\documentclass[11pt]{amsart}
\usepackage{amssymb,amscd,verbatim,graphicx}
\usepackage[T1]{fontenc}
\usepackage{mathrsfs}
\usepackage{xy}
\usepackage{bbm} 
\usepackage{url}
\usepackage[colorlinks,linkcolor=blue,citecolor=blue,urlcolor=red]{hyperref}
\xyoption{all}
\setlength{\textheight}{20cm}
\setlength{\textwidth}{15.2cm}
\setlength{\oddsidemargin}{0.2cm}
\setlength{\topmargin}{-0.2cm}
\setlength{\evensidemargin}{\oddsidemargin}

\setcounter{tocdepth}{1}
\swapnumbers
\numberwithin{equation}{section}
\newtheorem{thm}{Theorem}[subsection]
\newtheorem{propose}[thm]{Proposition}
\newtheorem{lemma}[thm]{Lemma}
\newtheorem{cor}[thm]{Corollary}
\theoremstyle{definition}
\newtheorem{defn}[thm]{Definition}

\newtheorem{remark}[thm]{Remark}
\newtheorem{remarks}[thm]{Remarks}

\newtheorem{examples}[thm]{Examples}

\newcommand{\uOm}{\underline{\Omega}}
\newcommand{\T}{\mathrm{T}}
\newcommand{\A}{\mathcal{M}_0}        
\newcommand{\M}{\mathcal{M}_1}        
\newcommand{\tM}{{}^t\!\mathcal{M}_{1}}

\newcommand{\dr}{\mathrm{dR}} 
\newcommand{\ab}{\mathrm{ab}}

\renewcommand{\d}{{\text{\LARGE $\cdot $}}}

\newcommand{\Spec}{\operatorname{Spec}} 
\newcommand{\Hom}{\operatorname{Hom}}      
\newcommand{\Ext}{\operatorname{Ext}}      
\newcommand{\Biext} {\operatorname{Biext}}  
\newcommand{\DM}{{\operatorname{DM}}}  
\newcommand{\DA}{{\operatorname{DA}}}

\newcommand{\MHS}{{\operatorname{MHS}}}

\newcommand{\Shv}{\operatorname{Shv}}
\newcommand{\Sm}{\operatorname{Sm}}

\newcommand{\An}{\operatorname{An}}

\newcommand{\car}{\operatorname{char}}
\newcommand{\Div}{\operatorname{Div}}

\newcommand{\eff}{{\operatorname{eff}}}
\newcommand{\ihom}{{\rm\underline{Hom}}}  

\newcommand{\Aff}{\mathbb{A}}   
\newcommand{\V}{\mathbb{V}}
 
\newcommand{\Lie}{\operatorname{Lie}}
 

\newcommand{\drb}{\mathrm{dRB}}
\newcommand{\bdr}{\mathrm{BdR}}

\newcommand{\Ab}{\mathrm{Mod}_{\Z}}
\newcommand{\Mod}{\mathrm{Mod}_{\Z,K}}
\newcommand{\Modc}{\mathrm{Mod}_{\Z,K}^{\cong}}
\newcommand{\QMod}{\mathrm{Mod}_{\Q,K}}
\newcommand{\cQMod}{\mathrm{Mod}_{K,\Q}}
\newcommand{\fMod}{\mathrm{FMod}_{\Z,K}}
\newcommand{\cMod}{\mathrm{Mod}_{K, \Z}}

\newcommand{\C}{\mathbb{C}}     
\newcommand{\F}{\mathbb{F}}
\newcommand{\Q}{\mathbb{Q}}     
\newcommand{\Z}{\mathbb{Z}}     

\newcommand{\N}{\mathbb{N}}
\newcommand{\G}{\mathbb{G}}     
\newcommand{\EExt}{{\rm \mathbb{E}xt}} 
\newcommand{\R}{\mathbb{R}}     

\newcommand{\im}{\operatorname{Im}}        
\renewcommand{\ker}{\operatorname{Ker}}  
\newcommand{\gr}{{\operatorname{gr}}}        
\newcommand{\Pic}{\operatorname{Pic}}     
\newcommand{\RPic}{\operatorname{RPic}}     
\newcommand{\LAlb}{\operatorname{LAlb}}     

\newcommand{\LA}[1]{\mbox{${\rm L}_{#1}{\rm Alb}$}}
\newcommand{\RA}[1]{\mbox{${\rm R}^{#1}{\rm Pic}$}}

\newcommand{\Tot}{\operatorname{Tot}}     
\newcommand{\NS}  {\operatorname{NS}}      

\newcommand{\tr}{{\operatorname{tr}}}        
\newcommand{\qi}{{\rm q.i.}\,}      

\newcommand{\by}[1]{\stackrel{#1}{\rightarrow}}
\newcommand{\longby}[1]{\stackrel{#1}{\longrightarrow}}
\newcommand{\vlongby}[1]{\stackrel{#1}{\mbox{\large{$\longrightarrow$}}}}
\newcommand{\iso}{\stackrel{\sim}{\longrightarrow}}

\newcommand{\eprooff}{\hfill$\Box$\par}
\renewcommand{\tilde}{\widetilde}
\newcommand{\df}{\mbox{\,${:=}$}\,}
\newcommand{\ie}{{\it i.e.}, }
\newcommand{\cf}{{\it cf. }}
\newcommand{\eg}{{\it e.g. }}

\newcommand{\et} {{\rm \acute{e}t}}
\newcommand{\eh} {{\rm \acute{e}h}}
\newcommand{\Zar}{{\rm Zar}}
\newcommand{\Nis}{{\rm Nis}}
\newcommand{\cdh}{{\rm cdh}}
\newcommand{\an}{{\rm an}}

\newcommand{\fr}{{\rm fr}}
\newcommand{\tor}{{\rm tor}}

\newcommand{\tf}{{\rm tf}}

\newcommand{\gm}{{\rm gm}}

\renewcommand{\bar}{\overline}
\newcommand{\into}{\hookrightarrow}
\renewcommand{\implies}{\mbox{$\Rightarrow$}}

\renewcommand{\lim}{\varprojlim}

\newcommand{\onto}{\mbox{$\to\!\!\!\!\to$}}

\newcommand{\boxtensor}{\def\boxtimesten{\Box\kern-7.59pt\raise1.2pt
\hbox{$\times$} }}                                  

\newcounter{elno}                   

\newcommand{\cA}{\mathcal{A}}

\newcommand{\cC}{\mathcal{C}}

\newcommand{\cO}{\mathcal{O}}

\renewcommand{\phi}{\varphi}
\renewcommand{\epsilon}{\varepsilon}
\renewcommand*{\d}{{\scalebox{0.5}{$\bullet$}}}
\marginparsep 11pt


\newcommand{\sK}{\mathcal{K}}

\newcommand{\sX}{\mathcal{X}}

\newcommand{\Ker}{\operatorname{Ker}}
\newcommand{\Coker}{\operatorname{Coker}}

\newenvironment{thlist}{\begin{list}{\rm{(\roman{enumi})}}%
{\usecounter{enumi}}}%
{\end{list}}

\begin{document}
\title{Motivic periods and Grothendieck arithmetic invariants}
\author{F. Andreatta, L. Barbieri-Viale}
\address{Dipartimento di Matematica ``F. Enriques'', Universit{\`a} degli Studi di Milano\\ via C. Saldini, 50\\ Milano I-20133\\ Italy}\email{Fabrizio.Andreatta@unimi.it} \email{Luca.Barbieri-Viale@unimi.it}
\author{A. Bertapelle (Appendix by B. Kahn)}
\address{Dipartimento di Matematica ``T. Levi-Civita'', Universit\`a degli Studi di Padova\\ via Trieste, 63\\ Padova I-35121\\ Italy} \email{Alessandra.Bertapelle@unipd.it}
\address{IMJ-PRG\\Case 247\\4 place Jussieu\\75252
Paris Cedex 05\\France}
\email{Bruno.Kahn@imj-prg.fr}
\keywords{Motives, Periods, Motivic and de Rham Cohomology}
\subjclass [2000]{14F42, 14F40, 19E15, 14C30, 14L15}

\begin{abstract}
We construct a period regulator for motivic cohomology of an algebraic scheme over a subfield of the complex numbers. For the field of algebraic numbers we formulate a period conjecture for motivic cohomology by saying that this period regulator is surjective. Showing that a suitable Betti--de Rham realization of 1-motives is fully faithful we can verify this period conjecture in several cases. The divisibility properties of motivic cohomology imply that our conjecture is a neat generalization of the classical Grothendieck period conjecture for algebraic cycles on smooth and proper schemes. These divisibility properties are treated in an appendix by B. Kahn (extending previous work of Bloch and Colliot-Th\'el\`ene--Raskind). 
\end{abstract}

\maketitle

\tableofcontents

\section*{Introduction}
Let $X$ be a scheme which is separated and of finite type over a subfield $K$ of the complex numbers.  Consider the $q$-twisted singular cohomology $H^p(X_\an, \Z_\an(q))$ of the analytic space $X_\an$ associated to the base change of $X$ to $\C$ and the $p$th de Rham cohomology $H^p_\dr (X)$, which is an algebraically defined $K$-vector space. We have the following natural $\C$-linear isomorphism 
$$\varpi^{p, q}_X: H^p(X_\an, \Z_\an (q))\otimes_\Z \C\cong H^p_\dr (X)\otimes_K \C$$ 
providing a comparison between these cohomology theories. As Grothendieck originally remarked, for $X$ defined over the field of algebraic numbers $K =\bar \Q$ or a number field, the position of the whole $H^p_\dr (X)$ with respect to $H^p(X_\an, \Z_\an (q))$ under $\varpi^{p, q}_X$ <<yields an interesting arithmetic invariant, generalizing the ``periods'' of regular differential forms>> (see \cite[p. 101 \& footnotes (9) and (10)]{GrdR}, \cf \cite[\S 7.5 \& Chap. 23]{An},  \cite{Bo}, \cite{BC} and \cite[Chap. 5 \& 13]{Hu}).  For the comparison of several notions of\, ``periods'' and versions of the period conjecture we refer to Huber survey \cite{HP} (see also \cite[\S 2.2.2]{BC}). 

The main goal of this paper is to describe this arithmetic invariant,  at least for $p=1$ and all twists, notably, $q=1$ and $q=0$. In more details, we first reconstruct $\varpi^{p, q}_X$ (in Definition~\ref{classper}) by making use of Ayoub's period isomorphism (see Lemma~\ref{APLemma}) in Voevodsky's triangulated category $\DM_\et^\eff$ of motivic complexes for the \'etale topology. Denote by $H^{p, q}_\varpi (X)$ the named arithmetic invariant, \ie the subgroup of those cohomology classes in $H^p(X_\an, \Z_\an (q))$ which are landing in $H^p_\dr (X)$ via $\varpi^{p, q}_X$. We then show the existence of a regulator map (see Corollary \ref{omegacom} and Definition~\ref{defper})
$$ r_\varpi^{p,q}:H^{p,q}(X) \to H^{p,q}_\varpi (X)$$
from \'etale motivic cohomology groups $H^{p,q}(X)$. Here we  regard motivic cohomology canonically identified with $H^p_\eh(X, \Z(q))$ where $ \Z(q)$ is the Suslin-Voevodsky motivic complex (see \cite[Def. 3.1]{VL}), as a complex of sheaves for the $\eh$-topology (introduced in \cite[\S 10.2]{BVK}). We are mostly interested in the case of $q=0, 1$ so that $\Z(0) \cong\Z[0]$ and $\Z(1)\cong \G_m [-1]$ by a theorem of Voevodsky (see \cite[Thm. 4.1]{VL}). 

Following Grothendieck's idea, we conjecture that the period regulator $r_\varpi^{p,q}$ is surjective over $\bar \Q$ and we actually show some evidence.  We easily see that $H^{0,q}_\varpi (X)=0$ for $q\neq 0$ and $r_\varpi^{0,0}$ is an isomorphism: therefore, the first non-trivial case is for $p=1$. Moreover, by making use of Suslin-Voevodsky rigidity theorem we can show that $r_\varpi^{p,q}$ is surjective on torsion (see Lemma~\ref{torhyp}). We can also show: if the vanishing $H^{p,q}(X)\otimes\Q/\Z = 0$ holds true then the surjectivity of $r_\varpi^{p,q}$ is equivalent to the vanishing $H^p_\dr(X)\cap H^p(X_\an, \Q_\an (q))=0$. The divisibility properties of motivic cohomology (see Appendix~\ref{KD}) imply that our conjecture is a neat generalization of the classical period conjecture for algebraic cycles on smooth and proper schemes (see Proposition~\ref{perconjsm}). 
 
In order to study the case $p=1$ we can make use of the description of $H^1$ via the Albanese $1$-motive $\LA{1} (X)$. Recall the existence of the homological motivic Albanese complex $\LAlb(X)$, a complex of 1-motives whose $p$th homology $\LA{p} (X)$ is a 1-motive with cotorsion (see \cite[\S 8.2]{BVK} for details). We can regard complexes of 1-motives as objects of $\DM_\et^\eff$ and by the adjunction properties of $\LAlb$ (proven in \cite[Thm. 6.2.1]{BVK}) we have a natural map $$\EExt^p (\LAlb(X),  \Z(1)) \to H^{p,1}(X)\cong H^{p-1}_\eh(X, \G_m)$$ which is an isomorphism, rationally, for all $p$ (see the motivic Albanese map displayed in \eqref{albmap} and \eqref{adjoint} below). We can also describe periods for 1-motives (see Definition~\ref{perhom}) in such a way that we obtain suitable Betti-de Rham realizations in period categories (see Definitions \ref{Def:drb} and \ref{Def:bdr}): a key point is that these realizations are fully faithful over  $\bar \Q$ (see Theorem~\ref{thm:ff1mot}).  The main ingredient in the proof of fullness is a theorem  due to Waldschmidt \cite[Thm. 5.2.1]{Wal} in transcendence  theory, generalizing the classical Schneider-Lang theorem (see also \cite[Thm. 4.2]{Bo}). An alternative proof can be given using a theorem of W\"ustholz \cite{WuC} (see our second proof of Theorem~\ref{thm:ff1mot}). A version of Baker's theorem and instances of Kontsevich period conjecture for 1-motives are further explored in a recent work of Huber and W\"ustholz \cite{HW}. Note that Kontsevich's period conjecture for 1-motives was formulated in \cite{WuL} (see also \cite[\S 23.3.3]{An}).  

Actually, we show  that the regulator  $r_\varpi^{p,1}$ can be revisited by making use of 1-motives (see Lemmas \ref{reproj} to \ref{regfact} and  Proposition~\ref{reg1}).
As a byproduct, all this promptly applies to show the surjectivity of $r_\varpi^{1,1}: H^{0}_\eh(X,\G_m)\to H^{1,1}_\varpi (X)$ via $\EExt (\LAlb(X),  \Z(1))$ verifying the conjecture for $p=1$ and $q=1$ (see Theorem \ref{per:1:1}). In fact, we can use the motivic Picard complex $\RPic (X)$ (see \cite[\S 8.3]{BVK})  so that $\EExt^p (\Z (0), \RPic (X))= \EExt^p  (\LAlb (X),  \Z(1))$ by Cartier duality showing that $\EExt (\LAlb(X),  \Z(1))$ is an extension of the finitely generated group $\Hom (\Z, \RA{p}(X))$ by a divisible group; we thus conclude that $r_\varpi^{p,1}$ induces a map 
$$\theta_\varpi^{p}: \Hom (\Z , \RA{p}(X))\to H^{p,1}_\varpi (X)$$
which in turn can be described making use of the mentioned Betti-de Rham realization. The surjectivity of $r_\varpi^{p,1}$ can be translated via $\theta_\varpi^{p}$ and the fullness of the Betti-de Rham realization.
For $p=1$, considering the 1-motive $\RA{1}(X) = [L_1^*\by{u_1^*} G_1^*]$ which is the Cartier dual of $\LA{1} (X)= [L_1\by{u_1} G_1]$, we get a canonical isomorphism
$$\ker u_1^*\cong \Hom (\Z , \RA{1}(X)) \cong H^1_\dr(X)\cap H^1(X_\an, \Z_\an (1))=H^{1,1}_\varpi (X).$$
In particular, we obtain that $H^1_\dr(X)\cap H^1(X_\an, \Z (1))=0$
if $X$ is proper. This vanishing for smooth projective varieties was previously obtained by Bost-Charles \cite[Thm. 4.2]{BC}. 

With some more efforts, making now use of the motivic complex $L\pi_0(X)$ along with its adjunction property (as stated in \cite[\S 5.4]{BVK}), we get a map $$\EExt^p (L\pi_0 (X),  \Z (0))\to H^{p,0}(X)\cong H^{p}_\eh(X,\Z).$$ Analysing the composition of this map for $p=1$ with $r_\varpi^{1,0}$ we see that $$r_\varpi^{1,0}: H^{1}_\et(X,\Z)\cong H^{1,0}_\varpi(X)= H^1_\dr(X)\cap H^1(X_\an, \Z_\an )$$ is an isomorphism (see Theorem \ref{per:1:0}), which yields the case $p=1$ and $q=0$  of our conjecture. In particular, 
$H^1_\dr(X)\cap H^1(X_\an, \Z_\an )=0$ 
for $X$ normal. This vanishing for smooth quasi-projective varieties was previously obtained by Bost-Charles \cite[Thm. 4.1]{BC}. 

For $p=1$ and $q\neq 0, 1$ we have that $H^{1,q}_\varpi (X)=0$ (see Corollary \ref{cor:per:1}) so that the period conjecture for motivic cohomology is trivially verified.

Remarkably, the description of the Grothendieck arithmetic invariants $H^{p,q}_\varpi (X)$ appears strongly related to the geometric properties encoded by motivic cohomology. These properties are almost hidden for smooth schemes, since the divisibility properties of motivic cohomology of $X$ smooth yields that for $p\notin [q, 2q]$ the surjectivity of $r_\varpi^{p,q}$ is equivalent to the vanishing $H^p_\dr(X)\cap H^p(X_\an, \Q_\an (q) )=0$. However, for $X$ smooth with a smooth compactification $\bar X$ and normal crossing boundary $Y$, we have that 
$$\ker\ (\Div^0_{Y}(\bar X)\longby{u_1^*} \Pic_{\bar X/\bar \Q}^0) \cong H^1_\dr(X)\cap H^1(X_\an, \Z_\an (1))$$
where $u_1^*$ is the canonical mapping sending a divisor $D$ supported on $Y$ to $\cO_{\bar X}(D)$.  In fact, here $\RA{1}(X)$ is Cartier dual of $\LA{1} (X) = [0\to \cA_{X/\bar \Q}^0]$, the Serre-Albanese semi-abelian variety (see \cite[Chap. 9]{BVK}). Therefore, there exist smooth schemes $X$ such that $H^{1,1}_\varpi (X)$ is non-zero and the vanishings in \cite[Thm. 4.1 \& 4.2]{BC} are particular instances of our descriptions.

With similar techniques one can make use of the Borel-Moore Albanese complex $\LAlb^c(X)$ (see \cite[Def. 8.7.1]{BVK}) to describe the compactly supported variant $H^{1,q}_{c,\varpi } (X)$, for any twist $q$. 

Finally, the cohomological Albanese complex $\LAlb^*(X)$ (see \cite[Def. 8.6.2]{BVK}) shall be providing a description of $H^{2d-j,q}_\varpi (X)$ for $d=\dim (X)$, at least for $j=0, 1$ and $q$ an arbitrary twist. 
A homological version of period regulators is also feasible and will be discussed in a future work.

\subsubsection*{Aknowledgements} We would like to thank Y. Andr\'e and J. Ayoub for some useful discussions on the matters treated in this paper. The first author was partially supported by  the Cariplo project n. 2017-1570 and all Italian authors acknowledge the support of the {\it Ministero dell'Istruzione,  dell'Universit\`a e della Ricerca} (MIUR)  through the Research Project  (PRIN 2010-11) ``Arithmetic Algebraic Geometry and Number Theory''.  We thank the referee for her/his many comments that helped us to considerably improve the exposition (starting with the suggestion to change the title to something reflecting more accurately the contents of the paper).

\section{Periods: constructions and conjectures} 
Let $\DM^\eff_\tau$ be the effective (unbounded) triangulated category of Voevodsky motivic complexes of $\tau$-sheaves over a field $K$ of zero characteristic, \ie  the full triangulated subcategory of $D(\Shv_\tau^\tr(\Sm_K))$ given by $\Aff^1$-local complexes  (\eg see \cite[\S 4.1]{AICM} and, for complexes bounded above, see also \cite[Lect. 14]{VL}). We here generically denote by $\tau$ either the Nisnevich or \'etale Grothendieck topology on $\Sm_K$, the category of smooth schemes which are of finite type over the field $K$. Let $\Z (q)$ for $q\geq 0$ be the Suslin-Voevodsky motivic complex regarded as a complex of \'etale sheaves with transfers. More precisely we consider a change of topology tensor functor $$\alpha : \DM^\eff_\Nis\to \DM^\eff_\et$$  and $\Z (q) = \alpha \Z_\Nis (q)$ (see \cite[Cor. 1.8.5 \& Def. 1.8.6]{BVK}) where $\Z_\Nis (q)$ is the usual complex for the Nisnevich topology (see also \cite[Def. 3.1]{SV}). We have the following canonical isomorphisms  $\Z (0) \cong \Z[0]$, $\Z (1)\cong \G_m [-1]$ and $\Z (q)\otimes \Z(q')\cong \Z (q+q')$ for any $q, q'\geq 0$ (see \cite[Lemma 3.2]{SV}). For any object $M\in \DM^\eff_\et$ we here denote $M(q)\df M\otimes \Z(q)$. Recall that by inverting the Tate twist $M\leadsto M(1)$ we obtain $\DM_\tau$ (where every compact object is isomorphic to $M(-n)$ for some $n\geq 0$ and $M$  compact and effective). For $M\in \DM^\eff_\et$ we shall define its motivic cohomology as
$$ H^{p,q}(M) \df \Hom_{\DM^\eff_\et} (M, \Z(q)[p]).$$
For any algebraic scheme $X$ we have the Voevodsky \'etale motive $M(X) = \alpha C_\d\Z_{tr}(X)\in \DM^\eff_\et$ where $C_\d$ is the Suslin complex and $\Z_{tr}(X)$ is the representable Nisnevich sheaf with transfers (see \cite[Def. 2.8, 2.14 \& Properties 14.5]{VL} and compare with \cite[Lemma 1.8.7 \& Sect. 8.1]{BVK}). We then write $H^{p,q}(X)\df H^{p,q}(M(X))$ and we refer to it as the \'etale motivic cohomology of $X$. We have an  isomorphism 
$$ H^{p,q}(X) \cong H^p_\eh(X, \Z(q))$$
where the last cohomology is, in general, computed by the $\eh$-topology (see \cite[\S 10.2]{BVK} and \cf \cite{AICM} and \cite[Prop. 1.8 \& Def. 3.1]{SV}). In particular, if $X$ is smooth $H^p_\eh(X, \Z(q))\cong H^p_\et(X, \Z(q))$. 

Note that we also have the triangulated category of motivic complexes without transfers $\DA^\eff_\et$ and if we are interested in rational coefficients we may forget transfers or keep the Nisnevich topology as we have equivalences $$\DA_{\et,\Q}^\eff\cong \DM_{\et,\Q}^\eff\cong \DM_{\Nis ,\Q}^\eff$$ (see \cite{AICM}, \cite[Cor. B.14]{AHop} and \cite[Thm. 14.30]{VL}). If we work with rational coefficients, we then have that motivic cohomology $H^{p,q}(X)_\Q$ is computed by the $\cdh$-topology, \ie $H^p_\eh(X, \Q(q))\cong H^p_\cdh(X, \Q(q))$, and $H^p_\cdh(X, \Q(q))\cong H^p_\Zar (X, \Q(q))$ if $X$ is smooth.

\subsection{de Rham regulator}
Denote by $\uOm$ the object of $\DM^\eff_\et$ which represents de Rham cohomology. More precisely we here denote $\uOm\df \alpha \uOm_\Nis$ where  $\uOm_\Nis$ is the corresponding object for the Nisnevich topology (see \cite[\S 2.1]{LW} and \cf \cite[\S 2.3]{AHop} without transfers). This latter $\uOm_\Nis$ is given by the complex of presheaves with transfers that associates to $X\in \Sm_K$ the global sections $\Gamma (X, \Omega^\d_{X/K})$ of the usual algebraic de Rham complex. 

For $M\in \DM^\eff_\et$ we shall denote (\cf \cite[\S 6]{LWdr} and \cite[Def. 2.1.1 \& Lemma 2.1.2]{LW})
$$ H^{p}_{\dr}(M) \df \Hom_{\DM^\eff_\et} (M, \uOm [p]).$$
For any algebraic scheme $X$ and $M=M(X)$ we here may also consider the sheafification of $\uOm$ for the $\eh$-topology.  Actually, we set $$H^p_\dr(X)\df  H^{p}_{\dr}(M(X))  \cong H^p_\eh(X, \uOm)$$
(see \cite[Prop. 10.2.3]{BVK}). Remark that this definition is equivalent to the definition of the algebraic de Rham cohomology in \cite[Chap. 3]{Hu} via the $h$-topology (as one can easily see via blow-up induction \cite[Lemma 10.3.1 b)]{BVK} after \cite[Prop. 3.2.4]{Hu} and \cite[Lemma 3.1.14]{Hu}).

 Note that for $q=0$ we have a canonical map $r^0:\Z(0)\to \uOm$ yielding a map
$$H^{p,0}(X) \cong  H^p_\eh(X,\Z)\to H^p_\dr(X) \cong H^p_\eh(X, \uOm).$$ 
For $q=1$ we have $r^1\df d\log : \Z (1) \to \uOm$ in  $\DM^\eff_\et$ (see \cite[Lemme 2.1.3]{LW} for the Nisnevich topology and apply $\alpha$) yielding a map
$$H^{p,1}(X) \cong  H^{p-1}_\eh(X,\G_m)\to H^p_\dr(X) \cong H^p_\eh(X, \uOm).$$  
Following \cite[(2.1.5)]{LW} an internal de Rham regulator $r^q$ in $\DM^\eff_\et$ for $q\geq 2$ is then obtained as the composition of
\begin{equation}\label{motreg}
r^q : \Z (q)\cong  \Z(1)^{\otimes q} \vlongby{d\log^{\otimes q}} \uOm^{\otimes q}\to \uOm .
\end{equation}
For $M\in \DM^\eff_\et$, composing a map $M\to \Z (q)[p]$ with $r^q[p]$ we  get an external {\it de Rham regulator} map 
\begin{equation}
{r^{p,q}_\dr}: H^{p,q}(M) \to H^p_\dr(M)
\end{equation}
and in particular for $M= M(X)$ we get
$${r^{p,q}_\dr}: H^{p,q}(X)\cong H^p_\eh(X, \Z(q)) \to H^p_\dr(X)\cong H^p_\eh(X, \uOm).$$
Note that if $X$ is smooth then $H^p_\eh(X, \uOm)\cong H^p_\et(X, \uOm)\cong H^p_\Zar (X, \Omega_X^\d)$ coincides with the classical algebraic de Rham cohomology (again, see \cite[Prop. 10.2.3]{BVK} and \cf \cite[Prop. 3.2.4]{Hu}) and we thus obtain  ${r^{p,q}_\dr}: H^p_\et(X, \Z(q)) \to H^p_\Zar (X, \Omega_X^\d)$ in this case.

\subsection{Periods}
As soon as we have an embedding $\sigma: K \into \C$ we may consider a Betti realization (\eg see \cite[\S 3.3]{LW} or \cite[Def. 2.1]{A}) in the derived category of abelian groups $D(\Z)$ as a triangulated functor
\begin{equation}\label{BettiR}
\beta_\sigma : \DM_\et^\eff\to D(\Z)
\end{equation}
such that $\beta_\sigma (\Z (q)) \cong \Z_\an(q)\df (2\pi i)^q\Z[0]$.  Actually, following Ayoub (see also \cite[\S 2.1.2]{AHop} and \cite[\S 1.1.2]{AHop2}) if we consider the analogue of the Voevodsky motivic category $\DM^\eff_\an$ obtained as the full subcategory of $D(\Shv_\an^\tr(\An_\C))$ given by $\Aff^1_\an$-local complexes, where we here replace smooth schemes $\Sm_K$ by the category $\An_\C$ of complex analytic manifolds, we get an equivalence 
$$\beta: \DM^\eff_\an\longby{\simeq} D(\Z)$$
such that $M_\an(X) \leadsto {\rm Sing}_*(X)$ is sent to the singular chain complex of $X\in \An_\C$. Moreover there is a natural triangulated functor
$$\sigma: \DM^\eff_\et\to \DM^\eff_\an$$
such that $M(X)\leadsto M_\an (X_\an)$ where the analytic space  $X_\an$ is given by the $\C$-points of the base change $X_\C$ of any algebraic scheme $X$. We then set $\beta_\sigma\df \beta\circ \sigma$. 
Thus it is clear that $\beta_\sigma (\Z[0] ) = \beta_\sigma (M(\Spec (K)) = \Z[0]$. Since a $K$-rational point of $X$ yields $M (X) = \Z\oplus \tilde M(X)$  we also see that $\beta_\sigma (\Z (1)[1]) \cong \beta_\sigma (\tilde M (\G_m))\cong \beta (\tilde M_\an(\C^*))\cong \Z_\an (1)[1]$ and then, $\beta_\sigma (\Z (q)) \cong \Z_\an(q)$ in general, as it follows from the compatibility of $\beta_\sigma$ with the tensor structures, \ie we here use the fact that $\beta_\sigma$ is unital and monoidal.
For $M\in \DM^\eff_\et$,  we denote
$$H^{p,q}_\an (M) \df \Hom_{D(\Z)}(\beta_\sigma M, \Z_\an(q)[p])$$
and we have a {\it Betti regulator} map 
\begin{equation}\label{Bettireg}
r^{p,q}_{\an}:H^{p,q} (M)\to H^{p,q}_\an (M)
\end{equation}
induced by $\beta_\sigma$. In particular, for $M= M(X)$, we obtain from Ayoub's construction (see also \cite[Prop. 4.2.7]{LW}):
\begin{lemma} 
For any algebraic $K$-scheme $X$  and any field homomorphism $\sigma : K \into \C$ we have
$$H^{p, q}_\an (X)\df \Hom_{D(\Z)}(\beta_\sigma M(X), \Z_\an(q)[p])\cong H^p(X_\an, \Z_\an (q))$$
and a Betti regulator map 
$$ r_\an^{p,q}:H^{p,q}(X) \to H^p(X_\an, \Z_\an (q)).$$
\end{lemma}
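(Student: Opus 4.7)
My plan is to unpack the factorization $\beta_\sigma = \beta\circ\sigma$ given in the excerpt and identify the left hand side with the classical singular cohomology step by step. First I would observe that, by the description already recalled, $\sigma$ sends $M(X)$ to $M_\an(X_\an)$ and $\beta$ is the equivalence $\DM^\eff_\an\simeq D(\Z)$ which sends $M_\an(Y)$ to the singular chain complex $\mathrm{Sing}_*(Y)$. Composing, we obtain a canonical identification
$$\beta_\sigma M(X)\cong \mathrm{Sing}_*(X_\an) \quad\text{in } D(\Z).$$
This reduces the asserted isomorphism to computing $\Hom_{D(\Z)}(\mathrm{Sing}_*(X_\an),\Z_\an(q)[p])$.

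Second, I would invoke the standard fact that for a topological space $Y$ (in particular $Y=X_\an$, which is locally contractible and has the homotopy type of a CW complex by triangulability of analytic spaces) and any abelian group $A$, the derived-$\Hom$ from the singular chain complex computes singular cohomology, i.e. $\Hom_{D(\Z)}(\mathrm{Sing}_*(Y),A[p])\cong H^p(Y,A)$. Applying this to $A=\Z_\an(q)=(2\pi i)^q\Z$ (which as an underlying abelian group is $\Z$, but where the twist is kept for tracking the Hodge-theoretic data) yields
$$H^{p,q}_\an(X)\cong H^p(X_\an,\Z_\an(q)),$$
as required. The only mild subtlety is to make sure that the twist on the right-hand side matches the one coming from $\beta_\sigma(\Z(q))\cong \Z_\an(q)$, which is precisely the compatibility of $\beta_\sigma$ with tensor structures already recorded in the excerpt.

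Third, the Betti regulator map $r_\an^{p,q}$ is then obtained essentially for free: a class in $H^{p,q}(X)$ is a morphism $M(X)\to \Z(q)[p]$ in $\DM^\eff_\et$, and applying the triangulated functor $\beta_\sigma$ produces a morphism $\beta_\sigma M(X)\to \beta_\sigma\Z(q)[p]\cong \Z_\an(q)[p]$ in $D(\Z)$, which is a class in $H^{p,q}_\an(X)$. Functoriality of $\beta_\sigma$ makes this assignment a group homomorphism, and under the identification of the previous step it gives the desired map with target $H^p(X_\an,\Z_\an(q))$.

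The main obstacle, if any, is the first paragraph: one must be confident that Ayoub's analytic realization $\beta$, which a priori is an equivalence of triangulated categories $\DM^\eff_\an\simeq D(\Z)$, really sends $M_\an(X_\an)$ to $\mathrm{Sing}_*(X_\an)$ with the expected tensor/twist compatibility. This is essentially the content of Ayoub's construction cited in the excerpt, so I would simply quote it rather than reprove it; once that is granted, the remainder is a direct application of the derived-category description of singular cohomology.
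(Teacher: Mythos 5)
Your proof is correct and takes essentially the same route as the paper: the paper states the lemma as an immediate consequence of the factorization $\beta_\sigma = \beta\circ\sigma$ and Ayoub's identification $\beta_\sigma M(X)\cong \mathrm{Sing}_*(X_\an)$ (citing Ayoub's construction and \cite[Prop. 4.2.7]{LW}) without further elaboration, and you have simply filled in the remaining homological-algebra step that $\Hom_{D(\Z)}(\mathrm{Sing}_*(Y), A[p])$ computes singular cohomology. One small remark: the invocation of triangulability/CW structure on $X_\an$ is unnecessary — since $\mathrm{Sing}_*(Y)$ is a complex of free abelian groups, $\Hom_{D(\Z)}(\mathrm{Sing}_*(Y), A[p])$ is computed by the underived $\Hom$-complex, which is by definition the singular cochain complex, so the identification holds for arbitrary $Y$.
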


Recall that the functor $\beta_\sigma$ admits a right adjoint $\beta^\sigma : D(\Z) \to \DM^\eff_\et$ (see \cite[Def. 1.7]{AHop2}). Note that the Betti regulator \eqref{Bettireg}
is just given by composition with the unit 
\begin{equation}\label{motregsigma}
r_\sigma^q: \Z(q)\to \beta^\sigma\beta_\sigma (\Z (q))
\end{equation}
 of the adjunction. 
Actually, by making use of the classical Poincar\'e Lemma and Grothendieck comparison theorem (\cite[Thm. $1^\prime$]{GrdR}) we get: 
\begin{lemma}[\protect{Ayoub}] \label{APLemma}
There is a canonical quasi-isomorphism
$$\varpi^{q}: \beta^\sigma\beta_\sigma (\Z (q))\otimes_\Z \C \longby{\qi}\uOm \otimes_K \C$$
whose composition with $r_\sigma^q$ in \eqref{motregsigma} is the regulator $r^q$ in \eqref{motreg} after tensoring with $\C$.
\end{lemma}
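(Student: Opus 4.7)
The plan is to construct $\varpi^{q}$ via Ayoub's analytic motivic category $\DM^\eff_\an$, using the classical Poincar\'e lemma together with Grothendieck's analytification comparison theorem, and then verify compatibility with the regulator by a direct computation on $\G_m$. First I would introduce, alongside $\uOm\in\DM^\eff_\et$, an analytic counterpart $\uOm_\an\in\DM^\eff_\an$, namely the presheaf on $\An_\C$ sending $X$ to the global sections $\Gamma(X,\Omega^\bullet_{X,\mathrm{hol}})$ of the holomorphic de Rham complex. The classical Poincar\'e lemma, applied sheaf-theoretically, asserts that the inclusion $\C \hookrightarrow \Omega^\bullet_{\mathrm{hol}}$ is a quasi-isomorphism of complexes of analytic sheaves, functorially in $X$; this promotes to a quasi-isomorphism $\Z_\an\otimes_\Z\C \simeq \uOm_\an$ in $\DM^\eff_\an$, and twisting by $(2\pi i)^q$ yields $\Z_\an(q)\otimes_\Z \C \simeq \uOm_\an$. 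Separately, Grothendieck's comparison theorem (\cite[Thm.~$1'$]{GrdR}) gives for every smooth $K$-scheme $X$ a quasi-isomorphism $\Omega^\bullet_{X/K}\otimes_K\C \simeq \Omega^\bullet_{X_\an,\mathrm{hol}}$, which lifts to a quasi-isomorphism $\sigma(\uOm)\otimes_K\C \simeq \uOm_\an$ in $\DM^\eff_\an$.

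Second, I would assemble $\varpi^q$ by moving across the adjunction $(\beta_\sigma,\beta^\sigma)$. Using that $\beta_\sigma(\Z(q)) \simeq \Z_\an(q)$ in $D(\Z)$ and the equivalence $\beta:\DM^\eff_\an\simeq D(\Z)$, the object $\sigma\bigl(\beta^\sigma \beta_\sigma(\Z(q))\otimes_\Z \C\bigr)$ is identified first with $\Z_\an(q)\otimes_\Z \C$, then via the Poincar\'e-lemma step with $\uOm_\an$, and finally via Grothendieck's theorem with $\sigma(\uOm)\otimes_K\C$. Descending back to $\DM^\eff_\et$ through the unit/counit of $(\beta_\sigma,\beta^\sigma)$ produces the desired canonical quasi-isomorphism $\varpi^q:\beta^\sigma\beta_\sigma(\Z(q))\otimes_\Z\C \longby{\simeq} \uOm\otimes_K \C$.

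Third, to verify that $\varpi^q\circ r_\sigma^q = r^q$ after tensoring with $\C$, I would exploit the monoidality of $\beta_\sigma$ and $\beta^\sigma$, the tensor decomposition $\Z(q) \cong \Z(1)^{\otimes q}$, and the product structure on $\uOm$, reducing everything to the case $q=1$. For $q=1$, one computes on $\G_m$: the unit $\Z(1)\to \beta^\sigma\beta_\sigma \Z(1)$ corresponds by adjunction to the identity on $\Z_\an(1)$, and tracking the canonical generator $2\pi i \in H^1(\C^*,\Z(1))$ through the Poincar\'e-lemma identification lands on the class $[dz/z]\in H^1_\dr(\G_m)$, which is exactly the image of the tautological coordinate under $d\log$. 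The cases $q\geq 2$ then follow by taking $q$-fold tensor powers, recovering \eqref{motreg}.

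The main technical obstacle is to ensure that the three comparison ingredients --- the Poincar\'e lemma, Grothendieck's analytification theorem, and the tensor compatibility of the Betti realization --- exist not merely as isomorphisms on cohomology of individual smooth varieties but as quasi-isomorphisms of $\Aff^1$-local complexes of $\tau$-sheaves with transfers, and that the functors $\sigma$, $\beta$, $\beta^\sigma$ intertwine them compatibly at that motivic level. This is essentially the content of Ayoub's construction of $\DM^\eff_\an$ and of the symmetric monoidal structure of the Betti realization; without that framework, the passage through $\Aff^1$-localization and sheafification would obstruct the naturality needed to glue the above identifications into a single canonical morphism.
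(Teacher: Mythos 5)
The paper proves this lemma purely by citation to Ayoub (\cite[Cor.~2.89, Prop.~2.92]{AHop} and \cite[\S 3.5]{APKZ}) with no further argument, so there is no in-text proof to compare against. Your sketch is a reasonable reconstruction of what lies behind that citation --- the holomorphic de Rham object in $\DM^\eff_\an$, the Poincar\'e lemma, Grothendieck's comparison theorem, monoidality of the realizations, and the $q=1$ check on $\G_m$ followed by tensor powers --- and you correctly flag in your closing paragraph that the genuine technical content lives in making these comparisons natural at the level of $\Aff^1$-local sheaf complexes with transfers, which is precisely what Ayoub's machinery supplies. One small point worth tightening in your second paragraph: $\sigma$ is not fully faithful and $\sigma\beta^\sigma$ is not the identity, so you cannot construct $\varpi^q$ by first identifying $\sigma\bigl(\beta^\sigma\beta_\sigma\Z(q)\otimes\C\bigr)$ and then ``descending''; rather, one should transpose across the adjunction, producing a map $\beta_\sigma(\uOm)\otimes\C\to\Z_\an(q)\otimes\C$ in $D(\Z)$ (via $\sigma\uOm\simeq\uOm_\an$ from Grothendieck and $\beta(\uOm_\an)\simeq\C$ from Poincar\'e), and then check that the resulting $\uOm\otimes\C\to\beta^\sigma(\Z_\an(q))\otimes\C$ is a quasi-isomorphism by evaluating $\Hom$'s out of motives of smooth schemes.
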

\begin{proof} See \cite[Cor. 2.89 \& Prop. 2.92]{AHop} and also \cite[\S 3.5]{APKZ}. \end{proof} 
\begin{remark}
Note that applying $\beta_\sigma$ to $\varpi^{q}$ we obtain a quasi-isomorphism $\beta_\sigma (\varpi^{q})$ such that
 \[\xymatrix{\C \cong \beta_\sigma (\Z (q))\otimes_\Z\C\ar@/_1.6pc/[rrrr]_-{\beta_\sigma (r^q)_\C}\ar[rr]_{\beta_\sigma (r_\sigma^{q})_\C} && \ar@/_1.2pc/[ll]\beta_\sigma\beta^\sigma\beta_\sigma (\Z (q))\otimes_\Z \C \ar[rr]^{\ \ \beta_\sigma (\varpi^{q})} && \beta_\sigma (\uOm )\otimes_K \C}\]
where $\beta_\sigma (r^q)_\C$ is a split injection but it is not a quasi-isomorphism (\cf \cite[\S 4.1]{LW}). 
\end{remark}
For $M\in \DM^\eff_\et$, by composition with $\varpi^q$ we get a period isomorphism 
$$\varpi^{p,q}_M: H^{p,q}_\an (M)\otimes_\Z \C \longby{\simeq} H^p_\dr(M)\otimes_K\C .$$
\begin{defn}\label{classper} For any scheme $X$ we shall call {\it period isomorphism} the $\C$-isomorphism
$$\varpi^{p,q}_X: H^p(X_\an, \Z_\an (q))\otimes_\Z \C \longby{\simeq}  H^p_\dr(X)\otimes_K\C$$
obtained by setting $\varpi^{p,q}_X\df \varpi^{p,q}_{M(X)}$ as above.
We shall denote $\eta^{p,q}_X\df (\varpi^{p,q}_X)^{-1}$ the inverse of the period isomorphism.
\end{defn}
We also get the following compatibility.
\begin{propose} \label{periodsquare}
For  $M\in \DM^\eff_\et$ along with a fixed embedding $\sigma: K \into \C$ the inverse of the period isomorphism $\varpi^{p,q}_M$ above induces a commutative diagram
 \[\xymatrixcolsep{2pc}\xymatrix{H^{p,q}(M) \ar[rr]^{r_\an^{p,q}} \ar[d]_{r^{p,q}_\dr}&& H^{p,q}_\an (M)\ar[dr]^{\iota^{p,q}_\an}&\\
H^p_\dr(M)\ar[rr]^{}\ar@/_1.6pc/[rrr]_{\iota^{p,q}_\dr}&& H^p_\dr(M)\otimes_K \C \ar[r]^{\simeq} &  H^{p,q}_\an (M)\otimes_\Z \C 
 }\]
where $\iota^{p,q}_\dr$ and $\iota^{p,q}_\an$ are the canonical mappings given by tensoring with $\C$.   \end{propose}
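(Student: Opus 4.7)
The proposition is essentially a formal chase once one unpacks the definitions; the real content has already been loaded into Lemma~\ref{APLemma}. My plan is to trace an element $\alpha \in H^{p,q}(M) = \Hom_{\DM^\eff_\et}(M, \Z(q)[p])$ around both sides of the square and reduce the commutativity to the identity in that lemma.

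First, I would reinterpret the Betti regulator via the adjunction $(\beta_\sigma, \beta^\sigma)$: under this adjunction, the morphism $\beta_\sigma(\alpha) \in \Hom_{D(\Z)}(\beta_\sigma M, \Z_\an(q)[p])$ which represents $r^{p,q}_\an(\alpha)$ corresponds to the composition
\[
M \xrightarrow{\alpha} \Z(q)[p] \xrightarrow{r^q_\sigma[p]} \beta^\sigma\beta_\sigma(\Z(q))[p]
\]
in $\DM^\eff_\et$, where $r^q_\sigma$ is the unit \eqref{motregsigma}. Similarly $r^{p,q}_\dr(\alpha)$ is represented by $r^q[p]\circ\alpha$, where $r^q:\Z(q)\to\uOm$ is the internal de Rham regulator \eqref{motreg}. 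The period isomorphism $\varpi^{p,q}_M$ is, by construction, the map on $\Hom$-groups induced by tensoring with $\C$ and post-composing with $\varpi^q[p]$.

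Next, starting from $\alpha$ and going right then down through $\iota^{p,q}_\an$ produces $\beta_\sigma(\alpha) \otimes 1 \in H^{p,q}_\an(M)\otimes_\Z \C$. Applying $\varpi^{p,q}_M$ to this class and unwinding the adjunction identification amounts to forming the composite
\[
M\otimes\C \xrightarrow{\alpha\otimes\C} \Z(q)[p]\otimes_\Z\C \xrightarrow{r^q_\sigma[p]\otimes\C} \beta^\sigma\beta_\sigma(\Z(q))[p]\otimes_\Z\C \xrightarrow{\varpi^q[p]} \uOm[p]\otimes_K\C.
\]
By Lemma~\ref{APLemma}, the last two arrows compose to $r^q[p]\otimes\C$, so the whole composite equals $(r^q[p]\circ\alpha)\otimes 1$, which is exactly $\iota^{p,q}_\dr(r^{p,q}_\dr(\alpha))$. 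In other words $\varpi^{p,q}_M\bigl(\iota^{p,q}_\an(r^{p,q}_\an(\alpha))\bigr)=\iota^{p,q}_\dr(r^{p,q}_\dr(\alpha))$, and since the bottom horizontal arrow of the diagram is the inverse of $\varpi^{p,q}_M$ (applied on the $\C$-linearized target), this is precisely the desired commutativity.

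No serious obstacle arises: the only subtle point is making sure the adjunction identification between $r^{p,q}_\an$ (realized via $\beta_\sigma$) and composition with the unit $r^q_\sigma$ is compatible with tensoring with $\C$ in the way stated, but this is standard naturality of the unit of an adjunction of triangulated (and monoidal) functors, and is implicit in the description of $\varpi^{p,q}_M$ given just before Definition~\ref{classper}.
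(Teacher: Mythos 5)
Your proposal is correct and follows essentially the same route as the paper: both reinterpret $r^{p,q}_\an$ via the $(\beta_\sigma,\beta^\sigma)$ adjunction as post-composition with the unit $r^q_\sigma$, and then reduce the commutativity to the identity $\varpi^q\circ r^q_\sigma = r^q\otimes\C$ supplied by Lemma~\ref{APLemma}; the paper phrases this as a commutative square of $\Hom$-groups rather than an element chase, but the content is the same.
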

 \begin{proof} This easily follows from Lemma \ref{APLemma}. In fact, by construction, the claimed commutative diagram can be translated into the following commutative square:
\[\xymatrix{\Hom_{\DM^\eff} (M, \Z(q)[p]) \ar[d]_{r_\dr^{p,q}\df r^q[p]\circ -}\ar[rr]^-{\iota^{p,q}_\an\circ\, r_\an^{p,q}} & & \ar[d]_{\varpi^q[p]\circ -}\Hom_{\DM^\eff}( M,\beta^\sigma\beta_\sigma\Z(q)[p])_\C\\
 \Hom_{\DM^\eff} (M, \uOm [p])\ar@/^1.2pc/[rru]_{\iota^{p,q}_\dr}\ar[rr]_-{-\otimes_K\C} & & \Hom_{\DM^\eff}(M, \uOm [p])_\C.}\]
\end{proof}
\begin{cor}\label{omegacom}  Let $X$ be an algebraic $K$-scheme along with a fixed embedding $\sigma: K \into \C$. The period isomorphism $\varpi^{p,q}_X$ above induces a commutative square
 \[\xymatrixcolsep{2pc}\xymatrix{ H^{p,q}(X) \ar[rr]^{r_\an^{p,q}} \ar[d]_{r^{p,q}_\dr}&& H^p(X_\an, \Z_\an (q))\ar[d]^{\iota^{p,q}_\an}\\
H^p_\dr(X)\ar[rr]^{\iota^{p,q}_\dr}&& H^p(X_\an, \C).
 }\]
 \end{cor}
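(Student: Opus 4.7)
The plan is to deduce this corollary as a direct specialization of Proposition~\ref{periodsquare} to the motive $M = M(X)$ associated with the algebraic $K$-scheme $X$, together with the identifications of cohomology groups already recorded in the paper.

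First, I would set $M = M(X)$ in the commutative diagram of Proposition~\ref{periodsquare} and substitute the definitions: by definition $H^{p,q}(M(X)) = H^{p,q}(X)$ and $H^p_\dr(M(X)) = H^p_\dr(X)$, while the previous lemma identifies $H^{p,q}_\an(M(X)) = \Hom_{D(\Z)}(\beta_\sigma M(X), \Z_\an(q)[p])$ canonically with the singular cohomology $H^p(X_\an, \Z_\an(q))$. Under these identifications the top arrow $r^{p,q}_\an$ and the left arrow $r^{p,q}_\dr$ of Proposition~\ref{periodsquare} become the corresponding regulators for $X$ appearing in the statement.

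Next, I would identify the lower-right composition. In Proposition~\ref{periodsquare} the map $\iota^{p,q}_\dr$ lands in $H^{p,q}_\an(M)\otimes_\Z\C$ via the period isomorphism $\varpi^{p,q}_M$. For $M=M(X)$ this target becomes $H^p(X_\an,\Z_\an(q))\otimes_\Z\C$, which is canonically isomorphic to $H^p(X_\an,\C)$: indeed $\Z_\an(q)=(2\pi i)^q\Z[0]$ is free of rank one as an abelian sheaf concentrated in degree zero, so the universal-coefficient map $H^p(X_\an,\Z_\an(q))\otimes_\Z\C \iso H^p(X_\an,\C)$ is an isomorphism (the Tate twist is absorbed into the identification $\C\cong (2\pi i)^q\C$). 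Similarly, $\iota^{p,q}_\an$ becomes the obvious change-of-coefficients map $H^p(X_\an,\Z_\an(q))\to H^p(X_\an,\C)$.

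Putting these translations into the commutative diagram of Proposition~\ref{periodsquare} yields exactly the square claimed in the corollary. There is no real obstacle: the content is entirely in Proposition~\ref{periodsquare} (itself resting on Lemma~\ref{APLemma}), and the corollary is a notational unwinding for schemes. The only point to double-check is that the identification $H^{p,q}_\an(M(X))\otimes_\Z\C\cong H^p(X_\an,\C)$ is compatible with the period isomorphism $\varpi^{p,q}_X$ as defined in Definition~\ref{classper}, but this is immediate since by construction $\varpi^{p,q}_X = \varpi^{p,q}_{M(X)}$ and the Tate-twist identification is the one implicitly used throughout.
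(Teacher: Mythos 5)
Your proof is correct and follows exactly the route the paper intends: the corollary is stated without proof precisely because it is the specialization of Proposition~\ref{periodsquare} to $M = M(X)$, combined with the identifications $H^{p,q}_\an(M(X))\cong H^p(X_\an,\Z_\an(q))$ from the preceding lemma and the universal-coefficient isomorphism $H^p(X_\an,\Z_\an(q))\otimes_\Z\C\cong H^p(X_\an,\C)$. Your attention to the compatibility of the Tate-twist identification with $\varpi^{p,q}_X$ is the right point to flag, and your treatment of it is correct.
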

Note that from Corollary \ref{omegacom} we get a refinement of the Betti regulator.
\begin{defn} \label{defper}
Define the {\it algebraic}\, singular cohomology classes as the elements of the subgroup $H^{p,q}_{\rm alg}(X)\df \im  r_\an^{p,q}\subseteq H^p(X_\an, \Z_\an (q))$ given by the image of the motivic cohomology under the Betti regulator $r_\an^{p,q}$. 

Define the {\it $\varpi$-algebraic}\, singular cohomology classes by the subgroup 
$$H^{p,q}_\varpi(X) \df H^p_\dr(X)\cap H^p(X_\an, \Z_\an (q))\subseteq H^p(X_\an, \Z_\an (q))$$ where $\cap$ means that we take elements in $H^p(X_\an, \Z_\an (q))$ which are given by the inverse image (under $\iota^{p,q}_\an$) of elements in $H^p_\dr(X)$ regarded (under $\iota^{p,q}_\dr$) inside $H^p(X_\an, \C)$  via the isomorphism $\varpi^{p,q}_X$ above. 

The groups $H^{p,q}_\varpi (X)$ shall be called {\it period cohomology} groups and 
$$ r_\varpi^{p,q}:H^{p,q}(X) \to H^{p,q}_\varpi (X)$$ 
induced by $r_\dr^{p,q}$ and $r_\an^{p,q}$ shall be called the {\it period}\ regulator. 
\end{defn}
We get that:
\begin{cor} $H^{p,q}_{\rm alg}(X)\subseteq H^{p,q}_\varpi(X)$. \end{cor}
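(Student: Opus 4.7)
The plan is to observe that this corollary is essentially a direct unpacking of the commutative square in Corollary~\ref{omegacom} combined with the definition of $H^{p,q}_\varpi(X)$. No new input is required; everything needed is already assembled.

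In concrete terms, I would argue as follows. Let $\xi \in H^{p,q}_{\rm alg}(X)$, so by definition there exists $\alpha \in H^{p,q}(X)$ with $r_\an^{p,q}(\alpha) = \xi$. Set $\delta := r^{p,q}_\dr(\alpha) \in H^p_\dr(X)$. The commutativity of the square in Corollary~\ref{omegacom} gives the equality
\[
\iota^{p,q}_\an(\xi) \;=\; \iota^{p,q}_\an\bigl(r_\an^{p,q}(\alpha)\bigr) \;=\; \iota^{p,q}_\dr\bigl(r^{p,q}_\dr(\alpha)\bigr) \;=\; \iota^{p,q}_\dr(\delta)
\]
inside $H^p(X_\an, \C)$, where I am implicitly using the identification induced by the period isomorphism $\varpi^{p,q}_X$ to view $H^p_\dr(X) \otimes_K \C$ and $H^p(X_\an, \Z_\an(q)) \otimes_\Z \C$ as the same $\C$-vector space. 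This exhibits $\xi$ as an element of $H^p(X_\an, \Z_\an(q))$ whose image in $H^p(X_\an, \C)$ comes from a class in $H^p_\dr(X)$; by Definition~\ref{defper} this is exactly the condition for $\xi$ to lie in $H^{p,q}_\varpi(X)$.

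Hence $H^{p,q}_{\rm alg}(X) \subseteq H^{p,q}_\varpi(X)$. In the same breath this shows that the Betti regulator $r_\an^{p,q}$ factors through the inclusion $H^{p,q}_\varpi(X) \hookrightarrow H^p(X_\an, \Z_\an(q))$, which is precisely what is needed for the period regulator $r_\varpi^{p,q}$ to be well-defined as a map $H^{p,q}(X) \to H^{p,q}_\varpi(X)$ in Definition~\ref{defper}.

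There is no genuine obstacle here: the statement is a formal consequence of the earlier commutativity result and the way $H^{p,q}_\varpi(X)$ was defined as a fibre product. The only point that deserves a moment's care is the implicit use of the period isomorphism $\varpi^{p,q}_X$ when identifying the two images inside $H^p(X_\an, \C)$, but this identification is built into Corollary~\ref{omegacom} itself.
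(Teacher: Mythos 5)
Your proof is correct and is precisely the implicit argument the paper intends: the corollary is stated without proof immediately after Definition~\ref{defper} because it is a direct diagram chase through the commutative square of Corollary~\ref{omegacom}, exactly as you spell out. The observation that this simultaneously shows $r_\an^{p,q}$ factors through $H^{p,q}_\varpi(X)$, making $r_\varpi^{p,q}$ well-defined, is also the intended reading.
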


For example, all torsion cohomology classes are $\varpi$-algebraic: we shall see in Lemma \ref{torhyp} that they are also algebraic. 

In particular, if $H^p(X_\an, \Z_\an (q))$ is all algebraic, \ie the Betti regulator $r_\an^{p,q}$ is surjective, then the canonical embedding $\iota^{p,q}_\an$ of singular cohomology $H^p(X_\an, \Q_\an (q))$ in the $\C$-vector space $H^p(X_\an, \C)$ factors through an embedding of $H^p(X_\an, \Q_\an (q))$ into the $K$-vector space $H^p_\dr(X)$. If $K=\bar \Q$ this  rarely happens. For example, if $p=0$ it happens only if $q=0$ and in this case $r_\varpi^{0,q}$ is always surjective (as $H^{0,q}_\varpi (X)=0$ for $q\neq 0$). 

\subsection{Period conjecture for motivic cohomology} 
Over $K =\overline{\Q}$ it seems reasonable to make the conjecture that all $\varpi$-algebraic classes are algebraic, \ie to conjecture that the period regulator $r_\varpi^{p,q}$ is surjective. In other words we may say that the period conjecture for motivic cohomology holds for $X$, in degree $p$ and twist $q$, if
\begin{equation}\label{periodconj}
H^{p,q}_{\rm alg}(X) = H^{p,q}_\varpi(X).
\end{equation}  
Over a number field we may expect that this holds rationally. 
If \eqref{periodconj} holds we also have that $H^p(X_\an, \Z_\an (q))$ modulo torsion embeds into $H^p_\dr(X)$ if and only if $H^p(X_\an, \Z_\an (q))$ is all algebraic. 
Note that using Proposition \ref{periodsquare} we can define $H^{p,q}_\varpi (M)$ providing a version of the period conjecture for any object $M\in \DM^\eff_\et$.
\begin{propose} \label{zerotwist}
For any $q\geq 0$ the period conjecture \eqref{periodconj} holds true for $X$, in degree $p$ and twist $r$, if and only if it holds true for $M(X)(q)$, in degree $p$ and twist $q+r$.
\end{propose}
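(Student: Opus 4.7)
The plan is to show that the Tate-twist functor $(-)\otimes\Z(q)\colon \DM^\eff_\et\to \DM^\eff_\et$ induces, on each of the three cohomology theories involved, natural isomorphisms compatible with both regulators and with the period comparison. This will identify the entire period-conjecture square for $(M(X),r)$ with that for $(M(X)(q),q+r)$, under which the subgroups $H^{p,*}_{\rm alg}$ and $H^{p,*}_\varpi$ of singular cohomology correspond; the proposition then follows.

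Setting $M=M(X)$, I will construct three candidate isomorphisms. On motivic cohomology, take $\tau^{\rm mot}(f) = f\otimes{\rm id}_{\Z(q)}$, which is an iso by Voevodsky's cancellation---equivalently, by the full faithfulness of $\DM^\eff_\et\to\DM_\et$ together with the invertibility of the Tate twist in $\DM_\et$. On Betti cohomology, take $\tau^{\rm Betti}(g) = g\otimes{\rm id}_{\Z_\an(q)}$, which is an iso since $\beta_\sigma$ is monoidal and $\Z_\an(q)=(2\pi i)^q\Z[0]$ is $\otimes$-invertible in $D(\Z)$. On de Rham cohomology, take $\tau^{\dr}(h) = \mu\circ(h\otimes r^q)$, where $\mu\colon\uOm\otimes\uOm\to\uOm$ is the algebra structure on $\uOm$ already used in the construction \eqref{motreg} of $r^q$. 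Compatibility of $\tau^{\rm mot}$ with $\tau^{\rm Betti}$ via $r_\an^{p,*}$, and with $\tau^{\dr}$ via $r_\dr^{p,*}$, is automatic from functoriality and from the multiplicative nature of \eqref{motreg}.

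The key verification is commutativity of the square
$$\xymatrix{H^{p,r}_\an(M)\otimes_\Z\C \ar[r]^{\varpi^{p,r}_M}\ar[d]_{\tau^{\rm Betti}\otimes\C} & H^p_\dr(M)\otimes_K\C \ar[d]^{\tau^{\dr}\otimes\C} \\
H^{p,q+r}_\an(M(q))\otimes_\Z\C \ar[r]^{\varpi^{p,q+r}_{M(q)}} & H^p_\dr(M(q))\otimes_K\C}$$
in which the horizontal arrows are the period isomorphisms. Unwinding the definition of $\varpi^{p,*}_{(-)}$ via Lemma~\ref{APLemma}, this boils down to the identity $\varpi^q\circ r^q_\sigma = r^q$ after $\otimes_\Z\C$ combined with the compatibility $\varpi^{q+r} = \mu\circ(\varpi^q\otimes\varpi^r)$, that is, to the monoidality of Ayoub's period comparison. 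Once the square commutes, $\tau^{\dr}\otimes\C$ is forced to be an iso (the other three arrows being so), and hence so is $\tau^{\dr}$ by faithful flatness of $K\to\C$.

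With the three $\tau$'s in place, $\tau^{\rm Betti}$ carries $H^{p,r}_{\rm alg}(X) = \im r_\an^{p,r}$ bijectively onto $H^{p,q+r}_{\rm alg}(M(X)(q))$ (by compatibility with the Betti regulators), and carries $H^{p,r}_\varpi(X)$ bijectively onto $H^{p,q+r}_\varpi(M(X)(q))$ (by commutativity of the central square). So the equality \eqref{periodconj} for $(X,p,r)$ is equivalent to the equality for $(M(X)(q), p, q+r)$. The hard part will be the monoidality check for the central square: it is implicit in Ayoub's construction (\cite[Cor.~2.89, Prop.~2.92]{AHop}), but since $\beta^\sigma$ is only lax monoidal the identity $\varpi^{q+r} = \mu\circ(\varpi^q\otimes\varpi^r)$ really lives only after tensoring with $\C$ and using Lemma~\ref{APLemma} to identify all the objects in sight.
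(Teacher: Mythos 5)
Your proposal follows essentially the same route as the paper's proof: Voevodsky cancellation for the motivic side, Tate-twist invertibility in $D(\Z)$ for the Betti side, and absorption of twists into $\uOm$ for the de Rham side, with commutativity of the resulting period square as the key compatibility. The paper phrases the de Rham twist simply via the canonical isomorphism $\uOm(-q)\cong\uOm$ in $\DM^\eff_\et$ rather than via your $\mu\circ(-\otimes r^q)$ construction, but these are the same map, so this is only a cosmetic difference.
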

\begin{proof} By Voevodsky cancellation theorem \cite{VC} we have that twisting by $q$ in motivic cohomology $H^{p,r} (X)\longby{\simeq} H^{p,q+r} (M(X)(q))$ is an isomorphism of groups. If $M = M(X)(q)$ with $q\geq 0$ we also get $H^{p,r}_\varpi (X)\longby{\simeq} H^{p,q+r}_\varpi (M(X)(q))$ canonically by twisting. In fact, we have a diagram induced by twisting 
 \[\xymatrix{
H^p(X_\an, \Z_\an(r) )\otimes \C\ar[r]^-{q_\C}_-{\tilde{ }}\ar[d]|*=0[@]{\tilde{ }}_{\omega_X^{p,r}}& H^{p,q+r}_\an (M (X)(q))\otimes_\Z \C\ar[d]|*=0[@]{\tilde{ }}_{\varpi^{p,q+r}_{M (X)(q)}}\\
H^p_\dr(X)\otimes_K\C\ar@{.>}[r]^-{q_\dr}_-{\tilde{ }}& H^p_\dr(M (X)(q))\otimes_K\C}\]
where $q_\C\df q\otimes \C$ is the $\C$-isomorphism given by the canonical integrally defined mapping $q \colon H^p(X_\an, \Z_\an(r) )\by{\simeq} H^{p,q+r}_\an (M (X)(q))$ which is sending a $p$-th cohomology class regarded as a map $\beta_\sigma M(X)= {\rm Sing}_*(X_\an) \to \Z_\an (r)[p]$ in $D (\Z)$ to the $q$-twist $\beta_\sigma M(X)(q)= {\rm Sing}_*(X_\an)(q) \to \Z_\an(q+r)[p]$. Similarly,  the $\C$-isomorphism $q_\dr$ is induced by twisting, since $\uOm (-q) \longby{\qi} \uOm$ is a canonical isomorphism in $\DM^\eff_\et$ and the claim follows.
\end{proof}
For $X$ smooth we have that $H^{p,q}(X)\cong H^p_\et (X, \Z (q))$ and with rational coefficients we have that $H^p_\et (X, \Q (q)) \cong CH^q(X,2q-p)_{\Q}$. In particular, if $X$ is smooth and $p=2q$ we get that $r_\varpi^{2q,q}$ is the modern refinement of the classical cycle class map with rational coefficients \begin{equation} \label{cycle}
 r_\varpi^{2q,q}=c\ell^q_\varpi: CH^q(X)_{\Q}\to H^{2q,q}_\varpi(X)_{\Q}
\end{equation}
for codimension $q$ cycles on $X$ considered in \cite{BC}. In this case, the period conjecture \eqref{periodconj} with rational coefficients coincides with the classical Grothendieck period conjecture for algebraic cycles: see \cite[\S 1.1.3]{BC} and \cite[Prop. 2.13-14]{BC} comparing it with the conjecture on torsors of periods. 
\begin{remark}
For $K=\C$ we may also think to refine the Hodge conjecture as previously hinted by Beilinson, conjecturing the surjectivity of
$$r^{p, q}_{\rm Hodge}: H^{p,q}(X)_\Q \to \Hom_{\MHS}(\Q(0), H^{p}(X)(q)).$$  
However,  such a generalization  doesn't hold, in general, \eg see \cite{JL}. 
\end{remark}

\subsection{Torsion cohomology classes are algebraic}
Consider $\Z/n (q)\df \Z (q)\otimes \Z/n$. By Suslin-Voevodsky rigidity we have a quasi-isomorphism of complexes of \'etale sheaves $\mu_n^{\otimes q}\to \Z/n (q)$  yielding $H^p_\eh(X,\Z/n (q))\cong H^p_\et(X,\mu_n^{\otimes q})\cong H^p_\eh(X,\mu_n^{\otimes q})$. For a proof of this key result see \cite[Thm. 10.2 \& Prop. 10.7]{VL} for $X$ smooth and make use of \cite[Prop. 12.1.1]{BVK} to get it in general.
\begin{lemma}\label{rig} For any algebraic scheme $X$ over $K =\overline{K}\into \C$ we have
$H^p_\eh(X,\Z/n (q))\cong H^p(X_\an,\Z/n)$. 
\end{lemma}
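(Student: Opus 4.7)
The plan is to chain three well-known comparison results. First, the rigidity quasi-isomorphism $\mu_n^{\otimes q} \longby{\qi} \Z/n(q)$ recalled in the paragraph preceding the lemma (and proved via \cite[Thm.~10.2 \& Prop.~10.7]{VL} for smooth schemes plus \cite[Prop.~12.1.1]{BVK} to descend along $\eh$-hypercovers) already gives
$$H^p_\eh(X, \Z/n(q)) \cong H^p_\et(X, \mu_n^{\otimes q}).$$
So the task reduces to identifying the right-hand side with singular cohomology of $X_\an$.

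Next, I would use that $K = \overline{K}$ is algebraically closed to trivialize the twist: choose once and for all a primitive $n$-th root of unity in $K$, which gives an isomorphism of étale sheaves $\mu_n \cong \Z/n$ on the small étale site of any $K$-scheme, and hence $\mu_n^{\otimes q} \cong \Z/n$. This yields $H^p_\et(X, \mu_n^{\otimes q}) \cong H^p_\et(X, \Z/n)$. (The isomorphism is non-canonical, but the lemma only asserts an abstract isomorphism.)

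Then I would base change along $K \hookrightarrow \C$: for a scheme of finite type over an algebraically closed field of characteristic zero, étale cohomology with finite coefficients is invariant under extension of the algebraically closed base field (SGA 4, Exp.~XVI, or the proper/smooth base change combined with standard invariance arguments). This gives $H^p_\et(X, \Z/n) \cong H^p_\et(X_\C, \Z/n)$.

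Finally, I would invoke Artin's comparison theorem (SGA 4, Exp.~XI for the smooth case and Exp.~XVI for the general finite type case, applied to the constant constructible sheaf $\Z/n$) to obtain $H^p_\et(X_\C, \Z/n) \cong H^p(X_\an, \Z/n)$. Composing the four isomorphisms yields the claim. The only real point to verify is that Artin comparison and algebraically-closed-base-change are available for arbitrary (possibly singular, non-proper) schemes of finite type, which is the hardest of the ingredients but is classical and well documented in SGA 4 for constant torsion coefficients.
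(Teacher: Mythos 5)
Your proof is correct and follows essentially the same route as the paper's: the rigidity isomorphism $H^p_\eh(X,\Z/n(q))\cong H^p_\et(X,\mu_n^{\otimes q})$ is recalled just before the lemma, and the paper's one-sentence argument likewise combines invariance of \'etale cohomology under the extension $K\hookrightarrow\C$ of algebraically closed fields, Artin's comparison theorem, and a choice of root of unity to untwist $\mu_n^{\otimes q}$. You have simply spelled out those same three ingredients in a bit more detail.
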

\begin{proof}  As  \'etale cohomology of $\mu_n^{\otimes q}$ is invariant under the extension $\sigma: K\into \C$ of algebraically closed fields we obtain the claimed comparison from the classical comparison result after choosing a root of unity.
\end{proof}
We then have (\cf \cite[Prop. 3.1]{RS}):
\begin{lemma} \label{torhyp}
The regulator $r_\varpi^{p,q}\mid_\tor:H^{p,q}(X)_\tor \onto H^{p,q}_\varpi (X)_\tor$ is surjective on torsion and $r_\varpi^{p,q}\otimes\Q/\Z :H^{p,q}(X)\otimes\Q/\Z \into H^{p,q}_\varpi (X)\otimes\Q/\Z$ is injective.
\end{lemma}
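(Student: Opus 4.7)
First I observe that every torsion class in $H^p(X_\an, \Z_\an(q))$ automatically lies in $H^{p,q}_\varpi(X)$: the canonical map $\iota^{p,q}_\an$ factors through the $\C$-vector space $H^p(X_\an,\C)$, which kills torsion, so the image is $0\in \iota^{p,q}_\dr(H^p_\dr(X))$. Consequently $H^{p,q}_\varpi(X)_\tor = H^p(X_\an,\Z_\an(q))_\tor$, and it is equivalent to prove that $r_\an^{p,q}$ is surjective on torsion and injective after tensoring with $\Q/\Z$.

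The plan is to run the Bockstein long exact sequence associated to the distinguished triangle $\Z(q) \by{n} \Z(q) \to \Z/n(q) \to \Z(q)[1]$ in $\DM^\eff_\et$ on both sides. Applying $\Hom_{\DM^\eff_\et}(M(X),-[p])$ yields
$$H^{p-1,q}(X,\Z/n) \to H^{p,q}(X) \by{n} H^{p,q}(X) \to H^{p,q}(X,\Z/n),$$
while applying $\beta_\sigma$ to the same triangle (and using $\beta_\sigma(\Z/n(q))\cong \Z/n$ in $D(\Z)$) and then $\Hom_{D(\Z)}(\beta_\sigma M(X),-[p])$ gives the analogous sequence for singular cohomology. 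By naturality of $\beta_\sigma$ (which defines the Betti regulator), these two sequences fit in a ladder compatible with $r_\an^{p,q}$ and with the coefficient-$\Z/n$ variant $H^{p,q}(X,\Z/n)\to H^p(X_\an,\Z/n)$.

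The crucial input is now Lemma~\ref{rig}: the coefficient-$\Z/n$ regulator is an isomorphism $H^{p-1,q}(X,\Z/n) \iso H^{p-1}(X_\an,\Z/n)$. From the Bocksteins one reads off
$$H^{p,q}(X)[n] = \im\bigl(H^{p-1,q}(X,\Z/n)\to H^{p,q}(X)\bigr), \qquad H^p(X_\an,\Z_\an(q))[n] = \im\bigl(H^{p-1}(X_\an,\Z/n)\to H^p(X_\an,\Z_\an(q))\bigr),$$
and the commutative square
\[\xymatrix{H^{p-1,q}(X,\Z/n) \ar[r]^-{\sim}\ar@{->>}[d] & H^{p-1}(X_\an,\Z/n)\ar@{->>}[d]\\ H^{p,q}(X)[n] \ar[r]^-{r_\an^{p,q}} & H^p(X_\an,\Z_\an(q))[n]}\]
has isomorphic top row and surjective vertical maps, so the bottom row is surjective. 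Taking the filtered colimit over $n$ yields surjectivity on all of the torsion.

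For the injectivity of $r_\varpi^{p,q}\otimes\Q/\Z$, I pass to the colimit of the Bocksteins, obtaining universal-coefficient style short exact sequences
$$0\to H^{p,q}(X)\otimes \Q/\Z \to H^{p,q}(X,\Q/\Z) \to H^{p+1,q}(X)_\tor \to 0$$
and its Betti analogue. Lemma~\ref{rig} in the colimit gives $H^{p,q}(X,\Q/\Z)\cong H^p(X_\an,\Q/\Z)$ compatibly with the regulator, so $r_\an^{p,q}\otimes \Q/\Z$ factors as an injection followed by an isomorphism, hence is injective. The only real work is bookkeeping the compatibility of the two Bocksteins with $\beta_\sigma$ and with Lemma~\ref{rig}; this is the expected main (but essentially formal) obstacle.
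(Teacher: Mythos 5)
Your proof is correct and follows essentially the same route as the paper's: both reduce to the Betti regulator via the observation that $H^{p,q}_\varpi(X)_\tor = H^p(X_\an,\Z_\an(q))_\tor$, then run the universal-coefficient/Bockstein exact sequences on the motivic and Betti sides against each other, using Lemma~\ref{rig} as the crucial isomorphism on the $\Z/n$-coefficient terms, and pass to the colimit over $n$. The paper packages this a bit more compactly as a single commutative diagram of short exact sequences (universal coefficients in degree $p$), but the ingredients and their roles are identical.
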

\begin{proof}
By construction, for any positive integer $n$, comparing the usual universal coefficient exact sequences, we have the following commutative diagram with exact rows 
  \[\xymatrix{0\to  H^p_\eh(X,\Z (q))/n \ar[r]\ar[d]^{r_\an^{p,q}/n}  & H^p_\eh(X,\Z/n (q))\ar[r]\ar@{=}[d]^{\ref{rig}}& { }_{n}H^{p+1}_\eh(X,\Z (q))\to 0\ar[d]^{{ }_{n}r_\an^{p+1,q}}\\
 0\to  H^p(X_\an,\Z_\an)/n \ar[r] &   H^p(X_\an,\Z/n)\ar[r]& { }_{n}H^{p+1}(X_\an,\Z_\an)\to 0.
  }\]
Passing to the direct limit on $n$ we easily get the claim. In fact, ${ }_{n}H^{p,q}_\varpi (X) = { }_{n}H^{p}(X_\an,\Z_\an)$ and $r_\an^{p,q}/n$ factors through $r_\varpi^{p,q}/n$. 
\end{proof} 
\begin{lemma} We have that $r_\varpi^{p,q}\otimes\Q$ is surjective if and only if $r_\varpi^{p,q}$ is surjective; moreover, if this is the case $r_\varpi^{p,q}\otimes\Q/\Z$ is an isomorphism. \end{lemma}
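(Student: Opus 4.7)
My plan is to set $A := H^{p,q}(X)$, $B := H^{p,q}_\varpi(X)$, $r := r_\varpi^{p,q}$, and $C := \coker(r)$, and deduce the surjectivity of $r$ from the surjectivity of $r\otimes\Q$ by a $\Tor$ long exact sequence argument. The reverse implication of the first assertion is automatic, and the ``moreover'' claim is immediate once one combines the tautological surjectivity of $r\otimes\Q/\Z$ (when $r$ is surjective) with its injectivity, already established in Lemma \ref{torhyp}.

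For the nontrivial direction, I would first translate $r\otimes\Q$ surjective into $C\otimes\Q = 0$, i.e.\ $C$ is a torsion abelian group. Applying $-\otimes\Q/\Z$ to the short exact sequence $0 \to \im(r) \to B \to C \to 0$ and invoking the identification $\Tor_1(M,\Q/\Z) = M_\tor$ (coming from the flat resolution $0 \to \Z \to \Q \to \Q/\Z \to 0$), together with $\Tor_1(C,\Q/\Z) = C$ since $C$ is torsion, extracts from the long exact sequence the segment
$$B_\tor \longrightarrow C \longrightarrow \im(r)\otimes\Q/\Z \longrightarrow B\otimes\Q/\Z.$$

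The third arrow is injective: it is the second factor in the factorization $A\otimes\Q/\Z \onto \im(r)\otimes\Q/\Z \to B\otimes\Q/\Z$ whose composition is $r\otimes\Q/\Z$, and this composition is injective by Lemma \ref{torhyp}. Hence $C \to \im(r)\otimes\Q/\Z$ vanishes and $B_\tor \to C$ is surjective. But $B_\tor \to C$ is the composition $B_\tor \into B \onto B/\im(r) = C$, and Lemma \ref{torhyp} also furnishes the surjection $A_\tor \onto B_\tor$, forcing $B_\tor \subseteq \im(r)$ and hence this composition to vanish. Together these give $C = 0$, i.e.\ $r$ is surjective. The argument needs no new geometric or transcendence input beyond Lemma \ref{torhyp}; the only subtle point is keeping the Tor bookkeeping straight, which is entirely routine.
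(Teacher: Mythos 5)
Your proof is correct, and it uses exactly the two conclusions of Lemma~\ref{torhyp} (surjectivity on torsion, injectivity after $\otimes\Q/\Z$) that the paper's laconic ``simple diagram chase'' must be invoking. The Tor long exact sequence attached to $0 \to \im(r) \to B \to C \to 0$, together with the natural identification $\Tor_1(-,\Q/\Z) = (-)_\tor$, is a clean way to package the bookkeeping: it simultaneously shows that the connecting map $B_\tor \to C$ is onto (since the next arrow into $\im(r)\otimes\Q/\Z$ is killed by the injectivity from Lemma~\ref{torhyp}) and zero (since $B_\tor \subseteq r(A_\tor) \subseteq \im(r)$), whence $C = 0$. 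All the subsidiary steps check out: $C\otimes\Q = 0$ does imply $C$ is torsion and hence $\Tor_1(C,\Q/\Z) = C$; an epi followed by a map with injective composite has an injective second factor; and right exactness of $\otimes$ handles both the trivial direction and the ``moreover.'' This is essentially the paper's argument written out carefully, not a different route, so no further comparison is needed.
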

\begin{proof}
This follows from a simple diagram chase.
\end{proof}
In the situation that $H^{p,q}(X)\otimes\Q/\Z = 0$ the period conjecture for motivic cohomology \eqref{periodconj} is then equivalent to  \begin{equation}\label{vanishing}H^p_\dr(X)\cap H^p(X_\an, \Q_\an (q))=0.\end{equation}

In particular: 
\begin{propose}\label{perconjsm}
 If $X$ is smooth then \eqref{periodconj} for $p\notin [q, 2q]$ is equivalent to \eqref{vanishing}. If $X$ is smooth and proper then \eqref{periodconj} is equivalent to the surjectivity of $c\ell_\varpi^q$ in \eqref{cycle} for $p=2q$ and 
to the vanishing \eqref{vanishing} for $p\neq 2q$.
\end{propose}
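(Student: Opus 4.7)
The plan is to reduce the proposition to two ingredients already on the table: (i) the equivalence, established just above, between the period conjecture \eqref{periodconj} and the vanishing \eqref{vanishing} whenever $H^{p,q}(X)\otimes\Q/\Z=0$; and (ii) the divisibility results for motivic cohomology of smooth schemes collected in Appendix~\ref{KD}. Once these two inputs are in place, essentially nothing is left beyond rereading the definitions.

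For the first assertion I would invoke Kahn's appendix to deduce that, for $X$ smooth and $p\notin[q,2q]$, the motivic cohomology group $H^{p,q}(X)$ is divisible, so that $H^{p,q}(X)\otimes\Q/\Z=0$. The equivalence with \eqref{vanishing} is then immediate from the remark preceding the statement. For the second assertion and $p\neq 2q$ I would proceed identically, using the sharper divisibility available when $X$ is moreover proper: the appendix gives $H^{p,q}(X)\otimes\Q/\Z=0$ for all $p\neq 2q$, which reduces the period conjecture in those bidegrees again to the vanishing \eqref{vanishing}.

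The remaining case is $p=2q$ for $X$ smooth and proper. Here I would first use the previous lemma to note that surjectivity of $r_\varpi^{2q,q}$ is equivalent to surjectivity of $r_\varpi^{2q,q}\otimes\Q$. Rationally, motivic cohomology of a smooth scheme is $H^{2q,q}(X)_\Q\cong CH^q(X)_\Q$, and by construction the period regulator is then the cycle class map $c\ell^q_\varpi$ of \eqref{cycle}; the claim follows. The main obstacle in the whole argument is the divisibility input from Kahn's appendix: the present proposition is essentially bookkeeping around the observation that the period conjecture splits into a divisibility-controlled vanishing part for $p\neq 2q$ and a cycle-class part at $p=2q$, and the substance lies entirely in establishing divisibility in each of the required ranges.
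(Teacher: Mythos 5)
Your proof is correct and follows the paper's own route: divisibility from Kahn's Theorem~\ref{t2} gives $H^{p,q}(X)\otimes\Q/\Z=0$ in the required ranges, reducing \eqref{periodconj} to \eqref{vanishing}, and the $p=2q$ case for $X$ proper is handled by the lemma that surjectivity of $r_\varpi^{p,q}$ is equivalent to surjectivity of $r_\varpi^{p,q}\otimes\Q$, which by the identification in \eqref{cycle} is precisely $c\ell^q_\varpi$. The paper's proof is terser and leaves the $p=2q$ step implicit; you have simply spelled out the same argument.
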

\begin{proof} In fact, by the Appendix \ref{KD}, Theorem \ref{t2}, we have that for $p\notin [q, 2q]$ the group $H^{p,q}(X)$ is an extension of torsion by divisible groups so that $H^{p,q}(X)\otimes\Q/\Z = 0$. If $X$ is proper the latter vanishing holds true for all $p\neq 2q$.
\end{proof}

Proposition \ref{perconjsm} explains some weight properties related to the Grothendieck period conjecture, weight arguments which are also considered in \cite{BC}. 
\begin{remark} For $K= \C$ we have that $r_\beta^{p,q}\mid_\tor:H^{p,q}(X)_\tor \onto H^{p} (X_\an, \Z_\an (q))_\tor$ is surjective (as also remarked in \cite{RS} for $X$ smooth projective): torsion motivic cohomology classes supply the defect of algebraic cycles providing the missing torsion algebraic cycles. In fact, from the well known Atiyah-Hirzebruch-Totaro counterexamples to the integral Hodge conjecture we know that $c\ell^p : CH^p(X) \to H^{2p} (X_\an, \Z_\an (p))$ cannot be surjective on torsion for $p\geq 2$ in general. 
\end{remark}

\section{Periods of 1-motives: fullness of Betti-de Rham realizations}
Let $\tM (K)$ be the abelian category of $1$-motives with torsion over $K$ (see \cite[App. C]{BVK}). We shall drop the reference to $K$ if it is clear from the context. We shall denote $${\sf M}_K=[{\sf u}_{K}\colon {\sf L}_K\to {\sf G}_K]\in \tM (K)$$ a  $1$-motive with torsion with ${\sf L}_K$ in degree $0$ and ${\sf G}_K$ in degree $1$; for brevity, we shall write ${\sf M}_K={\sf L}_K[0]$ if ${\sf G}_K=0$ and ${\sf M}_K={\sf G}_K[-1]$ if ${\sf L}_K=0$ and we omit the reference to $K$ if unnecessary. Let ${\sf M}_\tor := [{\sf L}_\tor \cap \ker({\sf u}) \to 0]$ be the torsion part of ${\sf M}_{K}$,  let ${\sf M}_\fr:=[{\sf L} /{\sf L}_\tor\to {\sf G}/{\sf u}({\sf L}_\tor )]$ be the free part of  ${\sf M}_{K}$, and let  ${\sf M}_\tf := [{\sf L}/ {\sf L}_\tor\cap \ker({\sf u}) \to {\sf G} ]$ be the torsion free part of ${\sf M}_{K}$.
There are  short exact sequences of complexes
\begin{eqnarray}\label{eq.mtf}
0\to {\sf M}_\tor\to {\sf M}_{K} \to {\sf M}_\tf\to 0
\end{eqnarray}
and
\begin{eqnarray}
\label{eq.mtf2}
0\to [F=F]\to {\sf M}_\tf \to {\sf M}_\fr\to 0,
\end{eqnarray}
where $F={\sf L}_\tor/{\sf L}_\tor\cap \ker({\sf u}) $.
Let ${\sf M}_\ab$ denote the $1$-motive with torsion $[{\sf L}\to {\sf G}/{\sf T}]$ where ${\sf T}$ is the maximal subtorus of ${\sf G}$. Recall (see \cite[Prop. C.7.1]{BVK}) that the canonical functor $\M \to \tM$ from Deligne 1-motives admits a left adjoint/left inverse given by ${\sf M}\leadsto {\sf M}_\fr$. 

Any $1$-motive ${\sf M}=[  {\sf L} \to {\sf G}] $ is canonically endowed with an increasing filtration of sub-$1$-motives, the {\it weight filtration},   defined as follows:
\begin{eqnarray}
\label{eq.weight} W_i({\sf M})=\left\{ \begin{matrix} {\sf M}&\text{if }&i\geq 0\\
{\sf G}[-1]&\text{if } &i=-1\\
{\sf T}[-1]&\text{if } &i=-2\\
0&\text{if } &i\leq -3 \\
\end{matrix}\right  .
\end{eqnarray}
with $\sf T$ the maximal subtorus of $\sf G$.
We have that $D^b(\tM )\cong D^b(\M )$ (see \cite[Thm. 1.11.1]{BVK}) and that there is a canonical embedding (see \cite[Def 2.7.1]{BVK}) 
\begin{equation}\label{tot}
\Tot : D^b(\M)\into \DM^\eff_\et
\end{equation}
so that we can also regard $1$-motives as motivic complexes of \'etale sheaves. The restriction of the Betti realization $\beta_\sigma$ in \eqref{BettiR} can be described explicitly for 1-motives via Deligne's Hodge realization (see \cite[Thm. 15.4.1]{BVK}). Similarly, the restriction of the de Rham realization in \cite{LW} can be described via Deligne's de Rham realization as follows.

\subsection{de Rham realization} 
Let $K$ be a field of characteristic zero and let ${\sf M}_K=[{\sf u}_{K}\colon {\sf L}_K\to {\sf G}_K]\in \tM (K)$ be  a  $1$-motive  with torsion over $K$.  Note that for  ${\sf M}_{K}^\natural \df [{\sf u}^\natural_K : {\sf L}_K\to {\sf G}^\natural_K]$ the universal $\G_a$-extension of ${\sf M}_{K}$ we have
$$0\to \V ({\sf M}) \to {\sf M}_{K}^\natural\longby{\rho^{\sf M}} {\sf M}_{K}\to 0$$
where $\V ({\sf M})\df \Ext ({\sf M}_{K},\G_a)^\vee$.  The existence of universal extensions is well-known when ${\sf L}_{K}$ is torsion-free; for the general case see \cite[Proposition 2.2.1]{BVB}. Recall (see \cite[\S 10.1.7]{De}) the following

\begin{defn} The de Rham realization of ${\sf M}_K$ is $$T_\dr({\sf M}_K)\df \Lie ({\sf G}^\natural_K)$$ as a $K$-vector space. 
\end{defn}

\begin{remark}\label{hodgefilt}
Note that $\rho^{\sf M} = (id_{\sf L}, \rho_{\sf G})$ where $\rho_{\sf G}\colon {\sf G}^\natural_K\onto {\sf G}_K$ is a quotient and  $\ker \rho_{\sf G} = \V ({\sf M})$ so that ${\sf G}_K$ is the semiabelian quotient of ${\sf G}^\natural_K$ and ${\sf u}^\natural_K$ is a canonical lifting of ${\sf u}_K$, \ie ${\sf u}_K=\rho_{\sf G}\circ{\sf u}_K^\natural$.  Further $\V({\sf M})\subseteq T_\dr({\sf M}_K)$ is also the kernel of the morphism $$d\rho_{\sf G}\colon \Lie({\sf G}^\natural_{K})\to \Lie({\sf G}_{K})$$ induced by $\rho_{\sf G}$, so that $T_\dr({\sf M}_{K})$ together with the $K$-subspace $\V({\sf M})$ can be regarded as a filtered $K$-vector space. This datum is called the {\it Hodge filtration} of $T_\dr({\sf M}_{K})$.
\end{remark}

The algebraic $K$-group ${\sf G}^\natural_K$ fits in the following diagram \cite[(2.15)]{Ber}
\begin{eqnarray*}
	\xymatrix{
		0\ar[r]&\V ({\sf A}) \ar[r]\ar@{^{(}->}[d]^i &{\sf A}^\natural\times_{\sf A} {\sf G} \ar[r]  \ar@{^{(}->}[d] &
	{\sf G}\ar[r]\ar@2{-}[d] &0\\
		0\ar[r]& \V ({\sf M}) \ar[r]\ar@{->>}[d]& {\sf G}^\natural
		\ar[r]^{\rho_{{\sf G}}}\ar@{->>}[d] &
	{\sf G} \ar[r] &0\\
		& \V({\sf L})   \ar@2{-}[r] &  {\sf L} \otimes \G_a  & &
}\end{eqnarray*}
where   we have omitted subscripts $K$ and written $\V({\sf A}))$ for $\V({\sf A}[-1])= \V({\sf G}[-1])$.
\begin{lemma}\label{baseC}
For $K \subset K'$ we have a natural isomorphism
\[({\sf M}^\natural_K)_{K'} \cong ({\sf M}_{K'})^\natural. 
\]
\end{lemma}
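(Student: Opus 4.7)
The plan is to verify the identification via the universal property characterizing the universal $\G_a$-extension. Recall that $\rho^{\sf M}\colon {\sf M}^\natural_K \to {\sf M}_K$ is the universal extension of ${\sf M}_K$ by a $K$-vector group, with kernel $\V({\sf M}_K) = \Ext({\sf M}_K,\G_a)^\vee$; by uniqueness of universal objects (a feature of the defining adjunction), it is enough to exhibit $({\sf M}^\natural_K)_{K'} \to {\sf M}_{K'}$ as the universal $\G_a$-extension of ${\sf M}_{K'}$ over $K'$.

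First I would show that $\V({\sf M}_K)\otimes_K K' \cong \V({\sf M}_{K'})$, i.e., that $\Ext({\sf M}_K,\G_a)\otimes_K K' \cong \Ext({\sf M}_{K'},\G_a)$. Using the weight filtration \eqref{eq.weight} together with \eqref{eq.mtf} and \eqref{eq.mtf2}, this reduces to checking the base-change compatibility of $\Ext(-,\G_a)$ on the graded pieces: for a lattice ${\sf L}$, one has $\Ext({\sf L},\G_a)=0$ by \'etale descent from $H^1(\Spec K,\G_a)=0$; for a torus ${\sf T}$, $\Ext({\sf T},\G_a)=0$ in characteristic zero, since any extension of a reductive group by a unipotent group splits via the Levi decomposition; and for an abelian variety ${\sf A}$, the identification $\Ext({\sf A},\G_a)\cong H^1({\sf A},\Oh{{\sf A}})$ plus flat base change for coherent cohomology gives the claim. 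Next, since $K'$ is $K$-flat, base changing the defining sequence $0\to \V({\sf M}_K)\to {\sf M}^\natural_K\to {\sf M}_K\to 0$ yields a short exact sequence of $1$-motives over $K'$ whose kernel is $\V({\sf M}_{K'})$ by the previous step. It then remains to show that the induced connecting map $\V({\sf M}_{K'})^\vee \to \Ext({\sf M}_{K'},\G_a)$ is an isomorphism; this follows from naturality of the Yoneda boundary combined with the base-change isomorphism just established, so the universal property of $({\sf M}_{K'})^\natural$ produces the desired canonical identification.

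The main obstacle is handling the torsion in ${\sf L}_K$: for $1$-motives with torsion the universal extension is constructed as in \cite[Proposition 2.2.1]{BVB} rather than by the classical Mazur-Messing recipe, so one must separately verify that the canonical lifting ${\sf u}^\natural_K \colon {\sf L}_K \to {\sf G}^\natural_K$ commutes with base change. This is again a uniqueness statement, reducing via \eqref{eq.mtf} to the torsion-free case, where base-change compatibility of the universal extension of the Deligne $1$-motive ${\sf M}_\tf$ is classical and can also be read off from the left-hand diagram of the display preceding the lemma.
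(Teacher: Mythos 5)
The paper records this lemma without a written proof; the two-square diagram displayed immediately before it exhibits ${\sf G}^\natural$ as built from ${\sf A}^\natural\times_{\sf A}{\sf G}$ and ${\sf L}\otimes\G_a$ by pullback and quotient, and all of these operations commute with base change along $K\subset K'$, which is the intended (implicit) argument. Your route through the universal property plus a weight-filtration d\'evissage is a legitimate alternative, and your remark on the torsion case via \eqref{eq.mtf} is in the right spirit.

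There is, however, an error in your graded-piece computation for the lattice: $\Ext([{\sf L}\to 0],\G_a)$ is \emph{not} zero. The copy of $\G_a$ here sits in the degree of ${\sf G}$, not of ${\sf L}$, so the relevant group is $\Hom({\sf L},\G_a)\cong{\sf L}^\vee\otimes\G_a$, and not the group-scheme $\Ext^1({\sf L},\G_a)$ that you compute (which does vanish by \'etale descent, but is not the quantity in play). This is consistent with the paper's own display, which records $\V({\sf L})={\sf L}\otimes\G_a$; since $\V(-)=\Ext(-,\G_a)^\vee$, your claim would force $\V({\sf L})=0$, contradicting both the diagram and the standard description of the universal extension of $[{\sf L}\to 0]$ as $[{\sf L}\to{\sf L}\otimes\G_a]$. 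Fortunately $\Hom({\sf L},\G_a)$ also commutes visibly with base change (it is literally the base change of a $K$-vector space), so your conclusion survives once the justification is repaired; with that fix the remainder of your argument goes through.
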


\subsection{Base change to $\C$ and periods} 
Consider $K$ a subfield of $\C$ and let ${\sf M}_\C=[{\sf u}_{\C}\colon {\sf L}_\C\to {\sf G}_\C]$ be the base change of ${\sf M}_K$ to $\C$. Let $T_\Z({\sf M}_\C)$ be the finitely generated abelian group in the usual Deligne-Hodge
realization of ${\sf M}_\C$ (see \cite[10.1.3]{De} and \cite[\S 1]{BRS}) given by the pull-back
\[
\xymatrix{0\ar[r]& H_1({\sf G}_\C )  \ar[r]  \ar@{=}[d]& \Lie({\sf G}_{\C})\ar[r]^{\exp}&{\sf G}_{\C}\ar[r]&0\\
0\ar[r]&  T_\Z({\sf G}_\C)  \ar[r] \ar@{=}[u]&   T_\Z({\sf M}_\C)\ar[r]^{\tilde \exp}\ar[u]^{\tilde{\sf u}_\C} &{\sf L}_\C\ar[r]\ar[u]_{\sf u_\C}&0}
\]
where for brevity  $T_\Z({\sf G}_\C)$ denotes $T_\Z({\sf G}_\C[-1])$ which by definition is $ H_1({\sf G}_\C ) $.
After base change to $\C$ and Lemma \ref{baseC} we then get $({\sf M}^\natural_K)_{\C} \cong ({\sf M}_{\C})^\natural$ hence  an isomorphism 
\begin{equation}\label{iota}
\iota \colon T_\dr({\sf M_\C})\longby{\simeq} T_\dr({\sf M}_K)\otimes_K\C
\end{equation} and a commutative diagram
\begin{equation}\label{4sq}
\xymatrix{0\ar[r]& H_1({\sf G}_\C )  \ar[r]  \ar@{=}[d]& \Lie({\sf G}_{\C})\ar[r]^{\exp}&{\sf G}_{\C}\ar[r]&0\\
0\ar[r]&  H_1({\sf G}_\C^\natural )  \ar[r]  & \Lie({\sf G}_{\C}^\natural)\ar[r]^{\exp}\ar[u]_{d \rho_{\sf G}}&{\sf G}_{\C}^\natural\ar[r]\ar[u]^{\rho_{\sf G}}&0 \\
0\ar[r]&  T_\Z({\sf G}_\C)  \ar[r] \ar@{=}[u] &   T_\Z({\sf M}_\C)\ar@/^2.7pc/[uu]^(0.3){\tilde{\sf u}_\C}\ar[r]^{\tilde \exp}\ar@{.>}[u] &{\sf L}_\C\ar[r]\ar[u]^{\sf u^\natural_\C}\ar@/_2.0pc/[uu]_(0.3){{\sf u}_\C} &0
  }\end{equation}
where the dotted arrow exists by definition of $T_\Z({\sf M}_\C) $ and the fact that the upper right-hand square is cartesian. Hence also the lower  right-hand square is cartesian and the sequence on the bottom is equivalently obtained by pull-back of the upper sequence via ${\sf u}_{\C}$ or of the sequence in the middle via ${\sf u^\natural_\C} $. Further $T_\Z({\sf G}_\C)$ is identified with the kernel of both exponential maps and the dotted arrow gives a homomorphism $ T_\Z({\sf M}_\C)\to  T_\dr({\sf M}_\C):= \Lie({\sf G}_{\C}^\natural)$. 
Finally note that the weight filtration in \eqref{eq.weight} gives a filtration on  $T_\Z({\sf M}_\C)$: the immersion ${\sf T}_\C\to {\sf G}_\C$ gives an inclusion $  T_\Z({\sf T}_\C)=H_1({\sf T}_\C) \subseteq  H_1({\sf G}_\C)=T_\Z({\sf G}_\C)$ while    $  T_\Z({\sf G}_\C) \subseteq  T_\Z({\sf M}_\C)$ comes from the previous diagram. 
\begin{defn} \label{perhom}
The homomorphism of {\it periods}\, is the unique homomorphism 
$$\varpi_{\sf M,\Z}: T_\Z({\sf M}_\C)\to T_\dr({\sf M}_K)\otimes_K\C$$ 
that yields $d \rho_{\sf G}\circ \varpi_{\sf M,\Z} = \tilde{\sf u}_\C$ and $\exp \circ\varpi_{\sf M,\Z}= {\sf u}^\natural_\C \circ\tilde{\exp}$ under the identification given by the isomorphism $\iota$ in \eqref{iota}. 
\end{defn}

Note that $\tilde{\sf u}_\C$ is the pull-back of ${\sf u}_\C$ along $\exp$ and for $x\in {\sf L}_\C$ we may pick $\tilde{\log} (x)\in T_\Z({\sf M}_\C)$, \ie such that $\tilde{\exp}(\tilde{\log}(x))=x$. 
We then get
\begin{equation}\label{logper} 
{\sf u}_\C (x)= \exp( \tilde{\sf u}_\C (\tilde{\log}(x)))= \exp (d\rho_{\sf G}(\varpi_{{\sf M},\Z}(\tilde{\log}(x)))).
\end{equation}

\begin{thm} \label{BettideRham}
The induced $\C$-linear mapping 
\[\varpi_{\sf M,\C}: T_\C({\sf M}_\C)\df T_\Z({\sf M}_\C)\otimes_\Z\C \to T_\dr({\sf M}_K)\otimes_K\C\]
is an isomorphism.
\end{thm}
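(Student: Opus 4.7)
The plan is dévissage along the weight filtration \eqref{eq.weight}, reducing the statement to three basic cases where the assertion is either classical or immediate. First I would verify that the functors ${\sf M}\mapsto T_\Z({\sf M}_\C)$ and ${\sf M}\mapsto T_\dr({\sf M}_K)$ send short exact sequences of $1$-motives to short exact sequences (at least modulo torsion): for $T_\dr$ this follows from the functoriality and exactness of the universal $\G_a$-extension and of ${\sf M}\mapsto \V({\sf M})$, while for $T_\Z$ it is the standard exactness of Deligne's Hodge realization on $1$-motives. Since $\varpi_{\sf M,\Z}$ is natural in ${\sf M}$ by its characterization in diagram \eqref{4sq}, a five-lemma argument then reduces the theorem, via \eqref{eq.weight} (and \eqref{eq.mtf}--\eqref{eq.mtf2} to suppress torsion, whose contribution to both $T_\Z\otimes_\Z\C$ and $T_\dr\otimes_K\C$ vanishes), to the three cases in which ${\sf M}$ is a torus, an abelian variety, or a lattice.

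For a torus ${\sf M}={\sf T}[-1]$, the vanishing $\Ext({\sf T},\G_a)=0$ forces $\V({\sf M})=0$ and ${\sf G}^\natural={\sf T}$, so $T_\dr({\sf M})=\Lie({\sf T})$ and $\varpi_{\sf M,\C}$ is the classical isomorphism $H_1({\sf T}_\C)\otimes_\Z\C \iso \Lie({\sf T}_\C)$ coming from the exponential sequence of the complex torus. For a lattice ${\sf M}=[{\sf L}\to 0]$, a direct computation gives ${\sf G}^\natural={\sf L}\otimes\G_a$, hence $T_\dr({\sf M}_K)={\sf L}\otimes_\Z K$, and $\varpi_{\sf M,\C}$ is tautologically the identity of ${\sf L}\otimes_\Z\C$. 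The main content lies in the case ${\sf M}={\sf A}[-1]$ of an abelian variety: here $\V({\sf A})$ has $K$-dimension $g=\dim{\sf A}$ and therefore $\Lie({\sf A}^\natural_\C)$ has $\C$-dimension $2g$, matching the rank of $H_1({\sf A}_\C)$; the map $\varpi_{{\sf A}[-1],\C}$ is then the classical period isomorphism of Deligne (see \cite[\S 10.1]{De}), obtained by lifting the real-analytic identification $H_1({\sf A}_\C)\otimes_\Z\R \iso \Lie({\sf A}_\C)$ through the surjection $d\rho_{\sf G}$, whose kernel $\V({\sf A})$ supplies the antiholomorphic complement.

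The main obstacle is making the five-lemma step rigorous on the full category $\tM$ of $1$-motives with torsion: exactness of ${\sf M}\mapsto T_\Z({\sf M}_\C)$ can fail on the nose at finite groups coming from ${\sf L}_\tor\cap\ker({\sf u})$. This is circumvented by observing that after $\otimes_\Z\C$ all such torsion contributions vanish from both sides of $\varpi_{\sf M,\C}$; combined with the sequences \eqref{eq.mtf}--\eqref{eq.mtf2} this reduces the argument to the torsion-free ${\sf M}_\fr$, on which the weight filtration and the three base cases above yield the desired $\C$-linear isomorphism.
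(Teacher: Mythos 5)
Your proof is correct and takes essentially the same route as the paper: both first use \eqref{eq.mtf}--\eqref{eq.mtf2} to kill the torsion (noting $T_\dr$ of finite groups vanishes and $T_\Z$-torsion dies after $\otimes_\Z\C$), reducing to the free $1$-motive ${\sf M}_\fr$, and then the paper explicitly invokes the dévissage to lattices, tori, and abelian varieties as ``an easy proof,'' which is exactly what you carry out, with the abelian-variety base case resting in both treatments on Deligne's classical period isomorphism \cite[\S 10.1]{De}.
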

\begin{proof} Making use of the identification in \eqref{iota} we are left to see that it holds true for $K=\C$. The case of $L$ without torsion is treated by Deligne \cite[10.1.8]{De}. Actually, an easy proof can be given by d\'evissage to the case of lattices, tori and abelian varieties. For the general case note that $\varpi_{\sf M,\C}=\varpi_{{\sf M}_\tf,\C}$  by \eqref{eq.mtf}. Indeed $ T_\dr({\sf M}_\tor)=0$ and the kernel of the canonical morphism $T_\Z({\sf M}_\C)\to T_\Z({\sf M}_{\tf,\C})$ is torsion. Further by \eqref{eq.mtf2} the map $T_\Z({\sf M}_{\tf,\C})\to T_\Z({\sf M}_{\fr,\C})$ is an isomorphism and we have an exact sequence $$ 0\to [F=F]\to {\sf M}_\tf^\natural \to {\sf M}_\fr^\natural\to 0$$ so that  the canonical morphism $T_\dr({\sf M}_\tf)\to T_\dr({\sf M}_\fr)$ is an isomorphism too. Hence $\varpi_{{\sf M}_\tf,\C}=\varpi_{{\sf M}_\fr,\C}$. We conclude that $\varpi_{\sf M,\C}=\varpi_{{\sf M}_\fr,\C}$ and the latter is an isomorphism since ${\sf M}_\fr$ is a Deligne $1$-motive.  
\end{proof} 

\begin{examples}\label{e.2} $\bullet$ If ${\sf M}_{K}=[0\to \G_m]$, then $T_\Z({\sf M}_{\C})=\Z$ and the first and second rows in   \eqref{4sq} are  given by  $0\to \Z\by{2\pi i} \C\by{\exp} \C^*\to 0$. Hence   $\varpi_{{\sf M},\Z}=\tilde{\sf u}_\C\colon \Z\to \C, x\mapsto 2\pi i x$ and $\varpi_{{\sf M},\C} \colon \C\to \C, z\mapsto 2\pi i z$. 

$\bullet$ If  ${\sf M}_{K}=[{\sf L}_{K}\to 0]$, then $T_{\Z}({\sf M}_{\C})={\sf L}_{\C}$ and $T_\dr({\sf M}_{\C} )={\sf L}_{\C}\otimes_{\Z} \C$ and the map $\varpi_{{\sf M},\Z}   $ is the homomorphism $ {\sf L}_\C\to {\sf L}_{\C}\otimes_{\Z} \C,  x\mapsto x\otimes 1$ and $\varpi_{{\sf M},\C}$ is the identity map.

$\bullet$ If ${\sf M}_{K}=[{\bf u}\colon \Z\to \G_m]$ with ${\sf u}(1)=a\in K^*$, then ${\sf G}^\natural =\G_m\times \G_a$. Once fixed a complex logarithm $\log a$ of $a$ we can construct an isomorphism $\Z\times \Z\to T_\Z({\sf M}_\C)=\{(z,y)\in \C\times \Z| \exp(z)=a^y\}$ that maps the pair $(k,y)$ to  $(y\log a+2\pi i k, y)$. The homomorphism of periods becomes then the map $\varpi_{\sf M,\Z}\colon \Z\times  \Z\to \C\times  \C$ that sends $ (k, y)$ to $( y\log a+ 2\pi i k,y )$.
\end{examples}

Note that over $\C$ the  Hodge  filtration $\mathbb V( {\sf M})\subseteq T_\dr({\sf M}_K )$ of Remark \ref{hodgefilt} is obtained from the Hodge filtration of  $ T_\C({\sf M}_\C )$ via $ \varpi_{{\sf M},\C}$.

\subsection{Periods and transcendence}\label{sec:transresults}
The proof of Theorem \ref{thm:ff1mot}, which is the main outcome of the second section of this paper, makes use of deep results of transcendence theory that we recall below.
First consider  \cite[Theorem 2]{La}.

\begin{thm}\label{lang}
Let ${\sf A}_K$ be an abelian variety of dimension $d$ over $K=\bar \Q$. Let $\Theta\colon\C^d\to {\sf A}_\an$ be the homomorphism given by the theta functions, inducing an isomorphism of the complex
torus onto ${\sf A}_\an$. Assume that the derivations $\partial/\partial z_i$, $(i = 1, \dots, d)$ are defined over $K$. If $\alpha=(\alpha_i)\in \C^d$
is a complex vector $\neq  0$ such that all $\alpha_i$ lie in $K$, then $\Theta(\alpha)$ is transcendental over $K$. In particular, 	the periods are transcendental.
\end{thm}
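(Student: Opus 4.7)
The plan is to derive Theorem~\ref{lang} from the Schneider-Lang transcendence criterion, applied along the complex line spanned by $\alpha$. I argue by contradiction: assume that $P\df\Theta(\alpha)$ lies in ${\sf A}(\bar{\Q})$.

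First I form the one-parameter analytic subgroup $\varphi\colon\C\to {\sf A}_{\an}$, $t\mapsto \Theta(t\alpha)$. Because ${\sf A}$ is an algebraic group defined over $K$, the multiplication-by-$n$ morphism $[n]\colon {\sf A}\to {\sf A}$ is a $K$-morphism, so if $\varphi(1)=P$ is algebraic then every $\varphi(n)=n\cdot P$ with $n\in\Z$ is algebraic as well. This yields the infinite set of algebraic values of $\varphi$ needed as input to Schneider-Lang.

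Next I fix a projective embedding ${\sf A}\hookrightarrow \P^N_K$ and consider the pull-backs $f_0,\ldots,f_N$ to $\C$, via $\varphi$, of homogeneous coordinates expressed through the classical theta functions on the universal cover of ${\sf A}_{\an}$. These are meromorphic on $\C$ of finite order (in fact of order $\leq 2$). The hypothesis that each $\partial/\partial z_i$ is defined over $K$, combined with $\alpha_i\in K$, guarantees that the directional derivative $D\df \sum_i \alpha_i\,\partial/\partial z_i$ preserves the ring $K[f_0,\ldots,f_N]$; equivalently, the $f_i$ satisfy an algebraic system of differential equations with coefficients in $K$. At each integer $t=n$ the values $f_i(n)$ lie in $\bar{\Q}$ because $n\cdot P$ is an algebraic point of ${\sf A}$.

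The Schneider-Lang criterion then asserts that meromorphic functions of finite order of growth, satisfying an algebraic differential system over a number field and possessing at least two algebraically independent members, can simultaneously take values in a fixed number field at only finitely many points. The bulk of the argument thus reduces to controlling the degenerate case where the image of $\varphi$ lies in a proper algebraic subgroup of ${\sf A}$; one handles it by induction on $\dim {\sf A}$, the base cases being the Hermite-Lindemann theorem on logarithms of algebraic numbers (for tori, \cf Example~\ref{e.2}) and Schneider's theorem on elliptic curves. The main obstacle I foresee is precisely this algebraic-independence step: one has to verify that the minimal algebraic subgroup of ${\sf A}$ containing $\varphi(\C)$ is itself defined over $K$ and inherits $K$-rational derivations, so that the inductive hypothesis applies cleanly. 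Granting this, a contradiction follows, and the ``in particular'' statement about periods is then immediate: taking $\alpha$ to be a nonzero period of $\Theta$, one has $\Theta(\alpha)=0\in{\sf A}(K)$ algebraic, so some $\alpha_i$ must be transcendental.
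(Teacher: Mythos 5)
The paper does not prove this statement: it is imported verbatim from Lang \cite[Theorem 2]{La}, and only its consequences (Theorem~\ref{trasc} and, via Waldschmidt, Theorem~\ref{bost-charles}) are used downstream. Your outline is in the spirit of Lang's own argument — restrict to the one-parameter subgroup $\varphi(t)=\Theta(t\alpha)$ and invoke the Schneider--Lang criterion — so the strategy is the right one, but there are two substantive problems in the way you have set it up, one of which you do not flag, while the one you do flag is not actually where the difficulty lies.

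First, using the theta functions $f_0,\dots,f_N$ directly does not work. Theta functions are only quasi-periodic, so $Df_i$ carries a different automorphy factor than $f_i$ and the ring $K[f_0,\dots,f_N]$ is \emph{not} stable under $D$; moreover the individual values $f_i(n)$ are not algebraic — only the projective point $(f_0(n):\cdots:f_N(n))$ is, the common scalar being the transcendental exponential factor of the theta along $n\alpha$. One must pass to abelian functions $g_i=f_i/f_0$ (affine coordinates on ${\sf A}$), for which $D$ genuinely preserves the function field $K(g_1,\dots,g_N)$ and the values $g_i(n)$ lie in a fixed number field whenever $P\in {\sf A}(\bar\Q)$. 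Second, the ``degenerate case'' you identify as the main obstacle and propose to handle by induction on $\dim {\sf A}$ is not where the real difficulty is. Once one adjoins the coordinate $t$ itself to the auxiliary system (with $Dt=1$), the transcendence degree over $\C$ is automatically $\geq 2$ as soon as $\alpha\neq 0$: if $\dim \overline{\varphi(\C)}\geq 2$ the $g_i$ alone already have trdeg $\geq 2$, and if $\dim \overline{\varphi(\C)}=1$ then $\Lambda\cap \C\alpha$ has rank $2$, the $g_i$ are doubly periodic in $t$, and $t$ is transcendental over $\C(g_1,\dots,g_N)$. Schneider--Lang then applies directly, with no reduction to proper subgroups, so the question of whether the Zariski closure of $\varphi(\C)$ descends to $K$ with $K$-rational derivations simply does not arise. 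As written your proposal is incomplete (you grant the hard step); with these two corrections the Schneider--Lang application closes cleanly, and the final remark on periods is correct.
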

It can be generalized to semiabelian varieties as follows.

\begin{thm}\label{trasc}
	Let ${\sf G}_K$ be a semiabelian variety over $K=\bar \Q$. If $0\neq x\in \Lie({\sf G}_{K})$, then $\exp(x)\in {\sf G}_{\C}(\C)$ is transcendental over $K$. In particular,
	$\Lie({\sf G}_{K})\cap \ker(\exp)=\{0\}$ in $\Lie({\sf G}_{\C})$.
\end{thm}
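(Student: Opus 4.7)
The idea is a dévissage along the canonical structure sequence of the semiabelian variety, reducing one direction to Theorem~\ref{lang} and the other to the Hermite--Lindemann theorem. Write the extension
$$0\to {\sf T}_K\to {\sf G}_K\to {\sf A}_K\to 0$$
with ${\sf T}_K$ a torus and ${\sf A}_K$ an abelian variety, both defined over $K=\bar\Q$. Passing to Lie algebras gives a short exact sequence $0\to \Lie({\sf T}_K)\to \Lie({\sf G}_K)\to \Lie({\sf A}_K)\to 0$ of $K$-vector spaces; after base change to $\C$, this sits compatibly inside the analytic exponential diagrams of ${\sf T}$, ${\sf G}$ and ${\sf A}$ by functoriality. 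For a nonzero $x\in \Lie({\sf G}_K)$ denote by $\bar x\in \Lie({\sf A}_K)$ its image, and split in two cases according to whether $\bar x=0$ or not.

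If $\bar x\neq 0$, Theorem~\ref{lang} applied to the abelian variety ${\sf A}_K$ yields that $\exp_{\sf A}(\bar x)\in {\sf A}(\C)$ is transcendental over $K$, i.e.\ it does not lie in ${\sf A}(K)$. Since the quotient map ${\sf G}_K\to {\sf A}_K$ is a morphism of algebraic groups over $K$, it carries ${\sf G}(K)$ to ${\sf A}(K)$, and by naturality of the analytic exponential it sends $\exp(x)$ to $\exp_{\sf A}(\bar x)$. Hence $\exp(x)\notin {\sf G}(K)$, so $\exp(x)$ is transcendental over $K$. If instead $\bar x=0$, then $0\neq x\in \Lie({\sf T}_K)$. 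Since $K=\bar\Q$ is algebraically closed every torus over $K$ is split, so ${\sf T}_K\cong \G_{m,K}^n$ and the exponential becomes the coordinatewise complex exponential $\C^n\to (\C^*)^n$. Writing $x=(x_1,\dots,x_n)\in K^n$ with some $x_i\neq 0$, the classical Hermite--Lindemann theorem gives that $e^{x_i}$ is transcendental over $\Q$, hence over $K=\bar\Q$; therefore $\exp(x)$ has at least one transcendental coordinate and is transcendental over $K$.

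The ``in particular'' assertion is an immediate corollary: if there were a nonzero $x\in \Lie({\sf G}_K)\cap \ker(\exp)$, then $\exp(x)$ would equal the identity of ${\sf G}$, a $K$-rational (hence algebraic) point, contradicting the transcendence just established. The only point that needs minor care is the hypothesis of Theorem~\ref{lang} that the coordinate derivations $\partial/\partial z_i$ on $\Lie({\sf A}_\C)\cong \C^d$ be defined over $K$; this is automatic once one chooses the coordinate system coming from any $K$-basis of $\Lie({\sf A}_K)$, since $\Lie({\sf A}_\C)=\Lie({\sf A}_K)\otimes_K\C$. The main conceptual obstacle is really already built into the input theorems: Theorem~\ref{lang} handles the abelian direction and Hermite--Lindemann the toric one, and the only work left is to glue them along the canonical exact sequence of the semiabelian variety. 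Alternatively, one may bypass the case split altogether by invoking Waldschmidt's theorem \cite[Thm.~5.2.1]{Wal} on commutative algebraic groups, cited in the introduction, which applies uniformly and gives the statement in one stroke.
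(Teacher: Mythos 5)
Your argument is correct and matches the paper's proof, which simply cites Hermite--Lindemann for $\G_m^d$, Theorem~\ref{lang} for abelian varieties, and then says "the general case follows by dévissage" without further elaboration. You have spelled out that dévissage explicitly — splitting on whether $x$ survives in $\Lie({\sf A}_K)$ or lands in $\Lie({\sf T}_K)$, and using functoriality of $\exp$ with respect to the $K$-defined quotient ${\sf G}\to{\sf A}$ and the $K$-defined inclusion ${\sf T}\hookrightarrow{\sf G}$ — which is exactly the intended argument.
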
 
\begin{proof}
The assertion  is known if ${\sf G}_{K}=\G_{m,K}^d$ due to fundamental work of Hermite and Lindemann on the transcendence of $e^\beta$ for $\beta$ a non zero algebraic number.
The case  ${\sf G}_K$ an abelian variety is Theorem \ref{lang}.
The general case follows then by d\'evissage. 
\end{proof}  

This type of results has been further generalised by  Waldschmidt (\cite[Thm. 5.2.1]{Wal}):

\begin{thm}\label{thm:Wal}
	Let ${\sf G}$ be a commutative connected algebraic group over $K=\bar \Q$. Let $\varphi\colon \C^n \to {\sf G}_\an$ be an analytic map such that the induced morphism on Lie algebras arises from a homomorphism of $K$-vector spaces $K^n \to \Lie {\sf G}$. Let $\Gamma\subset \C^n$ be a subgroup containing $n$ elements which are $\C$-linearly independent and such that $\varphi(\Gamma)\subset {\sf G}(K)$. Then the algebraic dimension of $\varphi$, i.e. the dimension of the Zariski closure of the image of $\varphi$, is $\leq n$. 
\end{thm}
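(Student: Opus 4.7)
The strategy is the classical transcendence-theoretic proof for commutative algebraic groups, combining auxiliary function construction via Siegel's lemma, Schwarz-type analytic estimates, a zero (multiplicity) estimate, and Liouville's inequality. Arguing by contradiction, suppose the algebraic dimension $d$ of $\varphi$ satisfies $d>n$; replacing ${\sf G}$ by the Zariski closure $H$ of the image $\varphi(\C^n)$ we may and will assume that $\varphi(\C^n)$ is Zariski dense in ${\sf G}$ and that $\dim {\sf G}=d>n$. Fix $n$ elements $\gamma_1,\dots,\gamma_n\in \Gamma$ which are $\C$-linearly independent and set $\Lambda:=\Z\gamma_1+\cdots+\Z\gamma_n\subseteq\Gamma$, so that $\varphi(\Lambda)\subseteq {\sf G}(K)$. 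Choose a projective embedding of ${\sf G}$ defined over $K$.

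For large integer parameters $S,D,T$ to be optimised at the end, apply Siegel's lemma to produce a non-zero polynomial $P$ of degree $\leq D$ on ${\sf G}$ whose pull-back $F:=P\circ\varphi$ vanishes to order $\geq T$ at every point of the box $\{m_1\gamma_1+\cdots+m_n\gamma_n : |m_i|\leq S\}\subseteq \Lambda$. The system is governed by roughly $T^n S^n$ linear equations in $D^d$ unknowns, with matrix entries whose sizes are controlled by the $K$-rationality of $d\varphi_0$ and by the heights of the algebraic points $\varphi(\sum m_i\gamma_i)\in {\sf G}(K)$; Siegel's lemma then yields such a $P$ whose coefficients have logarithmic height $O(SD)$ as soon as $T^nS^n\ll D^d$, a condition compatible with the standing assumption $d>n$.

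One next estimates $F$ analytically. The enforced vanishing together with a several-variable Schwarz lemma gives $|F|_R \leq (R/r)^{-(TS^n+O(1))}|F|_r$ on concentric polydiscs of radii $r<R$; since $\varphi$ is analytic of polynomial growth on bounded discs and $P$ has degree $D$, $|F|_r$ is at most exponential in $SD$. A zero (multiplicity) estimate of Masser--W\"ustholz type for the commutative algebraic group ${\sf G}$ ensures that $F$ is not identically zero: otherwise $\varphi(\C^n)$ would be contained in the proper hypersurface $\{P=0\}$ of ${\sf G}$, contradicting Zariski density. Extrapolating, one locates a lattice point $\xi=\sum m_i\gamma_i$ slightly outside the original box at which $F(\xi)\ne 0$. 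Since $\varphi(\xi)\in {\sf G}(K)$ and $P$ is defined over $K$, the value $F(\xi)$ is a non-zero algebraic number of bounded degree and logarithmic height $O(SD)$, whence Liouville's inequality supplies the lower bound $\log|F(\xi)|\geq -cSD$.

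One concludes by optimising $S$, $D$ and $T$ so that the analytic upper bound, which gains a decay factor growing with $d-n$, overwhelms the Liouville lower bound: the resulting inequality cannot hold unless $d\leq n$. The hardest step in this plan is the zero estimate, that is, showing that the auxiliary $F$ does not vanish identically and, in its sharper extrapolation form, that $F$ is non-zero at some lattice point just outside the vanishing box. In its strongest shape this multiplicity estimate for subvarieties of commutative algebraic groups is due to Philippon and Masser--W\"ustholz; its proof requires delicate intersection-theoretic control of the degrees of subvarieties of ${\sf G}$ translated by points of $\Lambda$, and is the technical core of the argument. The Siegel, Schwarz and Liouville steps are, by comparison, careful but standard calibrations of now-classical inequalities.
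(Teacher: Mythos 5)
The paper does not prove Theorem~\ref{thm:Wal} at all: it is stated as a direct citation of Waldschmidt's book (\cite[Thm.\ 5.2.1]{Wal}), and the authors simply use it as an input to the proofs of Lemmas~\ref{weights}, \ref{hodge} and Theorem~\ref{thm:ff1mot}. There is therefore no ``paper proof'' to compare your attempt against. What one can say is whether your sketch is a faithful outline of the argument that Waldschmidt actually gives, and there it is broadly on target but imprecise in one significant place.

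Your outline correctly identifies the standard transcendence machinery behind such statements: reduce to the case where $\varphi(\C^n)$ is Zariski dense of dimension $d$, suppose $d>n$, fix a rank-$n$ sublattice $\Lambda\subset\Gamma$ mapping into ${\sf G}(K)$, build an auxiliary polynomial $P$ by Siegel's lemma with controlled height forcing $F=P\circ\varphi$ to vanish to high order at lattice points, bound $F$ above via a several-variable Schwarz lemma, bound nonzero values of $F$ at algebraic points below via Liouville, and then close the argument with a zero (multiplicity) estimate for commutative algebraic groups. The parameter balance $T^nS^n \ll D^d$, which uses $d>n$ decisively, is also the right heuristic. However, you mislocate the role of the zero estimate. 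You claim it ``ensures that $F$ is not identically zero'' and immediately justify this by Zariski density of $\varphi(\C^n)$ and $P\neq 0$; but that is a tautology that needs no multiplicity estimate at all. The real and genuinely hard use of the Philippon/Masser--W\"ustholz multiplicity estimate is elsewhere: after extrapolation, one does \emph{not} simply find a point where $F\neq 0$; one deduces that a nonzero $P$ of degree $D$ vanishes to excessive order along too many translates $\Lambda$-points relative to $D$, and the zero estimate then either produces a proper connected algebraic subgroup $H\subsetneq {\sf G}$ (with the obstruction concentrated on $H$, allowing an inductive descent or a direct contradiction with the Zariski density normalization) or gives an outright numerical contradiction with the Siegel parameters. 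In other words, the zero estimate is what replaces the naive ``find a lattice point where $F\ne 0$'' step with something that actually works, and your extrapolation-to-a-nonzero-point narrative papers over exactly the subtlety that makes this the technical core. As a high-level sketch of the ingredients your proposal is reasonable; as a proof it has a genuine gap at the step you yourself flag as hardest, and the logic connecting the Schwarz estimate to the multiplicity estimate and to the termination of the argument is not in a form that would actually close.
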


This implies the following result  \cite[Thm. 3.1]{BC}, which is one of the technical inputs for the  proof of our Theorem \ref{thm:ff1mot} 

\begin{thm}\label{bost-charles}   Let ${\sf G}_K,{\sf H}_K$ be two connected commutative algebraic groups over $K=\bar\Q$. If the group   $H_1({\sf G}_\C)$ generates $\Lie({\sf G}_\C)$ as a complex vector space, then the map
\[\Lie\colon \Hom_{K\text{\rm -gr}}({\sf G}_K,{\sf H}_K)\to \{\psi\in \Hom_{K}(\Lie {\sf G}_K,\Lie {\sf H} _K) | \psi_\C(H_1({\sf G}_\C) )\subseteq H_1({\sf H}_\C)\}
\] 
is an isomorphism of $\Z$-modules.
\end{thm}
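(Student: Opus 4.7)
The well-definedness of $\Lie$ and its injectivity are essentially formal. Any morphism $\phi\colon {\sf G}_K\to {\sf H}_K$, after analytification and base change to $\C$, fits in a commutative diagram with the exponential maps, so $(\Lie\phi)_\C$ carries $H_1({\sf G}_\C)=\ker(\exp_{\sf G})$ into $H_1({\sf H}_\C)=\ker(\exp_{\sf H})$. The hypothesis that $H_1({\sf G}_\C)$ generates $\Lie({\sf G}_\C)$ over $\C$ means that ${\sf G}$ is semi-abelian and hence that $\exp_{\sf G}\colon \Lie({\sf G}_\C)\to {\sf G}(\C)$ is surjective; therefore $\Lie\phi=0$ forces $\phi_\C^{\an}=0$, and $\phi=0$ since ${\sf G}_K$ is connected.

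For surjectivity the plan is to start with $\psi\in \Hom_K(\Lie {\sf G}_K,\Lie {\sf H}_K)$ satisfying $\psi_\C(H_1({\sf G}_\C))\subseteq H_1({\sf H}_\C)$, to descend $\psi_\C$ through the exponentials to a holomorphic homomorphism $\phi_\C^{\an}\colon {\sf G}(\C)\cong \Lie({\sf G}_\C)/H_1({\sf G}_\C)\to {\sf H}(\C)$, and then to prove (i) that $\phi_\C^{\an}$ comes from an algebraic morphism $\phi_\C\colon {\sf G}_\C\to {\sf H}_\C$, and (ii) that $\phi_\C$ is defined over $K=\bar \Q$. Step (i) is the heart of the theorem and is precisely where Theorem \ref{thm:Wal} enters. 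Set $n=\dim{\sf G}_K$ and consider
\[
\varphi\colon \C^n=\Lie({\sf G}_\C) \longrightarrow ({\sf G}\times {\sf H})_\an, \qquad z\longmapsto \bigl(\exp_{\sf G}(z),\ \exp_{\sf H}(\psi_\C(z))\bigr),
\]
whose differential at the origin is the $\C$-linear extension of the $K$-linear map $(\mathrm{id},\psi)\colon \Lie{\sf G}_K\to \Lie({\sf G}_K\times {\sf H}_K)$. Taking $\Gamma=H_1({\sf G}_\C)\subset \C^n$, the hypothesis supplies $n$ elements of $\Gamma$ which are $\C$-linearly independent, while by construction $\varphi(\Gamma)=\{(0_{\sf G},0_{\sf H})\}\subseteq ({\sf G}\times {\sf H})(K)$. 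Theorem \ref{thm:Wal} then bounds the algebraic dimension of $\varphi$ by $n$, so the Zariski closure $Z\subseteq {\sf G}_\C\times {\sf H}_\C$ of $\mathrm{im}\,\varphi$ has $\dim Z\leq n$; surjectivity of $\exp_{\sf G}$ forces the projection $Z\to {\sf G}_\C$ to be dominant, whence $\dim Z=n$. Since $Z$ is an irreducible algebraic subgroup of dimension $n$ containing the single-valued analytic graph of $\phi_\C^{\an}$ (a closed analytic subset of maximal dimension inside $Z$), it must coincide with that graph; hence $Z$ is the graph of an algebraic homomorphism $\phi_\C\colon {\sf G}_\C\to {\sf H}_\C$ with $\Lie\phi_\C=\psi_\C$.

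Step (ii) is Galois descent: since $\psi$ is $K$-rational and $K=\bar \Q$, the map $\psi_\C$ is fixed by every $\sigma\in \mathrm{Aut}(\C/K)$, so the conjugate $\phi_\C^\sigma$ is an algebraic morphism with $\Lie\phi_\C^\sigma=\psi_\C=\Lie\phi_\C$; injectivity, already established, gives $\phi_\C^\sigma=\phi_\C$ for every such $\sigma$, and standard descent yields the desired $\phi\in \Hom_{K\text{-gr}}({\sf G}_K,{\sf H}_K)$. The main obstacle is step (i): the algebraization of the analytic homomorphism is the genuinely transcendental assertion and depends essentially on Waldschmidt's Theorem \ref{thm:Wal}; without it the conclusion would fail over a generic subfield of $\C$. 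The remaining bookkeeping --- that the $n$-dimensional Zariski closure $Z$ is the graph of an honest morphism rather than a correspondence --- is handled by the dimension count together with the fact that $\phi_\C^{\an}$ is already a single-valued analytic map.
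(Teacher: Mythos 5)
The paper does not prove this statement; it quotes it as \cite[Thm.\ 3.1]{BC} and merely notes that it follows from Waldschmidt's Theorem~\ref{thm:Wal}. Your argument is therefore a reconstruction rather than something to be matched against the text, and it is essentially the standard way to deduce the statement from Theorem~\ref{thm:Wal}. The steps check out: well-definedness and injectivity of $\Lie$ come from the functoriality and surjectivity of the exponential; for surjectivity you build the analytic homomorphism $\varphi(z)=(\exp_{\sf G}(z),\exp_{\sf H}(\psi_\C z))$ into ${\sf G}\times{\sf H}$, take $\Gamma=H_1({\sf G}_\C)$, note $\varphi(\Gamma)=\{(0,0)\}$ by the Betti compatibility of $\psi$, and invoke Theorem~\ref{thm:Wal} with the $n$ independent periods supplied by the hypothesis; the Zariski closure $Z$ is an irreducible algebraic subgroup with $\dim Z\le n$ by Waldschmidt and $\dim Z\ge n$ because $\mathrm{pr}_1$ is already surjective on $\C$-points, so $Z(\C)$ coincides with the closed $n$-dimensional analytic subgroup $\mathrm{graph}(\phi_\C^{\rm an})$ sitting inside it, and hence $Z$ is the graph of an algebraic homomorphism with the right derivative. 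The Galois descent to $\bar\Q$ via the already-established injectivity of $\Lie$ is also fine.

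One factual slip that you should correct, even though it does not break the argument: you claim that the hypothesis ``$H_1({\sf G}_\C)$ generates $\Lie({\sf G}_\C)$ over $\C$'' \emph{means} that ${\sf G}$ is semi-abelian. This is false; as the paper observes immediately after the statement, the universal $\G_a$-extension of an abelian variety also satisfies the hypothesis (the period lattice $H_1(A_\C)\hookrightarrow H^\dr_1(A)$ spans, by the de Rham comparison isomorphism), yet it has a nontrivial additive part. The correct characterization is that ${\sf G}$ admits no nonzero quotient isomorphic to $\G_a$. Fortunately you only use this claim to conclude that $\exp_{\sf G}\colon\Lie({\sf G}_\C)\to{\sf G}(\C)$ is surjective, which holds for \emph{every} connected commutative complex Lie group regardless of the hypothesis, so the proof is unaffected; but as written the sentence asserts an incorrect equivalence and should be replaced by the direct appeal to surjectivity of the exponential.
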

 
The condition on the group $H_1({\sf G}_\C)$  is satisfied whenever ${\sf G}_\C$ is a semiabelian variety (or its universal vectorial extension) since $T_\Z({\sf G}_\C)=H_1({\sf G}_\C)$ generates $T_\dr({\sf G}_\C)=\Lie({\sf G}_\C^\natural)$ and hence $\Lie({\sf G}_\C)$. Note that   Theorem \ref{BettideRham} is a generalization of this fact. For completeness we also cite the following result \cite[Thm 1]{WuC}, which is a consequence of the celebrated analytic subgroup theorem of W\"ustholz.  It implies the result of Waldschmidt is a special case. We will use it to give an alternative proof of Theorem \ref{thm:ff1mot}:

\begin{thm}\label{thm:Wusth} Let $W$ be a commutative connected algebraic group over $\bar \Q$. Let $S$ be a subset of $\exp^{-1}\bigl(W(\bar \Q)\bigr)$ and let $V\subset \Lie W$ be the smallest $\bar \Q$-vector subspace whose $\C$-span contains $S$. Then, there exists a connected algebraic subgroup $Z\subset W$ such that $\Lie Z=V$.

\end{thm}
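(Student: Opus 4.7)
The plan is to deduce this from the standard form of W\"ustholz's analytic subgroup theorem: if $W$ is a connected commutative algebraic group over $\bar\Q$ and $L\subset \Lie W$ is a $\bar\Q$-subspace whose complexification $L_\C$ meets $\exp^{-1}\bigl(W(\bar\Q)\bigr)$ in a nonzero element, then $W$ contains a positive-dimensional connected algebraic $\bar\Q$-subgroup $H$ with $\Lie H\subset L$. The statement above is the maximal-dimensional refinement of this, and the task is essentially a formal descent.

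First I would let $Z\subset W$ be the largest connected algebraic $\bar\Q$-subgroup with $\Lie Z\subset V$. Such a maximum exists by Noetherianity, since $\Lie(Z_1+Z_2)=\Lie Z_1+\Lie Z_2$ still lies in $V$ when both summands do. The inclusion $\Lie Z\subseteq V$ is built in, and the remaining task is to prove the reverse inclusion.

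Next I would pass to the quotient $\pi\colon W\to \bar W:=W/Z$ with differential $\bar\pi\colon \Lie W\to \Lie\bar W$, and set $\bar V:=\bar\pi(V)$ and $\bar S:=\bar\pi(S)$. Then $\bar S\subset \bar V_\C$ and, since $\pi$ is a $\bar\Q$-morphism, $\bar S\subset \exp^{-1}\bigl(\bar W(\bar\Q)\bigr)$. A short diagram chase shows that $\bar V$ inherits the minimality property of $V$: any strictly smaller $\bar\Q$-subspace $\bar V'\subsetneq \bar V$ with $\bar S\subset \bar V'_\C$ would pull back to $\bar\pi^{-1}(\bar V')$, which is contained in $V$ (its image lies in $\bar V=\bar\pi(V)$ and it contains $\ker\bar\pi=\Lie Z\subset V$), is strictly smaller than $V$, and still contains $S$ in its complexification, contradicting the minimality of $V$.

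There are then two cases. If $\bar V=0$, then $S\subset (\Lie Z)_\C$, so by minimality of $V$ we get $V\subseteq \Lie Z$, giving $V=\Lie Z$ as desired. If $\bar V\neq 0$, pick some $s\in S$ with $\bar\pi(s)\neq 0$ and apply the standard analytic subgroup theorem to $\bar V\subset \Lie\bar W$ (using the nonzero element $\bar\pi(s)\in \bar V_\C\cap \exp^{-1}(\bar W(\bar\Q))$); this yields a positive-dimensional connected algebraic $\bar\Q$-subgroup $\bar H\subset \bar W$ with $\Lie \bar H\subset \bar V$. The identity component of $\pi^{-1}(\bar H)$ then strictly contains $Z$ and has Lie algebra $\bar\pi^{-1}(\Lie \bar H)\subset \bar\pi^{-1}(\bar V)=V$, contradicting the maximality of $Z$. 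The main obstacle is of course the analytic subgroup theorem itself, whose proof rests on transcendence-theoretic machinery (zero estimates, auxiliary functions, and Baker-style interpolation); the reduction above is a purely formal descent and is routine compared to that input.
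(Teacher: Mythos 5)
The paper does not actually prove this statement: it is cited verbatim as Theorem~1 of W\"ustholz's paper \cite{WuC}, with the remark that it ``is a consequence of the celebrated analytic subgroup theorem.'' Your proposal supplies precisely the reduction the paper leaves implicit, deriving the maximal-dimensional form from the more familiar existence form of the analytic subgroup theorem.

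Your argument is correct. The maximal connected $\bar\Q$-subgroup $Z$ with $\Lie Z\subset V$ exists and is unique (by the sum of subgroups and boundedness of dimension). Passing to $\bar W=W/Z$, the key verification is that $\bar V=\bar\pi(V)$ inherits minimality; your preimage argument works because $\ker\bar\pi=\Lie Z\subset V$ forces $\bar\pi^{-1}(\bar V')\subset V$ to be a proper $\bar\Q$-subspace whose complexification still contains $S$ (using the compatibility of preimages with base change to $\C$). When $\bar V\ne 0$, minimality of $\bar V$ gives $\bar\pi(s)\ne0$ for some $s\in S$, and compatibility of $\exp$ with the $\bar\Q$-morphism $\pi$ puts $\bar\pi(s)\in\exp^{-1}(\bar W(\bar\Q))$; the standard analytic subgroup theorem then produces a positive-dimensional $\bar H\subset\bar W$ with $\Lie\bar H\subset\bar V$, and $(\pi^{-1}\bar H)^0$ contradicts maximality of $Z$ since its Lie algebra is $\bar\pi^{-1}(\Lie\bar H)\subset\bar\pi^{-1}(\bar V)=V$. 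This is a clean and complete formal descent, and it adds value over the paper's treatment by making the dependence on the analytic subgroup theorem explicit rather than relying on the external citation.
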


\subsection{Period categories} 
For a fixed $\sigma : K\into \C$ we consider a homological category for Betti-de Rham realizations as follows. Let $\Mod$ be the following category: (i) objects are triples $(H_{\Z}, H_{K}, \omega)$ where $H_\Z$ is a finitely generated abelian group, $H_K$ is a finite dimensional $K$-vector space, and $\omega : H_{\Z}\to H_{K}\otimes_K \C$ is a homomorphism of groups; (ii) morphisms $\phi:(H_{\Z}, H_{K}, \omega)\to (H_{\Z}', H_{K}', \omega')$ are pairs $\phi \df (\phi_\Z , \phi_K)$ where $\phi_\Z \colon H_{\Z}\to H_{\Z}'$ is a group homomorphism, $\phi_K \colon  H_K\to H_K'$ is a $K$-linear homomorphism and $\varphi$ is compatible with $\omega$ and  $\omega'$, \ie  the following square
\begin{equation}\label{4h}
\xymatrix{ H_\Z \ar[r]^(0.4){\omega} \ar[d]_{\phi_\Z}&H_{K}\otimes_K \C\ar[d]^{\phi_K\otimes1_\C}\\
H_\Z' \ar[r]^(0.4){\omega'}&H_{K}'\otimes_K \C
 }\end{equation} commutes. For $H = (H_{\Z}, H_{K}, \omega)$ in $\Mod$ let 
 \begin{equation}\label{omegac}
 \omega_\C : H_{\Z}\otimes_\Z \C\to H_{K}\otimes_K \C
 \end{equation} be the induced $\C$-linear mapping and denote $\Modc$ the full subcategory of $\Mod$ given by those objects such that $\omega_\C$ is a $\C$-isomorphism. 
 
There is a $\Q$-linear variant $\QMod$ of this category where objects are $(H_{\Q}, H_{K}, \omega)$ as above but $H_\Q$ is a finite dimensional $\Q$-vector space. Note that $\QMod \cong \Mod\otimes \Q$ is  the category $\Mod$ modulo torsion objects (see \cite[B.3]{BVK} for this notion).

\begin{defn}
We shall call  $\Modc$ (resp.\/ $\QMod^{\cong}$) the  category (resp.\/ $\Q$-linear category) of {\it homological periods}.
\end{defn}
Let $\Mod^{\cong, \fr}$ (resp. $\Mod^{\cong, \tor}$) be the full subcategory of $\Modc$ given by those objects $H$ such that $H_\Z$ is free (resp. is torsion). For any $r\in \Z$ we shall denote \[\Z (r) \df (\Z, K, (2\pi i)^r)\in \Mod^{\cong, \fr}.\]
For $H = (H_\Z, H_K, \omega)$ and $H' = (H'_\Z, H'_K, \omega')$  we can define 
\begin{equation}\label{twist}
H\otimes H'\df (H_\Z\otimes_\Z H'_\Z,H_K\otimes_K H'_K, \omega\otimes \omega')
\end{equation} and set $H(r)\df H\otimes \Z (r)$ the Tate twist.
For $H\in \Mod^{\cong, \fr}$, say that $H = (H_\Z, H_K, \omega)$ with $H_\Z$ free, we have duals $H^{\vee} \in \Mod^{\cong, \fr}$ given by  
\begin{equation}\label{dual}
(H_\Z, H_K, \omega)^{\vee} \df (H_\Z^{\vee},H_K^{\vee}, \omega^{\vee})
\end{equation}
 where $H_\Z^{\vee} = \Hom (H_\Z, \Z)$ is the dual abelian group, $H_K^{\vee}=\Hom (H_K, K)$ is the dual $K$-vector space, and
$$\omega^{\vee}: H_\Z^{\vee} \to H_K^{\vee}\otimes_K \C $$ is the composition of the canonical mapping $H_\Z^{\vee}\to H_\Z^{\vee}\otimes_\Z \C$ with the $\C$-isomorphism $H_\Z^{\vee}\otimes_\Z \C \longby{\simeq} H_K^{\vee}\otimes_K \C $ given by the inverse of the $\C$-dual of $\omega_\C$ in \eqref{omegac}, i.e., $\omega^{\vee}(f)=(f\otimes_\Z \mathrm{id}_\C)\circ \omega_\C^{-1}$ for any $f\colon H_\Z\to \Z$, up to the canonical isomorphism $H_K^{\vee}\otimes_K \C \simeq (H_K\otimes_K \C)^{\vee} $. We clearly get that $(H^{\vee})^{\vee} = H$ and $(\ \ )^\vee \colon \Mod^{\cong, \fr}\to \Mod^{\cong, \fr}$ is a dualizing functor. Note that $\Z (r)^\vee = \Z (-r)$ so that $H (r)^\vee = H^\vee (-r)$ for $r\in \Z$. 

Similar constructions can be done for the $\Q$-linear variant $\QMod^{\cong}$.  Note that $\QMod^{\cong}$ (resp. $\Mod^{{\cong, \fr}}$) admits an internal $\ihom$ defined via the internal Hom of the category of finite dimensional $\Q$-vector spaces (resp. lattices). Furthermore these categories do have an identity object: $\mathbbm 1=\Z(0)\in \Mod^{{\cong, \fr}}$ and $\mathbbm 1=\Q(0)\in \QMod^{\cong}$, respectively. For any object $H$ of $\QMod^{\cong}$ we have $H^\vee=\ihom(H,\mathbbm 1)$ and $\mathrm{End}(\mathbbm 1)=\Q$. Hence all objects of $\QMod^{\cong}$ are reflexive. Similarly, for $\Mod^{\cong, \fr}$.

\begin{lemma} \label{tensorper}
The categories $\Mod$ and $\Modc$ are abelian tensor categories. The category $\QMod^{\cong}$ is a neutral Tannakian category with fibre functor the forgetful functor to $\Q$-vector spaces.
\end{lemma}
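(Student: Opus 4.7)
The plan is to reduce the verifications to the corresponding statements for (finitely generated) abelian groups, for $\Q$-vector spaces, and for $K$-vector spaces, and then to use the flatness of $\C$ over $\Z$ and over $\Q$ to propagate the ``$\omega_\C$ is an isomorphism'' condition through kernels, cokernels, and tensor products.

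For the abelian structure of $\Mod$, given a morphism $\phi=(\phi_\Z,\phi_K)$ between $(H_\Z,H_K,\omega)$ and $(H'_\Z,H'_K,\omega')$, the commutativity of \eqref{4h} shows that $\omega$ restricts to a map $\ker\phi_\Z\to\ker\phi_K\otimes_K\C$ and descends to a map $\coker\phi_\Z\to\coker\phi_K\otimes_K\C$; this produces the kernel and cokernel in $\Mod$, and the coimage-to-image isomorphism follows componentwise from the first isomorphism theorem for abelian groups and for $K$-vector spaces. To promote this to $\Modc$, one applies $(-)\otimes_\Z\C$ to the four-term exact sequence $0\to\ker\phi_\Z\to H_\Z\to H'_\Z\to\coker\phi_\Z\to 0$, which stays exact by flatness, and compares it termwise via $\omega$ with the corresponding four-term sequence on the $K$-side; the five lemma then gives that the $\omega$'s of $\ker\phi$ and $\coker\phi$ become isomorphisms after tensoring with $\C$.

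The symmetric monoidal structure is given componentwise by \eqref{twist}, with unit object $\Z(0)=(\Z,K,1)$ (respectively $\Q(0)$); the associator, commutator, and unitor constraints are inherited from the tensor product of abelian groups and of $K$-vector spaces, the compatibility with $\omega$ is automatic, and $\Modc$ is stable under $\otimes$ because the tensor product of two $\C$-linear isomorphisms is an isomorphism.

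For the Tannakian claim about $\QMod^\cong$, rigidity follows from the internal $\ihom$ already described in the excerpt together with the reflexivity property asserted there, and a direct inspection of \eqref{4h} for $H=H'=\mathbbm 1=\Q(0)$ gives $\End(\mathbbm 1)=\Q$. The forgetful functor $(H_\Q,H_K,\omega)\mapsto H_\Q$ is manifestly $\Q$-linear, symmetric monoidal, and exact, since kernels and cokernels are taken componentwise. The main non-formal point is \emph{faithfulness}: if $\phi=(\phi_\Q,\phi_K)$ is a morphism with $\phi_\Q=0$, then \eqref{4h} gives $(\phi_K\otimes 1_\C)\circ\omega=0$; since $\omega_\C$ is surjective, the image of $\omega$ generates $H_K\otimes_K\C$ as a $\C$-vector space, which forces $\phi_K=0$. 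I expect this faithfulness step and the closure of $\Modc$ under kernels and cokernels to be the only non-routine points; the remaining verifications are bookkeeping.
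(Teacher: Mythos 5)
The paper states Lemma \ref{tensorper} without proof, treating the verifications as routine once the preliminary remarks about $\ihom$, $\End(\mathbbm 1)=\Q$, and reflexivity are in place. Your proof correctly supplies those verifications along the expected lines: kernels, cokernels, biproducts, and $\otimes$ are formed componentwise; flatness of $\C$ over $\Z$, $\Q$, and $K$ keeps the condition that $\omega_\C$ is an isomorphism stable under these operations; and faithfulness of the forgetful functor from $\QMod^{\cong}$ follows from surjectivity of $\omega_\C$. You are right to single out faithfulness as the non-formal point: it is precisely what fails on all of $\Mod$ (where $\omega$ can vanish while $H_K\neq 0$) and what the superscript $\cong$ buys. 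Two minor details worth recording: the five-lemma step becomes transparent once one observes that the pair $(\omega_\C,\omega'_\C)$ is an isomorphism from the arrow $\phi_{\Z,\C}$ to the arrow $\phi_{K,\C}$ in $\C$-vector spaces, hence automatically induces isomorphisms on kernels, images, and cokernels; and stability of $\Modc$ under $\otimes$ rests on the natural identifications $(H_\Z\otimes_\Z H'_\Z)\otimes_\Z\C\cong(H_\Z\otimes_\Z\C)\otimes_\C(H'_\Z\otimes_\Z\C)$ and $(H_K\otimes_K H'_K)\otimes_K\C\cong(H_K\otimes_K\C)\otimes_\C(H'_K\otimes_K\C)$, i.e., that base change to $\C$ is a monoidal functor, which you use implicitly when invoking the tensor product of $\C$-linear isomorphisms.
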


Note that there is a cohomological version of $\Modc$ and $\QMod^{\cong}$, which is called the de Rham--Betti category in the existing literature (\cf \cite[7.5]{An}). 
\begin{defn} Let $\cMod^{\cong}$ be the category whose objects are triples $(H_{K}, H_{\Z}, \eta)$ where $H_{K}$ is a finite dimensional $K$-vector space, $H_{\Z}$ is a finitely generated abelian group and $$\eta : H_{K}\otimes_K \C\longby{\simeq}  H_{\Z}\otimes_\Z \C$$  is an isomorphism  of $\C$-vector spaces. We shall call $\cMod^{\cong}$ and its $\Q$-linear variant $\cQMod^{\cong}$ the categories of {\it cohomological periods}. 
\end{defn}
The category $\cMod^{\cong, \fr} $ is denoted  $\cC_{\drb}$ in \cite[\S 2.1]{BC} and in \cite[\S 5.3]{Bo}.  The $\Q$-linear variant $\cQMod^{\cong}$ is denoted $(K, \Q)$-Vect in \cite[Chap. 5]{Hu}. For these categories we have an analogue of Lemma \ref{tensorper}; in particular, a dualizing functor exists.
\begin{lemma} \label{varsigma}
There is canonical equivalence given by the functor
$$ \varsigma : \Modc\to \cMod^{\cong}\qquad \varsigma (H_\Z, H_K, \omega) \df (H_K, H_\Z, \omega^{-1}_\C)$$
which induces an equivalence between the tensor subcategories  $\Mod^{{\cong, \fr}}$ and $\cMod^{{\cong, \fr}}$. 
\end{lemma}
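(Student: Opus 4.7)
The plan is to exhibit a strict inverse to $\varsigma$ and check compatibility with the tensor structures of Lemma \ref{tensorper}.

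First, I would verify that $\varsigma$ is well-defined on objects: by the very definition of $\Modc$, the $\C$-linearization $\omega_\C$ of \eqref{omegac} is a $\C$-isomorphism, so the inverse $\omega_\C^{-1}\colon H_K\otimes_K\C\longby{\simeq} H_\Z\otimes_\Z\C$ exists and is a $\C$-isomorphism, yielding an object of $\cMod^{\cong}$. On a morphism $\phi=(\phi_\Z,\phi_K)$, one sets $\varsigma(\phi):=(\phi_K,\phi_\Z)$; the commutativity of the square \eqref{4h} is equivalent, after tensoring its domain with $\C$ and inverting the horizontal $\C$-isomorphisms $\omega_\C$ and $\omega_\C'$, to the commutativity of the square required of a morphism in $\cMod^{\cong}$. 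Hence $\varsigma(\phi)$ is a bona fide morphism in $\cMod^{\cong}$, and functoriality is immediate.

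Next, a strict inverse $\varsigma'\colon \cMod^{\cong}\to\Modc$ is defined on objects by $(H_K,H_\Z,\eta)\mapsto(H_\Z,H_K,\omega)$, where $\omega$ is the composition of the canonical map $H_\Z\to H_\Z\otimes_\Z\C$ with $\eta^{-1}\colon H_\Z\otimes_\Z\C\longby{\simeq} H_K\otimes_K\C$. By construction $\omega_\C=\eta^{-1}$ is a $\C$-isomorphism, so the target lies in $\Modc$; on morphisms one again switches the two components. The compositions $\varsigma'\circ\varsigma$ and $\varsigma\circ\varsigma'$ are the identity on the nose, so $\varsigma$ is actually an isomorphism of categories, hence a fortiori an equivalence.

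Finally, for the tensor structures, the identity object $\Z(0)=(\Z,K,\mathrm{can})$ of $\Modc$ is sent by $\varsigma$ to $(K,\Z,\mathrm{can}^{-1})$, the identity object of $\cMod^{\cong}$, and since $(\omega\otimes\omega')_\C=\omega_\C\otimes\omega'_\C$ we get $(\omega\otimes\omega')_\C^{-1}=\omega_\C^{-1}\otimes(\omega')_\C^{-1}$, so $\varsigma$ strictly preserves the tensor product \eqref{twist}. Because $\varsigma$ leaves the underlying abelian group $H_\Z$ unchanged, it restricts to an equivalence between the full subcategories $\Mod^{\cong,\fr}$ and $\cMod^{\cong,\fr}$ cut out by the condition that $H_\Z$ be free. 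No serious obstacle appears beyond correctly tracking the directions of the period maps and of the arrows in the morphism squares.
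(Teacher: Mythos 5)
Your proof is correct. The paper states Lemma \ref{varsigma} without giving an explicit proof, and your argument supplies the obvious and complete verification: well-definedness on objects comes from invertibility of $\omega_\C$, the rule $(\phi_\Z,\phi_K)\mapsto(\phi_K,\phi_\Z)$ on morphisms preserves commutativity of the defining squares, the explicit inverse $\varsigma'$ shows $\varsigma$ is in fact an isomorphism of categories (not merely an equivalence), and the tensor and freeness conditions are trivially preserved since $\varsigma$ does not alter $H_\Z$ and $(\cdot)_\C$ commutes with $\otimes$. This is exactly the argument the authors have in mind, and nothing is missing.
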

We set $$\Z (r) \df \varsigma(\Z (r))\in \cMod^{\cong, \fr}.$$
Note that, for $H\in \Mod^{\cong, \fr}$ we may consider $H^\circ\in \cMod^{\cong, \fr}$ setting
\begin{equation}\label{circdual}
 (H_\Z, H_K, \omega)^{\circ} \df (H_K^{\vee}, H_\Z^{\vee}, \omega^{\circ})= \varsigma (H^\vee)= \varsigma (H )^\vee
\end{equation}
where $\omega^{\circ}: H_K^{\vee}\otimes_K \C\longby{\simeq} H_\Z^{\vee}\otimes_\Z \C$ is just given by the $\C$-dual of $\omega_\C$ in \eqref{omegac}. 
We then have $\Z (r)^\circ = \Z (-r)  \in \cMod^{\cong, \fr}$ so that $H(r)^\circ = H^\circ (- r)$ for all $r\in \Z$.

The functor $( \ \ )^{\circ}$ is an anti-equivalence and there is an induced equivalence  $\QMod^{\cong}\cong (\cQMod^{\cong})^{op}$ of neutral Tannakian categories.

\subsection{Betti--de Rham realization and Cartier duality} 
Now recall the period mapping $\varpi_{\sf M,\Z} : T_\Z({\sf M}_\C) \to T_\dr({\sf M}_K)\otimes_K \C$ provided by Definition \ref{perhom}. According to Theorem \ref{BettideRham} we have that $\varpi_{\sf M,\C}$ is a $\C$-isomorphism. 
\begin{defn}\label{Def:bdr}
For $K$ a subfield of $\C$, ${\sf M}_K\in \tM(K)$ and $\varpi_{\sf M,\Z}$ we set $$T_{\bdr}({\sf M}_K)\df (T_\Z({\sf M}_\C), T_\dr({\sf M}_K), \varpi_{\sf M,\Z})\in \Modc$$ 
and the $\Q$-linear variant
$$T_{\bdr}^\Q({\sf M}_K)\df (T_\Q({\sf M}_\C), T_\dr({\sf M}_K), \varpi_{\sf M,\Q})\in \QMod^{\cong}$$  where $T_\Q({\sf M}_\C) \df T_\Z({\sf M}_\C)\otimes_\Z \Q$. Call these realizations the {\it Betti--de Rham} realizations.\end{defn}

Since the period mapping $\varpi_{\sf M,\Z}$ in $T_\bdr ({\sf M}_K)$ is covariantly functorial, by the constructions in \eqref{4sq} and \eqref{iota}, the Betti--de Rham realization yields a functor
\begin{equation}\label{Tfct}
T_{\bdr}  \colon \tM (K) \to  \Modc 
\end{equation} 
in the homological category $\Modc$. Similarly, with rational coefficients, we get a functor  
from $1$-motives up to isogenies $\M^\Q(K) \cong \tM^\Q (K)$ to  $\QMod^{\cong}$. By Examples \ref{e.2} we have $T_{\bdr}(\Z [0]) =\Z (0)$ and $T_{\bdr}(\G_m[-1]) =\Z (1)$.

\begin{defn} For $H=(H_\Z, H_K, \omega)\in \Mod^{{\cong, \fr}}$  define the {\it Cartier dual}\, 
\[H^*:= (H_\Z^\vee, H_K^\vee, 2\pi i\omega^\vee ) = H^\vee(1) = H(-1)^{\vee}=\ihom (H, \Z(1))\in \Mod^{{\cong, \fr}} .\]
\end{defn}

Note that this construction is reflexive.

\begin{thm}\label{Cartier}
For ${\sf M}_K\in \tM (K)$ free with Cartier dual ${\sf M}_K^*$ we have that
$$T_{\bdr} ({\sf M}_K)^* \cong  T_{\bdr} ({\sf M}_K^*)$$
\end{thm}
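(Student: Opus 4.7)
The plan is to exploit functoriality of $T_\bdr$ together with the Cartier duality pairing for free $1$-motives. Concretely, Cartier duality for Deligne $1$-motives provides a canonical biextension pairing
\[
\sf M_K \otimes^L \sf M_K^* \longrightarrow \G_m[-1],
\]
which is perfect modulo torsion. Applying the (already constructed) Betti and de Rham realizations, and using the normalizations $T_{\bdr}(\Z[0])=\Z(0)$ and $T_{\bdr}(\G_m[-1])=\Z(1)$ from Examples~\ref{e.2}, the strategy is to pass from this pairing to the sought identification. First I would produce the pairing in each realization, then verify it becomes perfect after tensoring with $\Q$ (and integrally, since $\sf M$ is free), and finally track the unique factor of $2\pi i$ that distinguishes a plain $\vee$ from the Cartier dual $(\ )^* = (\ )^\vee(1)$ in $\Mod^{{\cong,\fr}}$.

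For the Betti side, functoriality of $T_\Z$ on $\tM(\C)$ and the fact that $T_\Z(\G_m[-1])=\Z$ yield a natural $\Z$-bilinear pairing
\[
T_\Z(\sf M_\C)\otimes_\Z T_\Z(\sf M_\C^*) \longrightarrow T_\Z(\G_m[-1])=\Z,
\]
which for a \emph{free} $1$-motive is a perfect pairing (this is classical Deligne duality on the lattice/weight graded pieces: tori pair with lattices, and abelian varieties with their duals), giving a natural isomorphism $T_\Z(\sf M_\C^*)\cong T_\Z(\sf M_\C)^\vee$. For the de Rham side, the universal $\G_a$-extension construction is compatible with Cartier duality: the Lie algebras $\Lie(\sf G_K^\natural)$ and $\Lie(\sf G_K^{*\natural})$ are canonically dual $K$-vector spaces, which amounts to a natural perfect pairing
\[
T_\dr(\sf M_K)\otimes_K T_\dr(\sf M_K^*)\longrightarrow T_\dr(\G_m[-1])=K
\]
and hence an isomorphism $T_\dr(\sf M_K^*)\cong T_\dr(\sf M_K)^\vee$.

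It remains to compare the two perfect pairings through the period map of $\G_m[-1]$. By naturality of $\varpi_{-,\Z}$ applied to the biextension pairing, the following square commutes:
\[
\xymatrix{
T_\Z(\sf M_\C)\otimes_\Z T_\Z(\sf M_\C^*) \ar[r]\ar[d]_{\varpi_{\sf M,\Z}\otimes \varpi_{\sf M^*,\Z}} & \Z \ar[d]^{\varpi_{\G_m[-1],\Z}}\\
\bigl(T_\dr(\sf M_K)\otimes_K T_\dr(\sf M_K^*)\bigr)\otimes_K\C \ar[r] & \C
}
\]
with right-hand vertical map equal to multiplication by $2\pi i$ (Example~\ref{e.2}). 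Combined with Theorem~\ref{BettideRham}, which ensures that $\varpi_{\sf M,\C}$ is a $\C$-isomorphism, the diagram forces $\varpi_{\sf M^*,\Z}$ to coincide with $2\pi i$ times the $\C$-linear dual of $\varpi_{\sf M,\C}^{-1}$ under the identifications of the previous paragraph. This is precisely the definition of $\omega^*=2\pi i\,\omega^\vee$ in the Cartier dual $T_\bdr(\sf M_K)^*\in \Mod^{{\cong,\fr}}$, so the triple defining $T_\bdr(\sf M_K^*)$ matches that defining $T_\bdr(\sf M_K)^*$, yielding the natural isomorphism.

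The main obstacle I expect is the careful bookkeeping of the $2\pi i$ twist on the de Rham side: one must verify that the $K$-linear pairing $T_\dr(\sf M_K)\otimes T_\dr(\sf M_K^*)\to K$ induced by the Poincar\'e biextension is exactly the one compatible with the Betti pairing through $\varpi_{\G_m[-1],\Z}$, rather than a scalar multiple. D\'evissage via the exact sequences \eqref{eq.mtf} and \eqref{eq.mtf2}, combined with Lemma~\ref{baseC}, reduces this to the three primitive cases (lattice, torus, abelian variety), where the comparison is a direct computation as in Examples~\ref{e.2} and the classical period pairing on $H_1$ of an abelian variety and its dual.
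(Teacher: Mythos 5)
Your proposal is correct and follows essentially the same route as the paper's proof, which is very terse: it observes that the Poincar\'e biextension of $\sf M_K$ gives a natural morphism $T(\sf M_K)\otimes T(\sf M_K^*)\to\Z(1)$ inducing Deligne's dualities $\langle\ ,\ \rangle_\Z$ and $\langle\ ,\ \rangle_\dr$ on $T_\Z$ and $T_\dr$, and then simply cites \cite[Prop.~10.2.8]{De} for the compatibility of these two pairings through the period map. What you have done is unwind the content of that citation: your commutative square relating $\varpi_{\sf M,\Z}\otimes\varpi_{\sf M^*,\Z}$ to $\varpi_{\G_m[-1],\Z}=2\pi i$ is exactly the statement of \cite[10.2.8]{De}, and your final algebraic manipulation showing this pins down $\varpi_{\sf M^*,\Z}$ as $2\pi i\,\varpi_{\sf M}^\vee$ is the correct translation into the definition of $(\ )^*$ in $\Mod^{\cong,\fr}$. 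The one slip is the proposed d\'evissage: the exact sequences \eqref{eq.mtf} and \eqref{eq.mtf2} concern the torsion/free decomposition and are trivial here since $\sf M_K$ is assumed free; the filtration you actually want for reducing to the primitive cases (lattice, torus, abelian variety) is the weight filtration \eqref{eq.weight}. Even so, d\'evissage along the weight filtration is not entirely immediate because the Poincar\'e biextension does not split along it; the cleaner path is what the paper takes, namely to invoke Deligne's \cite[Prop.~10.2.8]{De} directly rather than re-prove it by reduction to graded pieces.
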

\begin{proof}It suffices to prove that the Poincar\'e biextension of $ {\sf M}_K$  provides a natural morphism $T({\sf M}_K)\otimes T({\sf M}_K^*)\to\Z(1)$ which induces the usual dualities $\langle\ , \ \rangle_\Z$ on $T_\Z$'s and $\langle\ , \ \rangle_\dr$ on $T_ \dr$'s constructed in \cite[\S 10.2.3 \& \S 10.2.7]{De}.  This is proved in  \cite[Prop. 10.2.8]{De}. 
\end{proof}

Note that we also have a de Rham--Betti contravariant realization in the cohomological category $\cMod^{\cong}$. Recall from \cite[\S 1.13]{BVK} that we also have the category of 1-motives with cotorsion ${}_t\M$. Cartier duality 
\begin{equation}\label{Cdualtor}
(\ \ )^*: {}_t\M \xrightarrow{\simeq} \tM
\end{equation} 
is an anti-equivalence of abelian categories.
\begin{defn}\label{Def:drb}
For  ${\sf M}\in {}_t\M$ denote $$T_\drb ({\sf M})\df \varsigma (T_\bdr ({\sf M}^*)) = (T_\dr ({\sf M}^*), T_\Z ({\sf M}^*), \eta_{{\sf M}^*})\in \cMod^{\cong}$$ where $\eta_{{\sf M}^*}\df \varpi_{{\sf M}^*,\C}^{-1}$ is the inverse of the $\C$-linear period isomorphism $ \varpi_{{\sf M}^*,\C}$ of the Cartier dual ${\sf M}^*\in \tM$ (see Theorem \ref{BettideRham}). Call this realization (and its $\Q$-linear variant) the {\it de Rham--Betti} realization.
\end{defn}
With this definition we get a functor 
\begin{equation}\label{Tfctbis}
T_\drb : {}_t\M^{op} \to\cMod^{\cong}.
\end{equation}
Now we have $T_{\drb}(\Z [0]) =\Z (1)$ and $T_{\drb}(\G_m[-1]) =\Z (0)$.
With the notation adopted in \eqref{circdual},  we also have $$T_\bdr({\sf M})^\circ(1) = (T _\dr ({\sf M})^\vee, T _\Z ({\sf M})^\vee, (2\pi i)^{-1}\varpi_{\sf M}^\circ).$$
\begin{lemma} \label{BdRB}
We have a natural isomorphism of functors $T_\drb (\ \ )\cong T_\bdr(\ \ )^\circ(1)$.\end{lemma}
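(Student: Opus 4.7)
The plan is to unpack Definition \ref{Def:drb} and reduce the statement to Cartier duality (Theorem \ref{Cartier}), using the compatibility of the equivalence $\varsigma$ of Lemma \ref{varsigma} with duals and Tate twists.

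By Definition \ref{Def:drb},
$$T_\drb({\sf M}) = \varsigma\bigl(T_\bdr({\sf M}^*)\bigr),$$
where ${\sf M}^* \in \tM$ is the Cartier dual of ${\sf M}$ via the anti-equivalence \eqref{Cdualtor}. For ${\sf M}$ free, Theorem \ref{Cartier} furnishes a natural isomorphism $T_\bdr({\sf M}^*) \cong T_\bdr({\sf M})^*$ in $\Mod^{\cong, \fr}$, and by the definition of the Cartier dual on this category we have $T_\bdr({\sf M})^* = T_\bdr({\sf M})^\vee(1)$. Substituting yields
$$T_\drb({\sf M}) \cong \varsigma\bigl(T_\bdr({\sf M})^\vee(1)\bigr).$$

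Next I would verify that $\varsigma$ commutes with the dualizing functor $(\ \ )^\vee$ and with the Tate twist $(1)$ on the free subcategories. Compatibility with duals is precisely what \eqref{circdual} records: $\varsigma(H^\vee) = \varsigma(H)^\vee = H^\circ$. Compatibility with the twist $(1)$ is a direct consequence of the formula \eqref{twist} and of the fact that $\Z(1) \in \cMod^{\cong}$ is defined as $\varsigma$ of its counterpart in $\Modc$, so that $\varsigma(H(1)) = \varsigma(H)(1)$. Concatenating, and invoking once more the definition of $(\ \ )^\circ$ in \eqref{circdual},
$$\varsigma\bigl(T_\bdr({\sf M})^\vee(1)\bigr) = \varsigma\bigl(T_\bdr({\sf M})\bigr)^\vee(1) = T_\bdr({\sf M})^\circ(1),$$
which yields the claimed isomorphism.

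Naturality in ${\sf M}$ is inherited from the naturality of Theorem \ref{Cartier} and of the equivalence $\varsigma$. The only bookkeeping point, which I expect to be the main (mild) obstacle, is to track the factor $2\pi i$: Theorem \ref{Cartier}, via the Poincar\'e biextension pairing, identifies $\varpi_{{\sf M}^*}$ with $2\pi i\cdot \varpi_{\sf M}^\vee$ under the induced isomorphisms $T_\Z({\sf M}^*) \cong T_\Z({\sf M})^\vee$ and $T_\dr({\sf M}^*) \cong T_\dr({\sf M})^\vee$; after inverting and applying $\varsigma$ this becomes $(2\pi i)^{-1}(\varpi_{\sf M}^\vee)_\C^{-1} = (2\pi i)^{-1}\varpi_{\sf M}^\circ$, which agrees precisely with the explicit formula for $T_\bdr({\sf M})^\circ(1)$ displayed immediately before the Lemma. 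Once this check is done, the remainder is purely definitional.
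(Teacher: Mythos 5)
Your proposal is correct and follows essentially the same route as the paper's own proof: unpack $T_\drb({\sf M}) = \varsigma(T_\bdr({\sf M}^*))$, invoke Theorem \ref{Cartier} to identify $T_\bdr({\sf M}^*)$ with $T_\bdr({\sf M})^* = T_\bdr({\sf M})^\vee(1)$, and track the $2\pi i$ factor to conclude $\eta_{{\sf M}^*} = \varpi_{{\sf M}^*,\C}^{-1} = (2\pi i)^{-1}\varpi_{\sf M}^\circ$. The paper simply writes out the period-isomorphism identities $\varpi_{{\sf M}^*,\Z} = 2\pi i\,\varpi_{{\sf M},\Z}^\vee$ and its $\C$-inverse directly, whereas you make explicit the (routine) compatibility of $\varsigma$ with $(\ )^\vee$ and the Tate twist; both amount to the same argument.
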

\begin{proof} For ${\sf M}\in \M$ and its Cartier dual ${\sf M}^*$ we have that $T_\bdr ({\sf M})^* \cong T_\bdr ({\sf M}^*)\in \Mod^{{\cong, \fr}}$ by Theorem \ref{Cartier}. Thus the period isomorphism of the Cartier dual $\varpi_{{\sf M}^*,\Z}= 2\pi i\varpi_{{\sf M},\Z}^\vee$ and its $\C$-inverse $\varpi_{{\sf M}^*,\C}^{-1}= (2\pi i)^{-1}\varpi_{\sf M}^\circ$.
 \end{proof} 

\subsection{Weight and Hodge filtrations}
Consider the category $\fMod$ given by objects in $\Mod$ endowed with  finite and exhaustive filtrations and morphisms that respect the filtrations. 

More precisely, an object of $\fMod$ is an abelian group $H_{\Z}$ endowed with a (weight) filtration $W_{\d}  H_{\Z}$ and a $K$-vector space $H_{K}$ endowed with two filtrations  $W_{\d}  H_{K}$, $F_{\d}  H_{K}$, along with the corresponding compatibilities of the $\omega$'s on weight filtrations.

Let  ${\sf M}_K=[  {\sf L}_K \to {\sf G}_K] $ be a $1$-motive over $K$ and, as usual, let ${\sf T}_K$ denote the maximal subtorus of ${\sf G}_K$. Since the Betti-de Rham realization \eqref{Tfct} is functorial and compatible with the canonical weight filtration  \eqref{eq.weight} on $T_\Z({\sf M}_\C)$    and $T_\dr({\sf M}_{K})$ is filtered by $\V({\sf M})$, the Hodge filtration as in  Remark \ref{hodgefilt}, we also get a realization functor
\begin{equation}\label{FTfct}^{F}T_{\bdr}\colon \tM (K) \to  \fMod.  
\end{equation}
We have $$(T_\Z({\sf M}_\C), T_\dr({\sf M}_K), \varpi_{{\sf M}, \Z})\supseteq (T_\Z({\sf G}_\C), T_\dr({\sf G}_K), \varpi_{{\sf G}, \Z}) \supseteq (T_\Z({\sf T}_\C), T_\dr({\sf T}_K), \varpi_{{\sf T}, \Z}).$$
Note that:

\begin{lemma}\label{weights} Let $K=\bar \Q$. Let ${\sf M}_K$ and ${\sf N}_K$ be two free $1$-motives over $K$. Then any morphism $\varphi\colon T_\bdr({\sf M}_K)\to T_\bdr({\sf N}_K)$ in $\Mod^{{\cong, \fr}}$
preserves the weight filtrations.
\end{lemma}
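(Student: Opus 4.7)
The plan is to reduce preservation of the weight filtration by $\varphi$ to the vanishing of two induced morphisms in $\Mod^{\cong,\fr}$, and then to kill each by a distinct transcendence input. Write ${\sf M}=[{\sf L}_{\sf M}\to {\sf G}_{\sf M}]$ and ${\sf N}=[{\sf L}_{\sf N}\to {\sf G}_{\sf N}]$, with maximal subtori ${\sf T}_{\sf M}\subseteq{\sf G}_{\sf M}$, ${\sf T}_{\sf N}\subseteq{\sf G}_{\sf N}$ and abelian quotients $A_{\sf M},A_{\sf N}$. Since $T_\bdr$ is functorial and compatible with the canonical weight filtration of $1$-motives as recalled after Theorem~\ref{BettideRham}, preservation amounts to the vanishing of
\[
\psi^{(1)}\colon T_\bdr({\sf G}_{\sf M}[-1])\hookrightarrow T_\bdr({\sf M})\by{\varphi} T_\bdr({\sf N})\twoheadrightarrow T_\bdr({\sf L}_{\sf N}[0])
\]
(encoding preservation of $W_{-1}$) and, once that is known, of the analogous $\psi^{(2)}\colon T_\bdr({\sf T}_{\sf M}[-1])\to T_\bdr(A_{\sf N}[-1])$ (encoding preservation of $W_{-2}$).

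For $\psi^{(2)}$ I would apply Bost-Charles (Theorem~\ref{bost-charles}) directly. The target $T_\bdr(A_{\sf N}[-1])$ has $T_\Z=H_1(A_{{\sf N},\C})$ and $T_\dr=\Lie(A_{\sf N}^\natural)$, and $\varpi_{A_{\sf N},\Z}$ embeds $H_1(A_{{\sf N},\C})$ in $H_1(A_{{\sf N},\C}^\natural)\subseteq\Lie(A_{{\sf N},\C}^\natural)$. The compatibility of $\psi^{(2)}$ in $\Mod^{\cong,\fr}$ is then precisely the Bost-Charles hypothesis for the connected commutative $\bar\Q$-groups ${\sf T}_{\sf M}$ (a torus, whose Betti lattice generates its Lie algebra) and $A_{\sf N}^\natural$: it produces an algebraic group morphism $f\colon{\sf T}_{\sf M}\to A_{\sf N}^\natural$ with $df=\psi^{(2)}_K$. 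Such $f$ must vanish, since composition with $A_{\sf N}^\natural\to A_{\sf N}$ is zero ($\Hom_{K\text{-gr}}({\sf T}_{\sf M},A_{\sf N})=0$) and its factor through $\V(A_{\sf N})$ is zero ($\Hom_{K\text{-gr}}({\sf T}_{\sf M},\G_a)=0$). Hence $\psi^{(2)}_K=0$ and by compatibility $\psi^{(2)}_\Z=0$.

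The main obstacle is $\psi^{(1)}$: Bost-Charles does not apply directly because the target algebraic group ${\sf L}_{\sf N}\otimes_\Z\G_{a,K}$ has trivial Betti homology while $\psi^{(1)}_K\otimes_K 1_\C$ sends periods into the nontrivial lattice ${\sf L}_{\sf N}$. I would bypass this via a graph construction combined with W\"ustholz's analytic subgroup theorem (Theorem~\ref{thm:Wusth}). Let $\widetilde{{\sf G}_{\sf M}}$ denote the universal vector extension of ${\sf G}_{\sf M}$ as a semiabelian variety, so $T_\dr({\sf G}_{\sf M}[-1])=\Lie(\widetilde{{\sf G}_{\sf M}})$, and set $W\df\widetilde{{\sf G}_{\sf M}}\times_K({\sf L}_{\sf N}\otimes_\Z\G_{a,K})$. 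Consider
\[
S=\bigl\{\bigl(\varpi_{{\sf G}_{\sf M},\Z}(\gamma),\,\psi^{(1)}_\Z(\gamma)\bigr):\gamma\in H_1({\sf G}_{{\sf M},\C})\bigr\}\subseteq\Lie(W_\C).
\]
Because periods lie in the kernel of $\exp$ on $\widetilde{{\sf G}_{\sf M}}$ while $\exp$ is the identity on vector groups, $\exp_W(S)\subseteq W(\bar\Q)$. Theorem~\ref{BettideRham} tells us that the $\varpi_{{\sf G}_{\sf M},\Z}(\gamma)$ span $\Lie(\widetilde{{\sf G}_{{\sf M},\C}})$ over $\C$, so the $\C$-span of $S$ is the graph of $\psi^{(1)}_K\otimes_K 1_\C$, which is the complexification of the $\bar\Q$-rational graph of $\psi^{(1)}_K$ inside $\Lie(W_K)$. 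Theorem~\ref{thm:Wusth} then supplies a connected algebraic subgroup $Z\subseteq W$ with $\Lie Z$ equal to this graph; since the graph projects isomorphically onto $\Lie(\widetilde{{\sf G}_{\sf M}})$, $Z$ is the graph of an algebraic morphism $f\colon\widetilde{{\sf G}_{\sf M}}\to{\sf L}_{\sf N}\otimes\G_a$ with $df=\psi^{(1)}_K$.

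I would close the argument by observing that $\Hom_{K\text{-gr}}(\widetilde{{\sf G}_{\sf M}},{\sf L}_{\sf N}\otimes\G_a)=0$: any such morphism must kill the torus part ${\sf T}_{\sf M}\subseteq\widetilde{{\sf G}_{\sf M}}$ and then factor through the antiaffine quotient $\widetilde{A_{\sf M}}$, which admits no morphism to a vector group. Hence $f=0$, so $\psi^{(1)}_K=0$ and by compatibility $\psi^{(1)}_\Z=0$. The conceptual difficulty concentrates in the first step: the lattice component of a $1$-motive produces a mismatch between $T_\Z$ and the Betti homology of the underlying algebraic group that Bost-Charles alone cannot detect, and the graph construction plus W\"ustholz's theorem is the natural tool for bridging this transcendental gap.
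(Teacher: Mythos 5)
Your argument is correct, but it takes a genuinely different route from the paper's. The paper reduces the statement to showing that there are no nonzero morphisms in $\Mod^{\cong,\fr}$ between the Betti--de Rham realizations of two \emph{pure} $1$-motives of distinct weights, and then runs through the six ordered pairs among $[\Z\to 0]$, $[0\to\G_m]$, $[0\to{\sf A}]$: the weight-$0$/weight-$-2$ cases are killed by the transcendence of $2\pi i$, the weight-$0\to-1$ case by Lang's theorem (Theorem~\ref{trasc}), the weight-$-2\to-1$ case by Bost--Charles (Theorem~\ref{bost-charles}), and the two remaining cases by Cartier duality (Theorem~\ref{Cartier}). You instead package the two steps of the weight filtration into two obstruction maps $\psi^{(1)}$ and $\psi^{(2)}$, kill $\psi^{(2)}$ with Bost--Charles (as the paper does for the analogous pure pair), but dispatch $\psi^{(1)}$ --- the ``weight $\le -1$ to weight $0$'' obstruction --- by a graph construction in $\widetilde{{\sf G}_{\sf M}}\times({\sf L}_{\sf N}\otimes\G_a)$ followed by W\"ustholz's analytic subgroup theorem (Theorem~\ref{thm:Wusth}). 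This avoids Cartier duality entirely and handles $\psi^{(1)}$ in one stroke without devissage, but it leans on W\"ustholz where the paper makes do with Lang, Hermite--Lindemann, and Bost--Charles (itself a consequence of the weaker Waldschmidt theorem). In spirit your route is closer to the paper's \emph{alternative} proof of Theorem~\ref{thm:ff1mot}, which also passes through W\"ustholz, although there the weight question is never isolated as a separate lemma.

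Two small points worth tightening. First, when you conclude from $\Lie Z$ being a graph that $Z$ itself is a graph, what you get a priori from $\Lie$ being an isomorphism is that the projection $Z\to\widetilde{{\sf G}_{\sf M}}$ is an isogeny; to promote this to an isomorphism, note that its kernel is a finite subgroup scheme of the torsion-free vector group ${\sf L}_{\sf N}\otimes\G_a$, hence trivial (compare the paper's alternative proof of Theorem~\ref{thm:ff1mot}, where the corresponding step requires a separate argument because the second factor has nontrivial $H_1$). Second, the identification of $\widetilde{{\sf G}_{\sf M}}/{\sf T}_{\sf M}$ with the antiaffine group $A_{\sf M}^\natural$ uses the isomorphism $\Ext(A_{\sf M},\G_a)\cong\Ext({\sf G}_{\sf M},\G_a)$; alternatively one can simply observe that in the sequence $\Hom({\sf G}_{\sf M},\G_a)\to\Hom(\widetilde{{\sf G}_{\sf M}},\G_a)\to\Hom(\V({\sf G}_{\sf M}),\G_a)\to\Ext({\sf G}_{\sf M},\G_a)$ the first group vanishes and the last map is the identity on $\V({\sf G}_{\sf M})^\vee$ by universality of the extension, so $\Hom(\widetilde{{\sf G}_{\sf M}},\G_a)=0$ outright.
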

\begin{proof} Let $\varphi =(\varphi_\Z, \varphi_K) \colon (T_\Z({\sf M}_\C), T_\dr({\sf M}_K), \varpi_{{\sf M}, \Z})\to (T_\Z({\sf N}_\C), T_\dr({\sf N}_K), \varpi_{{\sf N}, \Z})$ for ${\sf M}_K$ and ${\sf N}_K$ one of the following pure $1$-motives: $[\Z_K \to 0]$, $[0\to \G_{m,K}]$ and $[0\to {\sf A}_K]$, where ${\sf A}_K$ is an abelian variety.  We show that $\varphi =0$ for different weights,  in all cases. As $K$ is algebraically closed this implies that $\varphi =0$ for all pure $1$-motives of different weights and this easily yields the claimed compatibility.

For ${\sf M}_K =[\Z_K\to 0]$ and ${\sf N}_K = [0\to \G_{m,K}]$ (respectively   ${\sf M}_K = [0\to \G_{m,K}]$ and ${\sf N}_K =[\Z_K\to 0]$) we have $T_\bdr([\Z_K \to 0]) =\Z (0)$, $T_\bdr([0\to \G_{m,K}]) =\Z(1)$ and $\varphi =0$ as $\varphi_K\colon K\to K$ is given by the multiplication by an algebraic number but the compatibility \eqref{4h} forces such algebraic number to be $n2\pi i$ (respectively $n/2\pi i$) for some $n\in \Z$. 

 Similarly, for ${\sf M}_K =[\Z_K \to 0] $ and ${\sf N}_K =[0\to {\sf A}_{K}]$ we have that $\varpi_{\Z,{\sf N}}\circ \varphi_\Z(1) =\varphi_{K}(1)$ if, and only if, $\varphi=0$. Indeed, the preceding equality implies that $d\rho_{\sf A}\circ \varpi_{\Z,{\sf N}}\circ \varphi_\Z(1)=d\rho_{\sf A}\circ \varphi_{K}(1)$. Now, the right-hand term is in  $\Lie ({\sf A}_K)$ while by Remark \ref{trasc} the left-hand term would give a transcendental point of  $\Lie ({\sf A}_{\C})$   if $\varphi_\Z(1)\neq 0$.

Dually, for ${\sf M}_K =[0\to {\sf A}_K]$ and ${\sf N}_K =[0\to \G_{m,K}]$ by making use of Theorem \ref{Cartier} we then get $\varphi^* =0$ thus $\varphi =0$.

Finally, for ${\sf M}_K = [0\to \G_{m,K}]$ and ${\sf N}_K =[0\to {\sf A}_K]$ we can apply Theorem \ref{bost-charles} to the pair $(\varphi_{K},\varphi_\Z)$  so that, dually, making use of Theorem \ref{Cartier}, the same holds for ${\sf M}_K =[0\to {\sf A}_K]$ and ${\sf N}_K = [\Z_K\to 0]$. 
\end{proof}

Let  ${\sf M}_{K}=[{\sf u}_{K}\colon {\sf L}_K\to {\sf G}_{K}]$ and ${\sf N}_{K}=[{\sf v}_{K}\colon {\sf F}_K\to {\sf H}_K]$ be free and let $\varphi\colon T_\bdr({\sf M}_{K})\to T_\bdr({\sf N}_{K})$ be a morphism in $\Modc$. Then we have a $K$-linear mapping $\varphi_K\colon T_\dr({\sf M}_{K})\to T_\dr({\sf N}_{K})$ and a homomorphism $\varphi_\Z\colon T_\Z({\sf M}_{\C})\to T_\Z({\sf N}_{\C})$ which is compatible with the weight filtrations, by Lemma \ref{weights}. Moreover, $\varphi_\Z$ and $\varphi_K$ are compatible with the  $\varpi$'s as in \eqref{4h}. We have that $\varphi_\Z$ restricts to a homomorphism \begin{equation}\label{w-1}
W_{-1}\varphi_{\Z}\colon W_{-1}T_\Z({\sf M}_\C)\df T_\Z({\sf
G}_\C)\cong H_1({\sf G}_\C^\natural)\to W_{-1}T_\Z({\sf N}_\C)\df T_\Z({\sf
H}_\C)\cong H_1({\sf H}_\C^\natural)
\end{equation}
and we get an induced map on $\gr_0^W$ as follows
 \begin{equation}\label{w-0}
 \varphi_{\Z, 0}\colon \gr_{0}^WT_\Z({\sf M}_\C)=T_\Z({\sf M}_\C)/T_\Z({\sf
G}_\C)= {\sf L}_\C\to \gr_{0}^WT_\Z({\sf N}_\C)=T_\Z({\sf N}_\C)/T_\Z({\sf
H}_\C)= {\sf F}_\C .
\end{equation}
Note that $\varphi_{\Z, 0}$ is indeed defined over $\bar \Q$.

\begin{lemma}\label{hodge} Let $K=\bar \Q$. Let ${\sf M}_K$ and ${\sf N}_K$ be two free $1$-motives over $K$. Then any morphism $\varphi\colon T_\bdr({\sf M}_K)\to T_\bdr({\sf N}_K)$ in $\Mod^{{\cong, \fr}}$ preserves the Hodge filtrations.
\end{lemma}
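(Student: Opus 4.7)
The plan is to combine Lemma~\ref{weights} with Theorem~\ref{bost-charles} to control $\varphi$ on the $W_{-1}$ piece, and then to invoke Theorem~\ref{trasc} to dispatch the remaining weight-$0$ component. Writing ${\sf M}=[{\sf u}\colon {\sf L}\to {\sf G}]$ and ${\sf N}=[{\sf v}\colon {\sf F}\to {\sf H}]$ and denoting by ${\sf A}$, ${\sf A}'$ the abelian quotients of ${\sf G}$, ${\sf H}$, I would reformulate the claim as the vanishing of the $K$-linear composition
\[
\chi \df d\rho_{\sf H}\circ\varphi_K|_{\V({\sf M})}\colon \V({\sf M})\longrightarrow \Lie({\sf H}_K),
\]
which is equivalent to $\varphi_K(\V({\sf M}))\subseteq \V({\sf N})$.

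First I would handle the weight-$-1$ part. By Lemma~\ref{weights} the morphism $\varphi$ respects the weight filtration, so its restriction to $W_{-1}$ gives a morphism between the Betti--de Rham realizations of ${\sf G}[-1]$ and ${\sf H}[-1]$. The universal vectorial extensions ${\sf A}^\natural\times_{\sf A}{\sf G}$ and ${\sf A}'^\natural\times_{{\sf A}'}{\sf H}$ are commutative connected algebraic groups over $\bar\Q$ whose $H_1$ generates the Lie algebra over $\C$ thanks to Theorem~\ref{BettideRham} applied to ${\sf G}[-1]$. I would therefore apply Theorem~\ref{bost-charles} to produce a unique morphism of algebraic $K$-groups $g\colon {\sf A}^\natural\times_{\sf A}{\sf G}\to {\sf A}'^\natural\times_{{\sf A}'}{\sf H}$ with $\Lie g = W_{-1}\varphi_K$ and $H_1(g) = W_{-1}\varphi_\Z$. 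Since the semiabelian quotient ${\sf G}$ has no non-trivial unipotent subgroup, the kernel $\V({\sf A})$ of the projection ${\sf A}^\natural\times_{\sf A}{\sf G}\to{\sf G}$ is the unipotent radical, and hence $g$ sends $\V({\sf A})$ into $\V({\sf A}')\subseteq \V({\sf N})$; in particular $\chi$ vanishes on $\V({\sf A}) = \V({\sf M})\cap W_{-1}T_\dr({\sf M}_K)$. Dividing by the unipotent radicals, $g$ also descends to a morphism $\bar g\colon {\sf G}\to {\sf H}$ over $\bar\Q$.

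Since the short exact sequence $0\to \V({\sf A})\to \V({\sf M})\to \V({\sf L})={\sf L}\otimes K\to 0$ identifies the graded pieces of the Hodge filtration, the previous vanishing shows that $\chi$ factors through a $K$-linear map $\bar\chi\colon {\sf L}\otimes K\to \Lie({\sf H}_K)$, and it remains to prove $\bar\chi(\ell\otimes 1)=0$ for every $\ell\in {\sf L}$. To this end I would fix a lift $v_\ell\in \V({\sf M})$ of $\ell\otimes 1$ together with a point $\tilde y_0=(x,\ell)\in T_\Z({\sf M}_\C)$, where $x\in \Lie({\sf G}_\C)$ is a complex logarithm of ${\sf u}(\ell)$. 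Since $\varpi_{\sf M}|_{W_{-1}}$ is an isomorphism, the difference $v_\ell - \varpi_{\sf M}(\tilde y_0)$ equals $\varpi_{\sf M}(w')$ for a unique $w'\in H_1({\sf G}_\C)\otimes\C$, and applying $d\rho_{\sf G}$ yields $\tilde{\sf u}_\C(w')=-x$. Combining the compatibility of $\varphi$ with the period maps and the Bost--Charles identity $\tilde{\sf v}_\C\circ H_1(\bar g) = \Lie(\bar g)\circ \tilde{\sf u}_\C$ on $W_{-1}$, I would then derive
\[
\chi(v_\ell) \;=\; \tilde{\sf v}_\C\bigl(\varphi_\Z(\tilde y_0)\bigr)\,-\,\Lie(\bar g)(x)
\]
inside $\Lie({\sf H}_\C)$. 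Exponentiating, and using $\exp\circ \tilde{\sf v}_\C = {\sf v}\circ \tilde\exp$ on $T_\Z$ together with $\exp\circ \Lie(\bar g) = \bar g\circ \exp$, this becomes
\[
\exp\bigl(\chi(v_\ell)\bigr) \;=\; {\sf v}\bigl(\varphi_{\Z,0}(\ell)\bigr)\cdot \bar g({\sf u}(\ell))^{-1}\;\in\;{\sf H}(\bar \Q),
\]
since $\varphi_{\Z,0}$ and $\bar g$ are defined over $\bar\Q$. On the other hand $\chi(v_\ell)$ itself lies in $\Lie({\sf H}_K)$ by the $K$-linearity of $\varphi_K$ and $d\rho_{\sf H}$, so Theorem~\ref{trasc} applied to the semiabelian variety ${\sf H}$ forces $\chi(v_\ell)=0$, as otherwise $\exp\chi(v_\ell)$ would be transcendental over $\bar\Q$.

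The main difficulty is to correctly set up this transcendental computation: unlike in the weight-$-1$ case, $v_\ell$ admits in general no integral preimage in $T_\Z({\sf M}_\C)$, so the decomposition $\tilde y_0+w'$ is forced to involve transcendental complex coefficients, essentially a logarithm of ${\sf u}(\ell)$. The heart of the argument is that these a priori unrelated transcendental contributions to $\chi(v_\ell)$ assemble into the single algebraic element ${\sf v}(\varphi_{\Z,0}(\ell))\cdot \bar g({\sf u}(\ell))^{-1}$ of ${\sf H}(\bar\Q)$, which is exactly what allows Theorem~\ref{trasc} to convert the $K$-rationality of $\chi$ into its vanishing on the full Hodge filtration.
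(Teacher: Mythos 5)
Your proposal is correct and follows essentially the same strategy as the paper's proof: Lemma~\ref{weights} reduces the $W_{-1}$ part of the claim to Theorem~\ref{bost-charles} applied to the universal vectorial extensions of the semiabelian parts, and the residual weight-$0$ contribution is killed by Theorem~\ref{trasc} after showing, via compatibility of $\varphi$ with the period maps, that the error term exponentiates to an algebraic point of ${\sf H}$ while also lying in $\Lie({\sf H}_K)$. The only difference from the paper is cosmetic bookkeeping: the paper packages the weight-$0$ error as an analytic morphism $\beta\colon {\sf L}_\C^\natural\to {\sf H}_\C$ inside a quotient diagram ${\sf G}^\natural_\C/\V({\sf G})_\C\cong {\sf G}_\C\oplus {\sf L}^\natural_\C\to {\sf H}_\C$ and shows $\beta(x\otimes 1)=0$, whereas you track the same error as the $K$-linear map $\chi=d\rho_{\sf H}\circ\varphi_K|_{\V({\sf M})}$ and decompose $v_\ell=\varpi_{\sf M}(\tilde y_0)+\varpi_{\sf M,\C}(w')$ explicitly to compute $\exp(\chi(v_\ell))$; the two computations produce exactly the same algebraic point ${\sf v}(\varphi_{\Z,0}(\ell))\cdot\bar g({\sf u}(\ell))^{-1}$.
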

\begin{proof}
Let  ${\sf M}_{K}=[{\sf u}_{K}\colon {\sf L}_K\to {\sf G}_{K}]$ and ${\sf N}_{K}=[{\sf v}_{K}\colon {\sf F}_K\to {\sf H}_K]$ be free and let $\varphi\colon T_\bdr({\sf M}_{K})\to T_\bdr({\sf N}_{K})$ be a morphism in $\Modc$. We have to show that $\varphi_K(\V ({\sf M}))\subseteq \V ({\sf N})$ where $\V ({\sf M})$ is the additive part of ${\sf G}_K^\natural$ and $\V ({\sf N})$ is that of ${\sf H}_K^\natural$;  see Remark \ref{hodgefilt}. 
Recall the commutative diagram
\begin{equation}\label{4t}
\xymatrix{
  T_\Z({\sf M}_{\C} )\ar[d]^{\varpi_{{\sf M},\Z}}\ar[rr]_{\varphi_{\Z}}  && T_\Z({\sf N}_{\C} )\ar[d]^{\varpi_{{\sf N},\Z}}\\
 T_\dr({\sf M}_{\C} ) \ar[rr]_ {\varphi_{K}\otimes \mathrm{id}_\C}  && T_\dr({\sf N}_{\C})  .
  }
\end{equation} 
By definition of $\varpi_{{\sf M},\Z}$ and \eqref{w-1}  there is then a commutative diagram \begin{equation*} 
\xymatrix{
	 H_1({\sf G}_\C^\natural)\ar[d] \ar[rr]_{W_{-1}\varphi_{\Z}}  &&  H_1({\sf H}_\C^\natural)\ar[d] \\
	\Lie({\sf G}_{\C}^\natural ) \ar[rr]_ {\varphi_{K}\otimes \mathrm{id}_\C}  && \Lie({\sf H}_{\C}^\natural )  
}
\end{equation*}  where the vertical arrows are those in the horizontal sequence in the middle of diagram  \eqref{4sq} for ${\sf M}$ and $\sf N$ respectively. Hence there exists an {\it analytic} morphism   $h_\C \colon {\sf G}_{\C}^\natural\to {\sf H}_{\C}^\natural$ with   $dh_\C= \varphi_K\otimes \mathrm{id}_\C$. 
 It is sufficient to prove that $h_\C$ is algebraic and defined over $K$  to conclude by the structure theorem of  algebraic $K$-groups  that $h_K(\V ({\sf M}))\subseteq \V ({\sf N})$ and hence that $\varphi_K=dh_K$ preserves the Hodge filtrations.

 If ${\sf L}_K=0$, by Lemma \ref{weights}, $\varphi$ factors through $W_{-1}T_\bdr({\sf N}_{K})= T_\bdr({\sf H}_{K})$. Hence we may assume ${\sf F}_K=0$ as well. 
It follows then from Theorem \ref{bost-charles} applied to ${\sf G}^{\natural}$ and ${\sf H}^\natural$ that  the above morphism $h_\C$  is indeed  algebraic and defined over $K$. Hence $\varphi_K$ preserves the Hodge filtrations.
  
 Now let ${\sf L}_K\neq 0$ and set ${\sf L}^\natural_K ={\sf L}_K\otimes \G_{a,K}$. Since $\varphi$ preserves the weights by Lemma \ref{weights} we get $W_{-1}\varphi\colon T_\bdr({\sf G}_K)\to T_\bdr({\sf H}_K)$. By the previous step $W_{-1}\varphi_K (\V ({\sf G}))\subseteq \V ({\sf H})\subseteq \V ({\sf N})$. We thus obtain the following commutative diagram 
 \[
  \xymatrix{T_\Z({\sf M}_\C)\ar[d]_{\varphi_\Z}\ar[r]^{\varpi_{{\sf M}, \Z}\hspace*{1cm}}& T_\dr({\sf M})/\V ({\sf G})\otimes\C  \ar[r]^{\hspace*{0.5cm}\exp}  \ar@{.>}[d]^{\gamma}& {\sf G}^\natural_{\C}/\V ({\sf G})_\C \ar@{=}[r]&{\sf G}_{\C}\oplus {\sf L}_\C^\natural  \ar@{.>}[d]^{\delta}\\
T_\Z({\sf N}_\C)\ar[r]^{\varpi_{{\sf N}, \Z}\hspace*{1cm}}& T_\dr({\sf N})/\V ({\sf N})\otimes\C  \ar[r]^{\hspace*{0.5cm}\exp} & {\sf H}^\natural_{\C}/\V ({\sf N})_\C \ar@{=}[r]&{\sf H}_{\C}  
  }\] 
where the mapping $\gamma$ is induced by $\varphi_K$, we have the canonical identification of ${\sf G}_K^\natural/\V({\sf G}) = {\sf G}_K\oplus {\sf L}_K^\natural$ and $\delta = g_\C + \beta$ with $g_\C = g_K\otimes \mathrm{id}_\C$ and $g_K\colon {\sf G}_K  \to {\sf H}_K$ induced by $W_{-1}\varphi_K$. 
We are left to show that $\beta \colon {\sf L}_\C^\natural  \to {\sf H}_{\C}$ is zero. Since the composition of the upper arrows in the previous diagram maps $T_\Z({\sf G}_\C) $ to $0\oplus 0$, we obtain a commutative square
\[
  \xymatrix{{\sf L}_\C\ar[d]_{\varphi_{\Z , 0}}\ar[r]^{({\sf u}, 1)\ \ \ }  & {\sf G}_{\C}\oplus {\sf L}_\C^\natural  \ar[d]^{\delta}\\
{\sf F}_\C\ar[r]^{{\sf v}}& {\sf H}_{\C}  
  }\] 
where $\varphi_{\Z , 0}$ is the induced map as in \eqref{w-0}. In particular, for $x\in {\sf L}_K(K)$ we have
$\beta (x\otimes 1)= {\sf v} (\varphi_{\Z , 0}(x))- g_K({\sf u}(x))= \gamma -dg_K\otimes \mathrm{id}_\C$ is in ${\sf H}_K(K)$. On the other hand $ \beta (x\otimes 1) = \exp d\beta (x\otimes 1)$. Since $ d\beta = d\delta - d g_K\otimes\mathrm{id}_\C$ we have that $d\beta (x\otimes 1)$ belongs to $\Lie({\sf H}_K)$ regarded as a $K$-linear subspace of $\Lie ({\sf H}_\C)$. By Remark \ref{trasc} we get that $\beta (x\otimes 1)=0$ and therefore that $ \beta =0$. 
\end{proof}

\subsection{Full faithfulness}
We are now ready to show that our previous Lemmas \ref{weights} and \ref{hodge} yield the full faithfulness of Betti--de Rham and de Rham--Betti realizations.
\begin{thm} \label{thm:ff1mot}
The functors $T_\bdr$ in \eqref{Tfct} and $T_\drb$ in \eqref{Tfctbis} restricted to $\M (K)$ are fully faithful over $K = \bar \Q$.
\end{thm}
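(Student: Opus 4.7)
The plan is to handle $T_\bdr$ first and deduce the statement for $T_\drb$ from it: on $\M(K)\subset {}_t\M(K)$ the functor $T_\drb$ equals $\varsigma\circ T_\bdr\circ (-)^*$ by Lemma \ref{BdRB} together with Theorem \ref{Cartier}, and since Cartier duality $(-)^*\colon \M(K)\to \M(K)^{op}$ is an anti-equivalence and $\varsigma$ is an equivalence (Lemma \ref{varsigma}), full faithfulness transfers automatically.

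For faithfulness of $T_\bdr$, suppose $f\colon {\sf M}_K\to {\sf N}_K$ in $\M(K)$ satisfies $T_\bdr(f)=0$. Then in particular $T_\Z(f_\C)=0$, and Deligne's classical Hodge realization is faithful on Deligne $1$-motives over $\C$ (checked directly on the pure weight pieces $[\Z\to 0]$, $[0\to\G_m]$, $[0\to{\sf A}]$ and assembled via the weight filtration \eqref{eq.weight}, \cf \cite[\S 10]{De}). Hence $f_\C=0$, and since $K\hookrightarrow \C$ is faithfully flat, $f=0$.

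For fullness, let ${\sf M}_K=[{\sf u}\colon {\sf L}\to {\sf G}]$, ${\sf N}_K=[{\sf v}\colon {\sf F}\to {\sf H}]$ and take $\varphi=(\varphi_\Z,\varphi_K)\in \Hom(T_\bdr({\sf M}_K),T_\bdr({\sf N}_K))$. The strategy is to build a map of complexes $(\ell,g)\colon {\sf M}_K\to {\sf N}_K$ piece by piece. First, by Lemma \ref{weights} the morphism $\varphi$ restricts to $W_{-1}\varphi\colon T_\bdr({\sf G}[-1])\to T_\bdr({\sf H}[-1])$, and by Lemma \ref{hodge} this restriction respects the Hodge filtration. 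Applying the Bost--Charles Theorem \ref{bost-charles} to the connected commutative algebraic $K$-groups ${\sf G}^\natural$ and ${\sf H}^\natural$ (whose Betti lattices generate the corresponding Lie algebras as $\C$-vector spaces by Theorem \ref{BettideRham}) produces a $K$-group homomorphism $g^\natural\colon {\sf G}^\natural\to {\sf H}^\natural$ whose Lie algebra map is $W_{-1}\varphi_K$; preservation of the Hodge filtration forces $g^\natural(\V({\sf M}))\subseteq \V({\sf N})$, so $g^\natural$ descends to a $K$-morphism $g\colon {\sf G}\to {\sf H}$. Next, the graded piece $\varphi_{\Z,0}\colon {\sf L}_\C\to {\sf F}_\C$ of \eqref{w-0} is a homomorphism between the $\bar\Q$-points of the étale $K$-group schemes ${\sf L}$ and ${\sf F}$, and since $K=\bar\Q$ it comes from a unique $K$-morphism $\ell\colon {\sf L}\to {\sf F}$.

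The main obstacle, and the only remaining step, is verifying the compatibility $g\circ {\sf u}={\sf v}\circ\ell$ so that $(\ell,g)$ is a morphism of complexes with $T_\bdr(\ell,g)=\varphi$. This is precisely what the last paragraph in the proof of Lemma \ref{hodge} already packages: the defect $\beta(x\otimes 1)={\sf v}(\varphi_{\Z,0}(x))-g({\sf u}(x))$ lies in ${\sf H}(K)$ for $x\in {\sf L}(K)$, satisfies $\beta(x\otimes 1)=\exp(d\beta(x\otimes 1))$ with $d\beta(x\otimes 1)\in \Lie({\sf H}_K)\subset \Lie({\sf H}_\C)$, and hence vanishes by the semi-abelian Schneider--Lang statement Theorem \ref{trasc}. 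All the transcendence input is thus funnelled through Lemmas \ref{weights} and \ref{hodge}, and the remainder of the argument is a structural assembly. An alternative route replaces Theorem \ref{bost-charles} by W\"ustholz's analytic subgroup Theorem \ref{thm:Wusth} applied to $W={\sf G}^\natural\times {\sf H}^\natural$ to produce the graph of $g^\natural$ as an algebraic subgroup, yielding the second proof alluded to in the introduction.
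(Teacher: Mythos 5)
Your overall scaffolding matches the paper's: reduce to $T_\bdr$ via Lemma \ref{BdRB}, descend to abelian groups for faithfulness, and for fullness assemble a candidate morphism $(\ell,g)$ out of $\gr_0^W$ and $W_{-1}$ after invoking Lemmas \ref{weights} and \ref{hodge}. However, there is a genuine gap in the step that produces $g$. You apply Theorem \ref{bost-charles} to ${\sf G}^\natural$ and ${\sf H}^\natural$, where by the appearance of $\V({\sf M})$ and $\V({\sf N})$ you clearly mean the universal vectorial extensions of the full $1$-motives ${\sf M}$ and ${\sf N}$, and you justify the hypothesis of that theorem (that the Betti lattice generates the Lie algebra over $\C$) by citing Theorem \ref{BettideRham}. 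But Theorem \ref{BettideRham} says that $T_\Z({\sf M}_\C)$ generates $\Lie({\sf G}^\natural_\C)$, whereas the hypothesis of Theorem \ref{bost-charles} is that $H_1({\sf G}^\natural_\C)$ generates $\Lie({\sf G}^\natural_\C)$. These are different lattices: from diagram \eqref{4sq} one has $H_1({\sf G}^\natural_\C)=T_\Z({\sf G}_\C)$, which sits in an exact sequence $0\to T_\Z({\sf G}_\C)\to T_\Z({\sf M}_\C)\to {\sf L}_\C\to 0$; when ${\sf L}\neq 0$ the smaller lattice $T_\Z({\sf G}_\C)$ has rank $2\dim {\sf A}+\dim {\sf T}$ while $\Lie({\sf G}^\natural_\C)=T_\dr({\sf M})_\C$ has dimension $2\dim {\sf A}+\dim {\sf T}+\operatorname{rk}{\sf L}$, so the hypothesis fails and Theorem \ref{bost-charles} does not apply. (Concretely: for ${\sf M}=[\Z\to 0]$, ${\sf G}^\natural=\G_a$ and $H_1=0$.) This is exactly the pitfall the paper avoids: in the proof of Lemma \ref{hodge} the case ${\sf L}=0$ is isolated first, and only there is Theorem \ref{bost-charles} applied at the universal-extension level; in the proof of Theorem \ref{thm:ff1mot} itself the argument descends via Lemma \ref{hodge} to the semi-abelian quotients ${\sf G},{\sf H}$, and Theorem \ref{bost-charles} is applied there.

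A second, smaller, gap: once $(\ell,g)$ is constructed, you verify only that it is a morphism of complexes (via the vanishing of the defect $\beta$ extracted from the proof of Lemma \ref{hodge}) and then assert $T_\bdr(\ell,g)=\varphi$ as if it were automatic. It is not: $T_\bdr(\ell,g)$ and $\varphi$ agree on $W_{-1}$ and on $\gr_0^W$ by construction, but a priori they could differ by a morphism from $T_\bdr({\sf L}[0])$ to $T_\bdr({\sf H}[-1])$ in $\Modc$. This requires one more application of Lemma \ref{weights}, showing that any such morphism between pieces of different pure weights vanishes; the paper does this explicitly via the auxiliary morphism $\alpha=T_\bdr(h)-\varphi$. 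Also note that in the paper the faithfulness of $T_\bdr$ is dispatched with a reference to \cite[proof of Lemma 3.3.2]{ABVB}; your descent-from-$\C$ argument is a reasonable alternative but, as you state it, reduces to the faithfulness of the lattice functor $T_\Z$ on $\M(\C)$ without filtrations, which is true but is not literally Deligne's statement about the Hodge realization and deserves a sentence of justification (it follows because $H_1({\sf G}_\C)$ generates $\Lie {\sf G}_\C$ for connected ${\sf G}$).
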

\begin{proof} Clearly, the functor $T_\bdr$ (resp. $T_\drb$) is faithful (\cf  \cite[proof of Lemma 3.3.2]{ABVB}) and we are left to show the fullness. Making use of Lemma \ref{BdRB} we are left to check the fullness for $T_\bdr$. Let  ${\sf M}_{K}=[{\sf u}_{K}\colon {\sf L}_K\to {\sf G}_{K}]$ and ${\sf N}_{K}=[{\sf v}_{K}\colon {\sf F}_K\to {\sf H}_K]$ be free and let $\varphi\colon T_\bdr({\sf M}_{K})\to T_\bdr({\sf N}_{K})$ be a morphism in $\Modc$. 

For $0$-motives, \ie if ${\sf G}_{K} = {\sf H}_{K} =0$, we have ${\sf L}_K\cong \Z^r_{K}$ and ${\sf F}_K \cong \Z^s_{K}$,  $\varphi_\Z \colon T_\Z({\sf M}_{\C}) \cong \Z^r \to T_\Z({\sf N}_{\C})\cong \Z^s_{K}$ provides a morphism $f: {\sf M}_{K}\to {\sf N}_{K}$ such that $T_\bdr(f) = \varphi$.

If the weight $-1$ parts are non-zero, by Lemma \ref{weights} $\varphi_\Z$ restricts to a homomorphism $W_{-1}\varphi_\Z$ as in \eqref{w-1} and it yields a morphism $\varphi_{\Z, 0}$ as in \eqref{w-0} \ie $\varphi_{\Z, 0}$ is the map induced  by $\varphi_\Z$ on $\gr_0^W$. If we set $f_\C\df\varphi_{\Z, 0}$ the homomorphism $f_\C\colon {\sf L}_\C\to  {\sf F}_\C$ trivially descends to a homomorphism $f_K \colon {\sf L}_{K}\to  {\sf F}_{K}$ over $K=\bar \Q$. 
 
Let's now consider $\varphi_\C\df  \varphi_K\otimes_K\mathrm{id}_\C$ and translate \eqref{w-1} and  \eqref{4t}, as in the proof of Lemma \ref{hodge},  in the following commutative diagram with exact rows
  \begin{equation}\label{dia1}
  \xymatrix{0\ar[r]& H_1({\sf G}_\C^\natural )  \ar[r]  \ar[d]_{W_{-1}\varphi_\Z}& \Lie({\sf G}_{\C}^\natural)\ar[r]^(.6){\exp}\ar[d]^{\varphi_\C}&{\sf G}_{\C}^\natural\ar[r]\ar@{.>}[d]^{\psi}&0\\
  0\ar[r]& H_1({\sf H}_\C^\natural )  \ar[r]  & \Lie({\sf H}_{\C}^\natural)\ar[r]^(.6){\exp}&{\sf H}_{\C}^\natural\ar[r]&0
  }\end{equation}
yielding a morphism of analytic groups $\psi \colon {\sf G}_\C^\natural  \to {\sf H}_\C^\natural$ on the quotients via the exponential mapping $\exp$, as indicated above. 
Now, since by Lemma \ref{hodge}, we have $\varphi_K (\V({\sf M}))\subseteq \V({\sf N})$, $\psi(\V({\sf M_{\C}}))\subseteq \V({\sf N}_\C)$, the diagram \eqref{dia1} induces a commutative diagram 
\[
  \xymatrix{0\ar[r]& H_1({\sf G}_\C  )  \ar[r]  \ar[d]_{W_{-1}\varphi_\Z}& \Lie({\sf G}_{\C} )\ar[r]^{\exp}\ar@{.>}[d]^{\varphi'_\C}&{\sf G}_{\C} \ar[r]\ar@{.>}[d]^{\psi'}&0~\\
  0\ar[r]& H_1({\sf H}_\C  )  \ar[r]  & \Lie({\sf H}_{\C} )\ar[r]^{\exp}&{\sf H}_{\C} \ar[r]&0.
  } \] 
As $\varphi'_\C$ is the base change of the $K$-linear map $\Lie({\sf G}_{K}) \to \Lie({\sf H}_{K})$  induced by $\varphi_K$, it follows from Theorem \ref{bost-charles} that  $\psi'= g_\C$ is the base change of the morphism $g_{K}\colon {\sf G}_K\to {\sf H}_{K}$ over $K=\bar \Q$ induced by $W_{-1}\varphi_K$ (see the proof of Lemma \ref{hodge}).

We are left to check that $h\df (f_{K},g_{K})$ gives a morphism $h\colon {\sf M_{K}}\to {\sf N}_{K}$, i.e., that $g_{K}\circ {\sf u}_{K}={\sf v}_{K}\circ f_{K}$, and to see that $T_\bdr(h) = \varphi$. To show that $h$ is a morphism of $1$-motives we may work after base change to $\C$ and, using \eqref{4sq}, it suffices to prove that $\psi\circ {\sf u}_{\C}^\natural={\sf v}_{\C}^\natural\circ f_{\C}$. Consider the following diagram
 \[\xymatrix{ T_\Z({\sf
M}_\C) \ar[rr]^{ \varphi_{\Z}} \ar[dd]_(.3){\varpi_{\sf M,\Z}}\ar[dr]^{\widetilde \exp}& &T_\Z({\sf
N}_\C)\ar[dd]^(.3){\varpi_{\sf N,\Z}}\ar[dr]^{\widetilde \exp}&\\
&{\sf L}_\C \ar[dd]_(.3){{\sf u}_\C^\natural}\ar[rr]^(.3){ f_{\C}} && {\sf F}_\C \ar[dd]^(.3){{\sf v}_\C^\natural}~~\\
T_\dr({\sf M}_K)\otimes_K\C\ar[rr]_(.4){\varphi_\C}\ar[dr]_{\exp}&&T_\dr({\sf N}_K)\otimes_K\C
 \ar[dr]_{\exp}& \\
 &{\sf G}_\C^\natural\ar[rr]^(.3){\psi}  && {\sf H}_\C^\natural ~.} \] 
 All squares are commutative. Indeed, $\exp\circ \varpi_{\sf M,\Z}= {\sf u}_\C^\natural\circ \widetilde \exp$ and $\exp\circ \varpi_{\sf N,\Z}= {\sf v}_\C^\natural\circ \widetilde \exp$  by \eqref{4sq}, $f_\C\circ \widetilde \exp=  \widetilde \exp \circ \varphi_\Z$ by definition of $f_\C$, $\varphi_\C\circ \varpi_{\sf M,\Z}= \varpi_{\sf N,\Z} \circ\varphi_\Z$ by  the compatibility of $\varphi_\Z$ with $\varphi_K$ as in \eqref{4h}, and finally $\psi\circ \exp=\exp\circ \varphi_\C$ by \eqref{dia1}. One concludes by the surjectivity of the map  $\widetilde \exp$ that also $\psi\circ {\sf u}_{\C}^\natural={\sf v}_{\C}^\natural\circ f_{\C}$. 

Now consider the   morphism $\alpha\df T_\bdr(h) -\varphi\colon T_\bdr({\sf M}_{K})\to T_\bdr({\sf N}_{K})$ in $\Modc$. By construction $W_{-1}\varphi_\Z = W_{-1}T_\Z(h)$ so that $\alpha$ is vanishing on $T_\bdr({\sf G}_{K})$. Moreover we have that $\gr_0^WT_\Z(h) = \varphi_{\Z, 0}$ so that $\alpha$ induces a morphism in $\Modc$ from $T_\bdr({\sf L}_{K})$ to $T_\bdr({\sf H}_{K})$ which is trivial by Lemma \ref{weights}. 
\end{proof}

\begin{remark}
In the proof of Theorem \ref{thm:ff1mot}, in order to show that $(f_K,g_K)\colon {\sf M}_{K}\to {\sf N}_{K}$ is a morphism we are left to check that $g_K \circ {\sf u}_{K}={\sf v}_{K} \circ f_K$. Remark that this also follows from two key facts: (i) the pullbacks $\tilde{\sf u}_{\C}$ and $\tilde{\sf v}_{\C}$ of ${\sf u}_{\C}$ and ${\sf v}_{\C}$ factor through the period mappings $\varpi$'s and (ii) the mappings $\varphi_\Z$ and $\varphi_K$ are compatible with the $\varpi$'s. 

In fact, according to the above notation, for $x\in {\sf L}_\C$ pick $\tilde{\log} (x)\in T_\Z({\sf M}_\C)$ and note that 
$\varphi_\Z(\tilde{\log}(x))= \tilde{\log}(f(x))$. Making use of \eqref{logper} we obtain
$$ g_\C ({\sf u}_\C (x))= g_\C\exp d\rho_{\sf G}\varpi_{M,\Z}(\tilde{\log}(x))= \exp d\rho_H dg^\natural_\C\varpi_{M,\Z}(\tilde{\log}(x))$$ by the functoriality of $\exp$. Now $\psi =\varphi_K\otimes_K1_\C$ and we are assuming the compatibility $(\varphi_K\otimes_K1_\C)\circ\varpi_{M,\Z}=\varpi_{N,\Z}\circ\varphi_\Z$ so that
$$ g_\C ({\sf u}_\C (x)) = \exp d\rho_H \varpi_{N,\Z}\varphi_\Z (\tilde{\log}(x))= \exp d\rho_H \varpi_{N,\Z}(\tilde{\log}(f(x))) = {\sf v}_{\C} (f(x))$$
using \eqref{logper} again, as claimed.
\end{remark}

We notice that an alternative proof of Theorem \ref{thm:ff1mot} can be given using W\"ustholz's analytic subgroup Theorem  \ref{thm:Wusth} as follows:\\

{\it Alternative Proof of Theorem \ref{thm:ff1mot}.}
Let  ${\sf M}_{K}=[{\sf u}_{K}\colon {\sf L}_K\to {\sf G}_{K}]$ and ${\sf N}_{K}=[{\sf v}_{K}\colon {\sf F}_K\to {\sf H}_K]$ be free $1$-motives and let 
$\varphi=(\varphi_\Z,\varphi_K) \colon T_\bdr({\sf M}_{K})\to T_\bdr({\sf N}_{K})$ be a morphism in $\Modc$.  Let $ W={\sf G}_K^\natural \times {\sf H}_K^\natural$ and note that  we have   commutative squares
\[\xymatrix{
T:=T_\Z({ \sf  M}_\C)\ar[rr]^{(id,\varphi_\Z)}\ar[d] &&T_\Z({ \sf  M}_\C)\times T_\Z({ \sf  N}_\C)\ar[d]\ar[rr]^(.45){\varpi_{{\sf M},\Z}\times  \varpi_{{\sf N},\Z}}&&   \Lie W_{\C}=\Lie{\sf G}_{\C}^\natural  \oplus \Lie{\sf H}_{\C}^\natural \ar[d]^{exp}\\
{ \sf  L}_\C\ar[rr]^{(id,\varphi_{\Z,0})} &&{ \sf  L}_\C\times { \sf  F}_\C \ar[rr]^(.4){ {\sf u}^\natural\times {\sf v}^\natural}&& W_\C:={\sf G}_{\C}^\natural \times { \sf H}_{\C}^\natural
}
\]
where the horizontal arrows are injective.
Let $S$ denote the image of $T$ in $ \Lie W_{\C} $; it  is contained in $ \exp^{-1}\bigl(W(\bar \Q)\bigr)$ since the image of  ${\sf L}_{\C} \times  {\sf F}_{\C} $ via $ {\sf u}^\natural\times {\sf v}^\natural $   is contained in $W(\bar \Q)$.  
Let $V$ denote the image of $\Lie {\sf G}_K^\natural$  in $\Lie W$ via the map ${\rm id}\oplus \varphi_K$.   By the compatibility of $\varphi_\Z$ and $\varphi_K$ over $\C$ via the homomorphisms of periods,  $V_\C$ coincides with the $\C$-span of $S$.
It then follows from Theorem \ref{thm:Wusth}  that there exists an algebraic subgroup $Z\subset W$ whose Lie algebra is $V$. 
 Now, the composition of the inclusion $Z\to  W$ with the projection $W\to {\sf G}_K^\natural$ is an isogeny, since it is an isomorphism on Lie algebras. In fact,  it is an isomorphism; indeed the injective map $T\to V_\C=\Lie Z_\C\subset \Lie W_\C$ maps $H_1({\sf G}^\natural_\C)\subset T$ into $H_1(Z_\C)\subset H_1(W_\C)$ and hence the isomorphism $\Lie Z_\C\longby{\sim} \Lie {\sf G}^\natural_\C$ restricts to an isomorphism 
 $H_1(Z_\C)\longby{\sim} H_1({\sf G}^\natural_\C )$.
 Let $\gamma \colon  {\sf G}_K^\natural \to {\sf H}_K^\natural$ be the homomorphism of algebraic $K$-groups defined by composing the inverse of the isomorphism 
$Z\to {\sf G}_K^\natural$ with the inclusion $Z\to W$ and the second projection $W\to  {\sf H}_K^\natural$. By construction $\Lie \gamma=\varphi_K$.

In order to see that $f:=(\varphi_{\Z,0},\gamma)$ is a morphism of $1$-motives with $T_\bdr(f)=\varphi$ it suffices to check that $\gamma_\C\circ {\sf u}^\natural_\C={\sf v}^\natural_\C\circ \varphi_{\Z,0}$ as morphisms $ {\sf L}_\C\to H_\C^\natural$. The latter fact is equivalent to the equality
$({\rm id}_{{\sf G}^\natural},\gamma)\circ {\sf u}^\natural=({\sf u}^\natural,{\sf v}^\natural\circ \varphi_{\Z,0})$ as morphisms ${\sf L}_\C\to W_\C$, and, by the above diagram, this is satisfied whenever 
$({\rm id}_{\Lie{\sf G}^\natural},\Lie \gamma_\C)\circ \varpi_{{\sf M},\Z}=(\varpi_{{\sf M},\Z},\varpi_{{\sf N},\Z}\circ\varphi_\Z)\colon T\to \Lie W_\C$.
Then we conclude by the commutativity of diagram \eqref{4h} since $\Lie \gamma_\C=\varphi_K\otimes {\rm id}_\C$.
\eprooff

\subsection{Descent to number fields}
Let $K'/ K$ be a field extension with $K'\subseteq \bar \Q$. Note the following commutative diagram of functors
\begin{equation}\label{4m}
\xymatrix{ \M(K) \ar[r]^{T_\bdr} \ar[d] & \Mod^{{\cong, \fr}} \ar[d]  \\
 \M(K') \ar[r]^{T_\bdr}  &\mathrm{Mod}_{\Z,K'}^{{\cong, \fr}}
 }\end{equation} 
where the functor on the left is the usual base-change and the vertical functor on the right  maps $(H_\Z,H_K,\omega)$ to $(H_\Z,H_K\otimes_K K',\omega)$ using the canonical isomorphism $(H_K\otimes_K K')\otimes_{K'} \C\simeq  H_K\otimes_K \C$.

\begin{propose}
Let $K$ be a subfield of $\bar\Q$. The functor $T_\bdr\colon \M(K) \to \Mod^{{\cong, \fr}} $ is fully faithful.
\end{propose}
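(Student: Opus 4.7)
The plan is to deduce the proposition from Theorem~\ref{thm:ff1mot} by Galois descent along $K \subseteq \bar\Q$. Faithfulness is immediate: the base-change functor $\M(K)\to \M(\bar\Q)$ is faithful (a morphism of $K$-1-motives that vanishes over $\bar\Q$ vanishes, since $K\to \bar\Q$ is flat/injective on the relevant Hom groups), and $T_\bdr$ is faithful over $\bar\Q$ by Theorem~\ref{thm:ff1mot}; combined with the commutativity of diagram~\eqref{4m}, this gives faithfulness of $T_\bdr\colon \M(K)\to \Mod^{\cong,\fr}$.

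For fullness, fix a morphism $\varphi=(\varphi_\Z,\varphi_K)\colon T_\bdr({\sf M}_K)\to T_\bdr({\sf N}_K)$ in $\Mod^{\cong,\fr}$. Extending the embedding $\sigma\colon K\hookrightarrow \C$ to some $\tilde\sigma\colon \bar\Q\hookrightarrow \C$, diagram~\eqref{4m} sends $\varphi$ to the morphism $\varphi_{\bar\Q}:=(\varphi_\Z,\varphi_K\otimes_K \bar\Q)\colon T_\bdr({\sf M}_{\bar\Q})\to T_\bdr({\sf N}_{\bar\Q})$ in $\mathrm{Mod}_{\Z,\bar\Q}^{\cong,\fr}$; note the Betti group $T_\Z(({\sf M}_{\bar\Q})_\C)=T_\Z({\sf M}_\C)$ is canonically identified with the one used over $K$ since $({\sf M}_{\bar\Q})_\C={\sf M}_\C$. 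By Theorem~\ref{thm:ff1mot}, there exists a unique morphism $\bar f\colon {\sf M}_{\bar\Q}\to {\sf N}_{\bar\Q}$ with $T_\bdr(\bar f)=\varphi_{\bar\Q}$.

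It remains to show that $\bar f$ descends to a $K$-morphism $f\colon {\sf M}_K\to {\sf N}_K$; then the commutativity of~\eqref{4m} and the (already proven) faithfulness of the right vertical functor force $T_\bdr(f)=\varphi$. Since ${\sf M}_K$ and ${\sf N}_K$ are of finite type over $K$, the action of $\mathrm{Gal}(\bar\Q/K)$ on $\Hom_{\M(\bar\Q)}({\sf M}_{\bar\Q},{\sf N}_{\bar\Q})$ factors through some finite Galois quotient, and Galois descent for lattices and semi-abelian varieties gives $\Hom_{\M(K)}({\sf M}_K,{\sf N}_K)=\Hom_{\M(\bar\Q)}({\sf M}_{\bar\Q},{\sf N}_{\bar\Q})^{\mathrm{Gal}(\bar\Q/K)}$. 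Thus it suffices to check $\bar f^\tau=\bar f$ for every $\tau\in\mathrm{Gal}(\bar\Q/K)$. Applying $T_\bdr$, one has $T_\bdr(\bar f^\tau)=(\varphi_\Z,(\varphi_K\otimes_K\bar\Q)^\tau)$, where $\tau$ acts trivially on the Betti part (which comes from $K\hookrightarrow \C$) and by $\mathrm{id}\otimes\tau$ on the scalar extension of the de Rham part. Since $\varphi_K$ is $K$-linear, $(\varphi_K\otimes_K\bar\Q)^\tau=\varphi_K\otimes_K\bar\Q$, and therefore $T_\bdr(\bar f^\tau)=\varphi_{\bar\Q}=T_\bdr(\bar f)$. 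The faithfulness provided by Theorem~\ref{thm:ff1mot} then yields $\bar f^\tau=\bar f$, completing the descent.

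The only point requiring genuine care is the Galois-equivariance of the correspondence in step three, i.e.\ checking that the Betti datum $\varphi_\Z$ is insensitive to the extension of $\sigma$ to $\bar\Q$ while the de Rham datum transforms as scalar extension of a $K$-linear map; once this is set up, the argument is formal. The hardest conceptual ingredient of the proposition, namely transcendence theory, is entirely absorbed into the invocation of Theorem~\ref{thm:ff1mot}.
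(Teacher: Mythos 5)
Your overall strategy --- lift $\varphi$ to a morphism $\bar f$ over $\bar\Q$ via Theorem~\ref{thm:ff1mot}, then descend it to $K$ by checking Galois invariance and invoking faithfulness --- is the same as the paper's. Faithfulness over $K$, the existence and uniqueness of $\bar f$, and the reduction to showing $\bar f^\tau=\bar f$ for all $\tau\in\mathrm{Gal}(\bar\Q/K)$ are all fine.

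The step that is misjustified is the assertion $T_\Z(\bar f^\tau) = \varphi_\Z$ ``because $\tau$ acts trivially on the Betti part.'' This is false as a general statement about morphisms over $\bar\Q$: the Betti \emph{object} $T_\Z({\sf M}_\C)$ is indeed independent of how $\sigma$ extends to $\bar\Q$, but $T_\Z$ of a morphism defined only over $\bar\Q$ does change under $\tau$. Concretely, if ${\sf M}_K={\sf N}_K$ is an elliptic curve over $\Q$ acquiring CM by $\Z[i]$ over $\bar\Q$ and $g=[i]$, then $g^\tau=[-i]$ for $\tau$ complex conjugation and $T_\Z(g)\neq T_\Z(g^\tau)$; so $T_\bdr$ is not Galois-equivariant for the trivial action on the Betti component. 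The reason your computation of $T_\bdr(\bar f^\tau)$ is nonetheless correct is indirect: from the $K$-linearity of $\varphi_K$ you rightly get $T_\dr(\bar f^\tau)=(\varphi_K\otimes_K\bar\Q)^\tau=\varphi_K\otimes_K\bar\Q=T_\dr(\bar f)$, and \emph{then} $T_\Z(\bar f^\tau)=\varphi_\Z$ follows because, the period map $\varpi_{{\sf N},\Z}$ being injective for a free $1$-motive, the Betti component of a morphism in $\Mod_{\Z,\bar\Q}^{\cong,\fr}$ is uniquely determined by its de Rham component through the compatibility square \eqref{4h}. You should therefore derive the Betti equality from the de Rham one, not assert it independently. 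With this fix the argument goes through; the paper reaches the same conclusion by reducing to a finite Galois subextension $K'/K$ and, via the weight filtration, to the case of abelian varieties, where the descent datum $\tau^*{\sf A}_{K'}\cong{\sf A}_{K'}$ is tracked explicitly --- but the logical crux is the same.
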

\begin{proof}
The functor  $T_\bdr$ is fully faithful over $\bar\Q$ by Theorem \ref{thm:ff1mot};  hence it is faithful over $K$, since the left-hand vertical functor in \eqref{4m} is faithful.  

Assume now ${\sf M}_K=[{\sf u}\colon {\sf L}_K\to {\sf G}_K], {\sf N}_K=[{\sf v}\colon {\sf F}_K\to {\sf H}_K]$ are $1$-motives over $K$ and let $(\varphi_\Z,\varphi_K)\colon T_\bdr({\sf M}_K)\to T_\bdr({\sf N}_K)$ be a morphism in $\Mod$. By Theorem \ref{thm:ff1mot} there exists a morphism $\psi\colon {\sf M}_{\bar\Q}\to {\sf N}_{\bar\Q}$ such that $T_\bdr(\psi)=(\varphi_\Z,\varphi_K\otimes_K{\rm id}_{\bar\Q})$. 
Note that there exists a subfield  ${K'}\subset \bar \Q$ with ${K'}/K$ finite Galois and $\psi=(f,g)$ is defined over ${K'}$. 
We may further assume that ${\sf L}_{K'}, {\sf F}_{K'}$ are constant free.
Hence $\gr_0^W\varphi_\Z$ descends over $K'$ and we have a commutative square
\begin{equation}\label{lfgg} \xymatrix{ {\sf L}_{K'} \ar[rr]^{f=\gr_0^W(\varphi_\Z)} \ar[d]  && {\sf F}_{K'}  \ar[d]   \\
   {\sf L}_{K'}\otimes \G_{a,K'} \ar[rr]^{\gr_0^W(\varphi_{K'})}  && {\sf F}_{K'} \otimes \G_{a,K'}  
}\end{equation}
where the vertical morphisms map  $x$ to $x\otimes 1$ (and descend the homomorphism of periods for $\gr_0^W({\sf M})$ and $\gr_0^W({\sf N})$ respectively).
By diagram \eqref{lfgg} $f$ descends over $K$ since $\gr_0^W(\varphi_{K'})=\gr_0^W(\varphi_K) \otimes \mathrm{id}_{K'}$ and the vertical morphisms are injective on points. In order to check that $\psi$ descends over $K$, we may then reduce to the case ${\sf L}_K={\sf F}_K=0$. By Cartier duality, we may further reduce to the case where ${\sf L}_K={\sf F}_K=0$ and ${\sf G}_K=\sf{A}_K, {\sf H}_K={\sf B}_K$ are abelian varieties.

For any $\tau\in \mathrm{Gal}({K'}/K)$ let $\tau$ also denote  the corresponding $K$-automorphism of $\Spec K'$. Further let $\tau^*{\sf A}_{K'}$ denote the base change of ${\sf A}_{K'}$ along $\tau$ and let $\tau_{{\sf A}_{K'}}\colon  {\sf A}_{K'}\to {\sf A}_{K'}$ be equal to ${\rm id}_{\sf A_K}\otimes \tau $. It is not a morphism of $K'$-schemes in general. Finally let $ \iota_{{\sf A},\tau}\colon {\sf A}_{K'}\to \tau^*{\sf A}_{K'}$ be the canonical morphism of $K'$-schemes such that $\tau_{\sf A_{K'}}$ is the composition of $ \iota_{{\sf A},\tau}$ with the projection $\tau^*{\sf A}_{K'}\to{\sf A}_{K'} $.  

In order to prove that the morphism   $\psi\colon {\sf A}_{K'}\to {\sf B}_{K'}$ is defined over $K$ we have to check that for any $\tau\in \mathrm{Gal}({K'}/K)$ it is $\tau_{{\sf B}_{K'}}\circ\psi=\psi\circ \tau_{{\sf A}_{K'}}$. 
In fact it is sufficient to check that $\iota_{{\sf B},\tau}\circ \psi=(\tau^*\psi)\circ \iota_{{\sf A},\tau}$ as morphisms of $K'$-schemes ${\sf A}_{K'}\to \tau^*{\sf B}_{K'}$, where $\tau^*\psi\colon \tau^*{\sf A}_{K'}\to\tau^*{\sf B}_{K'}$ is the obvious base change of $\psi$.
By faithfulness of $T_\bdr$, it is sufficient to check that  
\begin{equation}
\label{4tbdr}
T_\bdr(\iota_{{\sf B},\tau})\circ T_\bdr(\psi)=T_\bdr(\tau^*\psi)\circ T_\bdr( \iota_{{\sf A},\tau}).\end{equation}
Note that since ${\sf A}_K$ is a $K$-form of ${\sf A}_{K'}$, we may identify ${\sf A}_{K'}$ with $\tau^* {\sf A}_{K'}$ so that $\iota_{{\sf A},\tau}$ becomes the identity map. Further $T_\dr(\tau^*\psi)=\tau^*(\varphi_K\otimes \mathrm{id}_{K'})$ may be identified with $\varphi_K\otimes \mathrm{id}_{K'}$ and $T_\Z(\tau^*\psi)$ with $\varphi_\Z$. We conclude that $ T_\bdr(\tau^*\psi)$ may be identified with $(\varphi_\Z,\varphi_K\otimes \mathrm{id}_{K'})$ and hence  \eqref{4tbdr} is clear.
\end{proof}

\section{Some evidence: description of some Grothendieck arithmetic invariants}
Throughout this section we assume that $K=\bar \Q$ and by scheme we mean a separated scheme  of finite type over $K$. 
In order to show the period conjecture for motivic cohomology \eqref{periodconj} we are left to deal with rational coefficients. However, we prefer to keep the arguments integral when possible.
In general, for any algebraic scheme $X$ over $K$, by making use of the period isomorphism $\varpi^{p,q}_X$  and its inverse $\eta^{p,q}_X$ in Definition \ref{classper} we set
$$H_{\bdr}^{p, q}(X) \df (H^p(X_\an, \Z_\an (q)), H^p_\dr(X), \varpi^{p,q}_X)\in \Modc$$
and 
$$H_{\drb}^{p, q}(X) \df (H^p_\dr(X), H^p(X_\an, \Z_\an (q)), \eta^{p,q}_X)\in \cMod^{\cong}.$$
Note that $\varsigma (H_{\bdr}^{p, q}(X)) = H_{\drb}^{p, q}(X)$. 
We have that $\varpi^{p,q}_X= (2\pi i)^{q}\varpi^{p,0}_X$ and $\eta^{p,q}_X= (2\pi i)^{-q}\eta^{p,0}_X$ where $\eta^{p,0}_X: H^p_\dr(X)\otimes_K\C \longby{\simeq}  H^p(X_\an, \C)$ is the usual de Rham--Betti comparison isomorphism (up to a sign \cf \cite[Def. 5.3.1]{Hu} and \cite[Lemma 4.1.1 \& Prop. 4.1.2]{LW} for the Nisnevich topology). 
In particular we have that $H_{\drb}^{p, q}(X)= H_{\drb}^{p, 0}(X)(q)$.
\subsection{Period cohomology revisited}
For $H\in \Mod$ we set $$H_{\varpi} \df \Hom (\Z(0), H)$$ where the $\Hom$-group is taken in $\Mod$. This yields a functor $$( \ \  )_\varpi :  \Mod\to \Ab$$ to the category of finitely generated abelian groups. 
Similarly, let  $H_{\varpi} \df \Hom (\Z(0), H)$ for $H \in \cMod^{\cong}$ where now the $\Hom$-group is taken in $\cMod^{\cong}$. By Lemma \ref{varsigma}, for $H \in \Modc$ we clearly have that $$H_{\varpi}= \varsigma(H)_\varpi.$$ 
Moreover, for $H\in \Mod^{{\cong, \fr}}$ we have $H^* = H(-1)^\vee \in \Mod^{{\cong, \fr}}$ so that  $$H_{\varpi}^* =\Hom  (\Z(0), H(-1)^\vee) = \Hom  (H , \Z(1)).$$
Note that for $H\in \Mod^{{\cong, \fr}}$ we also have $H^\circ(1)= H (-1)^\circ\in \cMod^{{\cong, \fr}}$ (see \eqref{circdual}) and we shall denote  $$H^{\varpi} \df\Hom_{\cMod^{{\cong, \fr}}} (\Z(0), H (-1)^\circ)= \Hom_{\Mod^{{\cong, \fr}}} (H, \Z (1)) = H_{\varpi}^*.$$
With rational coefficients, for $H\in \Mod\otimes \Q$ and the corresponding $H^\Q\in \QMod$ we then have $H_{\varpi}\otimes_\Z \Q \cong H_{\varpi}^\Q\df \Hom (\Q(0), H^\Q)$ and, similarly, $H^{\varpi}\otimes_\Z \Q\cong H^{\varpi}_\Q$. 
We have (\cf\cite[Def. 2.1]{BC} and \cite[(5.15)]{Bo}):
\begin{lemma}\label{Homega}
For $H=(H_\Z, H_K, \omega)\in \Mod$ we have
$H_{\varpi} \cong H_\Z\cap H_K$
where $\cap$ is the inverse image of $H_K$ under $\omega :H_\Z\to H_K\otimes_K\C$. Moreover, for $H=(H_K, H_\Z,\eta)\in \cMod^{\cong}$, we have that $H_{\varpi}\cong H_K\cap H_\Z$ where $\cap$ is the inverse image of $H_K$ under the composition of $H_\Z\to H_\Z\otimes_\Z\C\longby{\eta^{-1}} H_K\otimes_K\C$.
\end{lemma}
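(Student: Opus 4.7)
The proof is essentially an unraveling of the definitions. The plan is as follows.

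First I would fix notation by observing that $\Z(0) = (\Z, K, \omega_0)$ where $\omega_0\colon \Z \to K \otimes_K \C = \C$ sends $1$ to $1$. A morphism $\phi\colon \Z(0) \to H$ in $\Mod$ consists of a group homomorphism $\phi_\Z\colon \Z \to H_\Z$ and a $K$-linear map $\phi_K\colon K \to H_K$ making the square \eqref{4h} commute. Both maps are determined by their values on the element $1$, so set $x := \phi_\Z(1) \in H_\Z$ and $y := \phi_K(1) \in H_K$; the commutativity of \eqref{4h} becomes the single equation
\[
\omega(x) = y \otimes 1 \quad \text{in } H_K \otimes_K \C.
\]

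Next I would note that the canonical map $H_K \to H_K \otimes_K \C$, $y \mapsto y \otimes 1$, is injective (since $\C$ is a faithfully flat $K$-module). Consequently $y$ is uniquely determined by $x$ as soon as such a $y$ exists, i.e.\ as soon as $\omega(x) \in H_K \subseteq H_K \otimes_K \C$. This identifies
\[
H_\varpi = \Hom_{\Mod}(\Z(0), H) \;\cong\; \{ x \in H_\Z : \omega(x) \in H_K \} \;=\; H_\Z \cap H_K,
\]
with the intersection interpreted, as in the statement, via $\omega$. The correspondence sends $\phi$ to $x = \phi_\Z(1)$; its inverse sends $x$ to the pair $(\phi_\Z, \phi_K)$ with $\phi_\Z(1) = x$ and $\phi_K(1)$ the unique preimage of $\omega(x)$ in $H_K$. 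Functoriality in $H$ is immediate from the construction.

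Finally, for $H = (H_K, H_\Z, \eta) \in \cMod^{\cong}$, I would either repeat the argument verbatim (a morphism $\Z(0) \to H$ in $\cMod^{\cong}$ is a pair $(\phi_K, \phi_\Z)$ compatible with $\eta$, and analysis of the image of $1$ gives the stated description) or deduce it from the previous case via the equivalence $\varsigma\colon \Modc \to \cMod^{\cong}$ of Lemma~\ref{varsigma}. Since $\varsigma(\Z(0)) = \Z(0)$ and $\varsigma$ is an equivalence, $H_\varpi = \varsigma(H)_\varpi$ for $H \in \Modc$, and the description $H_K \cap H_\Z$ for the cohomological side is precisely the image of $H_\Z \cap H_K$ under the identification $\omega_\C^{-1} = \eta$. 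There is no real obstacle here; the only thing to be careful about is keeping straight which factor is embedded in which tensor product, so that the ``intersection'' $\cap$ is interpreted as in Definition~\ref{defper} rather than as a set-theoretic intersection.
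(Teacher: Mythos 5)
Your proposal is correct and takes essentially the same route as the paper: the paper's entire proof is the single sentence that the identification is given by $\varphi \mapsto \varphi(1)$, and you simply unpack that, verifying that the compatibility square \eqref{4h} reduces to the condition $\omega(\phi_\Z(1)) = \phi_K(1)\otimes 1$ and that the $K$-component is uniquely recovered from the $\Z$-component by injectivity of $H_K \hookrightarrow H_K\otimes_K\C$. The reduction of the cohomological case to the homological one via the equivalence $\varsigma$ of Lemma~\ref{varsigma} is also consistent with the paper's setup.
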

\begin{proof}
The identifications are given by mapping $\varphi\in \Hom (\Z(0), H)$ to $\varphi (1)\in H_\Z\cap H_K$. 
\end{proof}
Similarly, for $H = (H_K, H_\Q, \omega)\in \cQMod^{\cong}$ we have that $H_{\varpi}^\Q \cong H_K\cap H_\Q$. We then clearly obtain:
\begin{cor}\label{perhombis}
For $H= H_{\drb}^{p, q}(X)$ we have that $H_{\varpi} \cong H^{p,q}_\varpi (X)$
coincides with the period cohomology of Definition \ref{defper}. 
\end{cor}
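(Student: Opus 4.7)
The proof is essentially a direct unpacking of definitions, piggy-backing on Lemma \ref{Homega}. My plan is as follows.

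First, I would recall the input: $H_{\drb}^{p,q}(X) = (H^p_\dr(X), H^p(X_\an, \Z_\an(q)), \eta^{p,q}_X)$ lies in $\cMod^{\cong}$, where by construction $\eta^{p,q}_X = (\varpi^{p,q}_X)^{-1}$ is the inverse of the period isomorphism of Definition~\ref{classper}. Applying the second half of Lemma~\ref{Homega} directly to this object gives
\[
H_{\drb}^{p,q}(X)_\varpi \;\cong\; H^p_\dr(X)\cap H^p(X_\an, \Z_\an(q)),
\]
where the intersection is taken inside $H^p_\dr(X)\otimes_K\C$ via the composition
\[
H^p(X_\an, \Z_\an(q))\longrightarrow H^p(X_\an, \Z_\an(q))\otimes_\Z\C \xrightarrow{\;(\eta^{p,q}_X)^{-1}\;} H^p_\dr(X)\otimes_K\C.
\]

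Second, I would match this to Definition~\ref{defper}. Since $(\eta^{p,q}_X)^{-1} = \varpi^{p,q}_X$, the composition above is exactly the $\C$-linear map induced by $\iota^{p,q}_\an$ followed by the identification $H^p(X_\an,\C)\cong H^p_\dr(X)\otimes_K\C$ supplied by $\varpi^{p,q}_X$; equivalently, a class in $H^p(X_\an, \Z_\an(q))$ lies in the fiber product with $H^p_\dr(X)$ precisely when its image in $H^p(X_\an,\C)$ agrees, under the period isomorphism, with the image of some element of $H^p_\dr(X)$ under $\iota^{p,q}_\dr$. This is verbatim the definition of the period cohomology group $H^{p,q}_\varpi(X)$.

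There is no genuine obstacle here; the only thing to verify is that the direction in which the $\C$-isomorphism is used in Lemma~\ref{Homega} (taking the inverse image of $H_K$ via $\eta^{-1}$) agrees with the direction used in Definition~\ref{defper} (taking the inverse image of $\iota^{p,q}_\dr(H^p_\dr(X))$ via $\iota^{p,q}_\an$ after identifying both sides of $H^p(X_\an,\C)\cong H^p_\dr(X)\otimes_K\C$ through $\varpi^{p,q}_X$). These two prescriptions coincide because $\eta^{p,q}_X$ is by definition $(\varpi^{p,q}_X)^{-1}$, so the corollary follows at once. I would state it as a one-line deduction, observing in passing the compatible identity $H_{\bdr}^{p,q}(X)_\varpi \cong H^{p,q}_\varpi(X)$ obtained by applying $\varsigma$, consistent with the general equality $H_\varpi = \varsigma(H)_\varpi$ already noted.
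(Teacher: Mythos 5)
Your proposal is correct and follows exactly the route the paper intends: the paper treats this corollary as an immediate consequence of Lemma~\ref{Homega} applied to $H_{\drb}^{p,q}(X)\in\cMod^{\cong}$, and you supply precisely the definitional unwinding (matching $\eta^{p,q}_X=(\varpi^{p,q}_X)^{-1}$ against the $\iota^{p,q}_\an$/$\iota^{p,q}_\dr$ formulation of Definition~\ref{defper}) that justifies the paper's ``we then clearly obtain.''
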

Moreover, composing the functor $H\leadsto H_\varpi$ with the Betti--de Rham realization of 1-motives $T_{\bdr}$ in \eqref{Tfct} we obtain a functor
\begin{equation} \label{Tvarpi}
T_{\varpi} \df  (\ \ )_\varpi  \circ T_\bdr: \tM (K) \to \Ab.
\end{equation}
For a $1$-motive ${\sf M} \in {}_t\M (K)$ we also have $T_\drb ({\sf M})\in \cMod^{\cong}$.
Composing $H\leadsto H_\varpi$ with the de Rham--Betti realization $T_\drb$ in \eqref{Tfctbis} now yields a functor  
$$T^\varpi \df (\ \ )_\varpi  \circ T_\drb: {}_t\M (K)^{op} \to \Ab.$$ 
We also note that Lemma \ref{BdRB} yields:
\begin{cor} For ${\sf M}\in \M (K)$ we have that 
$T^\varpi ({\sf M})\df T_\drb({\sf M})_\varpi\cong T_\bdr ({\sf M})^\varpi$.
\end{cor}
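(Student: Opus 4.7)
The claim is essentially formal: it unpacks the definitions of the two sides and invokes the previously established Lemma~\ref{BdRB}. The plan is to match the homomorphism groups term by term.

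First I would unwind the right-hand side using the definition introduced just before the corollary: for $H\in \Mod^{\cong,\fr}$ one has $H^\varpi = \Hom_{\cMod^{\cong,\fr}}(\Z(0), H(-1)^\circ)$, so
\[
T_\bdr({\sf M})^\varpi = \Hom_{\cMod^{\cong,\fr}}(\Z(0),\, T_\bdr({\sf M})(-1)^\circ).
\]
Using the identity $H(-1)^\circ = H^\circ(1)$ (valid for any $H\in \Mod^{\cong,\fr}$ by the discussion after \eqref{circdual}), this rewrites as $\Hom_{\cMod^{\cong,\fr}}(\Z(0), T_\bdr({\sf M})^\circ(1))$.

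Next, for the left-hand side $T^\varpi({\sf M}) = T_\drb({\sf M})_\varpi = \Hom_{\cMod^{\cong}}(\Z(0), T_\drb({\sf M}))$, I would apply Lemma~\ref{BdRB}, which provides a natural isomorphism $T_\drb({\sf M}) \cong T_\bdr({\sf M})^\circ(1)$. Since ${\sf M}\in \M(K)$ is a Deligne $1$-motive and hence free, the object $T_\bdr({\sf M})$ lies in $\Mod^{\cong,\fr}$, so $T_\bdr({\sf M})^\circ(1)$ lies in the full subcategory $\cMod^{\cong,\fr}\subseteq \cMod^{\cong}$; thus the $\Hom$ computed in $\cMod^{\cong}$ agrees with the one computed in $\cMod^{\cong,\fr}$. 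Putting this together yields
\[
T^\varpi({\sf M}) \cong \Hom_{\cMod^{\cong,\fr}}(\Z(0),\, T_\bdr({\sf M})^\circ(1)) = T_\bdr({\sf M})^\varpi,
\]
and naturality in ${\sf M}$ is inherited from the naturality in Lemma~\ref{BdRB}.

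There is no real obstacle here: the only non-formal ingredient is Lemma~\ref{BdRB}, which has already been proved using Cartier duality of $1$-motives (Theorem~\ref{Cartier}). The rest is a diagram of tautologies concerning the Tate twist and the anti-equivalence $\varsigma$ between $\Modc$ and $\cMod^{\cong}$. If one wanted to make the isomorphism even more explicit, one could alternatively combine the identification $H^\varpi = H^*_\varpi = \Hom_{\Mod^{\cong,\fr}}(H, \Z(1))$ with Cartier duality for $1$-motives (Theorem~\ref{Cartier}), writing $T_\bdr({\sf M})^\varpi = T_\bdr({\sf M}^*)_\varpi$ and noting that $T_\drb({\sf M})$ is, by Definition~\ref{Def:drb}, precisely $\varsigma(T_\bdr({\sf M}^*))$, so that taking $(\ )_\varpi$ on both sides (which is invariant under $\varsigma$, by the observation that $H_\varpi = \varsigma(H)_\varpi$ recorded before Lemma~\ref{Homega}) gives the same conclusion.
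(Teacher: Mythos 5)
Your proof is correct and matches the paper's intent: the paper gives no explicit proof, stating only that the corollary follows from Lemma~\ref{BdRB}, and your first argument is precisely the expansion of that implication. Your alternative route via Theorem~\ref{Cartier} and $\varsigma$-invariance of $(\ )_\varpi$ is also sound and is essentially the content of the proof of Lemma~\ref{BdRB} itself, so the two paths are closer than they may appear.
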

Working with rational coefficients we have $T_\bdr^\Q\df T_\bdr\otimes\Q$ (resp.\/ $T_\drb^\Q\df T_\drb\otimes\Q$)  and we then get a functor $T_\varpi^\Q$ (resp.\/ a contravariant functor $T^\varpi_\Q$) from the category of 1-motives up to isogenies $\M^{\Q}\df \M\otimes \Q\cong \tM\otimes \Q\cong {}_t\M\otimes \Q$ to the category of finite dimensional $\Q$-vector spaces.
Moreover, applying our Theorem \ref{thm:ff1mot} we have:
\begin{cor} \label{keru}
For ${\sf M}=[{\sf u}\colon {\sf L}\to {\sf G}] \in \M (\bar \Q)$ with Cartier dual   ${\sf M}^*=[{\sf u}^*\colon {\sf L}^*\to {\sf G}^*] \in \M (\bar \Q)$ we have that
$$T_{\varpi} ({\sf M})\cong T_{\Z}({\sf M}_\C) \cap T_\dr ({\sf M}_K) \cong \ker {\sf u}  $$
and 
$$T^{\varpi} ({\sf M})\cong  T_\dr ({\sf M}^*_K)\cap T_{\Z}({\sf M^*_\C}) \cong \ker {\sf u}^*. $$
\end{cor}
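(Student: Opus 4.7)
The plan is to deduce Corollary \ref{keru} directly from the full faithfulness established in Theorem \ref{thm:ff1mot}, rather than arguing afresh through transcendence theory. In this way the transcendental content is encapsulated in the previously proved theorem, and what remains is essentially a formal unpacking.

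First I would observe that the initial isomorphism in each of the two displayed formulas is a direct application of Lemma \ref{Homega} (\cf\ Corollary \ref{perhombis}). Indeed, unpacking $T_\varpi({\sf M}) = \Hom(\Z(0), T_\bdr({\sf M}))$ with $T_\bdr({\sf M}) = (T_\Z({\sf M}_\C), T_\dr({\sf M}_K), \varpi_{{\sf M},\Z}) \in \Modc$ identifies it with the intersection $T_\Z({\sf M}_\C) \cap T_\dr({\sf M}_K)$ taken inside $T_\dr({\sf M}_K)\otimes_K\C$ via $\varpi_{{\sf M},\Z}$; the cohomological variant of Lemma \ref{Homega} handles the analogous identification for $T^\varpi({\sf M})$.

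The essential step is then the isomorphism $T_\varpi({\sf M}) \cong \ker {\sf u}$. Recalling from Examples \ref{e.2} that $T_\bdr(\Z[0]) = \Z(0)$, the full faithfulness of $T_\bdr$ on $\M(\bar\Q)$ supplied by Theorem \ref{thm:ff1mot} gives
\[
T_\varpi({\sf M}) = \Hom_{\Modc}\bigl(T_\bdr(\Z[0]),\, T_\bdr({\sf M})\bigr) \cong \Hom_{\M(\bar\Q)}\bigl([\Z\to 0],\, {\sf M}\bigr).
\]
The remaining verification is that the right-hand side coincides with $\ker {\sf u}$: a morphism of complexes $[\Z \to 0] \to [{\sf L} \to {\sf G}]$ is the datum of a homomorphism $\phi\colon \Z \to {\sf L}$ making the obvious square commute, i.e.\ satisfying ${\sf u}\circ \phi = 0$, so $\phi$ factors through $\ker {\sf u}$ and is parametrized by $\Hom_\Z(\Z, \ker {\sf u}) = \ker {\sf u}$. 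This is the only computational point, and it is immediate.

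For the formula involving $T^\varpi({\sf M})$, I will combine Cartier duality with the case just established. Unwinding Definition \ref{Def:drb} gives $T_\drb({\sf M}) = \varsigma(T_\bdr({\sf M}^*))$; since $H_\varpi = \varsigma(H)_\varpi$, as recorded immediately before Lemma \ref{Homega}, this yields
\[
T^\varpi({\sf M}) = T_\drb({\sf M})_\varpi = \varsigma(T_\bdr({\sf M}^*))_\varpi = T_\bdr({\sf M}^*)_\varpi = T_\varpi({\sf M}^*) \cong \ker {\sf u}^*,
\]
by applying the first half of the corollary to the Cartier dual ${\sf M}^*$. Since all genuine analytic input has already been absorbed into Theorem \ref{thm:ff1mot}, no substantive obstacle remains; the only delicate item is the bookkeeping between $T_\bdr$, $T_\drb$ and Cartier duality, which is precisely what Lemma \ref{BdRB} and Definition \ref{Def:drb} take care of.
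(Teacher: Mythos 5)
Your proposal is correct and follows essentially the same route as the paper: identify $T_\varpi({\sf M})$ via Lemma \ref{Homega}, then transport the problem to $\Hom_{\M(\bar\Q)}([\Z\to 0],{\sf M})=\ker{\sf u}$ using the full faithfulness of Theorem \ref{thm:ff1mot}, and handle $T^\varpi({\sf M})$ by Cartier duality. The only (cosmetic) deviation is that you reduce the second formula to the first via the identity $T_\drb({\sf M})_\varpi=\varsigma(T_\bdr({\sf M}^*))_\varpi=T_\varpi({\sf M}^*)$, whereas the paper invokes full faithfulness of $T_\drb$ directly and then identifies $\Hom_{\M}({\sf M},\G_m[-1])\cong\ker{\sf u}^*$; these are interchangeable and both rest on the same ingredients.
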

\begin{proof} Note that $\Z(0) = T_\bdr (\Z [0])$ and for $T_\bdr ({\sf M})= (T_\Z({\sf M}_\C), T_\dr({\sf M}_K), \varpi_{\sf M,\Z})$ we have 
$$\ker {\sf u}\cong \Hom_{\M (K)} (\Z [0],  {\sf M}) \longby{T_\bdr}\Hom (T_\bdr (\Z [0]),  T_\bdr ({\sf M})) = T_{\varpi}({\sf M})$$
which is an isomorphism over $K =\bar \Q$ as proven in Theorem \ref{thm:ff1mot}. We just apply Lemma \ref{Homega}.  Moreover, $T_\drb ({\sf M})= (T_\dr({\sf M}^*_K), T_\Z({\sf M}^*_\C), \eta_{\sf M^*})$, $\Z(0) = T_\drb(\G_m [-1])$ and we have an isomorphism
$$\Hom_{\M (K)} ({\sf M}, \G_m [-1]) \longby{T_\drb}\Hom (T_\drb(\G_m [-1]),  T_\drb ({\sf M})) = T^{\varpi}({\sf M})$$
and $\Hom_{\M (K)} ({\sf M}, \G_m [-1])\cong \Hom_{\M (K)} (\Z[0], {\sf M}^*) = \ker {\sf u}^*$ showing the claim.\end{proof}

\subsection{Period conjecture for $q=1$}
Recall that $\Z (1)\in \DM^\eff_{\et}$ is canonically identified with $\Tot ([0\to \G_m])=\G_m [-1]$ (see \cite[Lemma 1.8.7]{BVK}). We then have 
$$H^{p, 1}(X)\cong H^{p-1}_\eh(X,\G_m)$$ for all $p\in \Z$.
Recall the motivic Albanese triangulated functor
$$\LAlb  : \DM^\eff_{\gm}\to D^b (\M )$$ 
where $\DM^\eff_{\gm}\subset \DM^\eff_{\Nis}$ is the subcategory of compact objects, \ie the category of geometric motives, which has been constructed in \cite[Def. 5.2.1]{BVK} (see also \cite[Thm. 2.4.1]{BVA}). This functor is integrally defined. Rationally, $\LAlb$ yields a left adjoint to the inclusion functor given by $\Tot$ in \eqref{tot} (see \cite[Thm. 6.2.1]{BVK}).

Applying $\LAlb$ to the motive of any algebraic scheme $X$ we get $\LAlb(X) \in D^b(\M )$, a complex of 1-motives whose $p$-th homology $\LA{p} (X) \in{}_t\M$ is a 1-motive (with cotorsion, see \cite[Def. 8.2.1]{BVK}). Dually, we have $\RPic (X)\in D^b(\tM)$ (see \cite[\S 8.3]{BVK}). Taking the Cartier dual of $\LA{p} (X)$ we get $\RA{p}(X)\in \tM$ and conversely via \eqref{Cdualtor}. Now, the motivic Albanese map $$M(X)\to \Tot \LAlb (X)$$ in $\DM^\eff_{\et}$ (see \cite[\S 8.2.7]{BVK}) yields an integrally defined map
\begin{equation}\label{albmap}
\Hom_{D^b({}_t\M)} (\LAlb (X), [0\to \G_m][p])\to \Hom_{\DM^\eff_{\et}} (M(X), \Z(1)[p])\cong H^{p-1}_\eh(X,\G_m).
\end{equation}
Rationally (by adjunction), this map becomes a $\Q$-linear isomorphism 
\begin{equation}\label{adjoint}
H^{p-1}_\eh(X,\G_m)_\Q \cong \Hom_{\DM^\eff_{\et, \Q}} (M(X), \Z(1)[p])\xleftarrow{\simeq}\Hom_{D^b(\M^\Q)} (\LAlb (X), \G_m[-1][p]).
\end{equation}
Using \eqref{Cdualtor} we set $$\EExt^p (\Z, \RPic (X))\df \Hom_{D^b(\tM)} (\Z, \RPic (X)[p])\cong \Hom_{D^b({}_t\M)} (\LAlb (X), \G_m[-1][p])$$ for all $p\in \Z$ and we also have (\cf \cite[Lemma 10.5.1]{BVK}):
\begin{lemma}\label{reproj} For any $X$ over $K =\bar \Q$ and $p\in \Z$ there is an extension 
$$0\to \Ext (\Z, \RA{p-1}(X))\to \EExt^p (\Z, \RPic (X))\longby{\pi}\Hom (\Z, \RA{p}(X))\to 0$$
where the $\Hom$ and $\Ext$ are here taken in the category $\tM$ of $1$-motives with torsion. The composition of \eqref{albmap} with the period regulator $r_\varpi^{p, 1}:H^{p-1}_\eh(X,\G_m)\to H^{p,1}_\varpi (X)$ induces a mapping 
$$\theta_\varpi^{p}: \Hom (\Z, \RA{p}(X))\to H^{p,1}_\varpi (X).$$
\end{lemma}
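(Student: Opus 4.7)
The plan is to derive the short exact sequence from a hyperext spectral sequence for the bounded complex $\RPic(X) \in D^b(\tM)$ and to construct $\theta_\varpi^p$ by a divisibility-versus-finite-generation argument.

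For the short exact sequence, consider the convergent hyperext spectral sequence
\[E_2^{i,j} = \Ext^i_{\tM}(\Z, \RA{j}(X)) \Longrightarrow \EExt^{i+j}(\Z, \RPic(X)),\]
whose identification of cohomology objects $H^j(\RPic(X)) = \RA{j}(X)$ comes from Cartier duality applied to $\LAlb(X)$ as in \cite[\S 8.3]{BVK}. To collapse it into a two-step extension, I would check that $\Ext^i_{\tM}(\Z, -) = 0$ for $i \geq 2$. For any $[L \to G] \in \tM$, devissage along the short exact sequence $0 \to [0 \to G] \to [L \to G] \to [L \to 0] \to 0$ reduces this to (i) the vanishing of $\Ext^i_{\tM}(\Z, L[0])$ for $i \geq 1$, which holds because over $\bar\Q$ with trivial Galois action the lattice subcategory of $\tM$ is equivalent to finitely generated abelian groups, in which $\Z$ is projective; and (ii) the vanishing of $\Ext^i_{\tM}(\Z, G[-1])$ for $i \geq 2$ for a semi-abelian variety $G$, for which I would invoke the structural results in \cite[Lemma 10.5.1]{BVK}. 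Specializing to $n = p$ then yields the displayed extension, with $\pi$ the edge map.

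To produce $\theta_\varpi^p$, it suffices to show that the composition of \eqref{albmap} with $r_\varpi^{p,1}$ vanishes on the subgroup $\Ext^1_{\tM}(\Z, \RA{p-1}(X)) \hookrightarrow \EExt^p(\Z, \RPic(X))$. Writing $\RA{p-1}(X) = [u \colon L \to G]$, any extension $0 \to [L \to G] \to [L' \to G'] \to [\Z \to 0] \to 0$ in $\tM$ forces $G' = G$ and $L' \cong L \oplus \Z$ (since $\Z$ is free), and is classified up to equivalence by the image in $G$ of a section of $L' \to \Z$, modulo $u(L)$; hence $\Ext^1_{\tM}(\Z, [L \to G]) \cong G(\bar\Q)/u(L)$. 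This group is divisible because $G(\bar\Q)$ is divisible for any semi-abelian variety over the algebraically closed field $\bar\Q$ (multiplication by $n$ being a surjective isogeny). On the other hand, $H^{p,1}_\varpi(X) \subseteq H^p(X_\an, \Z_\an(1))$ is finitely generated, since $X$ is of finite type over $\bar\Q$, and hence contains no nontrivial divisible subgroup. Any homomorphism from a divisible abelian group to a finitely generated one is zero, so the composition vanishes on $\Ext^1_{\tM}(\Z, \RA{p-1}(X))$ and $\theta_\varpi^p$ is well defined.

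The main obstacle is the vanishing of higher Ext groups in $\tM$ together with the precise cohomological indexing of $\RPic(X)$; both rest on structural results for 1-motives with torsion in \cite[\S 8, \S 10]{BVK}. The subsequent divisibility argument is elementary.
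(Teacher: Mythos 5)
Your proof takes essentially the same route as the paper's: a hyperext spectral sequence $E_2^{i,j} = \Ext^i_{\tM}(\Z, \RA{j}(X)) \Rightarrow \EExt^{i+j}(\Z, \RPic(X))$ collapsed to a two-step extension by the vanishing of $\Ext^{\geq 2}_{\tM}(\Z,-)$, followed by noting that the divisible group $\Ext^1_{\tM}(\Z, \RA{p-1}(X))$ must map to zero in the finitely generated group $H^{p,1}_\varpi(X)$. The only difference is that you make explicit what the paper delegates to citations: you carry out the d\'evissage for the higher Ext vanishing and compute $\Ext^1_{\tM}(\Z, [L\to G]) \cong G(\bar\Q)/u(L)$ directly, whereas the paper simply invokes that $\tM(\bar\Q)$ has homological dimension $1$ and cites \cite[\S C.8]{BVK} for the divisibility of $\Ext(\Z, -)$.
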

\begin{proof} In fact,  the canonical spectral sequence
$$E^{p,q}_2 = \Ext^p(\Z, \RA{q}(X))\ \ \implies\ \ \EExt^{p+q} (\Z, \RPic (X))$$
yields the claimed extension since the abelian category of 1-motives with torsion $\tM (K)$ is of homological dimension $1$ over the algebraically closed field $K = \bar \Q$.
Moreover, for any 1-motive ${\sf M}= \RA{p}(X)\in \tM (K)$ the group $\Ext (\Z, {\sf M})$ is divisible and the group $\Hom (\Z, {\sf M})$ is finitely generated (as it follows easily by making use of \cite[\S C.8]{BVK}). The horizontal mapping in the following commutative diagram
\[
\xymatrix{
\Ext (\Z, \RA{p-1} (X))\ar@{_{(}->}[d]_{ }\ar@/^1.6pc/[drr]^-{\text{zero}}&&\\
\EExt^p (\Z, \RPic (X))\ar[r]\ar@{->>}[d]_{\pi}&H^{p-1}_\eh(X,\G_m)\ar[r]^{r_\varpi^{p, 1}}& H^{p,1}_\varpi (X)\\
\Hom (\Z, \RA{p}(X))\ar@{.>}@/_1.6pc/[urr]_-{\theta_\varpi^p}&&}\]
obtained by the composition of \eqref{albmap} with the period regulator $r_\varpi^{p, 1}$,
is therefore sending $\Ext (\Z, \RA{p-1} (X))$ to zero, since  $H^{p,1}_\varpi (X)$ is finitely generated.  We then get the induced mapping $\theta_\varpi^p$ as indicated in the diagram.
\end{proof}
Also for the Betti realization, there is an integrally defined group homomorphism
$$\theta^p_\Z: T_{\Z}(\RA{p}(X)_\C)_\fr \to H^p(X_\an, \Z_\an (1))_\fr$$ 
induced via Cartier duality, by applying the Betti realization $\beta_\sigma$ in \eqref{BettiR} to the motivic Albanese \eqref{albmap} in a canonical way. This is justified after the natural identification of Deligne's $T_\Z$ with the Betti realization $\beta_\sigma$ on 1-motives (see \cite[Thm. 15.4.1]{BVK} and \cite{VO} for an explicit construction of the natural isomorphism $T_\Z\cong \beta_\sigma\Tot $). Rationally, it yields an injection 
$$\theta^p_\Q: T_{\Q}(\RA{p}(X)_\C) \cong H^p_{(1)}(X_\an, \Q_\an (1))\subset H^p(X_\an, \Q_\an (1))$$ where the notation $H^p_{(1)}$ is taken to indicate the largest 1-motivic part of $H^p(X_\an, \Q_\an (1))$ (more precisely, this is given by the underlying $\Q$-vector space associated to the mixed Hodge structure, see \cite[Cor. 15.3.1]{BVK}).

For the de Rham realization, similarly, we have a $K$-linear mapping $$\theta^p_\dr: T_{\dr}(\RA{p}(X))\to H^p_\dr(X).$$ Actually, for ${\sf M} = \LA{p}(X)$ and ${\sf M}^* = \RA{p}(X)$, we have $\eta_{{\sf M}^*}$  the $\C$-inverse of the period isomorphism $\varpi_{{\sf M}^*,\C}$ in Theorem \ref{BettideRham} and  $\eta^{p,1}_X$ which is the inverse of the period isomorphism in Definition \ref{classper}. Together with $\theta^p_\Z$ and $\theta^p_\dr$, we obtain a diagram 
 \[\xymatrix{ T_{\Z}(\RA{p}(X)_\C)_\C \ar[rr]^{\theta^p_\Z\otimes \C} &&H^p(X_\an, \Z_\an (1))_\C \ar[r]^{\ \ 2\pi i} &  H^p(X_\an, \C)\\
T_{\dr}(\RA{p}(X))_{\C}\ar[u]^{\eta_{{\sf M}^*}}\ar[rr]^{\theta^p_\dr\otimes \C} && H^p_\dr(X)\otimes_K \C \ar[u]^{\eta^{p,1}_X}\ar[ur]_{\eta^{p,0}_X}. &
 }\] 
We have that this diagram commutes, in fact:
\begin{lemma}\label{omega} 
Let $X$ be over the field $K = \bar \Q$ and $p\in \Z$. There is a morphism 
$$\theta^p_\drb\df (\theta^p _\dr, \theta^p_\Z): T_{\drb} (\LA{p}(X))_\fr\to H_{\drb}^{p, 1}(X)_\fr$$ in the category $\cMod^{{\cong, \fr}}$.  Rationally 
$\theta^p_\drb\otimes\Q$ becomes injective. Moreover, $\theta^0_\drb$ and $\theta^1_\drb$ are integrally defined isomorphisms.
 \end{lemma}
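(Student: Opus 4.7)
The plan is to verify the three assertions in order: (i) commutativity of the square defining $\theta^p_\drb$ as a morphism in $\cMod^{\cong,\fr}$, (ii) injectivity after $\otimes\Q$, and (iii) the fact that $\theta^0_\drb$ and $\theta^1_\drb$ are integral isomorphisms.

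For (i), I would unwind the construction: both $\theta^p_\Z$ and $\theta^p_\dr$ arise by applying the Betti and de Rham realizations respectively to the motivic Albanese map $M(X)\to \Tot\LAlb(X)$ in $\DM^\eff_\et$, after Cartier duality. Compatibility with the comparison map $\eta_{{\sf M}^*}=\varpi^{-1}_{{\sf M}^*,\C}$ on the 1-motive side and with $\eta^{p,1}_X$ on the $X$ side is then a naturality statement for Ayoub's period quasi-isomorphism $\varpi^q$ of Lemma~\ref{APLemma}: applied to the motivic Albanese morphism and combined with Proposition~\ref{periodsquare} (plus the identification of Deligne's $T_\Z$ with $\beta_\sigma\circ\Tot$ of \cite[Thm.~15.4.1]{BVK}), it produces exactly the required commuting square in $\cMod^{\cong,\fr}$.

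For (ii), I would argue that a morphism $(f_K,f_\Z)$ in $\cQMod^{\cong}$ is injective as soon as $f_\Z$ is, since the isomorphism $\eta$ couples the two components over $\C$ and forces $f_K$ to be injective whenever $f_\Z$ is. By construction, $\theta^p_\Q$ identifies $T_\Q(\RA{p}(X)_\C)$ with the largest $1$-motivic sub-$\Q$-Hodge structure $H^p_{(1)}(X_\an,\Q_\an(1))$ of $H^p(X_\an,\Q_\an(1))$, via \cite[Cor.~15.3.1]{BVK}; this inclusion is manifestly injective, so $\theta^p_\drb\otimes\Q$ is injective too.

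For (iii), both statements reduce to comparing the Betti and de Rham sides with the realizations of the $1$-motives $\RA{0}(X)$ and $\RA{1}(X)$. On the Betti side, the point is that $H^p(X_\an,\Z_\an(1))_\fr$ is captured integrally by the $1$-motivic part of the cohomology for $p=0,1$, since no higher weights contribute: this is built into the definition of the Albanese complex \cite[Def.~8.2.1]{BVK} (see also \cite[Chap.~8]{BVK}), where the construction of $\LA{p}(X)$ for $p=0,1$ is rigged precisely so that $T_\Z(\RA{p}(X)_\C)_\fr\cong H^p(X_\an,\Z_\an(1))_\fr$. The analogous integral identification on the de Rham side follows from the fact that $T_\dr(\RA{p}(X))=\Lie({\sf G}^\natural_p)$ computes $H^p_\dr(X)$ modulo torsion for $p=0,1$ by \cite[Prop.~10.2.3]{BVK} and the construction of $\RPic(X)$ in \cite[\S 8.3]{BVK}. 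The main obstacle, and the place where genuine work is needed beyond formal manipulation, is to establish this integral (not merely rational) de Rham comparison for $p=1$: rationally it is a direct consequence of the adjunction \eqref{adjoint}, but integrally one has to propagate through the spectral sequence of Lemma~\ref{reproj} and use that the Ext-part is divisible to conclude that $\theta^1_\dr$ is a genuine isomorphism of free $K$-modules.
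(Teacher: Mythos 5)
Your overall strategy is reasonable and parts (ii) is essentially what the paper intends, but your part (i) has a genuine gap and your part (iii) chases a red herring.

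The gap in (i): naturality of Ayoub's period quasi-isomorphism $\varpi^q$ combined with Proposition~\ref{periodsquare} does give you a commuting square relating the period isomorphisms for $M(X)$ and for $\Tot\LAlb(X)$, since both live in $\DM^\eff_\et$ and $\varpi^{p,q}_M$ is functorial in $M$. But that is \emph{not yet} the square you need. The square in Lemma~\ref{omega} has $\eta_{{\sf M}^*}=\varpi^{-1}_{{\sf M}^*,\C}$ on the left, which is Deligne's period mapping built out of the universal vector extension (Definition~\ref{perhom} and Theorem~\ref{BettideRham}), not the restriction of Ayoub's period isomorphism to the image of $\Tot$. You invoke \cite[Thm.~15.4.1]{BVK} to identify $T_\Z$ with $\beta_\sigma\circ\Tot$ on the Betti side, but you give no corresponding input on the de Rham side: you need to know that $\Lie({\sf G}^\natural)$ is identified, compatibly with both period morphisms, with the motivic de Rham realization $H^\d_\dr(\Tot({\sf M}))$ coming from $\uOm$. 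That comparison is precisely what the paper's one-line proof cites, namely \cite[Cor.~16.3.2]{BVK}; without it the square on the de Rham side is assumed rather than established. Your phrasing ``it produces exactly the required commuting square'' skips over this.

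On (iii): the paper regards $p=0,1$ as straightforward once the compatibility of realizations is in place, because the motivic Albanese map \eqref{albmap} is already an \emph{integral} isomorphism for $p=0,1$ (a fact stated later in the paper, via \cite[Lemma 12.6.4 b)]{BVK}). Your worry about propagating through the spectral sequence of Lemma~\ref{reproj} misdirects the argument: that spectral sequence lives entirely on the motivic/1-motivic side and concerns $\EExt^p(\Z,\RPic(X))$, not the realization comparison. Also, "isomorphism of free $K$-modules" is a non-issue for $\theta^p_\dr$ since it is a map of $K$-vector spaces. The real content for $p=0,1$ is just: the Albanese map is an integral iso in these degrees, so the induced map on realizations is too, given the compatibility from \cite[Cor.~16.3.2]{BVK}.
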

\begin{proof} This is a consequence of \cite[Cor. 16.3.2]{BVK}. For $p=0, 1$ it is straightforward that they are isomorphisms. \end{proof}

\begin{lemma}\label{regfact}
The map $\theta_\varpi^p$ defined in Lemma \ref{reproj} factors through the de Rham-Betti realization via the Cartier duals \eqref{Cdualtor}, \ie we have the following factorization \[
\xymatrix{
\Hom (\Z, \RA{p}(X))\ar@/^2.5pc/[rrr]^-{\theta_\varpi^p}\ar@{=}[r]&\Hom (\LA{p}(X), \G_m[-1])\ar[r]^{\hspace*{1cm}T_\drb}& T^{\varpi}(\LA{p}(X))\ar[r]^{\iota}& H^{p,1}_\varpi (X)\\
}\]
such that $\iota$ is given by $\theta^p_\drb$ in Lemma \ref{omega}, using Corollary \ref{perhombis}, as follows 
$$T^{\varpi}(\LA{p}(X))=\Hom (\Z (0), T_{\drb}(\LA{p}(X))) \to  \Hom (\Z (0), H_{\drb}^{p, 1}(X))\cong H^{p,1}_\varpi (X)$$ and  the latter $\Hom$ is here taken in $\cMod^{\cong}$. 
\end{lemma}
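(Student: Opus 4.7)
The plan is to unwind both sides explicitly on a class $\phi\colon \LA{p}(X)\to \G_m[-1]$ (equivalently $\phi^*\colon \Z\to \RA{p}(X)$ after Cartier duality \eqref{Cdualtor}) and then invoke the compatibility of the motivic Albanese map with the Betti and de Rham realizations that already underlies Lemma \ref{omega}. On the right-hand side, applying the contravariant functor $T_\drb$ of \eqref{Tfctbis} to $\phi$ produces a morphism $\Z(0)=T_\drb(\G_m[-1])\to T_\drb(\LA{p}(X))$ in $\cMod^{\cong}$, which by definition is an element of $T^\varpi(\LA{p}(X))$; post-composing with the morphism $\theta^p_\drb$ of Lemma \ref{omega} and using the identification $\Hom_{\cMod^\cong}(\Z(0), H^{p,1}_\drb(X))\cong H^{p,1}_\varpi(X)$ of Corollary \ref{perhombis} gives $\iota\circ T_\drb(\phi)\in H^{p,1}_\varpi(X)$.

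On the left-hand side, a representative $\tilde\phi\in \EExt^p(\Z,\RPic(X))$ of $\phi^*\in \Hom(\Z,\RA{p}(X))$ is chosen via the spectral sequence of Lemma \ref{reproj}; under the motivic Albanese map \eqref{albmap} it produces a class in $H^{p-1}_\eh(X,\G_m)=H^{p,1}(X)$, whose image under $r^{p,1}_\varpi$ is $\theta^p_\varpi(\phi^*)\in H^{p,1}_\varpi(X)$. The indeterminacy of the lift lies in $\Ext(\Z,\RA{p-1}(X))$, which is divisible, and thus has no non-zero map into the finitely generated group $H^{p,1}_\varpi(X)$, exactly as in Lemma \ref{reproj}.

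The key identification is now the commutativity of the diagram
\[\xymatrix{
\Hom_{{}_t\M}(\LA{p}(X),\G_m[-1]) \ar[r]^-{T_\drb}\ar[d]_-{\eqref{albmap}} & T^\varpi(\LA{p}(X))\ar[d]^-{\theta^p_\drb}\\
H^{p-1}_\eh(X,\G_m)\ar[r]^-{(r^{p,1}_\dr, r^{p,1}_\an)} & H^{p,1}_\varpi(X)\subset H_\drb^{p,1}(X)
}\]
(where the bottom arrow is realized component-wise in the de Rham and Betti factors and intersected via $\varpi^{p,1}_X$ as in Corollary \ref{omegacom}). Commutativity is precisely the compatibility of the motivic Albanese morphism $M(X)\to \Tot\LAlb(X)$ with the Betti and de Rham realizations, i.e. the same \cite[Cor.~16.3.2]{BVK} that produces $\theta^p_\drb$ in Lemma \ref{omega}; the de Rham/Betti compatibility of the period regulator itself is Proposition \ref{periodsquare}. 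Both paths in the diagram therefore land in the same element of $H^p_\dr(X)\cap H^p(X_\an,\Z_\an(1))=H^{p,1}_\varpi(X)$.

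The main obstacle is bookkeeping torsion and divisibility: $\theta^p_\drb$ is stated in Lemma \ref{omega} only on the free parts $T_\drb(\LA{p}(X))_\fr$, whereas the lift $\tilde\phi$ is canonical only up to $\Ext(\Z,\RA{p-1}(X))$. Both ambiguities evaporate in the target $H^{p,1}_\varpi(X)$: the divisible $\Ext$ piece has no non-trivial image in the finitely generated period group, and the torsion mismatch is absorbed using the rigidity identification of Lemma \ref{rig} and the torsion statement of Lemma \ref{torhyp}. This shows that $\iota\circ T_\drb(\phi)=\theta^p_\varpi(\phi^*)$, which is the claimed factorization.
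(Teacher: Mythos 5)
Your proof is correct and follows essentially the same route as the paper's (which is a one-line remark that, by construction, both $\theta^p_\varpi$ and $\iota\circ T_\drb$ arise from applying the Betti and de Rham realizations to the motivic Albanese map \eqref{albmap}, hence agree on the quotient modulo the divisible $\Ext$-ambiguity). The only small imprecision is in your commutative square: the left vertical arrow labeled \eqref{albmap} really starts from $\Hom_{D^b({}_t\M)}(\LAlb(X),\G_m[-1][p])$ rather than from the quotient $\Hom_{{}_t\M}(\LA{p}(X),\G_m[-1])$, so it only becomes a bona fide arrow after choosing a lift and quotienting out $\Ext(\Z,\RA{p-1}(X))$ as you do explain earlier; the appeal to Lemmas \ref{rig} and \ref{torhyp} at the end is not really needed for the well-definedness.
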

\begin{proof} By construction $\theta_\varpi^p$ is induced by $r_\varpi^{p, 1}$ on a quotient via the motivic Albanese \eqref{albmap} applying Betti and de Rham realizations so that the claimed factorization is clear.\end{proof}
Thus, showing the period conjecture \eqref{periodconj} for $q=1$ is equivalent to seeing that $\theta_\varpi^p$ is surjective, rationally.
Recall (see \cite[Prop. 10.4.2]{BVK}) that for any $X$ of dimension $d= \dim (X)$ the 1-motive $\LA{d+1}(X)$ is a group of multiplicative type and 
$$\LA{p}(X) =
\begin{cases}
\relax 0 & \text{if $p < 0$}\\
\relax [\Z[\pi_0(X)]\to 0]& \text{if $p = 0$}\\
\relax [L_1\by{u_1} G_1] & \text{if $p= 1$}\\
\relax 0 & \text{if $p > \max (2, d  +1)$}
\end{cases}$$
where $G_1$ is connected, so that $\LA{p}(X)\in \M$ is free for $p=0,1$ (see \cite[Prop. 12.6.3 c)]{BVK}). Thus $\RA{0}(X) =[\Z[\pi_0(X)]\to 0]^*= [0\to \Z[\pi_0(X)]^\vee\otimes \G_m]$ is a torus and we have that $\Ext (\Z, \RA{0}(X)) = \Hom_K (\Z ,\RA{0}(X))= K^*\otimes_\Z \Z[\pi_0(X)]^\vee$  (see \cite[Prop. C.8.3 (b)]{BVK}). 
\begin{thm}\label{per:1:1} For any $X$ over $K =\bar \Q$ we have that \eqref{periodconj} holds true for $p=q=1$, \ie the period regulator $r_\varpi^{1, 1} : H^{0}_\eh(X,\G_m)\onto  H^{1,1}_\varpi (X)$ is surjective. Moreover, considering the 1-motive $\RA{1}(X) = [L_1^*\by{u_1^*} G_1^*]$ which is the Cartier dual of $\LA{1} (X)$ we have a canonical isomorphism
$$\ker u_1^* \cong H^1_\dr(X)\cap H^1(X_\an, \Z_\an (1))=H^{1,1}_\varpi (X).$$
In particular, if $X$ is proper $H^{0}_\eh(X,\G_m)\cong K^*\otimes_\Z \Z[\pi_0(X)]^\vee$ and  $H^1_\dr(X)\cap H^1(X_\an, \Z (1))=0$.
\end{thm}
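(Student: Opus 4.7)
The plan is to specialize the machinery of Lemmas \ref{reproj}, \ref{regfact} and \ref{omega} to $p=1$, and to upgrade the resulting factorization into an isomorphism using the full faithfulness of the Betti--de Rham realization of Deligne 1-motives over $\bar{\Q}$ (Theorem \ref{thm:ff1mot}). Since $\LA{1}(X)=[L_1\by{u_1}G_1]$ is a free 1-motive, its Cartier dual $\RA{1}(X)=[L_1^*\by{u_1^*}G_1^*]$ lies in $\M(\bar{\Q})$, with $L_1^*$ the character group of the toric part of $G_1$, hence free abelian. A direct computation in $\tM$ identifies $\Hom(\Z,\RA{1}(X))=\ker u_1^*$.

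By Lemma \ref{regfact}, the map $\theta_\varpi^{1}$ constructed in Lemma \ref{reproj} factors as
\[
\ker u_1^*\;=\;\Hom(\LA{1}(X),\G_m[-1])\longby{T_\drb}T^\varpi(\LA{1}(X))\longby{\iota}H^{1,1}_\varpi(X),
\]
where $\iota$ is induced by $\theta^{1}_\drb$. I would show that both arrows are isomorphisms. By Lemma \ref{omega}, $\theta^{1}_\drb\colon T_\drb(\LA{1}(X))_\fr\iso H_\drb^{1,1}(X)_\fr$ is an integrally defined isomorphism in $\cMod^{{\cong, \fr}}$, so applying $(-)_\varpi=\Hom(\Z(0),-)$ makes $\iota$ an isomorphism onto $H^{1,1}_\varpi(X)_\fr$; Theorem \ref{thm:ff1mot} makes $T_\drb$ bijective on the relevant Hom set, since both $\LA{1}(X)$ and $\G_m[-1]$ are Deligne 1-motives over $\bar{\Q}$. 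Composing, $\theta_\varpi^{1}\colon\ker u_1^*\iso H^{1,1}_\varpi(X)_\fr$.

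Unwinding the construction in Lemma \ref{reproj}, this identifies the image of $r_\varpi^{1,1}$ with $H^{1,1}_\varpi(X)_\fr$. Since $H^1(X_\an,\Z)$ is torsion-free by the universal coefficient theorem, one has $H^{1,1}_\varpi(X)=H^{1,1}_\varpi(X)_\fr$ (or, alternatively, Lemma \ref{torhyp} absorbs any torsion), and therefore $r_\varpi^{1,1}$ is surjective and $\ker u_1^*\cong H^{1,1}_\varpi(X)$ canonically. For the proper case, the toric part of $G_1$ vanishes and $G_1=A_1$ is an abelian variety, so $\RA{1}(X)=[0\to A_1^\vee]$ forces $L_1^*=0$ and $\ker u_1^*=0$; moreover $H^0_\eh(X,\G_m)=\Gamma(X,\cO_X^*)\cong K^*\otimes_\Z\Z[\pi_0(X)]^\vee$ for proper $X$ since global units on a connected proper $K$-scheme are $K^*$.

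The main obstacle is really the fullness statement in Theorem \ref{thm:ff1mot}, which rests on the transcendence theorem of Waldschmidt (or W\"ustholz). Once that is granted, the present theorem is a formal assembly of the motivic Albanese functoriality \eqref{albmap}, Cartier duality via Theorem \ref{Cartier}, and the cohomological identifications packaged in Lemmas \ref{reproj}--\ref{omega}.
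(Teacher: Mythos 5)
Your proposal is correct and takes essentially the same route as the paper: the surjectivity of $r_\varpi^{1,1}$ and the identification $\ker u_1^*\cong H^{1,1}_\varpi(X)$ are both deduced by showing $\theta_\varpi^1$ is an isomorphism, via the chain $\ker u_1^*\cong\Hom(\LA{1}(X),\G_m[-1])\cong T^\varpi(\LA{1}(X))\cong H^{1,1}_\varpi(X)$, with the middle isomorphism resting on Theorem~\ref{thm:ff1mot} (the paper packages this step as Corollary~\ref{keru}) and the last on Lemma~\ref{omega} and Corollary~\ref{perhombis}. Two small points of care: for $X$ proper the correct statement is $\RA{1}(X)=[0\to G_1^*]$ where $G_1^*$ may have a nontrivial toric part coming from $L_1$ (so writing $A_1^\vee$ is slightly loose, though it does not affect the conclusion $L_1^*=0$); and the identification $H^0_\eh(X,\G_m)\cong\G_m(\pi_0(X))$ for arbitrary (possibly singular) proper $X$ is not just "global units are $K^*$" but requires the $\eh$-descent statement the paper cites from \cite[Lemma~12.4.1]{BVK}.
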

\begin{proof} In fact, $\RA{1}(X)$ is free and therefore $\Hom (\LA{1}(X), \G_m[-1])\cong \Hom (\Z, \RA{1}(X))\cong  \ker u_1^*$. Thus the extension in Lemma \ref{reproj} is
$$0\to K^*\otimes_\Z \Z[\pi_0(X)]^\vee\to \EExt^1 (\Z, \RPic (X))\to\ker u_1^*\to 0.$$
Moreover $\theta_\varpi^1:\Hom (\Z, \RA{1}(X))\cong \ker u_1^*\by{\simeq} H^{1,1}_\varpi (X)$ is an isomorphism, which in turn implies that $r_\varpi^{1, 1}$ is a surjection. Actually, see Lemma \ref{regfact}, $\theta_\varpi^1$ factors as follows
$$\Hom (\Z, \RA{1}(X))\stackrel{(a)}{\cong} T^{\varpi}(\LA{1}(X))\stackrel{(b)}{\cong}  \Hom (\Z (0), H_{\drb}^{1, 1}(X))\stackrel{(c)}{\cong} H^{1,1}_\varpi (X)$$
where: (a) is the isomorphism obtained applying Corollary \ref{keru} to ${\sf M}=\LA{1} (X)$; (b) is the  $\Hom (\Z (0), - )$ of the isomorphism $\theta^1_\drb : T_{\drb} (\LA{1}(X))\cong H_{\drb}^{1, 1}(X)$ given by $p=1$ in Lemma \ref{omega}; (c) is the isomorphism in Corollary \ref{perhombis}.  If $X$ is proper then $L_1^*=0$, \ie $\LA{1}(X) = [L_1\by{u_1} G_1]$ with $G_1$ an abelian variety (see \cite[Cor. 12.6.6]{BVK}) in such a way that $\RA{1} (X) = [0 \to G_1^*]$, and $H^{0}_\eh(X,\G_m)\cong \G_m (\pi_0(X))$ (see \cite[Lemma 12.4.1]{BVK}).
\end{proof}
\begin{remark} We may actually compute $\RA{1}(X)$ by using descent. For example, if  $X$ is normal let $\bar X$ be a normal compactification of $X$, $p: X_\d \to X$ a smooth hypercovering and $\bar X_{\d}$ a smooth
compactification with normal crossing boundary $Y_{\d}$ such that  $\bar p : \bar X_{\d}\to
\bar X$ is a hypercovering. Then $\bar p^* : \Pic_{\bar X/K}^0\by{\simeq} \Pic_{\bar X_\d/K}^0$ is an abelian variety and  $$\RA{1} (X) = [\Div^0_{Y_\d}(\bar X_\d)\by{u_1^*} \Pic_{\bar X/K}^0]$$ where $\Div^0_{Y_\d}(\bar X_\d) \df \ker (\Div^0_{Y_0}(\bar X_0)\to \Div^0_{Y_1}(\bar X_1))$ (see \cite[Prop. 12.7.2]{BVK}).
\end{remark}
For $X$ smooth we have that (see \cite[Cor. 9.2.3]{BVK})
$$\LA{p}(X) =
\begin{cases}
\relax [\Z[\pi_0(X)]\to 0]& \text{if $p = 0$}\\
\relax [0\to\cA_{X/K}^0] & \text{if $p= 1$}\\
\relax [0\to \NS_{X/K}^*] & \text{if $p= 2$}\\
0 & \text{otherwise,}
\end{cases}$$
where $\cA_{X/K}^0$ is the Serre-Albanese semi-abelian variety and $\NS_{X/K}^*$ denotes the group of multiplicative type dual to the N\'eron-Severi group $\NS_{X/K}$. In this case, we then have 
$$\RA{p}(X) =
\begin{cases}
\relax [0\to \Z[\pi_0(X)]^*]& \text{if $p = 0$}\\
\relax [\Div^0_{Y}(\bar X)\by{u_1^*} \Pic_{\bar X/K}^0] & \text{if $p= 1$}\\
\relax [\NS_{X/K}\to 0] & \text{if $p= 2$}\\
0 & \text{otherwise,}
\end{cases}
$$
for a smooth compactification $\bar X$ with normal crossing boundary $Y$.
Note that, reducing to the smooth case by blow-up induction we can see that the map \eqref{albmap} is an isomorphism for $p=0, 1$ (\cf \cite[Lemma 12.6.4 b)]{BVK}). We deduce the following: 
\begin{cor} For any scheme $X$ over $K =\bar \Q$ we have a short exact sequence
$$0\to K^*\otimes_\Z \Z[\pi_0(X)]^\vee\to H^{0}_\eh(X,\G_m)\longby{r_\varpi^{1, 1}}H^{1,1}_\varpi (X)\to 0.$$
\end{cor}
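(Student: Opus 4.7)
The plan is to combine Lemma \ref{reproj} applied to $p=1$ with the isomorphism $\theta_\varpi^1$ computed in Theorem \ref{per:1:1} and the integral identification of the Albanese map for low degrees. More precisely, Lemma \ref{reproj} produces an extension
\[0\to \Ext(\Z,\RA{0}(X))\to \EExt^1(\Z,\RPic(X))\longby{\pi}\Hom(\Z,\RA{1}(X))\to 0\]
together with a commutative square whose horizontal arrows are \eqref{albmap} (for $p=1$) on top and the map $\theta_\varpi^1$ on the bottom, and whose right vertical arrow is $r_\varpi^{1,1}$.

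First, the remark immediately before the statement observes that \eqref{albmap} is an \emph{integral} isomorphism for $p=0,1$ by blow-up induction reducing to the smooth case and using \cite[Lemma 12.6.4 b)]{BVK}; thus in the above square the top arrow is an integral identification $\EExt^1(\Z,\RPic(X))\cong H^0_\eh(X,\G_m)$. Next, since $\LA{0}(X)=[\Z[\pi_0(X)]\to 0]$ is a free $1$-motive, its Cartier dual is the torus $\RA{0}(X)=[0\to \Z[\pi_0(X)]^\vee\otimes \G_m]$, so
\[\Ext(\Z,\RA{0}(X))=\Ext_{\tM}(\Z,\Z[\pi_0(X)]^\vee\otimes\G_m[-1])=K^*\otimes_\Z \Z[\pi_0(X)]^\vee,\]
where the last identification uses \cite[Prop. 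C.8.3 (b)]{BVK} and the fact that $K=\bar\Q$ is algebraically closed.

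Finally, Theorem \ref{per:1:1} shows that $\theta_\varpi^1\colon \Hom(\Z,\RA{1}(X))\to H^{1,1}_\varpi(X)$ is an isomorphism (this is where the transcendence-theoretic full faithfulness of $T_\bdr$, encoded in Theorem \ref{thm:ff1mot}, enters through Corollary \ref{keru} and Lemma \ref{omega}). Combining these three facts, the commutative square identifies the source and target of $r_\varpi^{1,1}$ with $\EExt^1(\Z,\RPic(X))$ and $\Hom(\Z,\RA{1}(X))$ respectively, so that $r_\varpi^{1,1}$ becomes identified with $\pi$. Hence the surjectivity in Theorem \ref{per:1:1} recovers the surjectivity of $\pi$, while the kernel sequence of Lemma \ref{reproj} yields the desired short exact sequence
\[0\to K^*\otimes_\Z \Z[\pi_0(X)]^\vee\to H^0_\eh(X,\G_m)\longby{r_\varpi^{1,1}} H^{1,1}_\varpi(X)\to 0.\]
The only non-formal ingredient is the integrality of \eqref{albmap} in degree $1$: everything else is bookkeeping built on Theorem \ref{per:1:1}. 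Since this integrality is already pointed out in the paragraph preceding the corollary, no serious obstacle remains.
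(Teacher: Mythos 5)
Your proof is correct and follows essentially the same route the paper intends: the corollary is stated without an explicit proof immediately after the remark that \eqref{albmap} is an integral isomorphism for $p=0,1$, and your reconstruction correctly combines Lemma \ref{reproj} (for $p=1$), the computation $\Ext(\Z,\RA{0}(X))\cong K^*\otimes_\Z\Z[\pi_0(X)]^\vee$, the integral identification $\EExt^1(\Z,\RPic(X))\cong H^0_\eh(X,\G_m)$, and the isomorphism $\theta_\varpi^1$ from Theorem \ref{per:1:1} to exhibit $r_\varpi^{1,1}$ as $\pi$ under these identifications.
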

In general, we also have: 
\begin{propose} \label{reg1}
For $K = \bar \Q$ the period regulator  $r_\varpi^{p,1}$ admits a factorization
$$H^{p, 1}(X)_\Q\cong H^{p-1}_\eh(X,\G_m)_\Q \onto T^{\varpi}_\Q(\LA{p}(X)) \into H^p_\dr(X)\cap H^p(X_\an, \Q_\an (1)) = H^{p,1}_\varpi (X)_\Q$$
where the projection is given by Lemma \ref{reproj} via $T_\drb^\Q$ and the inclusion is given by $\theta^p_\drb \otimes\Q$ in Lemma \ref{omega}. Therefore, the conjecture \eqref{periodconj} is equivalent to $T^{\varpi}_\Q(\LA{p}(X))\cong H^{p,1}_\varpi (X)_\Q$.
\end{propose}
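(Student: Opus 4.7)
The plan is to assemble the factorization directly from the rational versions of Lemmas \ref{reproj}, \ref{regfact} and \ref{omega}, using Theorem \ref{thm:ff1mot} to identify the middle term. I will work throughout with rational coefficients, where $\M^\Q\cong\tM^\Q\cong{}_t\M^\Q$ is an abelian category of homological dimension one (as already used in the proof of Lemma \ref{reproj}), so that the short exact sequence of $\Q$-vector spaces coming from Lemma \ref{reproj} splits automatically.

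First I would combine the rational adjunction isomorphism \eqref{adjoint} with the rational form of Lemma \ref{reproj} to obtain a canonical $\Q$-linear surjection
$$H^{p-1}_\eh(X,\G_m)_\Q\cong \EExt^p(\Z,\RPic(X))_\Q\onto \Hom_{\tM^\Q}(\Z,\RA{p}(X)),$$
noting that by the construction of $\theta_\varpi^p$ in Lemma \ref{reproj} its composition with $r_\varpi^{p,1}\otimes\Q$ is $\theta_\varpi^p\otimes\Q$. Next, Cartier duality identifies $\Hom_{\tM^\Q}(\Z,\RA{p}(X))$ with $\Hom_{\M^\Q}(\LA{p}(X),\G_m[-1])$, and the rational functor $T_\drb^\Q$ together with its full faithfulness on $\M(\bar\Q)$ (Theorem \ref{thm:ff1mot}, transported across $\M^\Q\cong{}_t\M^\Q$) yields a canonical isomorphism
$$\Hom_{\M^\Q}(\LA{p}(X),\G_m[-1])\cong \Hom_{\cQMod^{\cong}}(\Q(0),T_\drb^\Q(\LA{p}(X)))=T^\varpi_\Q(\LA{p}(X)).$$
For the inclusion on the right, I would apply the left-exact functor $\Hom_{\cQMod^{\cong}}(\Q(0),-)$ to the morphism $\theta^p_\drb\otimes\Q$ of Lemma \ref{omega}; since $\theta^p_\drb\otimes\Q$ is injective in $\cQMod^{\cong}$, the induced $\Q$-linear map into $\Hom_{\cQMod^{\cong}}(\Q(0),H^{p,1}_\drb(X)_\Q)\cong H^{p,1}_\varpi(X)_\Q$ (Corollary \ref{perhombis}) is injective.

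The main technical point — and the chief bookkeeping obstacle — is verifying that the composition of the three arrows built above literally agrees with $r_\varpi^{p,1}\otimes\Q$ rather than some variant twisted by an auxiliary identification. This is precisely the content of Lemma \ref{regfact}, whose factorization $\theta_\varpi^p=\iota\circ T_\drb$ is designed so that $r_\varpi^{p,1}$ factors as the rational Albanese projection of Lemma \ref{reproj} followed by the inclusion induced by $\theta^p_\drb$; combining it with the two identifications above produces the desired factorization. The final equivalence with the period conjecture \eqref{periodconj} is then formal: the leftmost arrow is already surjective, so $r_\varpi^{p,1}\otimes\Q$ is surjective iff the rightmost inclusion is an isomorphism, which by the divisibility and torsion comparison results recalled in Section 1.4 is equivalent to surjectivity of $r_\varpi^{p,1}$ itself, i.e.\ to \eqref{periodconj}.
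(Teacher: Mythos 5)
Your proposal reproduces the paper's proof: you assemble the factorization from the rational adjunction \eqref{adjoint}, the surjection $\pi^*\otimes\Q$ from Lemma \ref{reproj}, the compatibility of Lemma \ref{regfact}, the full faithfulness of Theorem \ref{thm:ff1mot}, the injectivity of $\theta^p_\drb\otimes\Q$ from Lemma \ref{omega}, and the identification of Corollary \ref{perhombis}, exactly as the paper's commutative diagram does. The closing observation that surjectivity of $r_\varpi^{p,1}\otimes\Q$ is equivalent to surjectivity of $r_\varpi^{p,1}$ (by the lemma following Lemma \ref{torhyp}) is likewise what the ``Therefore'' in the statement implicitly invokes, so the argument is correct and in essence identical.
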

\begin{proof}
In fact,  using the adjunction \eqref{adjoint}, the Cartier dual $\pi^*$ of $\pi$ in Lemma \ref{reproj}, the factorization of Lemma \ref{regfact} and Theorem \ref{thm:ff1mot} we have the following commutative diagram
\[
\xymatrix{
\Hom_{D^b(\M^\Q)} (\LAlb (X), \G_m[-1][p])\ar[r]^{\hspace*{1.2cm}\simeq}\ar@{->>}[d]_{\pi^*\otimes \Q}&H^{p-1}_\eh(X,\G_m)_\Q\ar[r]^{r_{\varpi ,\Q}^{p, 1}}& H^{p,1}_\varpi (X)_\Q\\
\Hom (\LA{p}(X), \G_m[-1])_\Q\ar@{^{(}->}[rru]_-{}\ar[r]_-{T_\drb^\Q}^{\hspace*{1.2cm}\simeq}&T^{\varpi}_\Q(\LA{p}(X))\ar@{^{(}->}[ru]_{}.&}\]
\end{proof}
For $X$ smooth we further have that $$H^{p, 1}(X)\cong H^{p-1}_\eh(X,\G_m)\cong H^{p-1}_\et (X,\G_m)$$ and this latter is vanishing after tensoring with $\Q$ for all $p\neq 1, 2$ (see \cite[Prop. 1.4]{GDix}).
Accordingly, the period conjecture \eqref{periodconj} for $X$ smooth and $p\neq 1, 2$ is in fact equivalent to \eqref{vanishing}, \ie 
\begin{equation}\label{vanishingq=1}H^{p,1}_\varpi (X) = H^p_\dr(X)\cap H^p(X_\an, \Q_\an (1))=0 \quad p\neq 1,2.\end{equation}
For $p=2$ and $X$ smooth we have that $H^{2, 1}(X) \cong \Pic (X)$, $r_\varpi^{2,1}=c\ell$ is induced by the usual cycle class map and $T^{\varpi}_\Q(\LA{2}(X)) = \NS (X)_\Q$. 

We here recover the results of Bost-Charles (see \cite[Thm. 5.1]{Bo} and \cite[Cor. 3.9-3.10]{BC}) as follows. We refer to \cite[Chap. 4]{BVK} for the notion of biextension of 1-motives. The following is a generalization of \cite[Thm. 3.8 2)]{BC} and of the discussion of the sign issue in \cite[\S 3.4]{BC}: 
\begin{lemma} \label{biext}
For ${\sf N}, {\sf M}\in \M (\bar \Q)$ we have that
$$\Biext ({\sf N}, {\sf M};\G_m ) \cong (T_\bdr ({\sf N})^\vee\otimes T_\bdr ({\sf M})^\vee\otimes \Z(1))_\varpi$$
and, when ${\sf N} ={\sf M}$, the subgroup of symmetric biextensions corresponds to alternating elements. 
\end{lemma}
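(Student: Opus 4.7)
The plan is to rewrite $\Biext({\sf N}, {\sf M}; \G_m)$ as a $\Hom$-group via the universal property of the Cartier dual, and then to apply the full faithfulness of the Betti--de Rham realization (Theorem \ref{thm:ff1mot}) together with the internal Hom/tensor adjunction in the period category $\Mod^{\cong,\fr}$. Recall from \cite[Chap. 4]{BVK} that the Cartier dual ${\sf M}^*$ represents the functor ${\sf N} \mapsto \Biext({\sf N}, {\sf M}; \G_m)$ on $\M(\bar \Q)$, so there is a canonical bifunctorial isomorphism $\Biext({\sf N}, {\sf M}; \G_m) \cong \Hom_{\M(\bar \Q)}({\sf N}, {\sf M}^*)$, compatible with the swap of the two arguments when ${\sf N} = {\sf M}$.

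Over $K = \bar\Q$, Theorem \ref{thm:ff1mot} identifies this with $\Hom_{\Mod^{\cong,\fr}}(T_\bdr({\sf N}), T_\bdr({\sf M}^*))$. Applying Theorem \ref{Cartier} gives $T_\bdr({\sf M}^*) \cong T_\bdr({\sf M})^\vee \otimes \Z(1)$, and since $\Mod^{\cong,\fr}$ is a closed symmetric monoidal category in which objects are reflexive (so that $\ihom(A,B) = A^\vee \otimes B$, \cf Lemma \ref{tensorper} and the surrounding discussion), the tensor/Hom adjunction rewrites the above group as
$$\Hom_{\Mod^{\cong,\fr}}\bigl(\Z(0),\; T_\bdr({\sf N})^\vee \otimes T_\bdr({\sf M})^\vee \otimes \Z(1)\bigr),$$
which is $(T_\bdr({\sf N})^\vee \otimes T_\bdr({\sf M})^\vee \otimes \Z(1))_\varpi$ by the very definition of $(-)_\varpi$. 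This proves the first assertion.

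For the symmetric case ${\sf N} = {\sf M}$, the symmetry of a biextension translates under the first identification into the condition $\phi = \phi^*$, where $\phi \colon {\sf M} \to {\sf M}^*$ is the associated morphism and $\phi^* \colon {\sf M} \simeq {\sf M}^{**} \to {\sf M}^*$ is its Cartier transpose. Transporting this through $T_\bdr$ and the tensor/Hom adjunction, and using that the canonical biduality ${\sf M} \cong {\sf M}^{**}$ for 1-motives carries the classical sign $-1$ on the associated pairings (\cf \cite[\S 10.2]{De}), one checks that the swap of the two tensor factors of $T_\bdr({\sf M})^\vee \otimes T_\bdr({\sf M})^\vee$ acts by $-1$ on the image of the symmetric biextensions, giving exactly the alternating condition. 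This is the period-theoretic incarnation of the classical fact that symmetric isogenies $A \to A^\vee$ of an abelian variety correspond, via the first Chern class, to alternating $2$-forms on $H_1(A,\Z)$.

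The main obstacle is the sign bookkeeping in the symmetric-to-alternating matching: although each link of the chain of isomorphisms is formal, one must trace the swap action on $\Biext({\sf M}, {\sf M}; \G_m)$ through Cartier biduality and through the symmetry of $\otimes$ in $\Mod^{\cong,\fr}$, and confirm that the sign $-1$ appearing in the biduality of 1-motives is precisely what converts symmetry on the biextension side into antisymmetry on the tensor side. This parallels, and in fact refines, the sign discussion carried out in a slightly different context in \cite[\S 3.4]{BC}.
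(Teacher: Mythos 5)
Your proposal matches the paper's own argument: the first part proceeds exactly as the paper does, via representability of $\Biext$ by the Cartier dual, full faithfulness of $T_\bdr$ (Theorem \ref{thm:ff1mot}), the compatibility $T_\bdr({\sf M}^*)\cong T_\bdr({\sf M})^*$ (Theorem \ref{Cartier}), and the tensor/Hom adjunction in $\Mod^{\cong,\fr}$. For the symmetric/alternating claim the paper simply invokes Deligne's \cite[10.2.5 \& 10.2.8]{De}, which is the same underlying sign argument you sketch, so the routes are essentially identical.
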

\begin{proof} Recall that  $\Biext ( - , {\sf M};\G_m )$ is representable by the Cartier dual ${\sf M}^*$ for ${\sf M}\in \M (K)$ (see \cite[Prop. 4.1.1]{BVK}).
Thus $\Biext ({\sf N}, {\sf M};\G_m ) = \Hom ({\sf N}, {\sf M}^*)\cong \Hom (T_\bdr ({\sf N}), T_\bdr ({\sf M})^*)$ where we here use Theorem \ref{Cartier}  and  Theorem \ref{thm:ff1mot}. Now $T_\bdr ({\sf M})^*= T_\bdr ({\sf M})^\vee(1)$ in such a way that $\Hom (T_\bdr ({\sf N}), T_\bdr ({\sf M})^*)= \Hom (\Z (0), T_\bdr ({\sf N})^\vee\otimes T_\bdr ({\sf M})^\vee\otimes \Z(1))$ making use of the tensor structure of the category $\Mod^{\cong, \fr}$ by Lemma \ref{tensorper}. 

Assume  ${\sf N} ={\sf M}$. Since $\Biext ({\sf M}, {\sf M};\G_m ) \cong \Hom (T_\bdr ({\sf M}), T_\bdr ({\sf M})^*)$, any biextension $\mathcal P$  corresponds to a pairing $T_\bdr ({\sf M})\otimes T_\bdr ({\sf M})\to  \Z(1)$ which induces the pairing \cite[10.2.3]{De} on Deligne-Hodge realizations and the pairing \cite[10.2.7]{De} on de Rham realizations; if $\mathcal P$ is symmetric, the pairing is alternating by \cite[10.2.5 \& 10.2.8]{De}.
\end{proof}
\begin{cor} For $X$ over $K=\bar \Q$ we have that 
$$\Biext (\LA{1}(X), \LA{1}(X);\G_m )^{\rm sym} \cong (H^{1, 0}_\drb(X)\otimes H^{1, 0}_\drb(X) \otimes \Z(1))_\varpi^{\rm alt}.$$
\end{cor}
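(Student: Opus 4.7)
The plan is to invoke Lemma \ref{biext} with ${\sf N} = {\sf M} = \LA{1}(X)$ and then translate the resulting homological period invariant into cohomological period data of $X$ via Lemma \ref{omega}, Theorem \ref{Cartier}, and the tensor equivalence $\varsigma$ of Lemma \ref{varsigma}.

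First, by Lemma \ref{biext} applied to $\LA{1}(X) \in \M(\bar\Q)$ we obtain
\[\Biext(\LA{1}(X), \LA{1}(X); \G_m)^{\rm sym} \cong \bigl(T_\bdr(\LA{1}(X))^\vee \otimes T_\bdr(\LA{1}(X))^\vee \otimes \Z(1)\bigr)_\varpi^{\rm alt}.\]
So it suffices to exhibit a natural isomorphism $T_\bdr(\LA{1}(X))^\vee \cong H^{1,0}_\bdr(X)$ in $\Mod^{\cong,\fr}$. Indeed, granting this, the result follows by applying $\varsigma$, which is a tensor equivalence that preserves the Tate objects $\Z(r)$ and commutes with the functor $(\,\cdot\,)_\varpi$, and which sends $H^{1,0}_\bdr(X)$ to $H^{1,0}_\drb(X)$.

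To produce this identification, start with Lemma \ref{omega} for $p=1$, which supplies an integrally defined isomorphism $T_\drb(\LA{1}(X)) \cong H^{1,1}_\drb(X)$ in $\cMod^{\cong,\fr}$. By Lemma \ref{BdRB} the left-hand side equals $\varsigma(T_\bdr(\LA{1}(X))^\vee \otimes \Z(1))$, and since $H^{p,q}_\drb(X) = H^{p,0}_\drb(X)(q)$ the right-hand side equals $\varsigma(H^{1,0}_\bdr(X) \otimes \Z(1))$. Applying $\varsigma^{-1}$, together with Theorem \ref{Cartier} and the identity $T_\bdr({\sf M})^* = T_\bdr({\sf M})^\vee(1)$, yields $T_\bdr(\LA{1}(X))^\vee \otimes \Z(1) \cong H^{1,0}_\bdr(X) \otimes \Z(1)$ in $\Mod^{\cong,\fr}$; untwisting by $\Z(-1)$ produces the required isomorphism $T_\bdr(\LA{1}(X))^\vee \cong H^{1,0}_\bdr(X)$.

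The only real obstacle is the careful bookkeeping of Tate twists and the passage between the homological category $\Modc$ and the cohomological category $\cMod^{\cong}$, but each ingredient is supplied by a previously established result. The compatibility of the symmetric biextension structure with the alternating condition under these identifications is automatic, being inherited directly from Lemma \ref{biext} together with the naturality of $\varsigma$, Cartier duality, and the tensor structure on the period categories.
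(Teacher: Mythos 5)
Your proof is correct and follows essentially the same route as the paper: both invoke Lemma~\ref{biext} for ${\sf N}={\sf M}=\LA{1}(X)$ and then reduce to identifying $T_\bdr(\LA{1}(X))^\vee$ with $H^{1,0}_\drb(X)$ up to the equivalence $\varsigma$. The paper dispatches this last identification by recalling $T_\Z(\LA{1}(X))\cong H_1(X_\an,\Z_\an)_\fr$ and appealing to ``the same argument of Lemma~\ref{omega}''; you instead cite the statement of Lemma~\ref{omega} for $p=1$ directly, combine it with Lemma~\ref{BdRB} and untwist by $\Z(-1)$ — a more explicit but equivalent bookkeeping of the same facts.
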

\begin{proof} Applying Lemma \ref{biext} to the free 1-motive $\LA{1}(X)$ we obtain the claimed formula. In fact, recall that $T_\Z (\LA{1}(X))\cong H_1(X_\an , \Z_\an)_\fr$ and observe that $H^{1, 0}_\drb(X)$ is identified with $T_\bdr (\LA{1}(X))^\vee$ up to inverting the period isomorphism by the same argument of Lemma \ref{omega}. 
\end{proof}
This implies that the period conjecture for $p=2$ holds true in several cases, \eg for abelian varieties, as previously proved by Bost (see \cite[Thm. 5.1]{Bo}). 

\subsection{The case  $q=0$} 
Consider the case of $\Z (0)$ which is canonically identified with $\Tot ([\Z\to 0])=\Z [0]$.
Note that $H^{p,0}(X)\cong H^{p}_\eh(X,\Z)$. Let $\A (K)\subset \M (K)$ be the full subcategory of $0$-motives or Artin motives over $K$. 
Recall that the motivic $\pi_0$ (see \cite[\S 5.4]{BVK} and \cite[Cor. 2.3.4]{BVA}) is a triangulated functor
$$L\pi_0:\DM^\eff_{\rm gm}\to D^b(\A)$$ whence $L\pi_0(X)\in D^b(\A)$,  a complex in the derived category of Artin motives, associated to the motive of $X$. We have that 
$M(X) \to \Tot L\pi_0(X)\in \DM^\eff_\et$ (see \eqref{tot} for $\Tot$) induces 
$$\Hom_{D^b(\A)} (L\pi_0(X), \Z[p])\to \Hom_{\DM^\eff_{\et}} (M(X), \Z (0) [p])\cong H^{p}_\eh(X,\Z).$$
This map is  an isomorphism, integrally, for $p=0, 1$ (\cf \cite[Lemma 12.6.4 b)]{BVK}) and it becomes, by adjunction, a $\Q$-linear isomorphism, for all $p$.  Recall that for any $M\in \DM^\eff_{\rm gm}$ we have (see \cite[Prop. 8.2.3]{BVK})
$$\LAlb(M(q)) \cong
\begin{cases}
\relax L\pi_0(M)(1)& \text{if $q = 1$}\\
0 & \text{for $q\geq 2$}
\end{cases}$$
where an Artin motive twisted by one is a 1-motive of weight $-2$, \ie the twist by one functor $(-)(1): D^b(\A)\to D^b(\M)$ is induced by $L\leadsto [0\to L\otimes \G_m]$. Note that as soon as $K = \bar \Q$ Artin motives are of homological dimension $0$ and we have that $$\Hom_{D^b(\A)} (L\pi_0(X), \Z[p])= \Hom_{\A} (L_p\pi_0(X), \Z).$$ 
Moreover,  we have that $$H^{p}_\eh(X,\Z)\cong\Hom_{\DM^\eff_{\et}} (M(X)(1), \Z (1) [p])$$
by Voevodsky's cancellation theorem \cite{VC}.
\begin{thm} \label{per:1:0}
For any $X$ over $K =\bar \Q$ we have that \eqref{periodconj} holds true for $p=1$ and $q=0$. Moreover, we have 
$$H^{1}_\et(X,\Z)\cong H^1_\dr(X)\cap H^1(X_\an, \Z_\an )\cong H^{1,0}_\varpi(X)$$ 
which is vanishing if $X$ is normal.
\end{thm}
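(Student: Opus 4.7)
The plan is to parallel the proof of Theorem \ref{per:1:1}, using the motivic $\pi_0$ functor in place of $\LAlb$. By Proposition \ref{zerotwist}, the period conjecture for $X$ at $(p,q)=(1,0)$ is equivalent to that for the twisted motive $M(X)(1)$ at $(1,1)$, so one can apply the Albanese machinery to $M(X)(1)$ via the identification $\LAlb(M(X)(1)) \cong L\pi_0(X)(1)$ just recalled. The relevant Albanese $1$-motive becomes $\LA{1}(M(X)(1)) = L_1\pi_0(X)(1) = [0 \to L_1\pi_0(X)\otimes \G_m]$, a pure torus, whose Cartier dual $\RA{1}(M(X)(1)) = [L_1\pi_0(X)^\vee \to 0]$ is a lattice with zero structure map; hence $\ker u_1^* = L_1\pi_0(X)^\vee$.

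Next I would check the analog of Lemma \ref{reproj}: since $\RA{0}(M(X)(1)) = [\Z[\pi_0(X)]^\vee \to 0]$ is also a lattice and extensions of a lattice by a lattice split in $\tM$ over $K=\bar\Q$, one gets $\Ext(\Z,\RA{0}(M(X)(1)))=0$, so the sequence degenerates to $\EExt^1(\Z, \RPic(M(X)(1))) \cong L_1\pi_0(X)^\vee$ integrally. The motivic Albanese map $M(X)(1) \to \Tot\LAlb(M(X)(1))$ induces an integral comparison $L_1\pi_0(X)^\vee \to H^{1,1}(M(X)(1)) \cong H^{1,0}(X)=H^1_\eh(X,\Z)$, which is an isomorphism by the stated integral property of the $L\pi_0$ comparison in degrees $p=0,1$ together with Voevodsky cancellation.

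To conclude that $r_\varpi^{1,0}$ is an isomorphism, I would factor it through the Betti--de Rham realization exactly as in the proof of Theorem \ref{per:1:1}. Corollary \ref{keru} applied to ${\sf M}=\LA{1}(M(X)(1))$ gives $T^\varpi({\sf M}) \cong L_1\pi_0(X)^\vee$; the full faithfulness Theorem \ref{thm:ff1mot} enters here only in the elementary case of morphisms from a torus to $[0\to\G_m]$, so does not require the full transcendence input. The analog of Lemma \ref{omega} in degree $p=1$ then identifies $T^\varpi({\sf M})$ with $\Hom(\Z(0), H^{1,1}_\drb(M(X)(1))) \cong H^{1,1}_\varpi(M(X)(1)) \cong H^{1,0}_\varpi(X)$ via Corollary \ref{perhombis}, and chasing the factorization yields the period conjecture at $(1,0)$ together with a natural integral isomorphism $L_1\pi_0(X)^\vee \iso H^{1,0}_\varpi(X)$.

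For the final assertion, note that if $X$ is normal then $\pi_1^{\et}(X)$ is profinite, so $H^1_\et(X,\Z) = \Hom_{\cont}(\pi_1^{\et}(X),\Z) = 0$, and moreover normality gives $X$ geometrically unibranch, whence $H^{1,0}(X) = H^1_\eh(X,\Z) \cong H^1_\et(X,\Z) = 0$. The isomorphism just established then forces $H^{1,0}_\varpi(X) = H^1_\dr(X)\cap H^1(X_\an,\Z_\an) = 0$. \textbf{Main obstacle.} The essential technical step is verifying the analog of Lemma \ref{omega} for the twisted motive $M(X)(1)$ in place of a scheme: one must identify the de Rham--Betti realization of the torus $[0\to L_1\pi_0(X)\otimes \G_m]$ with the degree-one part of the de Rham--Betti realization of $M(X)(1)$ itself. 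This should reduce, via functoriality of motivic $\pi_0$ and Cartier duality, to the integral $p=0,1$ comparison for $L\pi_0$ together with the canonical identification $\uOm\otimes\Z(-1)\cong \uOm$ underlying Proposition \ref{zerotwist}, but demands some bookkeeping.
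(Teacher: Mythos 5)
Your proposal follows essentially the same route as the paper's proof: reduce via Proposition~\ref{zerotwist} to the pair $(1,1)$ for $M(X)(1)$, use $\LAlb(M(X)(1))\cong L\pi_0(X)(1)$ to identify $\LA{0}$ and $\LA{1}$ of $M(X)(1)$, observe that the $\Ext$ term in the analog of Lemma~\ref{reproj} vanishes (your lattice formulation of $\RA{0}(M(X)(1))$ is the Cartier dual of the paper's torus $[0\to\Z[\pi_0(X)]\otimes\G_m]$ and the two vanishing arguments are equivalent), and then push through the de~Rham--Betti realization as in Theorem~\ref{per:1:1} using Lemma~\ref{omega} twisted by $(-1)$. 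The only cosmetic difference is the final vanishing for normal~$X$: you give the direct profinite-$\pi_1^\et$ argument, whereas the paper cites the more general fact $H^1_\et(X,\Z)\cong H^1_\eh(X,\Z)$ for arbitrary~$X$ and its vanishing for normal~$X$ from~\cite{BVK}; both are correct, and the paper's formulation makes clear that the $H^1_\et\cong H^1_\eh$ identification does not actually require normality.
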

\begin{proof} Making use of Proposition \ref{zerotwist} we are left to show the period conjecture for $M (X)(1)$ in degree $1$ and twist $1$. We have that
$$\Hom_{\DM^\eff_{\et}} (M(X)(1), \Z (1) [1])\cong \Hom_{D (\M)} (\LAlb (M(X)(1)), \Z (1) [1]).$$
We have $\LA{0}(M (X)(1)) \cong L_0\pi_0(X)(1)\cong [0 \to \Z[\pi_0 (X)]\otimes \G_m]$ in such a way that $$\Ext_{{}_t\M} (\LA{0}(M (X)(1)), \G_m[-1])=0$$ and (\cf \eqref{adjoint} for $M(X)(1)$) we obtain
$$H^{1}_\eh(X,\Z)\cong\Hom_{{}_t\M} (\LA{1}(M (X)(1)), \G_m[-1]).$$
Now 
$T^\varpi (\LA{1}(M (X)(1)))\cong H^{1,1}_\varpi (M(X) (1))\cong H^{1,0}_\varpi (X)$ by Lemma \ref{omega} twisted by $(-1)$ and the same argument in the proof of Theorem \ref{per:1:1} applies here. 
Finally, recall that $H^{1}_\et(X,\Z)\cong H^{1}_\eh(X,\Z)$ for any scheme $X$ and $H^{1}_\eh(X,\Z)=0$ if $X$ is normal (see \cite[Lemma 12.3.2 \& Prop. 12.3.4]{BVK}). 
\end{proof}
\begin{remark} For $X$ not normal (\eg for the nodal curve) the group $H^{1}_\et(X,\Z)$ can be non-zero. Moreover, for any $X$ we have a geometric interpretation $H^{1}_\et(X,\Z) \cong L\Pic (X)\into \Pic (X[t, t^{-1}])$ by a theorem of Weibel \cite[Thm. 7.6]{Weib}. Note that this $L\Pic (X)$ is also a sub-quotient of the negative $K$-theory group $K_{-1}(X)$ (see \cite[Thm. 8.5] {Weib}). 
\end{remark}
For $X$ smooth we have a quasi-isomorphism $L\pi_0(X)\cong \Z[\pi_0(X)][0]$ (see \cite[Prop. 5.4.1]{BVK}) which means that $H^{p,0}(X)_\Q=0$ for $p\neq 0$. This yields (as it also does  Proposition \ref{perconjsm} for $X$ smooth) that the period conjecture \eqref{periodconj} is equivalent to 
\begin{equation}\label{vanishingq=0}H^p_\dr(X)\cap H^p(X_\an, \Q_\an)=0 \quad p\neq 0.\end{equation} 
\begin{remark} The period conjecture \eqref{periodconj} for $q=0$ and $X$ smooth is also equivalent to the surjectivity of $f^p_\varpi: H^{p,0}_\varpi(\pi_0(X))_\Q\to H^{p,0}_\varpi(X)_\Q$ induced by the canonical morphism $f: X\to \pi_0(X)$,  for all $p\geq 0$. In fact, the morphism $f$ induces a map $M(X)\to M(\pi_0 (X))$ and a commutative square
by functoriality  
 \[\xymatrix{ H^{p,0}(X)_\Q \ar[r]^{r_\varpi^{p,0}} \ar@{=}[d]_{f^p} & H^{p,0}_\varpi(X)_\Q\\
H^{p,0}(\pi_0(X))_\Q\ar[r]^{\cong}& H^{p,0}_\varpi(\pi_0(X))_\Q\ar[u]_{f^p_\varpi}
 }\]
where $f^p: \Hom_{\DM^\eff_{\et}} (M(\pi_0(X)), \Z[p])_\Q\to \Hom_{\DM^\eff_{\et}} (M(X), \Z  [p])_\Q$ is an isomorphism for $X$ smooth; since $\dim \pi_0 (X) =0$ then $r^{p,0}_\varpi$ is clearly an isomorphism for $\pi_0 (X)$. For $p=0$ the group $H^{0}(X_\an,\Z_\an (0))$ has rank equal to the rank of $\Z[\pi_0(X)]$ and $f^0_\varpi$ is an isomorphism; for $p\neq 0$ the surjectivity of $f^p_\varpi$ is equivalent to the vanishing of all groups. 
\end{remark}

\subsection{Arbitrary twists}
We now apply Waldschmidt's Theorem \ref{thm:Wal} to arbitrary twists.
\begin{propose}\label{prop:odd} For ${\sf M}=[{\sf L}\to {\sf G}]$ a free $1$-motive over $K=\overline{\Q}$ and $q\in \Z$ an integer we have that  
\begin{itemize}
\item[1)] the group $\Hom \bigl(\Z(q), T_{\bdr}({\sf M})\bigr)$ of homomorphisms in $\Modc$ or $\QMod^{\cong}$  is trivial for $q\neq 0$, $1$;\smallskip
\item[2)] the group $\Hom \bigl(\Z(q), T_{\drb}({\sf M})\bigr)$ of homomorphisms in $\cMod^{\cong}$ or $\cQMod^{\cong}$  is trivial for $q\neq 0$, $1$.\smallskip
\end{itemize}
\end{propose}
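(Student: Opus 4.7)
Part (2) reduces to (1) by Cartier duality: Lemma~\ref{BdRB} identifies $T_\drb({\sf M}) \cong \varsigma(T_\bdr({\sf M}^*))$ with ${\sf M}^*$ again a free $1$-motive, so via the equivalence $\varsigma$ of Lemma~\ref{varsigma} one has $\Hom_{\cMod^{\cong}}(\Z(q), T_\drb({\sf M})) \cong \Hom_{\Modc}(\Z(q), T_\bdr({\sf M}^*))$, which vanishes for $q \neq 0,1$ by (1).

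For (1), write $\phi = (\phi_\Z, \phi_K)$ and set $\gamma = \phi_\Z(1) \in T_\Z({\sf M}_\C)$, $v = \phi_K(1) \in \Lie {\sf G}^\natural_K$, so that the compatibility \eqref{4h} reads $\varpi_{{\sf M},\Z}(\gamma) = (2\pi i)^q v$ in $\Lie {\sf G}^\natural_\C$. The plan is to argue in three stages along the weight filtration \eqref{eq.weight} of $T_\bdr({\sf M})$. Stage 1 (weight $0$): composing with the projection $T_\bdr({\sf M})\to T_\bdr({\sf L}[0])$ yields the compatibility $\phi_\Z(1)\otimes 1 = (2\pi i)^q \phi_K(1)$ in ${\sf L}\otimes_\Z\C$, with $\phi_\Z(1)\in {\sf L}$ and $\phi_K(1) \in {\sf L}\otimes_\Z \bar\Q$; transcendence of $(2\pi i)^q$ over $\bar\Q$ for $q\neq 0$ kills both sides, so $\phi$ factors through $T_\bdr({\sf G}[-1])$. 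Stage 3 (weight $-2$): once $\phi$ has been reduced to a morphism into $T_\bdr({\sf T}[-1])\cong \Z(1)^{\oplus \dim{\sf T}}$, the compatibility becomes $\phi_\Z(1)=(2\pi i)^{q-1}\phi_K(1)$ in $\C$, which vanishes for $q\neq 1$ by transcendence of $(2\pi i)^{q-1}$.

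Stage 2 (weight $-1$) is the crux: I must show that the composition with $T_\bdr({\sf G}[-1])\to T_\bdr({\sf A}[-1])$ vanishes (where ${\sf A}$ is the abelian quotient of ${\sf G}$), equivalently that any $v \in \Lie {\sf A}^\natural(\bar\Q)$ with $(2\pi i)^q v \in H_1({\sf A}^\natural_\C)$ must vanish. Since $(2\pi i)^q v$ is then a period, $\exp((2\pi i)^q v) = 0 \in {\sf A}^\natural(\bar\Q)$. I would invoke Waldschmidt's Theorem~\ref{thm:Wal} with $W = {\sf A}^\natural_K\times \G_{m,K}$ and $\varphi\colon \C^2\to W_\an$, $(a,b)\mapsto (\exp(av), e^b)$; the induced Lie map is $\bar\Q$-rational, and $\Gamma := \varphi^{-1}(W(\bar\Q))$ contains the $\C$-linearly independent pair $((2\pi i)^q,0),(0,2\pi i)$. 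The Zariski closure of $\im\varphi$, which equals ${\sf Z}_0\times \G_m$ where ${\sf Z}_0\subseteq {\sf A}^\natural$ is the Zariski closure of $\{\exp(zv): z\in\C\}$, must then have dimension $\leq 2$, so $\dim{\sf Z}_0\leq 1$. Assuming $v\neq 0$, ${\sf Z}_0$ is $1$-dimensional, and since ${\sf A}^\natural$ contains no subtorus it is either $\G_a\subset\V({\sf A})$---in which case $\exp=\mathrm{id}$ and $(2\pi i)^q v = 0$ directly yields $v=0$---or an elliptic curve $E$ lifted into ${\sf A}^\natural$.

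The principal obstacle is this elliptic subcase, where $(2\pi i)^q v\in\Lambda_E$ with $v\in \Lie E(\bar\Q)\setminus\{0\}$ gives a nonzero $\bar\Q$-linear relation $(2\pi i)^q v = m\omega_1 + n\omega_2$ between $(2\pi i)^q v$ and the periods $\omega_1,\omega_2$ of $E$. For $q=1$ this is ruled out by Bost--Charles (Theorem~\ref{bost-charles}) applied to $\Hom_{\bar\Q\text{-gr}}(\G_m, E) = 0$; for $q\neq 0,1$ Waldschmidt alone does not suffice and one must invoke W\"ustholz's analytic subgroup theorem~\ref{thm:Wusth} applied to $E\times \G_m$ (equivalently, Chudnovsky's transcendence theorem on elliptic periods versus $\pi$) to preclude such a relation, completing the argument for (1).
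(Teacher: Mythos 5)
Your overall approach is the same as the paper's: apply Waldschmidt's Theorem~\ref{thm:Wal} to an analytic one-parameter subgroup $z\mapsto\exp(zv)$ of a universal vectorial extension, then analyse the possible one-dimensional algebraic subgroups. The paper handles the semi-abelian part ${\sf G}$ in one stroke, applying Waldschmidt with $n=1$ to ${\sf G}^\natural$, whereas you split further into the abelian quotient ${\sf A}$ and the torus ${\sf T}$; this finer weight d\'evissage is valid but not necessary, and your auxiliary $\G_m$ factor (taking $n=2$, $W={\sf A}^\natural\times\G_m$) is likewise unnecessary — $n=1$, $W={\sf A}^\natural$, $\Gamma=(2\pi i)^q\Z$ already gives $\dim{\sf Z}_0\le 1$.

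The genuine gap is the elliptic subcase in Stage~2: you flag it as the ``principal obstacle'' and propose to invoke W\"ustholz's analytic subgroup theorem (or Chudnovsky's theorem), but in fact this case never arises. The universal vectorial extension ${\sf A}^\natural$ of an abelian variety (and more generally ${\sf G}^\natural$ of a semi-abelian variety) contains \emph{no} nontrivial abelian subvariety. Indeed, if $E\subset{\sf A}^\natural$ were an abelian subvariety, then $E\cap\V({\sf A})$ is finite (proper meets affine) so $E$ maps isogenously onto an abelian subvariety $\bar E\subset{\sf A}$; the inclusion $E\hookrightarrow {\sf A}^\natural$ then splits the pulled-back extension in $\Ext(E,\V({\sf A}))$, and since pullback along an isogeny is injective on these $\Q$-vector spaces, the class of ${\sf A}^\natural$ restricted to $\bar E$ would vanish in $\Ext(\bar E,\V({\sf A}))$; but that class is the surjective restriction map $\Ext({\sf A},\G_a)\to\Ext(\bar E,\G_a)$, which is nonzero when $\bar E\ne 0$ — a contradiction. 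So ${\sf Z}_0$ must be a $\G_a$, and $\exp|_{\G_a}=\mathrm{id}$ forces $v=0$. This is precisely the implicit content of the paper's line ``There are only two possibilities $H=\G_a$ and $H=\G_m$'' for $H\subset{\sf G}^\natural$. Once you note this standard fact, your argument closes without any appeal to W\"ustholz or Chudnovsky (whose invocation as stated is in any case too vague to preclude the relation $(2\pi i)^qv\in\Lambda_E$ for arbitrary $q$).
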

\begin{proof} 1) We work in $\Modc$ and leave  the other case to the reader. We suppose first that ${\sf L}=0$. Consider a non trivial $\phi\in \Hom_{\bdr}\bigl(\Z(q), T_{\bdr}({\sf M})\bigr)$ and the
subgroup $\Gamma=T_\Z(\Z(q))=\Z \subset T_{\dr}(\Z(q))_\C=\C$. Via the non trivial map $\phi_{K} \otimes \C\colon T_{\dr}(\Z(q))_\C \to \T_{\dr}({\sf G})_\C = \Lie ({\sf G}_\C^\natural)$ we can identify $\Gamma$ with a subgroup of $\Lie ({\sf G}_\C^\natural)$. This subgroup is contained in $V_\C$ with $V\subset \Lie ({\sf G}^\natural)$ defined by the
image $\phi_{K}\bigl(T_{\dr}(\Z(q))\bigr)$.  Via the exponential map $ \Lie ({\sf G}_\C^\natural) \to {\sf G}^\natural(\C)$ the image of $\Gamma$ is $0\in {\sf G}^\natural(K)$ as
$\phi$ is a map in the category $\Modc$ (respectively $\QMod^{\cong}$). We deduce from Waldschmidt's Theorem \ref{thm:Wal} that $V\subset \Lie ({\sf G}^\natural)$ is the
Lie algebra of a $1$-dimensional algebraic subgroup $H$ of ${\sf G}^\natural$. There are only two possibilities $H=\G_a$ and $H=\G_m$. In both cases the period morphism
for $\Z(q)$ identifies $\Gamma$ with the subgroup $(2 \pi i )^q \Z \subset \Lie (H_\C)$ that goes to $0$ via $\exp_{H_\C}$. For $H=\G_a$ the map $\exp_{H_\C}$ is the
identity, leading to a contradiction. For $H=\G_m$ the kernel of $\exp_{H_\C}$ is $(2 \pi i ) \Z$ forcing $q=1$.

Secondly we suppose that ${\sf G}=0$. Consider a non trivial $\phi\in \Hom_{\bdr}\bigl(\Z(q), T_{\bdr}({\sf M})\bigr)$. Recall that $\T_{\dr}({\sf M})={\sf L} \otimes K$ and the period map
is induced by the inclusion ${\sf L} \subset {\sf L} \otimes K$. Let $e=\phi_K(1)\in {\sf L} \otimes K$. It is a non-zero element. Using that $T_\Z(\Z(q))$ is identified via the period
morphism for $\Z(q)$ with  $(2 \pi i )^q \Z$, we deduce that $\phi_\Z(1) = (2 \pi i )^q \cdot e$ should lie in ${\sf L}\subset {\sf L}\otimes K$. As $\pi$ is transcendental,
this forces $q=0$.

For general ${\sf M} = [{\sf L}\to {\sf G}]$ we reduce to ${\sf G}$ and ${\sf L}$ to conclude the statement.

2) We prove the statement for $\cMod^{\cong}$ using Lemma \ref{varsigma}. The analogue for $\cQMod^{\cong}$  follows similarly. Given a $1$-motive ${\sf M}$ and its Cartier dual ${\sf M}^*$ we have a
natural identification $$\varsigma\colon \Hom \bigl(\Z(q), T_{\bdr}({\sf M}^*)\bigr)\cong \Hom \bigl(\varsigma (\Z(q)), \varsigma (T_{\bdr}({\sf M}^*))= \Hom \bigl(\Z(q), T_{\drb}({\sf M})\bigr) \bigr).$$

The statement follows then from 1).
\end{proof}
 Denote $H^{p,q}_{\drb , (1)}(X)_\fr \subset H^{p, q}_{\drb}(X)_\fr$ the image of  $T_{\drb}(\LA{p}(X))_\fr (q-1)$ under $\theta_\drb^p (q-1)$ of Lemma \ref{omega} twisted by $q-1$. We have:
\begin{cor} \label{cor:per:1}
We get that $H^{p,q}_{\varpi ,(1)} (X)_\fr = 0$ if $q\neq 0, 1$. For $p=1$ we have $H^{1,q}_{\varpi ,(1)} (X) = H^{1,q}_{\varpi} (X)$
and 
$$H^{1,q}_{\varpi} (X) =
\begin{cases}
\relax H^1_\et(X, \Z)\ \ \text{(see Theorem \ref{per:1:0})} & \text{if $q = 0$}\\
\relax \ker  u_1^*\ \ \text{(see Theorem \ref{per:1:1})} & \text{if $q= 1$}\\
0 & \text{$q\neq 0, 1$.}
\end{cases}
$$
\end{cor}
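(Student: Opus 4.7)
The proof reduces both assertions to Proposition \ref{prop:odd} via Lemma \ref{omega}. By construction, $H^{p,q}_{\drb,(1)}(X)_\fr$ is the image in $H^{p,q}_\drb(X)_\fr$ of the morphism $\theta^p_\drb(q-1)\colon T_\drb(\LA{p}(X))_\fr(q-1)\to H^{p,q}_\drb(X)_\fr$ in $\cMod^{\cong,\fr}$, so $H^{p,q}_{\varpi,(1)}(X)_\fr$ is the image of the induced map on periods obtained by applying $(\ )_\varpi = \Hom_{\cMod^{\cong}}(\Z(0),-)$ to this morphism.

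For the first assertion, I would rewrite the source via the Tate untwist
\[
T^\varpi(\LA{p}(X))(q-1) \;=\; \Hom_{\cMod^{\cong}}(\Z(0),T_\drb(\LA{p}(X))_\fr(q-1)) \;\cong\; \Hom_{\cMod^{\cong}}(\Z(1-q),T_\drb(\LA{p}(X))_\fr),
\]
and then invoke Proposition \ref{prop:odd}(2), applied to the free (Deligne) 1-motive underlying $\LA{p}(X)_\fr$, to conclude that this group vanishes whenever $1-q\notin\{0,1\}$, equivalently $q\neq 0,1$. The image $H^{p,q}_{\varpi,(1)}(X)_\fr$ is therefore zero in this range.

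For the case $p=1$, the key input is that $\LA{1}(X)=[L_1\to G_1]$ is already free, so Lemma \ref{omega} gives that $\theta^1_\drb$ is an integrally defined isomorphism in $\cMod^{\cong,\fr}$; Tate-twisting by $q-1$ and then applying $(\ )_\varpi$ yields an isomorphism $T^\varpi(\LA{1}(X))(q-1)\xrightarrow{\sim}H^{1,q}_\varpi(X)_\fr$, so the image $H^{1,q}_{\varpi,(1)}(X)$ coincides with $H^{1,q}_\varpi(X)_\fr$. Finally, the universal coefficient theorem together with the freeness of $H_0(X_\an,\Z)$ forces $H^1(X_\an,\Z_\an(q))$ to be torsion-free, so $H^{1,q}_\varpi(X)_\fr = H^{1,q}_\varpi(X)$, giving the equality $H^{1,q}_{\varpi,(1)}(X) = H^{1,q}_\varpi(X)$.

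Combining these two steps, $H^{1,q}_\varpi(X)=0$ for $q\neq 0,1$, while the values $H^1_\et(X,\Z)$ for $q=0$ and $\ker u_1^*$ for $q=1$ are the respective contents of Theorems \ref{per:1:0} and \ref{per:1:1}. The only point that requires attention is ensuring Proposition \ref{prop:odd} is applied to a free 1-motive, which is why one must pass to $\LA{p}(X)_\fr$, and keeping track of the Tate twist $q\mapsto 1-q$ through $(\ )_\varpi$; beyond this bookkeeping I do not anticipate any deeper obstacle.
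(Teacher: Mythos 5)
Your proof is correct and follows essentially the same route as the paper: identify $H^{p,q}_{\varpi,(1)}(X)_\fr$ with $\Hom(\Z(1-q),T_\drb(\LA{p}(X))_\fr)$ via the Tate untwist and kill it with Proposition \ref{prop:odd}(2), then for $p=1$ use that $\theta^1_\drb$ is an isomorphism (Lemma \ref{omega}) together with freeness of $\LA{1}(X)$. Your explicit justification of $H^{1,q}_{\varpi,(1)}(X)=H^{1,q}_\varpi(X)$ via torsion-freeness of $H^1(X_\an,\Z_\an(q))$ is a welcome detail that the paper leaves implicit.
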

\begin{proof}
We apply Proposition \ref{prop:odd} 2) to ${\sf M} = \LA{p}(X)$ to deduce that  $$H^{p,1-q}_{\varpi ,(1)} (X)_\fr = \Hom_{\drb}\bigl(\Z (0), H^{p,1}_{\drb, (1)}(X)(-q))\bigr) = \Hom_{\drb}\bigl(\Z(q), T_{\drb}(\LA{p}(X))_\fr\bigr)=0$$if $q\neq 0, 1$.
\end{proof}
Thus, for the period conjecture in degree $p=1$, the previous computations for the twists $q=0, 1$ are the only relevant. 

\subsection{Higher odd degrees}
Next, let $X$ be a smooth and projective variety over $K = \bar \Q$. Denote $J^{2k+1}(X)$ the  intermediate Jacobian: as a real analytic manifold, it is defined as the quotient of the image $H^{2k+1}_\Z(X)$ of $H^{2k+1}\bigl(X_\an, \Z_\an(k)\bigr)$ in   $H^{2k+1}\bigl(X_\an,\R(k)\bigr)$. 
This defines a full lattice of $H^{2k+1}\bigl(X_\an,\R(k)\bigr)$ so that  $J^{2k+1}(X)$ is compact. It has also a natural complex analytic structure induced by the
identification $$ J^{2k+1}(X)\df H^{2k+1}(X_\an,\C)/\bigl(F^{k+1} H^{2k+1}(X_\an,\C) + (\varpi_X^{2k+1,k})^{-1}\bigl(H^{2k+1}_\Z(X)\bigr) \bigr).$$Thus $J^{2k+1}(X)$
is a complex torus.

For integers $n$ define $N^n H^{2k+1}\bigl(X_{\rm an}, \Q_\an(k)\bigr)\subset H^{2k+1}\bigl(X_{\rm an}, \Q_\an(k)\bigr)$, the $n$-th step of the geometric coniveau
filtration, as the kernel of $$H^{2k+1}\bigl(X_{\rm an}, \Q_\an(k)\bigr)\longrightarrow \bigoplus_{Z\subset X} H^{2k+1}\bigl(X_{\rm an}\backslash Z_{\rm an},
\Q_\an(k)\bigr)$$ for $Z\subset X$ varying among the codimension $\geq n$ closed subschemes.
\begin{lemma}\label{prop:algebraicIJac} Assume that $H^{2k+1}\bigl(X_{\rm an}, \Q_\an (k)\bigr)$ has geometric coniveau $k$, \ie that we have
$N^k H^{2k+1}\bigl(X_{\rm an}, \C\bigr)=H^{2k+1}\bigl(X_{\rm an}, \C\bigr)$. Then  $J^{2k+1}(X)$ is an abelian variety, which descends to an abelian variety
$J^{2k+1}(X)_K$  over $K$ with $$T_{\drb}\bigl(J^{2k+1}(X)_K\bigr)=\bigl(H^{2k+1}_\dr(X),H^{2k+1}_\Z(X), \eta_X^{2k+1,k}\bigr).$$
\end{lemma}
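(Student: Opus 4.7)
The strategy proceeds in three steps: first show that $J^{2k+1}(X)$ is an abelian variety as a complex torus, then descend it to $K = \bar\Q$, and finally identify its de Rham--Betti realization with the asserted triple.

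For the first step, the assumption $N^k H^{2k+1}(X_\an,\C) = H^{2k+1}(X_\an,\C)$ forces, via the standard inequality between geometric coniveau and Hodge coniveau, the Hodge numbers $h^{p,q}(X)$ with $p+q=2k+1$ to vanish outside $\{(k,k+1),(k+1,k)\}$. Tate-twisting by $k$, the rational Hodge structure $H := H^{2k+1}(X,\Q(k))$ is therefore pure of weight one and type $\{(1,0),(0,1)\}$. The cup-product pairing combined with a suitable power of the hard Lefschetz operator on $H^{2k+1}(X)$ yields a polarization of $H$, so the attached complex torus $J^{2k+1}(X)$ is a polarizable abelian variety.

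For the descent to $K$, the coniveau hypothesis together with resolution of singularities furnishes smooth projective $K$-varieties $\tilde Z_1,\dots,\tilde Z_r$ and morphisms $f_i\colon \tilde Z_i \to X$ over $K$ factoring through codimension-$k$ closed $K$-subvarieties of $X$, such that
\[
\bigoplus_i (f_i)_*\colon \bigoplus_i H^1(\tilde Z_i,\Q(0)) \longrightarrow H^{2k+1}(X,\Q(k))
\]
is surjective. Each $\Pic^0(\tilde Z_i)$ is an abelian variety over $K$, and the above surjection of Hodge structures comes from a morphism of abelian varieties over $K$ (via the Albanese/Picard formalism, up to isogeny); its image is a $K$-form of $J^{2k+1}(X)$. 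This produces the model $J^{2k+1}(X)_K$, whose base change to $\C$ recovers the original complex torus by construction.

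Finally, to identify $T_\drb(J^{2k+1}(X)_K)$, apply Definition~\ref{Def:drb} to $M=[0\to J^{2k+1}(X)_K]$. Its de Rham component is $\Lie\bigl((J^{2k+1}(X)_K^\vee)^\natural\bigr)$, canonically $H^1_\dr(J^{2k+1}(X)_K)$ by Messing's theorem, and its Betti component is $H_1(J^{2k+1}(X)^\vee_\an,\Z)$. By the very construction of the complex torus, $H_1(J^{2k+1}(X)_\an,\Z) = H^{2k+1}_\Z(X)$; using the Lefschetz-induced polarization (which supplies an isogeny $J^{2k+1}(X) \to J^{2k+1}(X)^\vee$ implementing a Tate-twisted Poincar\'e duality on homology), one transports this identification to match the Betti and de Rham lattices to $H^{2k+1}_\Z(X)$ and $H^{2k+1}_\dr(X)$, respectively. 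By functoriality of the period isomorphism $\varpi_X^{2k+1,k}$ under the Gysin/Albanese construction of the previous step, the resulting comparison map is $\eta_X^{2k+1,k}$. The main obstacle lies precisely in this final compatibility: reconciling the Cartier-dual $J^\vee$ appearing in $T_\drb$ with the direct description of $J^{2k+1}(X)$ via $H_1(J_\an,\Z)=H^{2k+1}_\Z(X)$ requires careful tracking of Tate twists and of the polarization isomorphism; the remaining verifications are standard Hodge-theoretic functoriality.
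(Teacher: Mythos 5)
Your Step 1 coincides with the paper's (coniveau forces Hodge coniveau, then polarizability via Lefschetz). After that the routes diverge: the paper simply invokes \cite[Thm.~A]{ACMV2}, which produces an abelian variety $J$ over $K$ \emph{together with} a correspondence $\Gamma\in \CH^h(J\times_K X)$ defined over $K$ inducing $\Gamma_*\colon H^1(J_\an,\Q_\an)\cong H^{2k+1}(X_\an,\Q_\an(k))$. The crucial point is that $\Gamma$ is an algebraic cycle over $K$: this automatically gives the compatibility with $K$-rational de Rham cohomology and with the period isomorphisms, which is exactly what the identification of $T_\drb$ requires.

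Your constructive route in Step 2 is plausible but leaves two real gaps unaddressed. First, the passage from the surjection $\bigoplus_i H^1(\tilde Z_i,\Q)\twoheadrightarrow H^{2k+1}(X,\Q(k))$ to ``a morphism of abelian varieties over $K$'' needs (a) Poincar\'e reducibility to turn the kernel into a sub-abelian variety of $\prod\Alb(\tilde Z_i)_\C$ up to isogeny, and (b) the fact that sub-abelian varieties (equivalently idempotents in $\End\otimes\Q$) of a $\bar\Q$-abelian variety base-changed to $\C$ are already defined over $\bar\Q$. Neither is stated, and (b) is precisely where the standing hypothesis $K=\bar\Q$ enters; for a general number field this step would fail and one genuinely needs an argument like that of ACMV.

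Second, and more seriously, you acknowledge yourself that Step 3 is incomplete: ``the main obstacle lies precisely in this final compatibility.'' This is not a verification you can postpone, because it is the actual content of the third assertion of the lemma. The resolution is that the comparison between $T_\drb(J^{2k+1}(X)_K)$ and $(H^{2k+1}_\dr(X),H^{2k+1}_\Z(X),\eta^{2k+1,k}_X)$ should not be chased through an abstract polarization isomorphism; it should be deduced from the fact that the identification of Betti groups is induced by Gysin pushforward along morphisms $f_i$ that are \emph{defined over} $K$, hence is the Betti component of a map of de Rham--Betti objects. Your Step 2 already has the needed $K$-rational correspondences at hand (the $f_i$), so you could have closed the argument by observing that Gysin maps for cycles over $K$ are compatible with algebraic de Rham cohomology and the period comparison. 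As written, however, the proof does not establish the displayed identity and therefore has a gap.
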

\begin{proof} Under the assumption, $H^{2k+1}_\Z(X)$ is a polarized Hodge structure of type $(1,0)$ and $(0,1)$ so that $J^{2k+1}(X)$ is polarizable and, hence,
an abelian variety.  The second statement follows from  \cite[Thm. A]{ACMV2} where it is proven that there exists an abelian variety $J$ over $K$ and a
correspondence $\Gamma \in {\rm CH}^{h}(J\times_K X)$ over $K$, for $h=k+\dim J^{2k+1}(X)$, inducing an isomorphism $\Gamma_\ast\colon H^1(J_\an,\Q_\an) \cong
H^{2k+1}\bigl(X_\an, \Q_\an(k)\bigr)$ (and hence in de Rham cohomology, compatibly with the period morphisms). Then set $J^{2k+1}(X)_K \df J$.
\end{proof}
The period conjecture \eqref{periodconj} in odd degrees for $X$ predicts that $H^{2k+1,q}_\varpi(X)= H^{2k+1}_\dr(X)\cap H^{2k+1}\bigl(X_{\rm an}, \Z_\an (q)\bigr)_\fr =0$ for every $k\in \N$ and every $q\in\Z$.
\begin{propose}\label{prop:oddPC} The period conjecture \eqref{periodconj} in  degree $p=2k+1$ and any twist $q$ for $X$ smooth and projective holds true if $H^{2k+1}\bigl(X_{\rm an}, \Q_\an (k)\bigr)$ has geometric coniveau $k$.\end{propose}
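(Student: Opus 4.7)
The plan is to reduce the claimed vanishing of $H^{2k+1,q}_\varpi(X)_\Q$ to $\Hom$-computations involving the intermediate Jacobian viewed as a Deligne $1$-motive, where full faithfulness of the de Rham--Betti realization will close the argument. Since $X$ is smooth and projective and $p=2k+1$ is odd, $p\neq 2q$ for every twist $q\in\Z$, so Proposition~\ref{perconjsm} already reduces the period conjecture in bidegree $(2k+1,q)$ to the single vanishing statement $H^{2k+1,q}_\varpi(X)_\Q=0$; this is what I would aim to prove.

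Next I would apply Lemma~\ref{prop:algebraicIJac}: under the geometric coniveau hypothesis, $J^{2k+1}(X)$ descends to an abelian variety $J^{2k+1}(X)_K$ over $\bar\Q$ whose de Rham--Betti realization is precisely $(H^{2k+1}_\dr(X),H^{2k+1}_\Z(X),\eta_X^{2k+1,k})$. Modulo torsion this yields an isomorphism $H^{2k+1,k}_\drb(X)_\fr\cong T_\drb(J^{2k+1}(X)_K)$ in $\cMod^{\cong,\fr}$. Combining this with the general identity $H^{p,q}_\drb(X)=H^{p,0}_\drb(X)(q)$ and Corollary~\ref{perhombis}, I would rewrite, after tensoring with $\Q$,
\[
H^{2k+1,q}_\varpi(X)_\Q\cong\Hom_{\cQMod^{\cong}}\bigl(\Q(k-q),T_\drb(J^{2k+1}(X)_K)_\Q\bigr).
\]

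At this point Proposition~\ref{prop:odd}(2), applied to the free $1$-motive $[0\to J^{2k+1}(X)_K]$, immediately kills the right-hand $\Hom$-group whenever $k-q\notin\{0,1\}$, i.e.\ for every $q\notin\{k-1,k\}$. The two remaining cases I would dispatch by invoking the full faithfulness of $T_\drb$ on $\M(\bar\Q)$ (Theorem~\ref{thm:ff1mot}) together with $T_\drb(\G_m[-1])=\Z(0)$ and $T_\drb(\Z[0])=\Z(1)$: for $q=k$ the group becomes $\Hom_{\M^\Q}([0\to J^{2k+1}(X)_K],\G_m[-1])$, which vanishes since an abelian variety admits no nonzero character; for $q=k-1$ it becomes $\Hom_{\M^\Q}([0\to J^{2k+1}(X)_K],\Z[0])$, which vanishes componentwise as a morphism of complexes.

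The main technical obstacle is the bookkeeping of Tate twists through the chain of identifications: matching the convention of Lemma~\ref{prop:algebraicIJac} (which interprets the abelian variety $J^{2k+1}(X)_K$ as the $1$-motive $[0\to J^{2k+1}(X)_K]$ concentrated in weight $-1$) with the contravariance of $T_\drb$ and the Tate-twist shift $H^{p,q}_\drb(X)=H^{p,k}_\drb(X)(q-k)$, so that Proposition~\ref{prop:odd}(2) is really applied to a free $1$-motive with \emph{empty} lattice part and the parameter $r=k-q$. Once the twists are aligned everything becomes formal, with the essential analytic content concentrated in the Waldschmidt-type input underlying Proposition~\ref{prop:odd} and in the Bost--Charles--Aribas--ACMV descent of the intermediate Jacobian used in Lemma~\ref{prop:algebraicIJac}.
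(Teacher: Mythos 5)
Your proposal is correct and follows essentially the same route as the paper: invoke Lemma~\ref{prop:algebraicIJac} to realize the de Rham--Betti period datum in degree $(2k+1,k)$ as $T_\drb$ of the descended intermediate Jacobian $J^{2k+1}(X)_K$, shift Tate twists to get $H^{2k+1,q}_\varpi(X)_\Q\cong\Hom(\Q(k-q),T_\drb(J^{2k+1}(X)_K)_\Q)$, kill $k-q\notin\{0,1\}$ via Proposition~\ref{prop:odd}(2), and finish the two remaining twists by full faithfulness (Theorem~\ref{thm:ff1mot}) and elementary vanishing of $1$-motive $\Hom$-groups. Your explicit use of Proposition~\ref{perconjsm} to pass to $\Q$-coefficients is a slightly more careful handling of the torsion than the paper's phrasing, and your two residual $\Hom$-groups appear as $\Hom([0\to J],\G_m[-1])$ and $\Hom([0\to J],\Z[0])$ rather than the Cartier-dual forms the paper writes, but these are equivalent and all vanish for the reasons you give.
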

\begin{proof} Thanks to Lemma \ref{prop:algebraicIJac} we have that 
$$H^{2k+1,q}_\varpi(X)=
\Hom \bigl(\Z(0), T_{\drb}\bigl(J^{2k+1}(X)_K\bigr)(q-k)\bigr)=\Hom \bigl(\Z(k-q), T_{\drb}\bigl(J^{2k+1}(X)_K\bigr)\bigr).$$ This is trivial for $k-q\neq 0$,
$1$ by Proposition \ref{prop:odd}. Now use Theorem \ref{thm:ff1mot}. For $k-q=0$ we get that this coincides with the homomorphisms of $1$-motives  from $[\Z \to 0]$ to $[0\to J^{2k+1}(X)_K]$, which is $0$.
For $k-q=1$ this coincides with the homomorphisms of $1$-motives from $[0\to \G_m]$ to $[0\to J^{2k+1}(X)_K]$, which is also $0$.
\end{proof}

\begin{remark} Lemma \ref{prop:algebraicIJac} is proven more generally in  \cite{ACMV2} for the Hodge structure $N^k H^{2k+1}\bigl(X_{\rm an},
\Q_\an(k)\bigr)\subset H^{2k+1}\bigl(X_{\rm an}, \Q_\an(k)\bigr)$ defined by the $k$-th step of the coniveau filtration. Namely, if $X$ is defined over a number
filed $L\subset K$, there is an abelian variety $J_a^{2k+1}(X)$ over $L$ with $T_{\drb}\bigl(J^{2k+1}_a(X)\bigr)=\bigl(N^kH^{2k+1}_\dr(X), N^k H^{2k+1}_\Z(X),\eta_X^{2k+1,k}\bigr)$. The proof of Proposition \ref{prop:oddPC} using $J_a^{2k+1}(X)$ gives the following weak version of the period conjecture:
\begin{equation}
N^k H^{2k+1}_\dr(X)\cap N^k H^{2k+1}\bigl(X_{\rm an}, \Z_\an (q)\bigr)_\fr=0\ \  \text{for every $k\in \N$ and every $q\in\Z$.}
\end{equation}

The assumption in Lemma \ref{prop:algebraicIJac} amounts to saying that $N^k H^{2k+1}\bigl(X_{\rm an}, \Q_\an(k)\bigr)$ is equal to $H^{2k+1}\bigl(X_{\rm an},
\Q_\an(k)\bigr)$. 
This equality holds, for example, for $k=1$ for uniruled smooth projective threefolds; see \cite{ACMV1}. 

The assumption implies, and under the generalized Hodge conjecture is equivalent to, the fact that the Hodge structure $H^{2k+1}\bigl(X_{\rm an}, \Q\bigr)$ has Hodge coniveau $k$, i.e.,
$H^{2k+1}\bigl(X_{\rm an}, \C\bigr)$ is the sum of the $(k+1,k)$ and $(k, k+1)$ pieces of the Hodge decomposition.   Under this weaker condition on the Hodge
coniveau one can still prove that $H^{2k+1}\bigl(X_{\rm an}, \Q_\an(k)\bigr)$ is the Hodge structure associated to the abelian variety $J^{2k+1}_{\rm alg}(X)$ over
$\C$, called the algebraic intermediate Jacobian in $J^{2k+1}(X)$. Unfortunately one lacks the descent to $K$. See the discussion in \cite{ACMV1}.
\end{remark} 

\appendix

\section{Divisibility properties of motivic cohomology {\rm (by B. Kahn)}}\label{KD}

In this appendix, some results of Colliot-Th\'el\`ene and Raskind on the $\sK_2$-cohomology of smooth projective varieties over a separably closed field $k$ are extended to the \'etale motivic cohomology of smooth, not necessarily projective, varieties over $k$. Some consequences are drawn, such as the degeneration of the Bloch-Lichtenbaum spectral sequence for any field containing $k$. 

Recall that in \cite{ctr}, Colliot-Th\'el\`ene and Raskind study the structure of the $\sK_2$-cohomology groups of a smooth projective variety $X$ over a separably closed field. Following arguments of Bloch \cite{bloch}, their proofs use the Weil conjecture proven by Deligne \cite{weilI} and the Merkurjev-Suslin theorem \cite{ms}. These results and proofs can be reformulated in terms of motivic cohomology, since
\[H^i_\Zar(X,\sK_2)\simeq H^{i+2}(X,\Z(2))\]
or even in terms of \'etale motivic cohomology, since
\[H^j(X,\Z(2))\iso H^j_\et(X,\Z(2))\text{ for }j\le 3\]
as follows again from the Merkurjev-Suslin theorem. 

If we work in terms of \'etale motivic cohomology, the recourse to the latter theorem is irrelevant and only the results of \cite{weilI} are needed; in this form, the results of \cite{ctr} and their proofs readily extend to \'etale motivic cohomology of higher weights, as in \cite[Prop. 4.17]{cycletale} and \cite[Prop. 1]{indec} (see also \cite[Prop. 1.3]{RS}).

Here we generalise these results to the \'etale motivic cohomology of smooth varieties over a separably closed field: see Theorem \ref{t2}. This could be reduced by a d\'evissage to the smooth projective case, using de Jong's alteration theorem in the style of \cite{finiteness}, but it is simpler to reason directly by using cohomology with compact supports, and Weil II \cite{weilII} rather than Weil I \cite{weilI}. (I thank H\'el\`ene Esnault and Eckart Viehweg for suggesting to use this approach). This descends somewhat to the case where the base field $k$ is not separably closed, yielding information on the Hochschild-Serre filtration on \'etale motivic cohomology (Theorem \ref{t2.1}). The rest of the appendix is concerned with implications on motivic cohomology of a field $K$ containing a separably closed field: the main result, which uses the norm residue isomorphism theorem of Voevodsky, Rost et al. (\cite{voeann}, see also \cite{hwannals}) is that $H^i(K,\Z(n))$ is divisible for $i\ne n$ (Theorem \ref{t3}). As an immediate consequence, the ``Bloch-Lichtenbaum'' spectral sequence of $K$ from motivic cohomology to algebraic $K$-theory degenerates (Theorem \ref{t4}).  We also show that the cokernel of the cup-product map 
\[H^{i-1}(K,\Z(n-1))\otimes K^*\to H^i(K,\Z(n))\]
is uniquely divisible for $i<n$ (Theorem \ref{t5}).

Throughout, motivic cohomology is understood in the sense of Suslin and Voevodsky (hypercohomology of the Suslin-Voevodsly complexes \cite{SV}).

\subsection{A weight and coniveau argument}

Let $X$ be a separated scheme of finite type over a finitely generated field $k$. 
\begin{propose}\label{p2.1} Let $n\in\Z$, $k_s$ a separable closure of $k$ and $G=Gal(k_s/k)$.
Let $\bar X=X\otimes_k k_s$. Then $H^j_c(\bar X,\Z_l(m))^G$ and $H^j_c(\bar X,\Z_l(m))_G$ are finite for $j\notin [2m,m+d]$ and any  prime number $l$ invertible in $k$, where $d=\dim X$.
\end{propose}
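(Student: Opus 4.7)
I will reduce the integral finiteness to the vanishing of $H^j_c(\bar X,\Q_l(m))^G$ and $H^j_c(\bar X,\Q_l(m))_G$ for $j\notin [2m,m+d]$, then establish the vanishing by a weight argument \`a la Deligne. The reduction is formal: $H^j_c(\bar X,\Z_l(m))$ is finitely generated over $\Z_l$ by constructibility, so its $G$-invariants (a submodule) and $G$-coinvariants (a quotient) are also finitely generated $\Z_l$-modules; flatness of $\Z_l\to \Q_l$ then forces them to be $\Z_l$-torsion---hence finite---as soon as the corresponding $\Q_l$-(co)invariants vanish.

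For the $\Q_l$-vanishing, I would spread $X/k$ to a separated morphism $f\colon \cX\to S$ of finite type with $S$ integral of finite type over $\Z[1/l]$ and function field $k$. Then $V:=H^j_c(\bar X,\Q_l(m))$ is realized as the geometric generic stalk of the constructible sheaf $(R^jf_!\Q_l)(m)$ on $S$, with $G$-module structure factoring through $\pi_1(S,\bar\eta)$. At any closed point $s\in S$ the geometric Frobenius $\mathrm{Frob}_s$ acts on the same underlying $\Q_l$-vector space, and the standard monodromy formalism provides an inclusion $V^G\subseteq V^{\mathrm{Frob}_s}$ and a surjection $V_{\mathrm{Frob}_s}\twoheadrightarrow V_G$. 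Hence it suffices to exhibit a single $s$ at which $\mathrm{Frob}_s-1$ acts invertibly on $V$, i.e.\ at which $1$ does not appear among its eigenvalues.

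To exclude the eigenvalue $1$ I would sandwich the weights of $\mathrm{Frob}_s$ on $V$ inside the interval $[\max(0,2j-2d)-2m,\; j-2m]$, uniformly in $s$. The upper bound $j-2m$ is Deligne's Weil II applied to $R^jf_!\Q_l$. For the lower bound on $H^j_c(\bar X,\Q_l)$ I would argue by induction on $d=\dim X$ via the excision long exact sequence associated with a decomposition $X=U\sqcup Z$ into a smooth dense open $U$ and a closed $Z$ of strictly smaller dimension; for the smooth piece, Poincar\'e--Verdier duality $H^j_c(\bar U,\Q_l)\cong H^{2d-j}(\bar U,\Q_l(d))^\vee$ combined with Weil II for smooth varieties (weights of $H^{2d-j}(\bar U,\Q_l)$ are $\ge 2d-j$) yields that weights of $H^j_c(\bar U,\Q_l)$ are $\ge 2j-2d$, and the inductive hypothesis handles $Z$. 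A direct arithmetic check shows that $[\max(0,2j-2d)-2m,\; j-2m]$ contains $0$ precisely when $2m\le j\le m+d$, so for $j$ outside this range $1$ is not a Frobenius eigenvalue on any closed fiber and the proof concludes.

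The principal obstacle is the lower weight bound for arbitrary separated (possibly singular, possibly non-proper) $X$: the upper bound comes straight from Weil II, but the lower bound requires combining Poincar\'e--Verdier duality (available only after restricting to a smooth open) with the excision d\'evissage to glue back the singular locus and the boundary. Once that is in place, the passage from finite-field specialization to the finitely generated base field $k$ and the integral-to-rational reduction are formal.
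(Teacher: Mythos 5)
Your overall strategy matches the paper's: reduce the integral finiteness to a $\Q_l$-statement, spread $X/k$ out to a model $f\colon\cX\to S$ of finite type, shrink $S$ so that $R^jf_!\Z_l$ is locally constant, and then control the $G$-(co)invariants through the action of a geometric Frobenius at a single closed point, excluding the eigenvalue $q^m$ on $H^j_c(\bar X,\Q_l)$ by weight/size considerations. The $\Z_l\to\Q_l$ reduction is indeed formal, the spreading-out step is correct, and the upper bound ``$q^m$ eigenvalue $\Rightarrow j\ge 2m$'' via Weil II ($|\alpha|\le q^{j/2}$) is exactly what the paper uses.

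The gap is in your derivation of the complementary bound $j\le m+d$, which you flagged as the principal obstacle. You claim that Poincar\'e--Verdier duality $H^j_c(\bar U,\Q_l)\cong H^{2d-j}\bigl(\bar U,\Q_l(d)\bigr)^\vee$ together with the Weil II \emph{lower} bound ``weights of $H^{2d-j}(\bar U,\Q_l)$ are $\ge 2d-j$'' yields ``weights of $H^j_c(\bar U,\Q_l)$ are $\ge 2j-2d$.'' But if $w'$ is a weight of $H^{2d-j}(\bar U,\Q_l)$, the corresponding weight of $H^j_c(\bar U,\Q_l)$ is $2d-w'$, so the hypothesis $w'\ge 2d-j$ gives $2d-w'\le j$: this is the \emph{upper} bound on $H^j_c$ — already Weil II for compactly supported cohomology — not a lower bound. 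To extract the lower bound $2d-w'\ge 2j-2d$ by this route one needs the \emph{upper} bound $w'\le 2(2d-j)$, i.e.\ that $H^i(\bar U,\Q_l)$ for $U$ smooth is mixed of weights $\le 2i$. That statement is true, but it is not the dual of the Weil II estimate on $H_c$; it is usually obtained from a good compactification and the associated weight spectral sequence, hence implicitly from resolution of singularities or de Jong's alterations — precisely the input the appendix sets out to avoid. The paper instead replaces your archimedean lower bound by a $p$-adic one, namely [SGA~7, Exp.~XXI, Cor.~5.5.3]: the Frobenius eigenvalues on $H^j_c(\bar X,\Q_l)$ are algebraic integers, divisible by $q^{j-d}$ when $j\ge d$. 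Integrality alone rules out $q^m$ for $m<0$, and the divisibility gives $m\ge j-d$ for $j\ge d$; combined with Weil II this yields $j\in[2m,m+d]$ with no duality, no excision and no induction on dimension. Either supply the $H^i\le 2i$ upper bound with its own justification, or swap in the SGA~7 integrality/divisibility statement, to close the argument.
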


\begin{proof} Suppose first that $k=\F_q$ is finite. By \cite[Cor. 5.5.3 p. 394]{sga7}, the
eigenvalues of Frobenius acting on $H^j_c(\bar X,\Q_l)$ are algebraic integers which are
divisible by $q^{j-d}$ if $j\ge d$. This yields the necessary bound $m\ge j-d$ for an
eigenvalue $1$. On the other hand, by \cite{weilII}, these eigenvalues have archimedean absolute
values $\le q^{j/2}$: this gives the necessary bound $m\le j/2$ for an eigenvalue $1$. 
The conclusion follows.

In general, we may choose a regular model $S$ of $k$, of finite type over $\Spec \Z$, such that
$X$ extends to a compactifiable separated morphism of finite type $f:\sX\to S$. By
\cite[lemma 2.2.2 p. 274 and 2.2.3 p. 277]{sga5}, $R^jf_!\Z_l$ is a constructible $\Z_l$-sheaf
on $S$ and its formation commutes with any base change. Shrinking $S$, we may assume that it is
locally constant and that $l$ is invertible on $S$. For a closed point $s\in S$, this gives an
isomorphism
\[H^j_c(\bar X,\Z_l)\simeq H^j_c(\bar X_s,\Z_l)\]
compatible with Galois action, and the result follows from the first case.
\end{proof}

\begin{cor}\label{c2.1} If $X$ is smooth in Proposition \ref{p2.1}, then $H^i(\bar
X,\Z_l(n))^{(G)}$  is finite for $i\notin [n,2n]$, where the superscript $^{(G)}$ denotes the subset of elements invariant under some open subgroup of $G$. If $X$ is smooth projective, then $H^i(\bar X,\Z_l(n))^{(G)}$ is finite for $i\ne 2n$ and $0$ for almost all $l$.
\end{cor}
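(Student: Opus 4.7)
The plan is to deduce both assertions from Proposition \ref{p2.1} via Poincar\'e duality, together with Deligne's purity theorem (Weil I) in the smooth projective case.

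For the first assertion, with $X$ smooth of dimension $d$ over $k$, Poincar\'e duality furnishes a $G$-equivariant perfect pairing
\[H^i(\bar X,\Q_l(n))\times H^{2d-i}_c(\bar X,\Q_l(d-n))\to \Q_l,\]
where the trace lands in $H^{2d}_c(\bar X,\Q_l(d))\cong \Q_l$ with trivial $G$-action. Consequently, for any open subgroup $H\subset G$ one has $H^i(\bar X,\Q_l(n))^H=0$ if and only if $H^{2d-i}_c(\bar X,\Q_l(d-n))_H=0$. Applying Proposition \ref{p2.1} over each finite extension of $k$ (the bound $[2m,m+d]$ depends only on $d$), the coinvariants $H^{2d-i}_c(\bar X,\Z_l(d-n))_H$ are finite for $2d-i\notin [2(d-n),(d-n)+d]$, i.e., for $i\notin [n,2n]$. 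Finiteness of these $\Z_l$-coinvariants forces the rational coinvariants to vanish, hence $H^i(\bar X,\Q_l(n))^H=0$, so $H^i(\bar X,\Z_l(n))^H$ is torsion; being contained in the finite torsion subgroup of the finitely generated $\Z_l$-module $H^i(\bar X,\Z_l(n))$, it is finite. The union $H^i(\bar X,\Z_l(n))^{(G)}$ lies in that same finite torsion subgroup, hence is finite as well.

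For the smooth projective case, I would spread out $X$ to a smooth projective morphism $f\colon \sX\to S$ with $S$ regular of finite type over $\Z$, exactly as in the proof of Proposition \ref{p2.1}. After shrinking $S$ the sheaves $R^if_*\Z_l(n)$ become lisse, and for a closed point $s\in S$ the Galois representation $H^i(\bar X,\Z_l(n))$ is isomorphic to $H^i(\sX_{\bar s},\Z_l(n))$ with its Frobenius action. Since $\sX_{\bar s}$ is smooth projective over a finite field $\F_q$, Weil I ensures that the Frobenius eigenvalues on $H^i(\sX_{\bar s},\Q_l(n))$ all have archimedean absolute value $q^{i/2-n}$, which equals $1$ only when $i=2n$. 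Thus for $i\ne 2n$ no power of Frobenius has $1$ as an eigenvalue, giving $H^i(\bar X,\Q_l(n))^{(G)}=0$ and, by the torsion-bound argument above, finiteness of $H^i(\bar X,\Z_l(n))^{(G)}$.

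For the ``$0$ for almost all $l$'' refinement, I would combine this vanishing of rational invariants with the classical fact that, for a smooth projective $f\colon\sX\to S$ as above, the lisse $\Z_l$-sheaves $R^if_*\Z_l$ are torsion-free for all but finitely many $l$---a consequence of proper-smooth base change and the torsion-freeness (outside a finite set of primes) of the Betti cohomology of a complex fiber. For such $l$ the module $H^i(\bar X,\Z_l(n))$ is itself torsion-free, so the vanishing of its rational $(G)$-invariants forces $H^i(\bar X,\Z_l(n))^{(G)}=0$. The main points of care are the switch between invariants and coinvariants under Poincar\'e duality and the uniformity of the bound $[n,2n]$ under finite base extensions; both are routine from the formulation of Proposition \ref{p2.1}.
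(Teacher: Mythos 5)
Your treatment of the first assertion and of the Weil~I eigenvalue argument in the smooth projective case is essentially the paper's proof, spelled out in more detail (the explicit passage from $G$-invariants to $(G)$-invariants by running the argument over finite extensions, and the careful bookkeeping of the shift under Poincar\'e duality, $2d-i\notin[2(d-n),(d-n)+d]\Leftrightarrow i\notin[n,2n]$, are exactly the moves the paper leaves implicit). Both routes then get finiteness of $H^i(\bar X,\Z_l(n))^{(G)}$ by observing that all $H$-invariants land in the finite torsion subgroup of a finitely generated $\Z_l$-module.

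The one genuine gap is in the ``$0$ for almost all $l$'' refinement. You argue that the lisse sheaves $R^if_*\Z_l$ on a model $S$ of $\operatorname{Spec} k$ are torsion-free for all but finitely many $l$ by comparing with the Betti cohomology of a complex fiber. But $k$ is only assumed finitely generated: if $\operatorname{char} k=p>0$, then $S$ is an $\F_p$-scheme and has no complex fiber at all, so the comparison-with-Betti argument is unavailable in precisely the cases where one needs it (e.g.\ $k$ finitely generated over $\F_p$). The paper avoids this by invoking Gabber's theorem, which gives the torsion-freeness of $H^i(\bar X,\Z_l)$ for almost all $l$ for a smooth projective variety over a separably closed field in arbitrary characteristic, with no recourse to a lift to characteristic zero. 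Your argument is fine in characteristic zero, but to cover the full generality of the corollary you must replace the complex-fiber step by Gabber's result (or some other characteristic-free torsion-freeness statement).
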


\begin{proof} By Poincar\'e duality and Proposition \ref{p2.1}, $H^i(\bar
X,\Z_l(n))^{G}$  is finite for $i\notin [n,2n]$; the claim follows since $H^i(\bar X,\Z_l(n)))$ is a finitely generated $\Z_l$-module. In the projective case, the Weil conjecture \cite{weilI} actually gives the finiteness of $H^i(\bar X,\Z_l(n))^{G}$, hence of $H^i(\bar X,\Z_l(n))^{(G)}$, for all $i\ne 2n$. But Gabber's theorem \cite{gabber} says that $H^i(\bar X,\Z_l(n))$ is torsion-free for almost all $l$, hence the conclusion.
\end{proof}

\begin{thm}\label{t2} Let $X$ be a smooth variety over a separably closed field $k$ of exponential characteristic $p$. Then, for $i\notin [n,2n]$, the group $H^i_\et(X,\Z(n))[1/p]$
is an extension of a direct sum $T$ of finite $l$-groups by a divisible group. If $X$ is projective, this is true for all $i\ne 2n$, and $T$ is finite. If $p>1$, $H^i_\et(X,\Z(n))$ is uniquely $p$-divisible for $i<n$. In particular, $H^i_\et(X,\Z(n))\otimes \Q/\Z=0$ for $i<n$.  For $i\le 1$, $H^i_\et(X,\Z(n))$ is
divisible. The sequence
\[0\to H^{i-1}_\et(X,\Q/\Z(n))\to H^{i}_\et(X,\Z(n))\to
H^{i}_\et(X,\Z(n))\otimes \Q\to 0\]
is exact for $i<n$.
\end{thm}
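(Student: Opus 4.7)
The plan is to combine the $l$-adic finiteness of Corollary \ref{c2.1} with the Suslin--Voevodsky rigidity isomorphism $H^i_\et(X,\Z/l^\nu(n))\cong H^i_\et(X,\mu_{l^\nu}^{\otimes n})$ (valid for $l$ invertible in $k$) and the universal coefficient sequence
\[0\to H^i_\et(X,\Z(n))/l^\nu\to H^i_\et(X,\mu_{l^\nu}^{\otimes n})\to {}_{l^\nu}H^{i+1}_\et(X,\Z(n))\to 0\]
coming from $0\to \Z(n)\xrightarrow{l^\nu}\Z(n)\to \Z/l^\nu(n)\to 0$, together with the Geisser--Levine vanishing $H^i_\et(X,\Z/p^r(n))=0$ for $i<n$ in positive characteristic.

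First I would pass from a finitely generated ground field (as assumed in Corollary \ref{c2.1}) to an arbitrary separably closed $k$ by spreading out: write $X$ as the base change of a smooth $X_0$ over a finitely generated subfield $k_0\subset k$, then invoke smooth base change for \'etale cohomology with finite coefficients. Taking $l$-adic limits gives $H^i_\et(X,\Z_l(n))$ finite for $i\notin[n,2n]$ and $l$ invertible in $k$, and zero for almost all $l$ when $X$ is projective (via Gabber's theorem). For the structural statement on $H^i_\et(X,\Z(n))[1/p]$, I would combine the inverse- and direct-limit forms of the universal coefficient sequence with the corresponding sequences for continuous \'etale cohomology; when both $i$ and $i+1$ lie outside $[n,2n]$ all relevant $l$-adic groups are finite, forcing $H^i_\et(X,\Z(n))\{l\}$ to be finite and the torsion-free quotient to be $l$-divisible. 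The boundary cases $i\pm 1\in[n,2n]$ require working with the colimit sequence
\[0\to H^{i-1}_\et(X,\Z(n))\otimes\Q_l/\Z_l\to H^{i-1}_\et(X,\Q_l/\Z_l(n))\to H^i_\et(X,\Z(n))\{l\}\to 0\]
and bounding $H^{i-1}_\et(X,\Q_l/\Z_l(n))$ as finite-modulo-divisible using the continuous \'etale sequence. Summing over $l\ne p$ gives the claimed extension; for projective $X$ the almost-everywhere vanishing makes the total torsion globally finite.

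For the $p$-primary part in positive characteristic and $i<n$, I would apply Geisser--Levine to the universal coefficient sequences in degrees $i-1$ and $i$, which force both $H^i_\et(X,\Z(n))/p^r=0$ and ${}_{p^r}H^i_\et(X,\Z(n))=0$, i.e.\ unique $p$-divisibility. Combined with the structure statement for $l\ne p$ (any extension of torsion by divisible satisfies $(-)\otimes\Q/\Z=0$), this yields $H^i_\et(X,\Z(n))\otimes\Q/\Z=0$ for $i<n$. The divisibility claim for $i\le 1$ follows from direct inspection using the explicit descriptions of $H^0_\et(X,\Z(n))$ and $H^1_\et(X,\Z(n))$ together with Hilbert~90. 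Finally, the displayed four-term sequence is extracted from the long exact sequence of $0\to\Z(n)\to\Q(n)\to\Q/\Z(n)\to 0$: the surjectivity of $H^i_\et(X,\Z(n))\to H^i_\et(X,\Q(n))$ and the injectivity of $H^{i-1}_\et(X,\Q/\Z(n))\to H^i_\et(X,\Z(n))$ both follow from the vanishing $H^j_\et(X,\Z(n))\otimes\Q/\Z=0$ for $j\le i<n$ just established.

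The main obstacles I foresee are (i) the careful handling of the boundary cases $i+1\in[n,2n]$ or $i-1\in[n,2n]$, where one must switch between the direct- and inverse-limit versions of the universal coefficient sequences and track which auxiliary groups remain finite; and (ii) the clean integration of the $l$-adic argument (resting on Weil~II via Corollary \ref{c2.1}) with the orthogonal $p$-adic argument via Geisser--Levine.
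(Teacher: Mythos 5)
Your plan reconstructs exactly what the paper's terse proof delegates to: away from $p$ it is the argument of \cite{indec} (Suslin--Voevodsky rigidity and the universal-coefficient sequences feeding off the $l$-adic finiteness of Corollary~\ref{c2.1}), and the $p$-primary part is Geisser--Levine \cite{gl2}; the spreading-out step you add is indeed needed since Corollary~\ref{c2.1} is stated over a finitely generated field while Theorem~\ref{t2} is over a separably closed one. The only slip is the justification of divisibility for $i\le 1$: Hilbert~90 is not what is used. Rather, the colimit sequence $0\to H^{i-1}_\et(X,\Z(n))\otimes\Q_l/\Z_l\to H^{i-1}_\et(X,\Q_l/\Z_l(n))\to H^i_\et(X,\Z(n))\{l\}\to 0$ shows that the torsion summand $T$ vanishes once one notes that $H^{-1}_\et(X,\Q_l/\Z_l(n))=0$ and $H^{0}_\et(X,\Q_l/\Z_l(n))\cong(\Q_l/\Z_l)^{\pi_0(X)}$ is itself divisible; combined with the $l$-divisibility of the torsion-free quotient already established, this gives divisibility of $H^i$ for $i\le 1$.
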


\begin{proof} 
Away from $p$, it is identical to \cite[proof of Prop. 1]{indec} (which is the projective case) in view of Corollary \ref{c2.1}. The unique $p$-divisibility of $H^i_\et(X,\Z(n))$ for $i<n$ follows from \cite[Th. 8.4]{gl2} and requires no hypothesis on $k$.
\end{proof}

\begin{cor}\label{c2.2} Let $K$ be a field containing a separably closed field $k$. Then,  for $i<n$, the sequence
\[0\to H^{i-1}_\et(K,\Q/\Z(n))\to H^{i}_\et(K,\Z(n))\to
H^{i}_\et(K,\Z(n))\otimes \Q\to 0\]
is exact and the left group has no $p$-torsion if $p=\car K$.
\end{cor}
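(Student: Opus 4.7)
The plan is to realize $\Spec K$ as a cofiltered limit of smooth affine $k$-schemes and then descend the exact sequence of Theorem \ref{t2} to the limit. Writing $K$ as the filtered union of its finitely generated sub-$k$-fields, each such sub-field $L$ is the function field of an integral $k$-variety; passing to a nonempty open subscheme of the smooth locus (generic smoothness applies since $k$ is separably closed, modulo a standard reduction to the perfect case for the characteristic $p$ part, where one can invoke the Geisser--Levine statement cited in Theorem \ref{t2}), and then taking the filtered system of localizations, one exhibits $\Spec K = \varprojlim_\alpha X_\alpha$ with $X_\alpha$ smooth affine of finite type over $k$ and affine transition maps.

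Next I would invoke continuity of \'etale motivic cohomology under such cofiltered limits: both $H^j_\et(-,\Z(n))$ and $H^j_\et(-,\Q/\Z(n))$ convert $\varprojlim X_\alpha$ into $\varinjlim$. Applying Theorem \ref{t2} to each $X_\alpha$ and each $i<n$ yields the short exact sequences
\[
0\to H^{i-1}_\et(X_\alpha,\Q/\Z(n))\to H^{i}_\et(X_\alpha,\Z(n))\to H^{i}_\et(X_\alpha,\Z(n))\otimes \Q\to 0.
\]
Because filtered colimits of abelian groups are exact and commute with $-\otimes\Q$, passing to the colimit over $\alpha$ gives the desired exact sequence for $K$.

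For the final assertion, Theorem \ref{t2} (together with the Geisser--Levine input for $i<n$) says that $H^j_\et(X_\alpha,\Z(n))$ is uniquely $p$-divisible, i.e.\ a $\Z[1/p]$-module, for every $j<n$. Filtered colimits preserve the property of being a $\Z[1/p]$-module, so both $H^{i-1}_\et(K,\Z(n))$ and $H^{i}_\et(K,\Z(n))$ are $\Z[1/p]$-modules. From the short exact sequence
\[
0\to H^{i-1}(K,\Z(n))\otimes \Q/\Z\to H^{i-1}(K,\Q/\Z(n))\to H^{i}(K,\Z(n))_{\tors}\to 0
\]
(arising from the Bockstein triangle for $\Z(n)\xrightarrow{m}\Z(n)\to \Z/m(n)$, taken in the colimit over $m$), both outer terms have no $p$-torsion, hence neither does $H^{i-1}_\et(K,\Q/\Z(n))$.

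The expected main obstacle is the first bullet: ensuring that $\Spec K$ is pro-smooth over $k$, in particular when $k$ is separably closed but not perfect (so that purely inseparable algebraic extensions $K/k$ may fail to admit smooth affine models). One can either bypass this via Popescu's theorem applied after base change to the perfect closure and then appeal to the Frobenius-twist invariance of motivic cohomology with $\Z[1/p]$- and with $\Q/\Z$-coefficients (handled separately on the $p$- and prime-to-$p$ parts), or invoke Geisser--Levine directly to reduce the $p$-primary part to the prime-to-$p$ part, where the colimit over smooth models is harmless.
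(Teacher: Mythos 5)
Your approach is essentially the paper's: reduce to $K/k$ finitely generated, realize $\Spec K$ as a cofiltered limit of smooth affine models over $k$, apply Theorem~\ref{t2} on each model, and pass to the colimit using continuity of \'etale motivic cohomology (the paper cites \cite{bbki} for this last step). Two small remarks: the non-perfect-$k$ issue you flag is real and not discussed in the paper (the paper simply takes for granted that a smooth model exists), and for the $p$-torsion claim your Bockstein decomposition is more than is needed --- once one knows $H^{i}_\et(K,\Z(n))$ is a $\Z[1/p]$-module, the injection $H^{i-1}_\et(K,\Q/\Z(n))\hookrightarrow H^{i}_\et(K,\Z(n))$ coming from the exact sequence itself already shows the left group has no $p$-torsion.
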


\begin{proof} We may assume $K/k$ finitely generated. By Theorem \ref{t2}, this is true for any smooth model of $K$ over $k$, and we pass to the limit (see \cite[Prop. 2.1 b)]{bbki}, or rather its proof, for the commutation of \'etale motivic cohomology with limits).
\end{proof}

\begin{remarks} 1) At least away from $p$, the range of ``bad'' $i$'s in Corollary \ref{c2.1} and Theorem \ref{t2} is $[n,2n]$ in general but shrinks to $2n$ when $X$ is projective. If we remove a smooth closed subset, this range becomes
$[2n-1,2n]$. As the proof of Proposition \ref{p2.1} shows, it depends on the length of the weight filtration on $H^*(\bar X,\Q_l)$. If $X=Y-D$, where $Y$ is smooth projective and $D$ is a simple normal crossing divisor with $r$ irreducible components,  the range is $[2n-r,2n]$. It would be interesting to understand the optimal range in general, purely in terms of the geometry of $X$.\\ 

2) Using Proposition \ref{p2.1} or more precisely its proof, one may recover the $l$-local version of \cite[Th.
3]{finiteness} without a recourse to de Jong's alteration theorem. I don't see how to get the global
finiteness of loc. cit. with the present method, because one does not know whether the torsion of $H^j_c(\bar X,\Z_l)$ vanishes for $l$ large when $X$ is not smooth projective.\\ 

3) Using a cycle class map to Borel-Moore $l$-adic homology, one could use
Proposition \ref{p2.1} to extend Theorem \ref{t2} to higher Chow groups of arbitrary
separated $k$-schemes of finite type. Such a cycle class map was constructed in
\cite[\S 1.3]{glrev1}. Note that Borel-Moore $l$-adic cohomology is dual to $l$-adic cohomology with compact supports, so the bounds for finiteness are obtained from those of Proposition \ref{p2.1} by changing signs.
\end{remarks}

\subsection{Descent}

\begin{thm}\label{t2.1} Let $X$ be a smooth variety over a field $k$; write $k_s$ for a separable closure of $k$, $X_s$ for $X\otimes_k k_s$ and $G$ for $Gal(k_s/k)$. For a complex of sheaves $C$ over $X_\et$, write $F^rH^i_\et(X,C)$ for the filtration on $H^i_\et(X,C)$ induced by the Hochschild-Serre spectral sequence
\[E_2^{r,s}(C)=H^r(G,H^s_\et(X_s,C))\Rightarrow H^{r+s}_\et(X,C).\]
Then, for $i<n$, the homomorphism 
\[F^rH^{i-1}_\et(X,\Q/\Z(n))\to F^rH^i_\et(X,\Z(n))\] 
induced by the Bockstein homomorphism $\beta$ is bijective for $r\ge 3$ and surjective for $r=1,2$.
\end{thm}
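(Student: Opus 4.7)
The plan is to compare the Hochschild–Serre spectral sequences for the three coefficient complexes $\Z(n)$, $\Q(n)$ and $\Q/\Z(n)$, using the vanishing of $G$-cohomology on $\Q$-vector spaces. I would first apply Theorem \ref{t2} to the base change $X_s$, which is smooth over the separably closed field $k_s$: for every $s\le n-2$, this yields a short exact sequence of continuous $G$-modules
\[0\to H^s(X_s,\Q/\Z(n))\xrightarrow{\beta_s}H^{s+1}(X_s,\Z(n))\to H^{s+1}(X_s,\Q(n))\to 0\]
whose quotient is a $\Q$-vector space. I would next record the standard vanishing $H^r(G,V)=0$ for every $r\ge 1$ and every discrete continuous $\Q[G]$-module $V$ over a profinite group $G$: the group $H^r(G,V)$ is uniquely divisible (inherited from $V$) and simultaneously torsion (every class lifts from $H^r(G/U,V^U)$ for some open $U\triangleleft G$ and is killed by $|G/U|$). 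Feeding the SES above into the long exact sequence of $G$-cohomology and applying this vanishing, the Bockstein induces a morphism
\[\beta_\ast\colon E_2^{r,s}(\Q/\Z(n))\longrightarrow E_2^{r,s+1}(\Z(n))\]
on Hochschild–Serre $E_2$-pages which, for every $s\le n-2$, is bijective for $r\ge 2$, surjective for $r=1$, and injective for $r=0$, with the kernels/cokernels at $r=0,1$ controlled by $H^\ast(G,H^{s+1}(X_s,\Q(n)))$.

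For the surjectivity of $F^r\beta$ when $r\ge 1$, I would realize the Bockstein triangle $\Z(n)\to\Q(n)\to\Q/\Z(n)\xrightarrow{+1}$ as a triangle of Hochschild–Serre filtered complexes. Since $E_\infty^{r',\ast}(\Q(n))=0$ for $r'\ge 1$ in the relevant cohomology range by the vanishing of the previous paragraph, the filtered complex $F^r R\Gamma(X,\Q(n))$ is acyclic in cohomology degrees $\le n-1$ for every $r\ge 1$. The resulting filtered long exact sequence makes $H^{i-1}(F^r R\Gamma(X,\Q/\Z(n)))\to H^i(F^r R\Gamma(X,\Z(n)))$ an isomorphism for $r\ge 1$ and $i<n$, and passing to images in the abutments yields the surjectivity of $F^r\beta$ in that range.

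The hard part is the injectivity for $r\ge 3$. I would lift the $E_2$-comparison to an $E_\infty$-comparison by induction on the page $k$ of the spectral sequence, observing that the $E_2$-isomorphism at positions of $G$-degree $r'\ge 2$ is preserved by outgoing differentials (whose sources and targets both have $G$-degree $\ge 2$), while the only potentially problematic incoming differentials $d_k$ are those whose source has $G$-degree $r'-k\in\{0,1\}$, i.e.\ $k=r'-1$ (source on the $r=1$ line) or $k=r'$ (source on the $r=0$ line). A diagram chase using the surjectivity (resp.\ injectivity) of $\beta_\ast$ on the $r=1$ (resp.\ $r=0$) line at the source, together with the $E_2$-isomorphism on the target (which still has $G$-degree $\ge 2$), then shows that these differentials produce matching images on both sides at positions of $G$-degree $r\ge 3$, giving an isomorphism $E_\infty^{r,i-1-r}(\Q/\Z(n))\xrightarrow{\sim}E_\infty^{r,i-r}(\Z(n))$ for every $r\ge 3$; summing on the graded pieces $\mathrm{gr}_F^{r'}$ for $r'\ge r\ge 3$ finally yields the bijectivity of $F^r\beta$. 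The main delicacy is the careful tracking of the incoming $d_k$'s from the $r'=0$ line at page $k=r$, where the cokernel of $\beta_\ast$ on $E_2^{0,\ast}$ could a priori produce new relations on the $\Z$-side not visible on the $\Q/\Z$-side; the resolution exploits the compatibility of $\beta$ with the map $\alpha$ between the three spectral sequences, via the Bockstein triangle, to verify that any such extra element either fails to survive to $E_r^{0,\ast}$ or has a canonical lift to the $\Q/\Z$-side through $\beta_\ast^{-1}$ at the iso'd target.
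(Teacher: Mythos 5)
Your first paragraph is correct and matches the paper's own set-up: Theorem~\ref{t2} applied to $X_s$ gives, for $s\le n-2$, the short exact sequence with $\Q$-vector space quotient, and the vanishing of $H^r(G,V)$ for $r\ge 1$ and $V$ a discrete $\Q[G]$-module yields that $\delta_2^{r,s}$ is bijective for $r\ge 2$, surjective for $r=1$ and injective for $r=0$. The second paragraph, however, contains a genuine gap. From $E_\infty^{r',*}(\Q(n))=0$ for $r'\ge 1$ you infer that the filtered subcomplex $F^{r}R\Gamma(X,\Q(n))$ is \emph{acyclic} in degrees $\le n-1$. That vanishing only says $F^{r'}H^j(X,\Q(n))=0$, i.e.\ that the \emph{image} of $H^j\bigl(F^{r'}R\Gamma(X,\Q(n))\bigr)$ in the abutment $H^j(X,\Q(n))$ is zero; it does not say $H^j\bigl(F^{r'}R\Gamma\bigr)$ itself vanishes. (The $E_1$-terms of the filtration are cochain-level groups, which are not zero even when $E_2^{r',*}(\Q)=0$ for $r'\ge 1$, so neither are the cohomologies of the truncations.) Consequently the filtered long exact sequence does not give the claimed isomorphism $H^{i-1}\bigl(F^rR\Gamma(\Q/\Z)\bigr)\cong H^i\bigl(F^rR\Gamma(\Z)\bigr)$, and the surjectivity of $F^r\beta$ does not follow by this route. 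The paper instead obtains the surjectivity by the same inductive propagation of the $E_2$-comparison through the pages $E_m$ that it uses for bijectivity.

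Your third paragraph is the route the paper takes (``it follows that, for $m\ge 3$, $\delta_m^{r,i-r}$ is bijective for $r\ge 3$ and surjective for $r=1,2$''), and the delicacy you flag is the genuine one: the incoming $d_r\colon E_r^{0,*}\to E_r^{r,*}$ has source on which $\delta_r^{0,*}$ is only injective, and mere injectivity at the source, even with an isomorphism at the target, does \emph{not} by itself give $\im d_r^\Z=\delta_r\bigl(\im d_r^{\Q/\Z}\bigr)$; so the passage from $E_r$ to $E_{r+1}$ at $G$-degree $r\ge 3$ is not a formal five-lemma consequence. Your proposed resolution (``either fails to survive\ldots or has a canonical lift through $\beta_*^{-1}$ at the iso'd target'') restates the difficulty rather than solving it: the isomorphism at the target lets you pull $d_r(y)$ back to the $\Q/\Z$-side, but says nothing about whether the pulled-back class lies in $\im d_r^{\Q/\Z}$. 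To close this step one must actually use the interplay with $\alpha_m\colon E_m(\Z)\to E_m(\Q)$ and the degeneration of the $\Q$-spectral sequence (for instance showing that the exactness $\ker\alpha_m^{0,*}=\im\delta_m^{0,*}$ propagates and then that $d_m^\Z$ on $E_m^{0,*}(\Z)$ factors through $\im\delta_m^{0,*}$ up to $\ker d_m^\Z$); as written your sketch does not supply the required input.
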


\begin{proof} By the functoriality of $E_m^{r,s}(C)$ with respect to morphisms of complexes, we have a morphism of spectral sequences
\[\delta_m^{r,s}:E_m^{r,s-1}(\Q/\Z(n))\to E_m^{r,s}(\Z(n))\]
converging to the Bockstein homomorphisms. By Theorem \ref{t2}, $\delta_2^{r,i-r}$ is bijective for $r\ge 2$ and surjective for $r=1$. It follows that, for $m\ge 3$, $\delta_m^{r,i-r}$ is bijective for $r\ge 3$ and surjective for $r=1,2$. The conclusion follows.
\end{proof}

\begin{remarks} 1) Of course, $F^rH^i_\et(X,\Z(n))$ is torsion for $r>0$ by a transfer argument, hence is contained in $\beta  H^{i-1}(X,\Q/\Z(n))$. The information of Theorem \ref{t2.1} is that it equals $\beta  F^rH^{i-1}(X,\Q/\Z(n))$.\\
2) For $i\ge n$, we have a similar conclusion for higher values of $r$, with the same proof: this is left to the reader.
\end{remarks}

\subsection{Getting the norm residue isomorphism theorem into play}

Recall that for any field $K$ and any $i\le n$, we have an isomorphism

\begin{equation}\label{eq2.1}
H^{i}(K,\Z(n))\iso H^{i}_\et(K,\Z(n)).
\end{equation}

Indeed, this is seen after localising at $l$ for all prime numbers $l$. For $l\ne \car K$, this follows from \cite{SV,gl} and the norm residue isomorphism theorem \cite{voeann}, while for $l=\car K$ it follows from  \cite{gl2}. Finally, $H^{i}(K,\Z(n))=0$ for $i>n$. This yields:

\begin{thm}\label{t3} Let $K$ be as in Corollary \ref{c2.2}. Then, for $i\ne n$, the group in \eqref{eq2.1} is divisible. 
\end{thm}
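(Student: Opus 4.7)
The plan is to dispatch $i>n$ trivially and prove divisibility of $H^i(K,\Z(n))$ for $i<n$ in two stages: the bulk of the range $i\leq n-2$ using Corollary~\ref{c2.2}, and the edge case $i=n-1$ separately using the norm residue isomorphism together with Suslin's description of torsion in Milnor $K$-theory.

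For $i>n$, the group $H^i(K,\Z(n))=0$ is trivially divisible. For $i\leq n-2$, I apply universal coefficients to $0\to\Z(n)\to\Q(n)\to\Q/\Z(n)\to 0$ to obtain
$$0\to H^i(K,\Z(n))\otimes\Q/\Z\to H^i(K,\Q/\Z(n))\xrightarrow{\beta} H^{i+1}(K,\Z(n))_{\rm tors}\to 0.$$
Identifying motivic with \'etale cohomology in degrees $\leq n$ via Beilinson--Lichtenbaum, the Bockstein $\beta$ coincides with the inclusion from Corollary~\ref{c2.2} at index $i+1<n$, which is injective with image the full torsion subgroup. Hence $\beta$ is an isomorphism, the kernel $H^i(K,\Z(n))\otimes\Q/\Z$ vanishes, and $H^i(K,\Z(n))$ is divisible.

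For $i=n-1$, I reduce to showing $H^{n-1}(K,\Z(n))/m=0$ for every positive integer $m$, which by the universal coefficient sequence
$$0\to H^{n-1}(K,\Z(n))/m\to H^{n-1}(K,\Z/m(n))\xrightarrow{\delta_m} K_n^M(K)[m]\to 0$$
amounts to bijectivity of $\delta_m$ (surjectivity being automatic). Treat each prime $\ell$ separately. For $\ell\neq p=\car K$, the separably closed subfield $k\subset K$ contains $\mu_\ell$, so $\mu_\ell^{\otimes n}$ is a trivial Galois module. Beilinson--Lichtenbaum combined with Bloch--Kato at degree $n-1$ gives
$$H^{n-1}(K,\Z/\ell(n))\cong H^{n-1}_\et(K,\mu_\ell^{\otimes n})\cong K_{n-1}^M(K)/\ell\otimes\mu_\ell,$$
while Suslin's theorem (a consequence of the norm residue isomorphism) identifies $K_n^M(K)[\ell]$ with $K_{n-1}^M(K)/\ell$ via multiplication by the class of a primitive $\ell$-th root of unity $\zeta\in\mu_\ell\subset k$. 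Under these identifications $\delta_\ell$ becomes Suslin's isomorphism $x\otimes\zeta\mapsto\{\zeta\}\cdot x$. For $\ell=p$, Theorem~\ref{t2} (relying on \cite{gl2}) gives unique $p$-divisibility of $H^{n-1}_\et(X,\Z(n))$ for every smooth $X/k$; passing to the filtered colimit over smooth models of $K$ yields unique $p$-divisibility of $H^{n-1}(K,\Z(n))\cong H^{n-1}_\et(K,\Z(n))$ directly.

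The main obstacle is the edge case $i=n-1$: Corollary~\ref{c2.2} is not available at index $n$ because $H^n(K,\Z(n))=K_n^M(K)$ is not itself divisible, so the $\ell$-torsion of the target must be computed by other means. Suslin's strengthening of the norm residue isomorphism, accessible through \cite{voeann}, supplies precisely this missing input and allows one to conclude.
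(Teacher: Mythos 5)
Your reduction for $i \leq n-2$ has a genuine gap at the very last step. From injectivity of the Bockstein $\beta$ you correctly deduce $H^i(K,\Z(n))\otimes\Q/\Z = 0$, but this does \emph{not} imply that $H^i(K,\Z(n))$ is divisible. For an abelian group $A$ one has $A\otimes\Q/\Z \cong A_\Q/(A/A_{\tors})$, so the vanishing only says that $A$ modulo torsion is divisible, and leaves the torsion subgroup untouched: $A = \Z/p$ satisfies $A\otimes\Q/\Z = 0$ yet is not divisible. To close the argument you would also need to show that the torsion of $H^i(K,\Z(n))$ is divisible, which your argument does not address.

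The paper's proof sidesteps this, and also makes your case split unnecessary, by applying Corollary~\ref{c2.2} at index $i$ rather than at $i+1$, which stays in range for every $i < n$. That corollary presents $H^i_\et(K,\Z(n))$ as an extension
$$0\to H^{i-1}_\et(K,\Q/\Z(n))\to H^{i}_\et(K,\Z(n))\to H^{i}_\et(K,\Z(n))\otimes \Q\to 0,$$
whose right-hand term is a $\Q$-vector space and whose left-hand term is prime-to-$p$ torsion and identified $l$-locally (for $l\ne\car K$) with $K_{i-1}^M(K)\otimes\Q_l/\Z_l(n-i+1)$ via the norm residue isomorphism, hence divisible; an extension of a divisible group by a divisible group is divisible. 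This treats all $i<n$ uniformly, including $i=n-1$, so the Suslin-type computation of $K^M_n(K)[\ell]$ in your edge case, while in the right spirit, is not needed. Your worry that ``Corollary~\ref{c2.2} is not available at index $n$'' reflects a feature of your own degree shift rather than an obstruction in the paper's route.
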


\begin{proof} Again it suffices to prove this statement after tensoring with $\Z_{(l)}$ for all prime numbers $l$. This is an immediate consequence of Corollary \ref{c2.2} since, by  \cite{voeann}, one has an isomorphism for $l\ne \car k$
\[H^{i-1}_\et(K,\Q_l/\Z_l(n))\simeq K_{i-1}^M(K)\otimes \Q_l/\Z_l(n-i+1)\] 
and the right hand side is divisible.
\end{proof}

\subsection{Application: degeneration of the Bloch-Lichtenbaum spectral sequence}

\begin{thm}\label{t4} Let $K$ be as in Corollary \ref{c2.2}. Then the Bloch-Lichten\-baum spectral sequence  \cite[(1.8)]{levine1}
\[E_2^{p,q}=H^{p-q}(K,\Z(-q))\Rightarrow K_{-p-q}(K)\]
degenerates. For any $n>0$, the map $K_n^M(K)\to K_n(K)$ is injective with divisible cokernel.
\end{thm}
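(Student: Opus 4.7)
The plan is to prove degeneration of the Bloch--Lichtenbaum spectral sequence by showing that every $d_r$ vanishes; both assertions of the theorem will then drop out by reading off the $E_\infty$-page. Re-indexing by $i:=p-q$ and $n:=-q$, the $E_2$-term $E_2^{p,q}=H^i(K,\Z(n))$ is supported in the motivic range $0\le i\le n$, and the differential $d_r\colon E_r^{p,q}\to E_r^{p+r,q-r+1}$ corresponds at $E_2$ to a map $H^i(K,\Z(n))\to H^{i+2r-1}(K,\Z(n+r-1))$. In particular $E_2^{0,-n}\cong K_n^M(K)$ by Nesterenko--Suslin--Totaro, and this is the edge piece that will map to $K_n(K)$.

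First I would dispose of the boundary cases. A differential leaving the edge $i=n$ lands at $(n+2r-1,n+r-1)$, where the cohomological degree strictly exceeds the weight, so the target vanishes automatically. A differential arriving at the edge has source at $(n-2r+1,n-r+1)$ with $r\ge 1$, where degree $<$ weight and the source is therefore divisible by Theorem \ref{t3}. The same dichotomy applies in general: whenever both source and target of $d_r$ lie in the motivic range, the source satisfies $i<n$, hence is divisible by Theorem \ref{t3}.

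The technical heart of the argument is to promote divisibility of sources to honest vanishing of differentials at every page. A natural route is to proceed one prime at a time, combining two inputs. Rationally, the Bloch--Lichtenbaum spectral sequence for any field degenerates via the Adams eigenspace decomposition $K_\ast(K)_\Q=\bigoplus_m K_\ast^{(m)}(K)_\Q$, which splits the abutment compatibly with the filtration. With $\Z/l^m$-coefficients, for each prime $l\ne\car K$, the norm residue isomorphism theorem together with Suslin--Voevodsky rigidity identifies the spectral sequence with a degenerate \'etale Atiyah--Hirzebruch-type spectral sequence (with the $l=\car K$ piece handled as in \cite{gl2}). Corollary \ref{c2.2}, which for $i<n$ exhibits $H^i(K,\Z(n))$ as an extension of its $\Q$-part by a divisible $\Q/\Z$-part, glues these two pieces of information to yield the integral degeneration. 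This is the step where the real work lies, since divisibility of a source alone is not formally enough to force a differential to vanish---one must actually match it against a known degeneration.

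Once degeneration is in hand, the top stage of the filtration gives $F^0K_n(K)=E_\infty^{0,-n}=E_2^{0,-n}=K_n^M(K)$, so $K_n^M(K)\hookrightarrow K_n(K)$. The quotient $K_n(K)/K_n^M(K)$ carries a finite filtration whose graded pieces are $E_\infty^{p,-p-n}=H^{2p+n}(K,\Z(p+n))$ for $-\lfloor n/2\rfloor\le p<0$; for such $p$ one has $2p+n<p+n$, so each graded piece is divisible by Theorem \ref{t3}, and a successive extension of divisible abelian groups remains divisible, giving the divisibility of the cokernel. The chief obstacle is the degeneration step itself; everything else is routine bookkeeping on $E_\infty$.
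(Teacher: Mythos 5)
Your reduction to vanishing of differentials, and your bookkeeping at $E_\infty$ (top filtration step giving $K_n^M(K)\hookrightarrow K_n(K)$, the graded pieces of the quotient being $H^{2p+n}(K,\Z(p+n))$ with degree strictly less than weight, hence divisible, hence a divisible cokernel) are all correct and match the paper's use of Theorem~\ref{t3}. But you have a genuine gap in the degeneration step, and you flag it yourself: you observe that ``divisibility of a source alone is not formally enough to force a differential to vanish'' and then propose to patch this by degenerating rationally via Adams eigenspaces and mod $l^\nu$ via the norm residue isomorphism, and ``gluing'' the two through Corollary~\ref{c2.2}. That gluing is never carried out, and it is not clear it can be: vanishing of a differential after $\otimes\Q$ and vanishing in the mod-$l^\nu$ spectral sequence do not formally imply vanishing of the integral differential (the coefficient sequences and universal-coefficient maps relate, but do not identify, the three spectral sequences; you would at minimum need compatibility of filtrations and naturality of the Bocksteins, none of which you establish).

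The input you are missing is exactly the one the paper uses, and it makes the degeneration immediate: by Levine \cite[Th.~11.7]{levine1}, the Adams operations imply that the differentials $d_r$ of the Bloch--Lichtenbaum spectral sequence are \emph{torsion} (killed by an integer), not merely that the rationalised spectral sequence degenerates. Once $d_r$ is a torsion homomorphism, it vanishes on any divisible source: if $N d_r=0$ and $x=Ny$ then $d_r(x)=N d_r(y)=0$. Theorem~\ref{t3} says $E_2^{p,q}=H^{p-q}(K,\Z(-q))$ is divisible whenever $p<0$ (since then $p-q<-q$, i.e.\ degree $<$ weight), and $E_2^{p,q}=0$ for $p>0$ since motivic cohomology of a field vanishes above the weight. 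So every nonzero differential would have to originate at $p\le 0$ and either land at $p>0$ (zero target) or originate at $p<0$ (divisible source); either way it vanishes. This is the whole of the paper's argument. Your use of Adams operations only for the rational eigenspace decomposition does not extract the torsion bound on $d_r$ itself, which is the fact that actually closes the argument.
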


\begin{proof} By the Adams operations, the differentials are torsion \cite[Th. 11.7]{levine1}. By Theorem \ref{t3}, they
vanish on the divisible groups $E_2^{p,q}$
for
$p< 0$. But $H^i(K,\Z(n))=0$ for $i>n$, so $E_2^{p,q}=0$ for $p>0$.
The last statement follows from the degeneration plus Theorem \ref{t3}.
\end{proof}

\begin{remarks} 1) Again by the Adams operations, the filtration on $K_n(K)$ induced by the
Bloch-Lichtenbaum spectral sequence splits after inverting $(n-1)!$ for any field $K$. On the other hand, we
constructed in \cite{klocal} a canonical splitting of the
corresponding spectral sequence with finite coefficients, including the abutment; the hypothesis that $K$ contains a separably closed field is not required there. This implies
in particular that the map
\[K_n^M(K)/l^\nu\to K_n(K)/l^\nu\]
is split injective \cite[Th. 1 (c)]{klocal}, hence bijective if $K$ contains a separably closed subfield by Theorem \ref{t4}.  
Could it be that the mod $l^\nu$ splittings of \cite{klocal} also exist
integrally?\\

2) As in \cite{klocal}, Theorems \ref{t3} and \ref{t4} extend to regular semi-local rings of
geometric origin containing a separably closed field; the point is that, for such rings
$R$, the groups $H^{i-1}_\et(R,\Q_l/\Z_l(n))$ are divisible by the universal exactness of the
Gersten complexes (\cite{grayson}, \cite[Th. 6.2.1]{cthk}).
\end{remarks}

\subsection{The map $H^{i-1}(K,\Z(n-1))\otimes K^*\to H^i(K,\Z(n))$}

\begin{thm}\label{t5} Let $K$ be as in Corollary \ref{c2.2}. Then, for $i<n$,
\begin{thlist}
\item The cokernel of the cup-product map
\[H^1(K,\Z(n-i+1))\otimes H^{i-1}(K,\Z(i-1))\by{\gamma^{i,n}} H^i(K,\Z(n))\]
is uniquely divisible. 
\item The cokernel of the cup-product map
\[H^{i-1}(K,\Z(n-1))\otimes K^*\by{\delta^{i,n}} H^i(K,\Z(n))\]
is uniquely divisible. 
\end{thlist}
\end{thm}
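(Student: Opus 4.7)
Put $I := \im(\delta^{i,n})$ and $C := H^i(K,\Z(n))/I$. By Theorem~\ref{t3}, $H^{i-1}(K,\Z(n-1))$ is divisible (since $i-1\ne n-1$), hence so is its tensor product with $K^*$ and therefore the image $I$; likewise $H^i(K,\Z(n))$ is divisible, so $C$ is divisible. Thus $C$ is uniquely divisible iff it is torsion-free. Applying the snake lemma to $0\to I\to H^i(K,\Z(n))\to C\to 0$ with multiplication by $m$, the divisibility of both $I$ and $H^i(K,\Z(n))$ kills $I/mI$ and $H^i(K,\Z(n))/m$, yielding $C[m]\cong H^i(K,\Z(n))[m]/I[m]$. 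It thus suffices to prove that $H^i(K,\Z(n))[m]\subseteq I$ for every $m\ge 1$.

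The $p$-primary case (with $p=\car(K)>0$) is vacuous by Theorem~\ref{t2}, which gives unique $p$-divisibility of $H^i(K,\Z(n))$ for $i<n$. For $\ell$-primary torsion with $\ell$ invertible in $K$, take $m=\ell^\nu$. The long exact sequence attached to $\Z(n)\xrightarrow{m}\Z(n)\to\Z/m(n)$, combined with the vanishing $H^{i-1}(K,\Z(n))/m=0$, produces an isomorphism
\[\beta\colon H^{i-1}(K,\Z/m(n))\xrightarrow{\;\simeq\;} H^i(K,\Z(n))[m].\]
Beilinson--Lichtenbaum (a consequence of the Bloch--Kato/norm residue theorem already used to prove Theorem~\ref{t3}) together with $\mu_m\subset k\subset K$ identifies $H^{i-1}(K,\Z/m(n))\cong K^M_{i-1}(K)/m\otimes\mu_m^{\otimes(n-i+1)}$; in particular every class is a sum of cup products $\tilde\xi'\cup\bar a$ with $\tilde\xi'\in H^{i-2}(K,\Z/m(n-1))$ and $\bar a\in H^1(K,\Z/m(1))=K^*/m$ the reduction of some $a\in K^*$.

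The key compatibility, which follows from the fact that the distinguished triangle $\Z(n-1)\xrightarrow{m}\Z(n-1)\to\Z/m(n-1)$ tensored with $\Z(1)$ reproduces $\Z(n)\xrightarrow{m}\Z(n)\to\Z/m(n)$, reads
\[\beta(\tilde\xi'\cup\bar a)=\beta(\tilde\xi')\cup a\in\delta^{i,n}\bigl(H^{i-1}(K,\Z(n-1))\otimes K^*\bigr)=I,\]
and summing over decompositions gives $\beta\bigl(H^{i-1}(K,\Z/m(n))\bigr)\subseteq I$, as required. The proof of (i) follows the same template: one writes each class of $H^{i-1}(K,\Z/m(n))$ as a sum of products $\zeta\cup\bar s$ with $\zeta\in H^0(K,\Z/m(n-i+1))=\mu_m^{\otimes(n-i+1)}$ and $\bar s$ the reduction mod $m$ of a Milnor symbol $\{a_1,\dots,a_{i-1}\}\in K^M_{i-1}(K)=H^{i-1}(K,\Z(i-1))$, so the Bockstein now falls onto $\zeta$ and yields $\beta(\zeta)\cup\{a_1,\dots,a_{i-1}\}\in\im(\gamma^{i,n})$. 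The main technical hurdle is the triangle/cup-product compatibility legitimising the displayed formula for $\beta$; once it is in place, the remainder is formal bookkeeping.
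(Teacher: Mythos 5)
Your proof is correct and rests on exactly the same core ingredients as the paper's: divisibility from Theorem~\ref{t3}, the Bockstein exact sequence realizing $H^i(K,\Z(n))[m]$ as the image of $H^{i-1}(K,\Z/m(n))$, and the norm residue isomorphism theorem (together with $\mu_m\subset K$) to write every class of $H^{i-1}(K,\Z/m(n))$ as a sum of products of symbols. The only difference is a harmless reorganization: the paper proves~(i) first, by placing the Bockstein on the $H^0_\et(K,\Z/l^\nu(n-i+1))$ tensor factor of a class $\zeta\cup\xi$, and then deduces~(ii) from~(i) via a second commutative square using the surjection $K^M_{i-2}(K)\otimes K^*\onto K^M_{i-1}(K)$; you instead prove~(ii) directly by placing the Bockstein on the $H^{i-2}(K,\Z/m(n-1))$ factor of $\tilde\xi'\cup\bar a$ and observe that~(i) is analogous, and both versions implicitly assume $i\ge 2$ so that a $K^*$-factor can be split off the Milnor symbol.
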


\begin{proof} By \eqref{eq2.1}, we may use the \'etale version of these groups. 

(i) Since $H^i_\et(K,\Z(n))$ is divisible
by Theorem \ref{t3}, so is $\Coker \gamma^{i,n}$. Let $\nu\ge 1$ and $l$ prime $\ne \car K$. The diagram
\[\begin{CD}
H^1_\et(K,\Z(n-i+1))\otimes H^{i-1}_\et(K,\Z(i-1))@>\gamma^{i,n}>> H^i_\et(K,\Z(n))\\
@A{\beta\otimes 1}AA @A{\beta}AA\\
H^0_\et(K,\Z/l^\nu(n-i+1))\otimes H^{i-1}_\et(K,\Z(i-1))@>\cup>> H^{i-1}_\et(K,\Z/l^\nu(n))
\end{CD}\]
commutes, where $\beta$ denotes Bockstein. The bottom horizontal map is surjective (even bijective) by the norm residue isomorphism theorem (resp. by \cite{gl2}). By Theorem \ref{t3} again, $H^1_\et(K,\Z(n-i+1))$ is $l$-divisible, hence so
is $H^1_\et(K,\Z(n-i+1))\otimes H^{i-1}_\et(K,\Z(i-1))$, and $\Coker \gamma^{i,n}$ is also
$l$-torsion free by an easy diagram chase.

(ii) Consider the commutative diagram
\[\begin{CD}
H^1_\et(K,\Z(n-i+1))\otimes H^{i-2}_\et(K,\Z(i-2))\otimes K^*@>{\gamma^{i-1,n-1}\otimes 1}>> H^{i-1}_\et(K,\Z(n-1))\otimes K^*\\
@V{1\otimes \cup}VV @V{\delta^{i,n}}VV\\
H^1_\et(K,\Z(n-i+1))\otimes H^{i-1}_\et(K,\Z(i-1))@>\gamma^{i,n}>> H^i_\et(K,\Z(n)).
\end{CD}\]

Since the left vertical map is surjective, we see that $\Coker \delta^{i,n}$ is the quotient of $\Coker \gamma^{i,n}$ by the image of the divisible group $H^{i-1}_\et(K,\Z(n-1))\otimes K^*$ (Theorem \ref{t3}), hence the claim follows from (i).
\end{proof}

It would be very interesting to describe $\Ker \delta^{i,n}$, but this seems out
of range.

\bibliography{bibP1MAFin}
\bibliographystyle{plain}
\end{document}